\newcommand{\suchthat}{\ \text{\rm s.t.}\ }
\newcommand{\cl}{{}}
\newcommand{\ikeo}{IKEO\xspace}
\newcommand{\AAA}{A}
\newcommand{\aaa}{a}
\newcommand{\Sym}{\operatorname{Sym}}
\newcommand{\ds}[1]{\mathsf{#1}}
\tikzset{
    onto/.style={/tikz/commutative diagrams/twoheadrightarrow}
}
\tikzset{
    into/.style={/tikz/commutative diagrams/hook}
}
\tikzset{
    intoL/.style={/tikz/commutative diagrams/hook'}
}
\newcommand{\pbk}[2]{|[label={
    [label distance=#1]#2-45:{\rotatebox[origin=c]{#2}{$\lrcorner$}}}]|}
\newcommand{\PBK}[1]{\pbk{-1.5mm}{#1}}
\newcommand{\SEpbk}{\PBK0}
\newcommand{\dlpullback}[1][dl]{\save*!/#1-4ex/#1:(-1,1)@^{|-}\restore}
\newcommand{\urpullback}[1][ur]{\save*!/#1-4ex/#1:(-1,1)@^{|-}\restore}
\newcommand{\drpullback}[1][dr]{\save*!/#1-4ex/#1:(-1,1)@^{|-}\restore}
\newcommand{\dpullback}[1][d]{\save*!/#1-1.8pc/#1:(-1,1)@^{|-}\restore}
\newcommand{\homomorphism}{map\xspace}
\newcommand{\homomorphisms}{maps\xspace}
\newcommand{\isoclass}{isomorphism class\xspace}
\newcommand{\isoclasses}{isomorphism classes\xspace}
\newcommand{\shortsur}{\nolinebreak[4]\!\onto\nolinebreak[4]\!}
\newcommand{\iso}{{\mathrm{iso}}}
\newcommand{\Map}{{\mathrm{Map}}}
\newcommand{\fatnerve}{\mathbf{N}}
\newcommand{\M}{M\"obius\xspace}
\newcommand{\overarrow}[1]{{\vec{#1}}}
\newcommand{\nondeg}{\overarrow}
\newcommand{\Phieven}{\Phi_{\text{\rm even}}}
\newcommand{\Phiodd}{\Phi_{\text{\rm odd}}}
\newcommand{\Beven}{\B_{\text{\rm even}}}
\newcommand{\Bodd}{\B_{\text{\rm odd}}}
\newcommand{\un}{\underline}
\newcommand{\finsup}{\mathrm{\,fin.sup.}}
\newcommand{\Bij}{\operatorname{Bij}}
\newcommand{\Vect}{\kat{Vect}}
\newcommand{\vect}{\kat{vect}}
\newcommand{\Id}{\text{Id}}
\newcommand{\fdb}{Fa{\`a} di Bruno\xspace}
\newcommand{\culf}{CULF\xspace}
\newcommand{\Decbot}[1]{\operatorname{Dec}_\bot{}\kern-2pt{#1}}
\newcommand{\Dectop}[1]{\operatorname{Dec}_\top{}\kern-2pt{#1}}
\providecommand{\norm}[1]{\left| {#1}\right|}
\providecommand{\normnorm}[1]{\left| {#1}\right|}
\newcommand{\onto}{\twoheadrightarrow}
\newcommand{\into}{\hookrightarrow}
\newcommand{\shortsetminus}{\,\raisebox{1pt}{\ensuremath{\mathbb r}\,}}
\newcommand{\ind}[1]{\underrightarrow{#1}}
\providecommand{\kat}[1]{\text{\textbf{\textsl{#1}}}}
\newcommand{\LIN}{\kat{LIN}}
\newcommand{\lin}{\kat{lin}}
\newcommand{\Inc}{\mathcal{I}}
\newcommand{\upperstar}{^{\raisebox{-0.25ex}[0ex][0ex]{\(\ast\)}}}
\newcommand{\lowershriek}{_!}
\newcommand{\isopil}{\stackrel{\raisebox{0.1ex}[0ex][0ex]{\(\sim\)}}%
			{\raisebox{-0.15ex}[0.28ex]{\(\rightarrow\)}}}
\newcommand{\tensor}{\otimes}
\newcommand{\op}{^{\text{{\rm{op}}}}}
\renewcommand{\epsilon}{\varepsilon}
\newcommand{\Set}{\kat{Set}}
\newcommand{\Grpd}{\kat{Grpd}}
\newcommand{\grpd}{\kat{grpd}}
\newcommand{\N}{\mathbb{N}}
\newcommand{\F}{\mathbb{F}}
\newcommand{\A}{\mathbb{A}}
\newcommand{\B}{\mathbb{B}}
\newcommand{\Q}{\mathbb{Q}}
\newcommand{\G}{\mathbb{G}}
\newcommand{\C}{\mathbb{C}}
\newcommand{\I}{\mathbb{I}}
\newcommand{\CC}{\mathscr{C}}
\newcommand{\DD}{\mathscr{D}}
\newcommand{\EE}{\mathscr{E}}
\newcommand{\FF}{\mathscr{F}}
\newcommand{\name}[1]{\ulcorner #1\urcorner}
\newcommand{\AutOmega}[2]{%
                          \Aut(#1)
           }
\newcommand{\Hom}{\operatorname{Hom}}
\newcommand{\Ext}{\operatorname{Ext}}
\newcommand{\Fun}{\operatorname{Fun}}
\newcommand{\Aut}{\operatorname{Aut}}
\newcommand{\id}{\operatorname{id}}
\newtheorem{lemma}{Lemma}[subsection]
\newtheorem{prop}[lemma]{Proposition}
\newtheorem{thm}[lemma]{Theorem}
\newtheorem{theorem}[lemma]{Theorem}
\newtheorem{cor}[lemma]{Corollary}
\theoremstyle{definition}
\newtheorem{eks}[lemma]{Example}
\newtheorem{BM}[lemma]{Remark}
\newtheorem{taller}[lemma]{$\!\!$}
\newenvironment{blanko}[1]%
{\begin{taller}{\normalfont\bfseries  #1}\normalfont}%
{\end{taller}}
\newenvironment{blanko*}[1]{\begin{list}{\bf {#1} }%
{\setlength{\labelsep}{0mm}\setlength{\leftmargin}{0mm}%
\setlength{\labelwidth}{0mm}\setlength{\listparindent}{\parindent}%
\setlength{\parsep}{\parskip}\setlength{\partopsep}{0mm}}%
\item%
}{\end{list}}
\newenvironment{proof*}[1]{\begin{list}{\em #1 }%
{\setlength{\labelsep}{0mm}\setlength{\leftmargin}{0mm}%
\setlength{\labelwidth}{0mm}\setlength{\listparindent}{\parindent}%
\setlength{\parsep}{\parskip}\setlength{\partopsep}{0mm}}%
\item}{\qed\end{list}}
\thanks{%
    This work has received funding from 
	grant 10.46540/3103-00099B from the Independent Research Fund Denmark
	and from a Spanish university requalification and mobility grant
    (UP2021-034, UNI/551/2021)
    with NextGenerationEU funds.
    It was supported also by research grants
    PID2019-103849GB-I00,
    PID2020-116481GB-I00, and
    PID2020-117971GB-C22
    (AEI/FEDER, UE) of Spain,
    grants 2021-SGR-0603
    and    2021-SGR-1015
    of Catalonia,
    and through the the Severo Ochoa and Mar\'ia de Maeztu Program
    for Centers and Units of Excellence in R\&D grant number CEX2020-001084-M}
\author{Imma G\'alvez-Carrillo}
\address{Universidad de M\'alaga, IMTECH-UPC,
    and Centre de Recerca Matemàtica}
\email{imma.galvez@uma.es}
\author{Joachim Kock}
\address{University of Copenhagen, Universitat Aut\`onoma de Barcelona,
    and Centre de Recerca Matem\`atica}
\email{kock@math.ku.dk}
\author{Andrew Tonks}
\address{Universidad de Málaga}
\email{at@uma.es}
\title{Decomposition spaces in Combinatorics}
\providecommand{\simplexcategory}{%
\begin{tikzpicture}
	\begin{scope}[scale=0.87]
	  \draw (0.0, 0.0) -- (0.142, 0.284) -- (0.284, 0.0) -- (0.0, 0.0);
	  \draw (0.057, 0.0) -- +(0.113, 0.227);
	\end{scope}
\end{tikzpicture}%
}
\begin{document}

\begin{abstract}
  A decomposition space (also called $2$-Segal space) is a simplicial
  object satisfying an exactness condition weaker than the Segal condition: just
  as the Segal condition expresses composition, the new
  condition expresses decomposition.  It is a general framework for incidence
  (co)algebras.  In the present contribution, after establishing a formula for
  the section coefficients, we survey a large supply of examples, emphasising
  the notion's firm roots in classical combinatorics.  The first batch of
  examples, similar to binomial posets, serves to illustrate two key points: (1)
  the incidence algebra in question is realised directly from a decomposition
  space, without a reduction step, and reductions are often given by \culf
  functors; (2) at the objective level, the convolution algebra is a monoidal
  structure of species.  Specifically, we encounter the usual Cauchy product of
  species, the shuffle product of $\mathbb L$-species, the Dirichlet product of
  arithmetic species, the Joyal--Street external product of $q$-species and the
  Morrison `Cauchy' product of $q$-species, and in each case a power series
  representation results from taking cardinality.  The external product of
  $q$-species exemplifies the fact that Waldhausen's $S_\bullet$-construction on
  an abelian category is a decomposition space, yielding Hall algebras.  The
  next class of examples includes Schmitt's chromatic Hopf algebra, the \fdb
  bialgebra, the Butcher--Connes--Kreimer Hopf algebra of trees and several
  variations from operad theory.  Similar structures on posets and directed
  graphs exemplify a general construction of decomposition spaces from directed
  restriction species. A short appetiser on decomposition spaces of
  symmetric functions is included, featuring the base change from
  elementary symmetric functions to monomial symmetric functions, modelled
  as a span of decomposition spaces.
  We finish by computing the \M function in a few cases, exhibiting a few
  techniques,
  and commenting on certain cancellations that occur in the process of taking
  cardinality, substantiating that these cancellations are not
  always possible at the objective level.
\end{abstract}

\subjclass[2010]{05A19, 16T10, 06A07, 18G30, 18B40; 18-XX, 55Pxx}

\maketitle

\newpage

\tableofcontents

\setcounter{section}{-1}

\section{Introduction}

\begin{blanko*}{Decomposition spaces.}
  The notion of decomposition space was introduced by the authors
  \cite{GKT:DSIAMI-1,GKT:DSIAMI-2,GKT:MI} as a general setting for incidence
  algebras and \M inversion, and independently under the name $2$-Segal
  space by Dyckerhoff and
  Kapranov~\cite{Dyckerhoff-Kapranov:1212.3563}, who were motivated by
  homological algebra, representation theory and geometry.  The inherent
  simplicial nature and the broad scope of
  applications of the notion prompted a rather abstract categorical and
  homotopical treatment, with the possible side effect of obscuring its firm
  roots in combinatorics and its attractive elementary aspects.

  The purpose of the present paper is to rectify this possible shortcoming by
  explaining the combinatorial aspects of the basic theory through many
  illustrative and natural examples from classical combinatorics.  From a
  theoretical viewpoint, the natural setting for the theory of decomposition
  spaces is that of simplicial $\infty$-groupoids, but in fact the notion of
  decomposition space is interesting even for simplicial sets: there are plenty
  of natural `decomposition sets' which are not categories (or posets); some
  examples can be found in \cite{Dyckerhoff-Kapranov:1212.3563},
  \cite{Bergner-et.al:1609.02853},
  \cite{Kock-Spivak:1807.06000}, \cite{Hackney-Kock:2210.11192}.
  However, it is
  our contention that the natural level of generality for
  decomposition spaces in combinatorics is that
  of simplicial {\em groupoids}, simply because many combinatorial objects have
  symmetries, and these are taken care of elegantly by the groupoid formalism.
\end{blanko*}

\begin{blanko*}{From locally finite posets to \M categories.}
  To motivate the notion of decomposition space, let us start with incidence
  coalgebras. Since the work of Joni and Rota~\cite{JoniRotaMR544721} we know
  well that coalgebras in combinatorics arise from the ability to decompose
  structures. Very often that ability comes from something fancier, namely the
  ability to actually {\em compose} structures. A paradigmatic notion of
  composition is composition of arrows in a category, such as in particular a
  poset or a monoid. From any locally finite poset, form the free vector space
  on its intervals, and endow this with a coalgebra structure by defining the
  comultiplication as
  $$
  \Delta([x,y]) = \sum_{x \leq m \leq y} [x,m] \tensor [m,y] .
  $$
  The same construction works for elements in a monoid (with the finite
  decomposition property~\cite{Cartier-Foata}). In an appendix to
  \cite{Cartier-Foata}, Foata explains how any (reduced) incidence coalgebra of
  a poset can also be realised as the incidence coalgebra of a monoid, and
  conversely. However, it seems to be more fruitful to observe as
  Leroux~\cite{Leroux:1975}, that both are examples of incidence coalgebras of
  categories. Recall that a poset can be regarded as a category in which there
  is at most one arrow between any two given objects. To have an interval
  $[x,y]$ thus means simply that $x\leq y$, and in categorical terms this means
  that there is an arrow from $x$ to $y$. The role of elements in the interval
  $[x,y]$ is played by the possible two-step factorisations of the arrow
  $x\to y$. Recall also that a monoid is a category with only one object. 
  Leroux showed that the notions of incidence coalgebras of posets and monoids
  have a common generalisation, namely to
  locally finite categories, meaning categories in which any given arrow admits
  only finitely many $2$-step factorisations: the incidence coalgebra of such a
  category is the free vector space on its arrows, with comultiplication given
  by
  \begin{equation}\label{eq:Delta(f)}
    \Delta(f) = \sum_{b\circ a=f} a \tensor b .
  \end{equation}
  The coassociativity is a consequence of the associativity of composition
  of arrows.
\end{blanko*}

\begin{blanko*}{Functoriality.}
  One important point made by Leroux (with Content and
  Lemay~\cite{Content-Lemay-Leroux}) is that certain functors induce coalgebra
  homomorphisms. In modern language, these are the {\em \culf} functors, which
  stands for {\em conservative} and {\em unique lifting of factorisations}. That
  a functor $F:\CC\to \DD$ is conservative means that if $F(a)$ is an identity
  arrow then $a$ was already an identity arrow (see \ref{sec:bialg} below for
  more precision and discussion). Unique lifting of factorisations means that
  for an arrow $a$, there is a one-to-one correspondence between the
  factorisations of $a$ in $\CC$ and the factorisations of $F(a)$ in $\DD$.

  In the classical theory of posets, often it is not the raw incidence coalgebra
  that is most interesting, but rather a {\em reduced} incidence coalgebra,
  where two intervals are identified if they are equivalent in some specific
  sense (e.g.~isomorphic as abstract posets).  As observed in
  \cite{Content-Lemay-Leroux}, these reductions can quite often be realised by
  \culf functors.  For example, the obvious functor from the poset $(\N,\leq)$
  to the monoid $(\N,+)$, sending an `arrow' $x\leq y$ to the monoid element
  $y-x$, is \culf and realises a classical reduction: the reduced incidence
  coalgebra of the poset $(\N,\leq)$ is precisely the raw incidence coalgebra of
  the monoid $(\N,+)$.

  In the general setting of decomposition spaces, virtually {\em all} reduction
  procedures become instances of \culf functors, and furthermore, many of them
  are revealed to be instances of decalage (cf.~\ref{Dec} below),
  a general construction in simplicial homotopy theory.
\end{blanko*}

\begin{blanko*}{\M inversion.}
  \M inversion amounts to establishing the convolution invertibility of the zeta
  function; the inverse is then defined to be the \M function
  \cite{Rota:Moebius}.  Leroux~\cite{Leroux:1975} established a \M inversion
  formula for any {\em \M category}.  A category is \M when it is locally finite
  and when for each arrow there are only finitely many ways to write it as a
  composite of a chain of non-identity arrows.  This notion covers both locally
  finite posets and monoids with the finite-decomposition property.
  The formula is
  $$
  \mu = \Phieven - \Phiodd  .
  $$
  Here $\Phieven = \sum _{k \text{ even}} \Phi_k$, where $\Phi_k(f)$ is the set
  of decompositions of $f$ into a chain of $k$ composable
  non-identity arrows.  (Similarly for $k$ odd.)
\end{blanko*}

\begin{blanko*}{Simplicial viewpoints.}
  The importance of sequences of composable arrows suggests a simplicial
  viewpoint (see glossary in Appendix~\ref{sec:simplicial}), which is
  fundamental to the theory of decomposition spaces (and one of the reasons the
  theory tends to drift into homotopy theory). Recall (see~\ref{app:nerve}) that
  the nerve of a category $\CC$ is the simplicial set
  $$
  N\CC : \simplexcategory\op\to\Set
  $$
  whose set of $n$-simplices is the set of sequences of $n$ composable
  arrows in $\CC$ (allowing identity arrows).  The face maps are given by
  composing arrows (for the inner face maps) and by discarding arrows at the
  beginning or the end of the sequence (outer face maps).  The
  degeneracy maps are given by inserting an identity map in the
  sequence.

  Leroux's theory can be formulated in terms of simplicial sets, as already
  exploited by D\"ur~\cite{Dur:1986}, and many of the arguments then rely on
  certain simple pullback conditions, the first being the Segal condition which
  characterises categories among simplicial sets (cf.~\ref{Segal}).  Most
  importantly in our exploitation of this simplicial viewpoint, the
  comultiplication \eqref{eq:Delta(f)} can be written in terms of the nerve
  $N\CC$ as a push-pull formula, $\Delta=(d_2,d_0)\lowershriek\circ
  d_1\upperstar$, to be explained below.
\end{blanko*}

\begin{blanko*}{Objective method.}
  \M inversion is a versatile algebraic counting device. The fact that the
  formula is always given by an alternating sum illustrates one of the great
  features of algebra over bijective combinatorics: the existence of additive
  inverses. On the other hand, it is well appreciated that bijective proofs in
  general represent deeper insight than purely algebraic proofs.

  There is a rather general method for lifting algebraic identities to
  bijections of sets, which one may try to apply whenever the identity takes
  place in the vector space with basis the set of
  \isoclasses of objects. This is the
  so-called objective method, pioneered in this context by Lawvere and
  Menni~\cite{LawvereMenniMR2720184}, working directly with the combinatorial
  objects rather than their numbers, using linear algebra with coefficients in
  $\Set$ rather than a ring or field.

  To illustrate this, observe that a vector in the free vector space on a set
  $S$ is just a collection of scalars indexed by (a finite subset of) $S$. The
  objective counterpart is a family of sets indexed by $S$, i.e.~an object in
  the slice category $\Set_{/S}$. The notion of cardinality has a natural
  extension to families of finite sets: the cardinality of a family of finite
  sets indexed by some set $B$ is a $B$-indexed family of natural numbers, and
  is in particular an element in the vector space with basis $B$. Finiteness
  issues enter the picture now and should be taken proper care of, see below.

  Linear maps at this level are given by spans $S \leftarrow M \to T$, which
  are, in more abstract terms, the {\em linear functors}, i.e.~functors between
  slices preserving sums and certain other colimits. Indeed, the pullback
  formula for composition of spans turns out to correspond precisely to matrix
  multiplication. Spans have cardinalities, which are linear maps.

  The \M inversion principle states an equality between certain linear maps
  (elements in the incidence algebra). At the objective level, such an equality
  can be expressed as a levelwise bijection of the spans of sets that represents
  those linear functors. In this way, the algebraic identity is revealed to be
  the cardinality of a bijection of sets, which carry much more structural
  information.

  Lawvere and Menni~\cite{LawvereMenniMR2720184} established an objective
  version of the \M inversion principle for \M categories in the sense of
  Leroux~\cite{Leroux:1975}.
  A trick is needed to account for the signs:
  where the algebraic identity states that $\zeta$ is convolution
  invertible with inverse $\mu = \Phieven-\Phiodd$:
  $$
  \zeta * (\Phieven-\Phiodd) = \epsilon ,
  $$
  to avoid the minus sign, that term has to be moved
  to the other side of the equation, and the equivalent statement
  $$
  \zeta * \Phieven
  \;\;=\;\; \epsilon\;\; +\;\; \zeta * \Phiodd
  $$
  can be realised as an explicit bijection of sets \cite{LawvereMenniMR2720184}.
\end{blanko*}

\begin{blanko*}{From sets to groupoids.}
  It is useful now to generalise from sets to groupoids, in order to get a
  better treatment of symmetries.  A prominent example illustrating this is the
  \fdb coalgebra (treated in detail in \ref{sec:fdb}): it ought to be the
  incidence coalgebra of (a skeleton of) the category of finite sets and
  surjections, but since finite sets have symmetries, there are too many
  factorisations, even of identity arrows.  This is solved by passing to {\em
  fat nerves} (cf.~\ref{fatnerve}).  The fat nerve of a category is the
  simplicial groupoid
  $$
  \fatnerve \CC : \simplexcategory\op\to \Grpd
  $$
  whose groupoid of $n$-simplices is the groupoid whose objects are
  sequences of $n$ composable arrows,
  and whose arrows are isomorphisms at each level, as pictured here:
  $$
  \xymatrix{\cdot \ar[r] \ar[d]^*-[@]=0+!L{\scriptstyle \sim}
    & \cdot \ar[d]^*-[@]=0+!L{\scriptstyle \sim} \ar[r]
    & \cdot \ar[r] \ar[d]^*-[@]=0+!L{\scriptstyle \sim} &\cdots\ar[r]
    & \cdot \ar[d]^*-[@]=0+!L{\scriptstyle \sim} \\
  \cdot \ar[r] & \cdot\ar[r] & \cdot\ar[r] &\cdots\ar[r]& \cdot
  }$$

  The slice categories now have to be groupoid slices $\Grpd_{/X}$ instead of
  set slices. Linear algebra works well at this level of generality too (see
  Appendix~\ref{sec:groupoids}), and there is a notion of homotopy cardinality
  which is invariant under homotopy equivalence. This approach was initiated by
  Baez and Dolan~\cite{Baez-Dolan:finset-feynman} and further developed by Baez,
  Hoffnung and Walker~\cite{Baez-Hoffnung-Walker:0908.4305}. A cleaner homotopy
  version of their formalism was introduced in \cite{GKT:HLA}, where in
  particular the notion of homotopy sum is exploited. The upgrade from sets to
  groupoids is essentially straightforward, as long as the notions involved are
  taken in a correct homotopy sense, as recalled in
  Appendix~\ref{sec:groupoids}: bijections of sets are replaced by equivalences
  of groupoids; the slices playing the role of vector spaces are homotopy
  slices, the pullbacks and fibres involved in the functors are homotopy
  pullbacks and homotopy fibres, and the sums are homotopy sums (i.e.~colimits
  indexed by groupoids, just as ordinary sums are colimits indexed by sets).
\end{blanko*}


\begin{blanko*}{Decomposition spaces and their incidence (co)algebras.}
  The final abstraction step, which became the starting point for our
  work~\cite{GKT:DSIAMI-1,GKT:DSIAMI-2,GKT:MI}, and which is where the present
  paper starts, is to notice that coassociative coalgebras and a \M inversion
  principle can be obtained from simplicial groupoids more general than those
  satisfying the Segal condition. We call these {\em decomposition spaces};
  Dyckerhoff and Kapranov~\cite{Dyckerhoff-Kapranov:1212.3563} call them
  $2$-Segal spaces.\footnote{Originally they were called {\em
  unital} $2$-Segal spaces, but it was later
  shown~\cite{Feller-Garner-Kock-Proulx-Weber:1905.09580} that the
  unitality condition is automatic. See Hackney's
  contribution~\cite{Hackney:thisvolume} to this volume for lucid
  exposition of that issue.
  The term $2$-Segal space
    may well be the most practical in the broader picture,
    in particular in view of the generalisations
  to $k$-Segal spaces for $k>2$ (see Dyckerhoff's
  contribution~\cite{Dyckerhoff:thisvolume} in this volume), but from the
  viewpoint of combinatorics we feel the decomposition space terminology
  has its merits.} Whereas the Segal condition is the expression of the
  ability to compose morphisms, the new condition is about the ability to
  decompose, which of course in general is easier to achieve than
  composability---indeed every Segal space is a decomposition space
  (Proposition~\ref{prop:segalisdecomp}).

  The decomposition-space axiom on a simplicial groupoid $X:
  \simplexcategory\op\to\Grpd$ is expressly the condition needed for a canonical
  coalgebra structure to be induced on the slice category $\Grpd_{/X_1}$.  The
  comultiplication is the linear functor
  $$
  \Delta : \Grpd_{/X_1} \to \Grpd_{/X_1} \tensor  \Grpd_{/X_1}
  $$
  given by the span
  $$
  X_1 \stackrel{d_1}\longleftarrow X_2 \stackrel{(d_2,d_0)}\longrightarrow 
  X_1\times X_1
  $$
  (with reference to general simplicial notation, reviewed in 
  Appendix \ref{sec:simplicial}).
  This can be read as saying that the comultiplication of an edge $f\in X_1$
  returns the sum
  of all pairs of edges $(a,b)$ that are the short edges of a triangle with long
  edge $f$.  In the case that $X$ is the fat nerve of a category, this is the 
  homotopy sum of
  all pairs $(a,b)$ of arrows with composite $b\circ a=f$, just as 
  in~\eqref{eq:Delta(f)}.
\end{blanko*}

\begin{blanko*}{Incidence coalgebras, without the need of reduction.}
  It is likely that all incidence (co)algebras can be realised directly (without
  imposing a reduction) as incidence (co)algebras of decomposition spaces.  The
  decomposition space is found by analysing the reduction step.  For example,
  D\"ur~\cite{Dur:1986} realises the $q$-binomial coalgebra as the reduced
  incidence coalgebra of the category $\vect^{\text{inj}}$ of finite dimensional
  vector spaces over a finite field and linear injections, by imposing the
  equivalence relation identifying two linear injections if their quotients are
  isomorphic.  Trying to realise the reduced incidence coalgebra directly as a
  decomposition space immediately leads to Waldhausen's
  $S_\bullet$-construction, a basic construction in $K$-theory: the $q$-binomial
  coalgebra is directly the incidence coalgebra of $S_\bullet(\vect)$.
\end{blanko*}

\begin{blanko*}{Hall algebras.}
  The $q$-binomial coalgebra fits into a general class of examples: for any
  abelian category (or even stable $\infty$-category~\cite{GKT:DSIAMI-1}), the
  Waldhausen $S_\bullet$-construction is a decomposition space (which is not
  Segal).  Under the appropriate finiteness conditions, the resulting incidence
  algebras include the Hall algebras, as well as the derived Hall algebras first
  constructed by To\"en~\cite{Toen:0501343}.
  This class of examples plays a key role in the work of Dyckerhoff and
  Kapranov~\cite{Dyckerhoff:1505.06940,Dyckerhoff-Kapranov:1403.5799,
        Dyckerhoff-Kapranov:1306.2545,Dyckerhoff-Kapranov:1212.3563};
  we refer to their work for the remarkable richness of the Hall algebra aspects
  of the theory. See also Bergner et.~al~\cite{Bergner-et.al:1609.02853},
  Walde~\cite{Walde:1611.08241}, Young~\cite{Young:1611.09234},
  Poguntke~\cite{Poguntke:1808.04165}, and the contribution of Cooper and
  Young~\cite{Cooper-Young:thisvolume} for further pointers in this direction.
\end{blanko*}

\begin{blanko*}{Organisation of the paper.}
  In Section~\ref{sec:decomp} we start out with a short, self-contained summary
  of the basic notions and results of the theory of decomposition spaces,
  emphasising combinatorial aspects: the definition in Subsection
  \ref{sec:def-decomp}, their incidence coalgebras in \ref{sec:coalg}, and the
  convolution product in \ref{sec:conv}. In~\ref{sec:tioncoeff} we introduce
  techniques for computing section coefficients, under suitable finiteness
  conditions, with a closed formula for the case of Segal spaces.
  In \ref{sec:bialg} we briefly review the notion of \culf functor, relevant
  because these induce coalgebra homomorphisms. We exploit decalage (a key
  example of \culf functor) to establish a criterion for local discreteness,
  essentially the situation in which the section coefficients are integral. We
  introduce monoidal decomposition spaces as \culf monoidal structures. These
  induce bialgebras instead of just coalgebras. A running example in this
  section is Schmitt's Hopf algebra of graphs~\cite{Schmitt:1994} (called the
  chromatic Hopf algebra by Aguiar, Bergeron and
  Sottile~\cite{Aguiar-Bergeron-Sottile}), an archetypical example of a
  coalgebra which cannot be the (raw) incidence coalgebra of a category, but is
  readily obtained as the incidence coalgebra of a decomposition space. It
  illustrates well the combinatorial meaning of the decomposition space axiom
  (Example \ref{ex:graphs-decomp}), the mechanism by which the coalgebra
  structure arises (\ref{ex:graphs-coalg}), and the \culf monoidal structure
  that makes it a bialgebra (\ref{ex:graphs-bialg}).

In Section~\ref{sec:ex}, we first go through some very basic examples,
which correspond closely to power series representations of the binomial
posets of Doubilet--Rota--Stanley~\cite{Doubilet-Rota-Stanley}, and show
how the objective version of these classical incidence algebras
amounts to
monoidal structures on various kinds of species. We emphasise decalage as a
general principle behind classical reduction procedures. The case of the
Joyal--Street external product of $q$-species leads to the general
treatment of the Waldhausen $S_\bullet$-construction as a decomposition
space in \ref{sec:Wald}. In~\ref{sec:fdb} we revisit the Fa\`a di Bruno
bialgebra. Classically it is the reduced incidence bialgebra of the poset
of set partitions (reduction modulo type equivalence), but can also be
obtained directly from the category of surjections. This suggests that
again the reduction step is a decalage, but the relationship turns out to
be more subtle: it is a \culf functor but not directly a decalage.
In~\ref{sec:trees} we treat examples related to trees and graphs, starting
with the Butcher--Connes--Kreimer Hopf algebra of
trees~\cite{Connes-Kreimer:9808042}, another example of an incidence
coalgebra which cannot be the (raw) incidence coalgebra of a category. We
proceed to treat operadic variations, including incidence bialgebras of
general operads, as well as related constructions with directed graphs
(cf.~Manchon~\cite{Manchon:MR2921530} and Manin~\cite{Manin:MR2562767}). We
briefly explain how most of the examples treated in this subsection are
subsumed in the notion of decomposition spaces from restriction species
of Schmitt~\cite{Schmitt:hacs} as well as directed restriction
species, treated in detail elsewhere~\cite{GKT:restriction},
and comment also briefly on hereditary species (also
from~\cite{Schmitt:hacs}) and directed hereditary species from the
viewpoint of decomposition spaces. To finish Section~\ref{sec:ex} we give a
brief introduction to symmetric functions from the viewpoint of
decomposition spaces. The highlight in this short account is the base
change from elementary symmetric functions to monomial symmetric
functions,
modelled at the objective level by means of an \ikeo-\culf span of
decomposition spaces.

In Section~\ref{sec:M} we come to M\"obius inversion, and need first to recall a
few notions from \cite{GKT:DSIAMI-2}: complete decomposition spaces and
nondegeneracy in~\ref{sec:compl}, and the notion of locally finite length and
the general M\"obius inversion formula in~\ref{sec:length}.
In~\ref{sec:cancellation} we compute the M\"obius function in a few easy cases,
and comment on certain cancellations that occur in the process of taking
cardinality, substantiating that these cancellations are not possible at the
objective level. This is related to the distinction between bijections and
natural bijections.

In Appendix~\ref{sec:groupoids} we provide background on groupoids necessary
to understand groupoid slices as the objective analogue of vector spaces, and
linear functors and spans as the objective analogue of linear maps.  We also
explain how to recover the vector space level via taking homotopy cardinality.

In Appendix~\ref{sec:simplicial} we briefly recall the simplicial
machinery,
which is an essential tool in our undertakings, with special emphasis on the
relationship with simplicial complexes.  In particular we explain the nerve
and the fat nerve of a small category, whereby the simplicial setting covers
the cases of categories, and in particular posets and monoids.
\end{blanko*}

\begin{blanko*}{Note.}
  Most of the material in this paper was originally Section 5 of the
  large single manuscript {\em Decomposition spaces, incidence algebras and \M
  inversion} \cite{GKT:1404.3202}. That manuscript was split into five papers
  that were published
  \cite{GKT:DSIAMI-1,GKT:DSIAMI-2,GKT:MI,GKT:HLA,GKT:restriction} plus the
  present paper, which was posted to the arXiv in 2016. It was not submitted for
  publication at the time, since it was felt that it ought to be expanded with
  an account of symmetric functions from the decomposition-space viewpoint. The
  development of that theory suffered substantial delays, but now, on the
  occasion of the Banff workshop proceedings, we have finally included a very
  brief account (just an appetiser) of the decomposition-space viewpoint on
  symmetric functions as Subsection~\ref{sec:Sym}. We have also taken the
  opportunity to update the exposition with some remarks and pointers to some
  other developments that have taken place on decomposition spaces in
  combinatorics since 2016.
\end{blanko*}

\begin{blanko*}{Acknowledgements.}
  We are grateful to Andr\'e Joyal, Kurusch Ebrahimi-Fard, Mark Weber,
  Louis Carlier, Alex Cebrian, Wilson Forero, Philip Hackney,
  and Darij Grinberg, for very useful feedback and help.
\end{blanko*}

\section{Decomposition spaces and incidence coalgebras}

\label{sec:decomp}

\subsection{Segal spaces and decomposition spaces}

\label{sec:def-decomp}

Segal spaces and decomposition spaces are simplicial groupoids
$X:\simplexcategory\op\to\Grpd$ satisfying
certain exactness properties. We refer to Appendix~\ref{sec:simplicial} for
a glossary on simplicial groupoids.

\begin{blanko}{Segal spaces (Segal groupoids).}
  A simplicial groupoid $X$ is called a {\em Segal space}, or a {\em Segal
    groupoid}, when all squares of the form
  $$\xymatrix{
     X_{n+1} \ar[r]^{d_0}\ar[d]_{d_{n+1}} & X_n \ar[d]^{d_n} \\
     X_n \ar[r]_{d_0} & X_{n-1}
  }$$
  are (homotopy) pullbacks (see Appendix \ref{pbk}).

  The most important such square is
  \begin{equation}\label{eq:Segal-sq}
    \vcenter{\xymatrix{
     X_2 \drpullback \ar[r]^{d_0}\ar[d]_{d_2} & X_1 \ar[d]^{d_1} \\
     X_1 \ar[r]_{d_0} & X_0
    }}
  \end{equation}
  which says that $X_2$ can be identified with the groupoid
  $X_1 \times_{X_0} X_1$ of composable pairs of `arrows'. This is satisfied by
  the nerve or the fat nerve of a small category.

  For a Segal space $X$, the vector space with basis $\pi_0 X_1$ has a coalgebra
  structure analogous to \eqref{eq:Delta(f)}.
\end{blanko}

It turns out~\cite{GKT:DSIAMI-1} that simplicial groupoids other than Segal
spaces induce coalgebras. These are the decomposition spaces, which are
characterised by a weaker exactness condition than the Segal condition. To give
the explicit definitions we need first some simplicial terminology. We refer to
Appendix~B for notation (which is standard). \medskip

\begin{blanko}{Face and degeneracy maps, active and inert
    maps.}\label{generic-and-free}
  The simplex category $\simplexcategory$ (see Appendix B) has an active-inert
  factorisation system (an example of the general categorical notion of
  generic-free factorisation system, important in monad theory
  \cite{Weber:TAC13,Weber:TAC18}). An arrow $a: [m]\to [n]$ in
  $\simplexcategory$ is \emph{active} (also called {\em generic}) when it
  preserves end-points, $a(0)=0$ and $a(m)=n$; and it is \emph{inert}
  (also
  called \emph{free}) if it is distance preserving, $a(i+1)=a(i)+1$ for
  $0\leq i\leq m-1$. The active maps are generated by the codegeneracy maps
  $s^i : [n+1] \to [n]$ and by the {\em inner} coface maps $d^i : [n-1]\to [n]$,
  $0 < i < n$, while the inert maps are generated by the {\em outer} coface maps
  $d^\bot := d^0$ and $d^\top:= d^n$. Every morphism in $\simplexcategory$
  factors uniquely as an active map followed by an inert map. Furthermore, it is
  a basic fact \cite{GKT:DSIAMI-1} that active and inert maps in
  $\simplexcategory$ admit pushouts along each other, and the resulting maps are
  again active and inert.
  For a simplicial groupoid $X:\simplexcategory\op\to\Grpd$, the images of
  active and inert maps in $\simplexcategory$ are again called active and inert.
\end{blanko}

\begin{blanko}{Decomposition spaces \cite{GKT:DSIAMI-1}.}\label{decomp}
  A simplicial groupoid $X: \simplexcategory\op\to\Grpd$ is called a {\em
    decomposition space} when it takes active-inert pushouts to
  pullbacks.

  One can break this down to checking that the following simplicial-identity
  squares are pullbacks. In the diagrams, the indices are
  $n\geq 0$ and $0 \leq k \leq n$, so that all horizontal arrows are 
  active maps (and the vertical arrows are inert maps):
  \begin{equation}
    \vcenter{
    \xymatrix@R-1ex{
      X_{n+1} \drpullback \ar[r]^{s_{k+1}} \ar[d]_{d_\bot} 
      & X_{n+2} \ar[d]_{d_\bot} 
      & \ar[l]_{d_{k+2}} \dlpullback X_{n+3} \ar[d]^{d_\bot} 
      \\
       X_n \ar[r]_{s_k} & X_{n+1}  & \ar[l]^{d_{k+1}} X_{n+2}
    }}
    \qquad
    \vcenter{\xymatrix@R-1ex{
       X_{n+1} \drpullback \ar[r]^{s_k} \ar[d]_{d_\top} & X_{n+2} 
       \ar[d]_{d_\top} & \ar[l]_{d_{k+1}} \dlpullback X_{n+3} \ar[d]^{d_\top} \\
      X_n \ar[r]_{s_k} &  X_{n+1}  & \ar[l]^{d_{k+1}} X_{n+2}
    }}
  \end{equation}

  The most important cases are the four squares that involve $d_1:X_2 \to X_1$
  (corresponding to composition of arrows in a category) and $s_0: X_0 \to X_1$
  (corresponding to the identity arrows in a category):
  \begin{equation}\label{eq:firstfew}
    \vcenter{
    \xymatrix@R-1ex{
       X_2 \ar[d]_{d_\bot} & \ar[l]_{d_2} \dlpullback X_3 \ar[d]^{d_\bot} \\
       X_1  & \ar[l]^{d_1} X_2
    }}
    \;\;
    \vcenter{
    \xymatrix@R-1ex{
       X_2 \ar[d]_{d_\top} & \ar[l]_{d_1} \dlpullback X_3 \ar[d]^{d_\top} \\
       X_1  & \ar[l]^{d_1} X_2
    }}
    \quad
    \vcenter{\xymatrix@R-1ex{
       X_1 \drpullback \ar[r]^{s_1} \ar[d]_{d_\bot} &  X_2 \ar[d]^{d_\bot} \\
       X_0 \ar[r]_{s_0} &  X_1
    }}
    \;\;
    \vcenter{\xymatrix@R-1ex{
       X_1 \drpullback \ar[r]^{s_0} \ar[d]_{d_\top} &  X_2 \ar[d]^{d_\top} \\
       X_0 \ar[r]_{s_0} &  X_1
    }}
  \end{equation}
  We shall see shortly that the first two pullback squares are essential
  ingredients in getting coassociativity of the {\em incidence coalgebra} of
  $X$. The last two pullback squares express counitality, but it
    has turned out, by a theorem of Feller
    et al.~\cite{Feller-Garner-Kock-Proulx-Weber:1905.09580}
  that they are automatically pullbacks if just the two first squares are
  pullbacks.
\end{blanko}

Although the Segal axiom squares are quite different from the decomposition
space axioms, it is not difficult to prove the following, which shows that the
new setting of decomposition spaces does cover the cases of nerves and fat
nerves of categories.

\begin{prop}[{\cite[Proposition 3.5]{GKT:DSIAMI-1},
    \cite[Proposition 5.2.6]{Dyckerhoff-Kapranov:1212.3563}}]
  \label{prop:segalisdecomp}
Every Segal space is a decomposition space.
\end{prop}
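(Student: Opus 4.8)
The plan is to verify directly that a Segal space $X$ turns every generic--free pushout square in $\simplexcategory$ into a (homotopy) pullback. The whole argument rests on the \emph{iterated Segal condition}, so the first step is to extract it from the defining squares: for a Segal space $X$ and any decomposition $a_1+\dots+a_r=p$, the block-restriction map
$$
X_p \;\longrightarrow\; X_{a_1}\times_{X_0}\dots\times_{X_0}X_{a_r}
$$
is an equivalence. This comes out of pasting the displayed Segal squares together with a double induction on $r$ and $p$; for $r=2$ it specialises to the adjacent splitting $X_{a+b}\simeq X_a\times_{X_0}X_b$, and one checks that the front-face map $X_{a+b}\to X_a$ and the back-face map $X_{a+b}\to X_b$ become the two projections. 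All squares and fibre products here are homotopy ones, but the pasting law for homotopy pullbacks makes these steps routine.

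Next I would analyse the shape of a generic--free pushout square
$$
\xymatrix@R-1ex{
  [m] \ar[r]^{g}\ar[d]_{f} & [n]\ar[d]^{f'} \\
  [p] \ar[r]_{g'} & [q]
}
$$
with $g,g'$ generic and $f,f'$ free. Since a free map is distance-preserving, $f$ is the inclusion of a sub-interval, so it exhibits $[p]$ as a join over shared endpoints $[c]\vee[m]\vee[e]$ with $c+m+e=p$; and because generic maps fix both endpoints, the pushout exhibits $[q]$ as $[c]\vee[n]\vee[e]$, with $g'=\id_{[c]}\vee g\vee\id_{[e]}$ and $f'$ the inclusion of the middle block. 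Applying $X$ and using the iterated Segal condition to identify $X_p\simeq X_c\times_{X_0}X_m\times_{X_0}X_e$ and $X_q\simeq X_c\times_{X_0}X_n\times_{X_0}X_e$, the square becomes
$$
\xymatrix@R-1ex{
  X_c\times_{X_0}X_n\times_{X_0}X_e \drpullback \ar[r]^{\id\times X(g)\times\id}\ar[d]_{\mathrm{pr}} & X_c\times_{X_0}X_m\times_{X_0}X_e \ar[d]^{\mathrm{pr}} \\
  X_n \ar[r]_{X(g)} & X_m
}
$$
with the vertical maps projection onto the middle factor. This is a pullback by the general fact that, for objects $A,B,C$ over $S$ and a map $\beta\colon B\to B'$ over $S$, the evident square with corners $A\times_S B\times_S C$, $A\times_S B'\times_S C$, $B$, $B'$ is cartesian --- here one uses that $g$, being generic, preserves endpoints, so that $X(g)$ intertwines the relevant $X_0$-valued structure maps; the cartesianness itself is a one-line consequence of the pasting law for homotopy pullbacks. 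Specialising $g$ to a codegeneracy $s_k$ or an inner coface $d_k$ and $f$ to $d_\bot$ or $d_\top$ reproduces precisely the family of squares listed in the definition of decomposition space, so this completes the proof.

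The only genuine work lies in the first step --- promoting the Segal squares to the iterated block form and verifying that the outer-face maps really are the projections one expects --- together with the bookkeeping needed to keep every pullback in sight a homotopy pullback. Once that is in place, the combinatorics of free maps and the closing product-square argument are purely formal.
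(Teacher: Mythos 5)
Your proof is correct, and it is essentially the argument behind the citation: the present paper only quotes the result, and the proof in \cite[Proposition 3.5]{GKT:DSIAMI-1} likewise rests on the iterated Segal decompositions $X_p \simeq X_{a_1}\times_{X_0}\cdots\times_{X_0}X_{a_r}$, the observation that a generic--free pushout in the simplex category is a block substitution $[c]\vee[m]\vee[e]\mapsto[c]\vee[n]\vee[e]$ with the free legs being block inclusions, and the fact that the resulting square of middle-factor projections and $\mathrm{id}\times X(g)\times\mathrm{id}$ is a (homotopy) pullback because generic maps preserve endpoints, so $X(g)$ is compatible with the $X_0$-valued structure maps. The compatibility checks you flag (outer faces becoming projections under the Segal equivalences) are exactly the routine verifications needed there as well, so there is no gap.
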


\begin{blanko}{Example (Schmitt's Hopf algebra of
    graphs).}\label{ex:graphs-decomp}
  We give an example of a decomposition space which is not a Segal space, to
  illustrate the combinatorial meaning of the pullback condition: it is about
  structures that can be decomposed but not always composed. We shall continue
  this example in \ref{ex:graphs-coalg}, and see that it corresponds to the Hopf
  algebra of graphs of Schmitt~\cite{Schmitt:1994}.

  We define a simplicial groupoid $X$ by taking $X_1$ to be the groupoid of
  graphs (admitting multiple edges and loops), and more generally letting $X_k$
  be the groupoid of graphs with an ordered partition of the vertex set into $k$
  parts (possibly empty). In particular we have $X_0=1$, the contractible
  groupoid, consisting only of the empty graph.

  These groupoids form a simplicial object: the
  outer face maps delete the first or last part of the graph, and the inner face
  maps join adjacent parts.  The degeneracy maps insert an empty part.
  The simplicial identities are readily checked.

  It is clear that $X$ is not a Segal space: for the Segal
  square~\eqref{eq:Segal-sq}
    $$\xymatrix{
      X_2 \ar[r]^{d_0}\ar[d]_{d_2} & X_1 \ar[d]^{d_1} \\
      X_1 \ar[r]_{d_0} & X_0 }$$ to be a pullback would mean that a graph with a
    two-part partition could be reconstructed uniquely from knowing the two
    parts individually. But this is not true, because the two parts individually
    contain no information about the edges going between them.

  One can check that it {\em is} a decomposition space: that the square
  $$\xymatrix{
    X_2 \ar[d]_{d_0} & X_3 \dlpullback \ar[l]_{d_2} \ar[d]^{d_0} \\
    X_1 & X_2 \ar[l]^{d_1} }$$
  is a pullback is to say that a graph with a
  three-part partition ($\in X_3$) can be reconstructed uniquely from a pair of
  elements in $X_2$ with common image in $X_1$ (under the indicated face maps).
  The following picture represents elements corresponding to each other in the
  four groupoids.

\colorlet{ourgrey}{black!18}

\tikzset{
  greydot/.pic={
	\fill (0,0) circle[radius=0.04];
  }
}

\tikzset{
  biggraph/.pic={
	\begin{scope}[ourgrey, line width=0.5pt]
	  \draw (0.35, 0.263) pic {greydot}
	  -- (0.175, 0.438) pic {greydot}
	  -- (-0.21, 0.263) pic {greydot}
	  -- (-0.263, 0.525) pic {greydot}
	  -- (0.21, 0.7) pic {greydot}
	  -- (0.525, 0.525) pic {greydot}
	  -- (0.7, 0.175) pic {greydot}
	  -- (0.7, -0.263) pic {greydot}
	  -- (0.35, -0.35) pic {greydot}
	  -- (0.175, -0.7) pic {greydot}
	  -- (-0.175, -0.7) pic {greydot}
	  -- (-0.525, -0.438) pic {greydot}
	  -- (-0.613, -0.088) pic {greydot}
	  -- (-0.613, 0.263) pic {greydot}
	  -- (-0.175, -0.14) pic {greydot}
	  -- (0.175, -0.175) pic {greydot};	
	  \draw (0.7, 0.175) -- (0.175, -0.175);	
	  \draw (0.35, -0.35) -- (0.175, -0.175)  -- (0.35, 0.263) -- (0.525, 0.525);	
	  \draw (-0.525, -0.438) -- (-0.175, -0.14) -- (0.175, -0.7);	
	  \draw  (-0.613, 0.263) -- (-0.21, 0.263);
    \end{scope}
  }
}

\tikzset{
  smallgraph/.pic={
	\begin{scope}[ourgrey, line width=0.5pt]
	  \draw (0.35, 0.263) pic {greydot}
	  -- (0.175, 0.438) pic {greydot};
	  \draw (0.21, 0.7) pic {greydot}
	  -- (0.525, 0.525) pic {greydot}
	  -- (0.7, 0.175) pic {greydot}
	  -- (0.7, -0.263) pic {greydot}
	  -- (0.35, -0.35) pic {greydot}
	  -- (0.175, -0.7) pic {greydot};
	  \draw (0.7, 0.175) -- (0.175, -0.175);
	  \draw (0.35, -0.35) -- (0.175, -0.175) pic {greydot}
	  -- (0.35, 0.263) -- (0.525, 0.525);
     \end{scope}
  }
}

\begin{center}
  \vspace*{12pt}
  \begin{tikzpicture}[line width=0.5pt]
	\footnotesize
      
	\begin{scope}[shift={(0.0, 0.0)}]
	  \draw (0.963, -0.963) node {$\in X_1$};
	  \draw (0.0, 0.0) pic {smallgraph};      
	  \draw (0.0, 0.0)+(-95:0.963) arc[start angle=-95, end angle=95, radius=0.963];
	  \draw (-0.088, 0.954)  .. controls (0.088, 0.175) and (0.088, -0.175) .. (-0.088, -0.954);
	\end{scope}

	\begin{scope}[shift={(4.375, 0.0)}]
	  \draw (0.963, -0.963) node {$\in X_2$};
	  \draw (0.0, 0.0) pic {smallgraph};
	  \draw (0.0, 0.0)+(-95:0.963) arc[start angle=-95, end angle=95, radius=0.963];
	  \draw (-0.088, 0.954) .. controls (0.088, 0.175) and (0.088, -0.175) .. (-0.088, -0.954);
	  \draw (0.044, 0.088) .. controls (0.35, 0.14) and (0.525, -0.123) .. (0.963, -0.088);
	\end{scope}

	\begin{scope}[shift={(0.0, 3.5)}]
	  \draw (0.963, -0.963) node {$\in X_2$};
	  \draw (0.0, 0.0) pic {biggraph};
	  \draw (0.0, 0.0) circle (0.963);
	  \draw (-0.088, 0.954) .. controls (0.088, 0.175) and (0.088, -0.175) .. (-0.088, -0.954);
	\end{scope}

	\begin{scope}[shift={(4.375, 3.5)}]
	  \draw (0.963, -0.963) node {$\in X_3$};
	  \draw (0.0, 0.0) pic {biggraph};
	  \draw (0.0, 0.0) circle (0.963);
	  	  \draw (-0.088, 0.954) .. controls (0.088, 0.175) and (0.088, -0.175) .. (-0.088, -0.954);
	  \draw (0.044, 0.088) .. controls (0.35, 0.14) and (0.525, -0.123) .. (0.963, -0.088);
	\end{scope}

	\begin{scope}[shift={(3.15, 2.275)}]
	  \draw (0.0, 0.0) -- (0.0, 0.175);
	  \draw (0.0, 0.0) -- (0.175, 0.0);
	\end{scope}
      
	\draw (2.8, 3.413) -- + (0.0, 0.175);
	\draw[->] (2.8, 3.5) -- + (-1.225, 0.0);
	\draw (2.188, 3.71) node {$d_2$};
	\draw (2.8, -0.088) -- + (0.0, 0.175);
	\draw[->] (2.8, 0.0) -- + (-1.225, 0.0);
	\draw (2.188, -0.21) node {$d_1$};
	\draw (-0.088, 2.1) -- + (0.175, 0.0);
	\draw[->] (0.0, 2.1) -- + (0.0, -0.7);
	\draw (-0.21, 1.75) node {$d_0$};
	\draw (4.288, 2.1) -- + (0.175, 0.0);
	\draw[->] (4.375, 2.1) -- + (0.0, -0.7);
	\draw (4.603, 1.75) node {$d_0$};
  \end{tikzpicture}
  \vspace*{12pt}
\end{center}
  The horizontal maps join the last two parts of the partition.  The vertical
  maps forget the first part.  Clearly the diagram commutes.  To reconstruct the
  graph with a three-part partition (upper right-hand corner), most of the
  information is already available in the upper left-hand corner, namely the
  underlying graph and all the subdivisions except the one between part 2 and
  part 3.  But this information is precisely available in the lower right-hand
  corner, and their common image in $X_1$ says precisely how this missing piece
  of information is to be implanted.
\end{blanko}

\subsection{Incidence coalgebras of decomposition spaces}

\label{sec:coalg}

We now turn to the incidence coalgebra (with groupoid coefficients)
associated to any decomposition space, explaining
the origin of the decomposition space axioms.

The incidence coalgebra associated to a decomposition space $X$ will be a
comonoid object in the symmetric monoidal $2$-category $\LIN$
(whose objects are groupoid slices and whose morphisms are linear functors---see 
\ref{sec:LIN}),
and the underlying object is $\Grpd_{/X_1}$.
Since $\Grpd_{/X_1} \tensor \Grpd_{/X_1} = \Grpd_{/X_1\times X_1}$,
and since linear functors are given by spans, to define a comultiplication
functor is to give a span
$$
X_1 \leftarrow M \to X_1 \times X_1.
$$

\begin{blanko}{Comultiplication and counit.}\label{comult}
  For $X$ a decomposition space, we can consider the following structure maps on
  $\Grpd_{/X_1}$.  The span
  \begin{equation}\label{comult-span}
  \xymatrix{
  X_1  & \ar[l]_{d_1}  X_2\ar[r]^-{(d_2,d_0)} &  X_1\times X_1 
  }
  \end{equation}
  defines a linear functor, the {\em comultiplication}
  \begin{eqnarray*}
    \Delta : \Grpd_{/ X_1} & \longrightarrow & \Grpd_{/( X_1\times X_1)}  \\
    (T\stackrel t\to X_1) & \longmapsto & (d_2,d_0) \lowershriek \circ d_1{} 
    \upperstar(t) .
  \end{eqnarray*}
  Likewise, the span
  \begin{equation}\label{counit-span}
  \xymatrix{
  X_1  & \ar[l]_{s_0}  X_0\ar[r]^z &  1 
  }
  \end{equation}
  defines a linear functor, the {\em counit}
  \begin{eqnarray*}
    \epsilon : \Grpd_{/ X_1} & \longrightarrow & \Grpd  \\
    (T\stackrel t\to X_1) & \longmapsto & z \lowershriek \circ s_0{}\upperstar(t) .
  \end{eqnarray*}
  
  We proceed to explain that coassociativity follows from the
  decomposition space axiom.  The coalgebra $(\Grpd_{/X_1},\Delta,\epsilon)$ is
  called the {\em incidence coalgebra} of the decomposition space $X$.  (Note
  that in the classical incidence-algebra literature (e.g.~\cite{Rota:Moebius},
  \cite{Leroux:1975}), the counit is often denoted $\delta$.)
\end{blanko}

\begin{blanko}{Coassociativity.}
  The comultiplication and counit maps on $\Grpd_{/X_1}$, defined
  in~\ref{comult} for any simplicial groupoid $X$, become coassociative and
  counital when the decomposition space axioms hold for $X$.  The desired
  coassociativity diagram (which should commute up to equivalence)
  $$
  \xymatrix@!C=25ex@R-1.5ex{
  \Grpd_{/X_1}\dto_\Delta\rto^-\Delta&\Grpd_{/X_1\times X_1}\dto^{\Delta\tensor\id}\\
  \Grpd_{/X_1\times X_1}\rto_-{\id\tensor\Delta}&\Grpd_{/X_1\times X_1\times X_1}}
  $$
  is induced by the solid spans in the diagram
  $$\xymatrix@C+6ex@R+0ex{
  X_1                  & X_2 \ar[l]_{d_1}\ar[r]^{(d_2,d_0)}   & X_1 \times X_1 \\
  X_2 \ar[u]^{d_1} \ar[d]_{(d_2,d_0)}&
  X_3\dlpullback\urpullback
  \ar@{-->}[l]_{d_2}
  \ar@{-->}[u]^{d_1}
  \ar@{-->}[d]^{(d_2 d_2,d_0)}
  \ar@{-->}[r]_{(d_3,d_0 d_0)}
          & X_2 \times X_1 \ar[u]_{d_1\times \id}\ar[d]^{(d_2,d_0)\times \id}\\
  X_1\times X_1 & X_1 \times X_2 \ar[l]^-{\id\times d_1}
  \ar[r]_-{\id\times (d_2,d_0)} &X_1 \times X_1 \times X_1.
  }
  $$
  Coassociativity will follow from the Beck--Chevalley Lemma \ref{lem:beck-chevalley} if the dashed
  part of the diagram can be established with pullbacks as indicated.
  Consider the upper right-hand square: it will be a pullback if and only if
  its composite with the first projection is a pullback:
  $$\xymatrix@C+6ex@R+0ex{
  X_2 \ar[r]^-{(d_\top,d_0)}   & X_1 \times X_1  \ar[r]^-{\text{pr}_1} & X_1 \\
  X_3\urpullback\ar[u]^{d_1}\ar[r]_-{(d_\top,d_0 d_0)}
    & X_2 \times X_1 \urpullback \ar[u]^{d_1\times \id} \ar[r]_-{\text{pr}_1} & 
     \ar[u]_{d_1} X_2   .
  }
  $$
  Saying that this composite outer square $d_\top d_1=d_1d_\top$ is a pullback
  is precisely one of the first decomposition space axioms~\eqref{eq:firstfew}.

  If one is just interested in coassociativity at the level of $\pi_0$, this
  pullback and its twin, $d_\bot d_2=d_1d_\bot$, are all that are needed, as was
  the case in the work of To\"en~\cite{Toen:0501343} who dealt with the case
  where $X$ is the Waldhausen $S_\bullet$ construction of a dg category.  On the
  other hand, it is interesting to analyse when the coassociativity is actually
  homotopy coherent at the level of groupoid slices.  It is proved in
  \cite[Theorem 7.3]{GKT:DSIAMI-1} that this is true when all the decomposition
  space axioms hold:
\end{blanko}

\begin{thm}
  If $X$ is a decomposition space then $\Grpd_{/X_1}$ has the structure of
  strong homotopy comonoid in the symmetric monoidal $2$-category $\LIN$, with the
  comultiplication and counit defined by the spans \eqref{comult-span} and
  \eqref{counit-span}.
\end{thm}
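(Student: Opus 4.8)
The plan is to realise the coassociator and the two counitor $2$-cells as instances of the Beck--Chevalley Lemma~\ref{lem:beck-chevalley}, and then to obtain the pentagon and triangle coherence laws not by hand but by packaging the whole comultiplication/counit data functorially, so that those laws reduce to simplicial identities holding strictly in $X$.

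First I would recall that a linear functor in $\LIN$ is given by a span and that composition of linear functors is computed by pullback of the middle legs of the spans (as used in~\ref{comult}). Consequently each of the two composites $(\Delta\tensor\id)\circ\Delta$ and $(\id\tensor\Delta)\circ\Delta$ in the coassociativity square is the linear functor associated with the span obtained by reading off an outer path of the $3\times3$ diagram displayed above, the middle object of that span being a priori a fibre product. The crux is that \emph{both} of these middle objects may be replaced by $X_3$ itself: this is exactly the assertion that the two dashed squares adjacent to $X_3$ that are marked as pullbacks really are pullbacks. As in the excerpt, one checks the upper right-hand one by composing with the first projection of the products in the bottom row, whereupon it turns into the decomposition-space axiom asserting that the square $d_\top d_1=d_1d_\top$ is a pullback; the lower left-hand square is handled symmetrically using the twin axiom $d_\bot d_2=d_1d_\bot$. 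Once these two squares are pullbacks, the entire $3\times3$ diagram commutes on the nose by the simplicial identities and consists of pullbacks, so the Beck--Chevalley Lemma assembles it into the required equivalence $(\Delta\tensor\id)\circ\Delta\simeq(\id\tensor\Delta)\circ\Delta$.

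For counitality I would proceed in the same style. The composite $(\epsilon\tensor\id)\circ\Delta$ is the linear functor given by the span obtained by pasting the counit span~\eqref{counit-span} onto the comultiplication span~\eqref{comult-span}; invoking the last two pullbacks of~\eqref{eq:firstfew} — the ones involving $s_0$ and $s_1$ — identifies the relevant fibre product with $X_1$ and the pasted span with the identity span on $X_1$, yielding a canonical equivalence $(\epsilon\tensor\id)\circ\Delta\simeq\id$, and symmetrically for the right counit law.

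The hard part is not exhibiting these $2$-cells but checking their coherence: one needs the coassociator to satisfy the pentagon identity and to be compatible with the counitors through the triangle identity, as strict equalities of $2$-cells in $\LIN$. Building each $2$-cell in isolation and then grinding through those identities would be unmanageable, so instead I would follow~\cite[Theorem 7.3]{GKT:DSIAMI-1} and encode the entire comonoid structure in one functorial gadget: the $n$-fold iterated comultiplication is the linear functor given by the span $X_1 \leftarrow X_n \rightarrow X_1^{\times n}$, the possible bracketings and unit insertions correspond to the generic-free maps targeting $[n]$, and every square that must be a pullback in order to apply Beck--Chevalley coherently is a generic-free pushout square, hence a pullback precisely by the decomposition-space axiom~\ref{decomp}. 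The pentagon and triangle identities then hold because the corresponding diagram of pullback squares commutes strictly, again by the simplicial identities. The genuine technical labour — and the place where one needs the full force of the decomposition-space axiom rather than just the two squares that suffice for coassociativity at the level of $\pi_0$ — is the careful enumeration of all the squares involved and the verification that each is of generic-free pushout type.
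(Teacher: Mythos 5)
Your proposal matches the paper's own treatment: coassociativity is obtained exactly as in the text, by establishing the two dashed pullback squares in the $3\times 3$ diagram via the decomposition-space axioms $d_\top d_1=d_1d_\top$ and $d_\bot d_2=d_1d_\bot$ and then invoking the Beck--Chevalley Lemma, with counitality handled by the $s_0$/$s_1$ squares of \eqref{eq:firstfew}. The full homotopy-coherent (strong comonoid) statement is, just as you do, deferred to the generic-free machinery of \cite[Theorem 7.3]{GKT:DSIAMI-1}, so your route is essentially the same as the paper's.
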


\begin{blanko}{Example: Schmitt's Hopf algebra of graphs,
    continued.}\label{ex:graphs-coalg}
  The following coalgebra is due to Schmitt~\cite{Schmitt:1994}.
  For a graph $G$ with vertex set $V$
  (admitting multiple edges and loops), and a subset $U \subset V$, define $G|U$
  to be the graph whose vertex set is $U$, and whose graph structure is induced
  by restriction (that is, the edges of $G|U$ are those edges of $G$ both of
  whose incident vertices belong to $U$).  On the vector space
  with basis the set of
  \isoclasses of graphs, define a comultiplication by the rule
  $$
  \Delta(G) = \sum_{A+B=V} G|A \tensor G|B .
  $$

  This coalgebra is obtained from the decomposition space in
  Example~\ref{ex:graphs-decomp}.  Indeed, we have to take $X_1$ the groupoid of
  graphs, because the coalgebra has linear basis the set of
  \isoclasses of graphs. Since the
  comultiplication sums over all ways to partition the vertex set into two parts
  (possibly empty), we must take $X_2$ to be the groupoid of graphs with a
  two-part partition of the vertex set.  (More generally, $X_k$ is the
  groupoid of graphs with an ordered partition of the vertex set into $k$
  parts (possibly empty).)

  Taking pullback along $d_1:X_2 \to X_1$ is to consider all possible
  two-part partitions of a given graph, and taking lowershriek along
  $(d_2,d_0): X_2 \to X_1 \times X_1$ is to return the graphs induced
  by the two parts.  In conclusion, this is precisely Schmitt's 
  comultiplication.
\end{blanko}

\begin{blanko}{Comultiplication of basis elements.}\label{sec:comult-f} 
  We proceed to spell out the effect of the comultiplication on basis elements.
  The slice $\Grpd_{/X_1}$ has a canonical basis $\{ \name f:1\to X_1
  \}_{f\in\pi_0X_1}$.  Here $\name f : 1 \to X_1$ denotes the map that singles
  out the element $f\in X_1$, in category theory called the {\em name} of $f$.
  The notion of basis for slices means that every object $T \to X_1$ can be
  written uniquely as a homotopy sum of names (cf.~Lemma~\ref{lem:fashosum}).
  Giving $\name f$ as input to the comultiplication, and expanding the result
  into a homotopy sum of names, we get:
  \begin{align}
  \nonumber\Delta(\name f)&:=\bigl((X_2)_f\xrightarrow{
  d_1^*\name f
  } X_2\xrightarrow{(d_2,d_0)} X_1\times X_1\bigr)
  \\\label{Delta(f)}&=
  \int^{\sigma\in (X_2)_f}\name{d_2\sigma}\otimes\name{d_0\sigma}
  \\\nonumber&=
  \int^{(a,b)\in X_1\times X_1}(X_2)_{f,a,b}\;\name{a}\otimes\name{b}\quad
  \in\; \Grpd_{/X_1}\otimes \Grpd_{/X_1} 
  .
  \end{align}
  Here $(X_2)_f$ is the fibre of $d_1:X_2\to X_1$ over $f$, and similarly
  $(X_2)_{f,a,b}$ is the fibre of $(d_1,d_2,d_0):X_2\to X_1\times X_1 \times
  X_1$ over $(f,a,b)$.  Here and throughout, `fibre' means `homotopy fibre', 
  cf.~\ref{fibres}.

  If $X$ is the strict nerve of a category then $X_2$ is
  the set of all composable pairs of arrows and $(X_2)_f$ is the subset of those
  pairs with composite $f$.  
  In particular, $(X_2)_{f,a,b}$ is then either empty or a singleton, and
  the comultiplication reduces to the
  formula~\eqref{eq:Delta(f)} from the introduction,
  $$
  \Delta(f)\:=\sum_{ab=f}a\otimes b.
  $$

  If $X$ is the fat nerve of a category, or more generally if $X$ is a
  Segal space (that is, $X_2 \simeq X_1 \times_{X_0} X_1$), then as in the case
  of the ordinary nerve we see that if $f$ is not the composite up to
  isomorphism of $a$ and $b$ then $X_{f,a,b}$ will be empty. In the case of a
  fat nerve, it is non-empty if and only if one can write
  \[
    f\;\;=\;(
    \begin{tikzcd}x\ar[r,"\cong"]&x'\ar[r,"a"]
      &y\ar[r,"\varphi","\cong"']&y'\ar[r,"b"]&z'\ar[r,"\cong"]&z
    \end{tikzcd}
    )\;\in\;\;X_{1}.
  \]
  In the more general case of a Segal space,
  it is non-empty if and only if there exists $\sigma\in X_{2}$ and
  $\varphi\in\Map_{X_0}(d_{0}a,d_{1}b)$ such that $f\cong d_{1}\sigma$ and
  $\sigma\in X_{2}$ corresponds to
  $(a,b,\varphi)\in X_1 \times_{X_0}X_{1}$ in the notation of \ref{pbk}.

    \begin{lemma}\label{X2ab}
If $X$ is a Segal space and $a,b\in X_{1}$ then the groupoid
 $(X_2)_{a,b}$ is discrete, naturally equivalent to the set of isomorphisms
$\Map_{X_0}(d_{0}a,d_{1}b)$.
\end{lemma}

\begin{proof}
  Since $X_2\simeq X_1 \times_{X_0} X_1$ we can compute $(X_2)_{a,b}$ as the pullback
    \[\begin{tikzcd}
\SEpbk (X_1 \times_{X_0} X_1)_{a,b}\ar[r]\ar[d]&\SEpbk X_1 \times_{X_0} X_1\ar[r]\ar[d]&X_0\ar[d,"\Delta"]\\
1\ar[r,"\name{a,b}"]&X_1\times X_1\ar[r,"d_0\times d_1"]             &X_0\times X_0.
\end{tikzcd}\]
But the homotopy fibres of the diagonal $\Delta\colon X_0\to X_0\times X_0$ are (naturally equivalent to) the mapping spaces.
\end{proof}

\begin{prop}\label{prop:Delta(f)}
  If $X$ is a Segal space and $f\in X_{1}$ then
  \[
  \Delta(\name f)=
  \int^{(a,b)\in X_1\times X_1}\!\!\!\!\!\!\!\!\!\!\!\!\!
  \sum_{\varphi\in\Map_{{X_{0}}}(d_{0}a,d_{1}b)}
  \!\!\!\!\!\!\!\!\!\!\!\!\!
  \Map_{X_{1}}(f,\ell_{a,b}(\varphi))
  \;\;
  \name{a}\otimes\name{b}
  \;\;\;
  \in
  \;
  \Grpd_{/X_1}\otimes \Grpd_{/X_1} .
  \]
  where $\ell_{a,b}: \Map_{X_0}(d_{0}a,d_{1}b)
  \simeq(X_2)_{a,b}\stackrel{d_1}\longrightarrow X_{1}$.
\end{prop}
Observe that each set $\Map_{X_1}(f,\ell_{a,b}(\varphi))$ in the sum is either
empty (if $f\not\simeq\ell_{a,b}(\varphi)$) or non-canonically in bijection with
the set $\AutOmega f{X_1} =\Map_{X_1}(f,f)$.

\begin{proof}
  By the previous lemma, $(X_2)_{f,a,b}$ is the homotopy fibre of
    $\ell_{a,b}$ over $f$, and as the domain of $\ell_{a,b}$ is discrete this
    fibre is the sum of the fibres $(X_2)_{f,a,b,\varphi}$,
  \[
    \begin{tikzcd}
      (X_2)_{f,a,b,\varphi}\ar[r,""]\ar[d,""]&(X_2)_{f,a,b}\ar[r,""]\ar[d,""]
                                                         &1\ar[d,"\name f"]\\
      1\ar[r,"\name\varphi"]&\Map_{X_0}(d_{0}a,d_{1}b)\ar[r,"\ell_{a,b}"]&X_1,
    \end{tikzcd}
  \]
  in which each
    $(X_2)_{f,a,b,\varphi}\simeq \Map_{X_{1}}(f,\ell_{a,b}(\varphi))$.
\end{proof}

\end{blanko}

\begin{blanko}{Local finiteness.}\label{finitary}
  As long as we work at the objective level, where all results and proofs are
  naturally bijective, it is not necessary to impose any finiteness conditions.
  But in order to be able to take cardinality to recover numerical results
  (i.e.~at the vector-space level), suitable finiteness conditions must be
  imposed. Intuitively, mimicking the local finiteness for categories, we should
  require that for each $n\in\N$ the active map $X_n \to X_1$ is
    finite. In the category case this
  means that, for each arrow $f\in X_1$ and $n\in\N$, there are only finitely
  many decompositions of $f$ into a sequence of $n$ arrows. Technically, the
  appropriate definition is the following (from~\cite{GKT:DSIAMI-2}).

  A decomposition space $X:\simplexcategory\op \to \Grpd$ is termed
  \emph{locally finite} if $X_1$ is locally finite (in the sense of
  groupoids~\ref{finite}) and both $s_0:X_0 \to X_1$ and $d_1:X_2 \to X_1$ are
  finite maps. Then the comultiplication and counit defined above are finite
  linear functors, and hence (by Proposition~\ref{finitetypespan}) descend to
  slices of finite groupoids
  $$
  \Delta:\grpd_{/X_1}\to\grpd_{/X_1}\tensor \grpd_{/X_1}\;, \qquad
  \epsilon:\grpd_{/X_1}\to\grpd.
  $$
  We can then take cardinality to obtain comultiplication and counit maps of
  vector spaces
  $$
  \normnorm{\Delta}:\Q_{\pi_0X_1}\to\Q_{\pi_0X_1}\tensor \Q_{\pi_0X_1}\;, \qquad
  \normnorm{\epsilon}:\Q_{\pi_0X_1}\to\Q\,.
  $$
  These are coassociative and counital, and
  $$
\Inc_{X} := (\Q_{\pi_0 X_1},\normnorm{\Delta},\normnorm{\epsilon})
  $$
  is what we call the {\em numerical incidence coalgebra} of $X$.
\end{blanko}

\begin{BM}
  If $X$ is the nerve of a poset $P$, then it is locally finite in the above
  sense if and only if all intervals $[x,y]$ are finite, which is the usual
  definition for posets~\cite{Stanley}. The points in this interval parametrise
  precisely the two-stage factorisations of the unique arrow $x\to y$, so this
  condition amounts to $X_2 \to X_1$ having finite fibre over $x \to y$. (In the
  poset case, the conditions on $X_1$ and on $s_0 : X_0 \to X_1$ are
  automatically satisfied, since everything is discrete.)

  Examples of infinite categories which are locally finite are given by free
  monoids or the free category on a directed graph.
\end{BM}

\subsection{Convolution algebras}

\label{sec:conv}

\begin{blanko}{Linear dual.}
  If $X$ is a decomposition space, we have seen there is a natural coassociative
  comultiplication on $\Grpd_{/X_1}$, the incidence coalgebra of $X$, which we
  see as an `objectification' of the vector space $\Q_{\pi_0 X_1}$ underlying
  the classical incidence coalgebra.  One may also consider the incidence (or
  convolution) {\em algebra} $\Grpd^{X_1}$, which can be obtained from the
  incidence coalgebra by taking the linear dual (\ref{duality}).  Since
  $\Grpd_{/X_1}$ is the free homotopy-sum completion of $X_1$ (just as
  $\Q_{\pi_0 X_1}$ is the `linear-combination completion' of the set $\pi_0
  X_1$), objects in $\Grpd^{X_1}$ can be regarded either as presheaves
  $X_1\to\Grpd$ or as linear functors $\Grpd_{/X_1}\to\Grpd$
  (see~\ref{duality}).  The category $\Grpd^{X_1}$ is interpreted as an
  `objectification' of the incidence algebra, denoted $\Inc^X$, which has
  underlying profinite-dimensional vector space $\Q^{\pi_0 X_1}$.
\end{blanko}

\begin{blanko}{Convolution.}\label{convolution}
  The multiplication in the incidence algebra is the convolution product, given
  as the dual of the comultiplication.  Consider two linear functors
  $$
  F,G:\Grpd_{/ X_1}\longrightarrow \Grpd
  $$
  given by spans $ X_1 \leftarrow M \to 1$ and $ X_1 \leftarrow N\to 1$.
  Their tensor product $F\otimes G$ is
  then given by the span
  $$
  X_1 \times  X_1\leftarrow M \times N\to 1
  $$
  and their {\em convolution} $F*G$ is the composite of $F\otimes G$
  with the comultiplication:
  $$
  F*G:\Grpd_{/ X_1}\stackrel{\Delta}\longrightarrow
  \Grpd_{/ X_1}\otimes \Grpd_{/ X_1} \stackrel{F\tensor G}\longrightarrow \Grpd
  \tensor \Grpd \simeq \Grpd.
  $$
  This is given by the composite span
  $$
  \xymatrix@!C=9ex{
   X_1 && \\
   X_2 \ar[u] \ar[d]&\ar[l]\ar[d] M *N\ar[lu]\ar[rd]\dlpullback &
  \\
   X_1\times  X_1 &\ar[l]M\times N \ar[r] & 1.
  }$$
  The neutral functor for the convolution product is $\epsilon$.
\end{blanko}

\begin{blanko}{The zeta functor.}\label{zeta}
  The {\em zeta functor}
  $$
  \zeta:\Grpd_{/ X_1} \to \Grpd
  $$
  is the linear functor defined by the span
  $$
  X_1 \stackrel =\leftarrow  X_1 \to 1\,.
  $$
  As an element of $\Grpd^{X_1}$, this is the terminal presheaf.

  Assuming $X_1$ locally finite then $\zeta$ is a finite linear functor and
  descends to
  $$
  \zeta:\grpd_{/X_1}\to \grpd.
  $$
  Its cardinality $\Q_{\pi_0 X_1}\to\Q$, which can be regarded as
  an element in the profinite-dimensional vector space $\Q^{\pi_0 X_1}$,
  is then the usual
  zeta function $\pi_0 X_1\to\Q$ with value $1$ on each 1-simplex of $X$.
\end{blanko}

\subsection{Section coefficients}

\label{sec:tioncoeff}

\begin{blanko}{Section coefficients.}
  If $X$ is a locally finite decomposition space then the
  homotopy cardinality of the comultiplication at the objective level
  \begin{eqnarray*}
    \grpd_{/X_1} & \longrightarrow & \grpd_{/X_1\times X_1}  \\
    \name f & \longmapsto & \big( (X_2)_f \to X_2 \to X_1 \times X_1 \big)
  \end{eqnarray*}
  yields a comultiplication in the category of vector spaces
  \begin{eqnarray*}
  \Q_{\pi_0 X_1} & \longrightarrow & \Q_{\pi_0 X_1} \tensor \Q_{\pi_0 X_1} \\
  \delta_f & \longmapsto & \int^{(a,b)\in X_1\times X_1}
  \!\!\!\!\!\!\!\!\!\!\!\!\!\!\!
   \norm{(X_2)_{f,a,b}} \ \delta_a \tensor \delta_b\;\;=\;\;
  \sum_{a,b}   c^f_{a,b} \;\delta_a \tensor \delta_b,
  \end{eqnarray*}
  which defines the (numerical) incidence coalgebra $\Inc_X$.
  It is just the cardinality of \eqref{Delta(f)},
  with the {\em section coefficients}
  \begin{equation}\label{sec coeffs}
  c^f_{a,b} :=
  \frac{\norm{(X_2)_{f,a,b}}}{\norm{\AutOmega a{X_1}}\norm{\AutOmega b{X_1}}} .
\end{equation}

  In the special case of a Segal space,
  we can take cardinality of Proposition \ref{prop:Delta(f)} to arrive at
  the following
  explicit formula for the section coefficients.
\end{blanko}

\begin{prop}[See \cite{GKT:corrigendum}]\label{sectioncoeff!}
  If $X$ is a locally finite Segal space then
  \[
  c^f_{a,b} =
  \frac{\norm{\AutOmega f{X_1}}\norm{\{\varphi \suchthat \ell_{a,b}(\varphi)\simeq f\}}}
       {\norm{\AutOmega a{X_1}}\norm{\AutOmega b{X_1}}}.
  \]
  In the case that $X$ is the fat nerve of a category,
  $\ell_{a,b}(\varphi)=a\varphi b$ and the term
  $\norm{\{\varphi \suchthat \ell_{a,b}(\varphi)\simeq f\}}$ is just the number of
  isomorphisms $\varphi : d_0 a \simeq d_1 b$ such that $a\varphi b\simeq f$.
\end{prop}

\begin{cor}\label{sec coeffs for monoids}
    If $X$ is a locally finite Segal space with $X_0=1$, then
\begin{equation}
  c^f_{a,b} =\begin{cases}\frac{\norm{\AutOmega f{X_1})}}
    {\norm{\AutOmega a{X_1}}\norm{\AutOmega b{X_1}}}& \text{if } f\simeq ab\\
    \qquad0& \text{if } f\not\simeq ab.
\end{cases}
\end{equation}
Here $ab$ denotes the image of $(a,b)$ under
$X_{1}\times X_{1}\simeq X_{2}\stackrel{d_{1}}\longrightarrow X_{1}$.
\end{cor}
\begin{proof}
  Since $X_0$ is contractible, the set $\{\varphi\}$ of the proposition is
  either singleton or empty, depending on whether $ab\simeq f$ or not.
\end{proof}

\begin{blanko}{`Zeroth section coefficients': the counit.}
  Let us also say a word about the zeroth section coefficients, i.e.~the
  computation of the counit. If $f$ is not isomorphic to a degenerate simplex
  then clearly $\norm\epsilon(\delta_f)=0$. In the case $f$ is degenerate, we
  just remark on two special cases:
  \begin{itemize}
    \item if $X$ is complete (\ref{complete}), meaning that $s_0$ is a
          monomorphism (\ref{mono}), then $\norm\epsilon(\delta_f) = 1$,
  \item  if $X_0=1$ then
          $\norm\epsilon(\delta_f)
          =\norm{\AutOmega f{X_1}}$.
  \end{itemize}
\end{blanko}

\begin{blanko}{Numerical convolution product.}
  By duality, if $X$ is locally finite, the convolution product descends to the
  profinite-dimensional vector space $\Q^{\pi_0 X_1}$ obtained by taking
  cardinality of $\grpd^{X_1}$, defining the (numerical) incidence algebra of
  $X$, denoted $\Inc^X$. It follows from the general theory of homotopy linear
  algebra (see appendix \ref{dual-card} and \cite{GKT:HLA}) that the cardinality
  of the convolution product is the linear dual of the cardinality of the
  comultiplication. Since it is the same span that defines the comultiplication
  and the convolution product, it is also the exact same matrix that defines the
  cardinalities of these two maps. It follows that the structure constants for
  the convolution product (with respect to the pro-basis $\{\delta^f\}$) are the
  same as the structure constants for the comultiplication (with respect to the
  basis $\{\delta_f\}$), i.e.~the section coefficients.
\end{blanko}

\begin{blanko}{Example.}
  The strict nerve of a category $\CC$ is a decomposition space which is
  discrete in each degree.  The resulting coalgebra at the numerical level
  (assuming local finiteness) is the coalgebra of
  Content--Lemay--Leroux~\cite{Content-Lemay-Leroux}, and if the category is
  just a poset, that of Joni and Rota~\cite{JoniRotaMR544721}.

  The objective-level incidence algebra of the strict nerve of $\CC$ has the
  convolution product
  \begin{equation}  \label{strict h*h}
  h^a * h^b = \begin{cases}
    h^{ab} & \text{ if $a$ and $b$ are composable} \\
    0 & \text{ else.}
  \end{cases}
  \end{equation}
    For the fat nerve $X$ of $\CC$, we find instead 
  \begin{equation}  \label{fat h*h}
    h^a * h^b \; = \; \sum_{\varphi\colon d_0a\simeq d_1b} \; h^{a\varphi b}
\; = \; \sum_{f\in\pi_0X_1} \;\{\varphi \suchthat a\varphi b\simeq f\}\cdot h^f
  \end{equation}
  where the first sum is over all isomorphisms
  $\varphi$ from the target of $a$ to the source of $b$,
  cf.~\cite{GKT:corrigendum}.

  To compute the cardinality of this algebra, note first that the cardinality
  of the representable $h^a$ is generally different from the canonical basis
  element $\delta^a$: the formula \eqref{representable-card}
    says
  $$
  \norm{h^a} = \norm{\AutOmega a {X_1}} \, \delta^a ,
  $$
  so that \eqref{fat h*h} becomes
$$
\delta^a * \delta^b
\;=\;
\!\!\!\!
\sum_{f\in\pi_0X_1}
\!\!\!\!
{\textstyle\frac{
\norm{\{\varphi \suchthat a\varphi b\simeq f\}} \cdot \norm{\AutOmega{f}{X_1}}
}
{
\norm{\AutOmega {a}{X_1}}\cdot\norm{\AutOmega {b}{X_1}}}
}\;
\delta^{f}
 \;=\;
 \sum_{f\in\pi_0X_1} \; c_{a,b}^f\;\delta^f
$$
 with the section coefficients as given in Proposition \ref{sectioncoeff!}.
\end{blanko}

\begin{blanko}{Finite support.}\label{fin-supp}
  The numerical incidence algebra $\Inc^X$ lives in profinite-dimensional vector
  spaces, since functions are not required to have finite support---for example,
  the zeta function does not have finite support for infinite posets or
  categories.  It is also interesting to consider the subalgebra of $\Inc^X$
  consisting of functions with finite support.  At the objective level this is
  the full subcategory $\grpd^{X_1}_{\finsup} \subset \grpd^{X_1}$, and
  numerically it is $\Q^{\pi_0 X_1}_{\finsup} \subset \Q^{\pi_0 X_1}$.  Of
  course we have canonical identifications $\grpd^{X_1}_{\finsup} \simeq
  \grpd_{/X_1}$, as well as $\Q^{\pi_0 X_1}_{\finsup} \simeq \Q_{\pi_0 X_1}$,
  but it is important to keep track of which side of duality we are on.
  
  That the decomposition space is locally finite is not the appropriate
  condition for the convolution and unit to restrict to the functions with
  finite support.  Instead the requirement is that $X_1$ be locally finite and
  the maps
  $$
  X_2 \to X_1 \times X_1, \qquad X_0 \to 1
  $$ 
  be finite.  By Lemma~\ref{X2ab} we know that the former map is finite for any
  Segal space with $X_0$ {\em locally} finite, but for the latter $X_0$ must actually
  be finite.
\end{blanko}

\begin{blanko}{Examples: category algebras.}\label{categoryalg}
  If $X$ is the {\em strict} nerve of a category $\CC$, then the finite-support
  convolution algebra is precisely the {\em category algebra} of $\CC$.  This is
  an important notion in representation theory (see~\cite{Webb}).
  
  Note that since the strict nerve is a Segal space, the formula for the section
  coefficients are the same as computed above, giving the familiar formula
  \eqref{strict h*h}.  Similarly the formula for the convolution unit is
  $$
  \epsilon = \sum_x \delta^{\id_x}
  $$
  the sum of all indicator functions of identity arrows: for this to be finite
  we need to require that the category has only finitely many objects.
  
  In the case of the {\em fat} nerve of a category $\CC$, the finiteness condition
  for having a finite-support convolution is
  implied by the condition that every object in $\CC$ has a finite automorphism
  group (a condition implied by local finiteness).
  On the other hand,
  the convolution unit has finite support precisely when there is only a finite
  number of \isoclasses of objects, already a more drastic condition.
  Compared to the usual category 
  algebra, this `fat category algebra' has (cf.~\eqref{fat h*h}):
  \[
  h^a * h^b \; = \; \sum_{f\in\pi_0X_1} \;\{\varphi \suchthat a\varphi b\simeq 
  f\}\cdot h^f .
\]

  Note that an important source of examples of category algebras are given by
  the path algebra of a quiver $Q$ (see for example \cite{Brion}): that is
  simply the category algebra on the free category on $Q$.  Since there are no
  automorphisms in a free category, in this case there is no difference between
  strict and fat nerve.
  
  It should be noted that the finite-support incidence algebras are important
  also outside the setting of category algebras, namely in the case
  of the Waldhausen $S_\bullet$-construction (cf.~\ref{sec:Wald} below):
  they are the Hall algebras (see 
  \cite{GKT:DSIAMI-1}).  The finiteness conditions are then homological, namely
  finite $\operatorname{Ext}^0$ and $\operatorname{Ext}^1$.
\end{blanko}

\begin{blanko}{Locally discrete decomposition spaces.}\label{loc-disc-ds}
  In the formula in Proposition~\ref{sectioncoeff!}
  for the section coefficients there are denominators.
  In very many examples of importance, however, the
  section coefficients are actually integral.
  This happens when the map
  $d_1 : X_2 \to X_1$
  is discrete, that is,
    has discrete homotopy fibres. Equivalently,
    the induced group homomorphisms
    $(d_1)_*:\AutOmega{\sigma}{X_{2}}\to\AutOmega{d_{1}\sigma}{X_{1}}
    $
    are injective for each $\sigma\in X_{2}$.
    For the zeroth section coefficients one
    should also require $s_0 : X_0 \to X_1$ to be discrete,
    but this always holds as $(d_1)_*(s_0)_*:
\AutOmega x {X_0}\to\AutOmega{s_0x}{X_1}\to\AutOmega x{X_0}$ is the identity.

  We define $X$ to be {\em locally discrete} when $d_1 : X_2 \to X_1$
  is a discrete map.
\end{blanko}

\begin{BM}
  In our terminology (\ref{finitary}), `locally finite' means that
  $d_1 : X_2 \to X_1$ and $s_0 : X_0 \to X_1$ are finite maps {\em and} that
  $X_1$ is locally finite. To be consistent with this definition, `locally
  discrete' should mean $d_1 : X_2 \to X_1$ and $s_0 : X_0 \to X_1$ discrete,
  {\em and} $X_1$ a locally discrete groupoid. If we define a groupoid to be
  locally discrete if all its hom sets are discrete, then every groupoid is
  locally discrete, and therefore it is not necessary to mention it in the
  definition.
\end{BM}

\begin{blanko}{Examples.}
    The fat nerve of a category $\CC$ is locally discrete if and only if,
    in any commutative diagram in $\CC$ of the form
  \[
  \xymatrix@R1ex{
&y\ar[dd]^{\simeq}_{\varphi}\ar[rd]^{b}\\x\ar[ru]^{a}\ar[rd]_{a} && z \,,\\&y\ar[ru]_{b}}
\]
the isomorphism $\varphi$ is the identity on $y$. For example, this is the case if
$\CC$ satisfies any of
  the three conditions
  \begin{itemize}
    \item All the arrows in $\CC$ are monos

    \item All the arrows in $\CC$ are epis

    \item All the automorphisms in $\CC$ are identities.
  \end{itemize}
  Starting from these three cases, many more examples can be derived by virtue
  of the following result.
\end{blanko}

\begin{lemma}\label{lem:locdiscrDec}
  The following are equivalent for a decomposition space $X$
  \begin{enumerate}
    \item $X$ is locally discrete.
    \item $\Decbot{X}$ is locally discrete.
  
    \item $\Dectop{X}$ is locally discrete.
  \end{enumerate} 
\end{lemma}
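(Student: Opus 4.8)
The key tool is the relationship between the décalage functors $\Dec_\bot,\Dec_\top$ and the original simplicial object $X$: by definition $\Dec_\top(X)_n = X_{n+1}$ with a shifted set of face/degeneracy maps, and similarly for $\Dec_\bot$. Concretely, the relevant low-degree pieces are
$$
\Dec_\top(X)_0 = X_1,\quad \Dec_\top(X)_1 = X_2,\quad \Dec_\top(X)_2 = X_3,
$$
with the $d_1$ of $\Dec_\top(X)$ (the map $\Dec_\top(X)_2\to\Dec_\top(X)_1$) being a $d_i$ of $X$ going $X_3\to X_2$, and the $s_0$ of $\Dec_\top(X)$ being an $s_i$ of $X$ going $X_1\to X_2$. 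The plan is to express the maps whose discreteness is being asserted for $\Dec_\top(X)$ and $\Dec_\bot(X)$ directly in terms of the simplicial structure maps of $X$, and then use the decomposition-space pullback squares \eqref{eq:firstfew} to transport the discreteness property between $X$ and its décalages. Discreteness of a map is stable under pullback (a standard fact about fibres), so whenever one of the structure maps in question fits into a pullback square with one of $d_1:X_2\to X_1$ or $s_0:X_0\to X_1$, we get the implication for free.

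First I would treat the implication $(1)\Rightarrow(2)$ and $(1)\Rightarrow(3)$. Assume $X$ is locally discrete, i.e. $d_1:X_2\to X_1$ and $s_0:X_0\to X_1$ are discrete maps. For $\Dec_\top(X)$ the map "$d_1$" is $d_1:X_3\to X_2$ (an inner face map of $X$), and the map "$s_0$" is $s_0:X_1\to X_2$. For $\Dec_\bot(X)$ the corresponding maps are $d_2:X_3\to X_2$ and $s_1:X_1\to X_2$. I would invoke the decomposition space axiom squares: the first two squares in \eqref{eq:firstfew} exhibit $d_2:X_3\to X_2$ and $d_1:X_3\to X_2$ as pullbacks of $d_1:X_2\to X_1$ along $d_\bot$ and $d_\top$ respectively, and the last two squares in \eqref{eq:firstfew} exhibit $s_1:X_1\to X_2$ and $s_0:X_1\to X_2$ as pullbacks of $s_0:X_0\to X_1$ along $d_\bot$ and $d_\top$. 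Since discreteness of a map is preserved under pullback, each of these décalage structure maps is discrete whenever the corresponding map for $X$ is, giving $(1)\Rightarrow(2)$ and $(1)\Rightarrow(3)$.

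For the converse directions $(2)\Rightarrow(1)$ and $(3)\Rightarrow(1)$, the same pullback squares are run the other way, but now one needs that the base-change maps $d_\bot, d_\top$ appearing in \eqref{eq:firstfew} are (split) epimorphic on $\pi_0$ — or rather, that a map whose pullback along a map admitting a section is discrete must itself be discrete — since $s$ provides sections. Indeed $d_\bot:X_1\to X_0$ and $d_\top:X_1\to X_0$ and $d_\bot:X_2\to X_1$, $d_\top:X_2\to X_1$ all admit sections (the appropriate degeneracy maps), so a map pulled back along them that becomes discrete must already have been discrete, because the original map is a retract (over its codomain) of its pullback. This is the step I expect to be the main obstacle: one must be careful that "discrete" is being read as a homotopy-invariant property of the fibres (each fibre equivalent to a discrete groupoid, i.e. having trivial automorphism groups), and that sections do behave as expected at the level of fibres. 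Once that is pinned down — via the elementary fact that if $g\circ f$ and $g$ are such that $f$ is a section of $g$ then the fibres of any map pull back and back again compatibly — the converse implications follow. I would then remark that the genuinely new content is that the décalage of a decomposition space is again a decomposition space (so that "locally discrete" even makes sense for $\Dec_\bot X$ and $\Dec_\top X$), a fact established in \cite{GKT:DSIAMI-1}, which legitimizes the statement.
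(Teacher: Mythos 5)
Your proof is correct and takes essentially the same route as the paper: the four pullback squares of \eqref{eq:firstfew} you invoke are exactly the \culf squares for the dec maps, and your section argument is the paper's observation that the dec map is essentially surjective (a degeneracy provides a section), so that discreteness of $d_1$ and $s_0$ is both preserved and reflected along the base change. The paper merely packages this as a general lemma (an essentially surjective \culf functor reflects local discreteness, via the fact that pullback along an essentially surjective map preserves and reflects discreteness) applied to $d_\bot$ and $d_\top$.
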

This result refers to {\em decalage} (\ref{Dec}), recalled in the next subsection
where we also prove the lemma.

As we shall see, examples coming from combinatorics
tend to be locally discrete.

\begin{blanko}{A tiny example: the `hanger category'.}
  The following category is perhaps the smallest example of a category
  whose fat nerve is {\em not} locally discrete.
  $$
  \xymatrix @C=64pt {
  &\cdot \ar@(lu,ru)[]^j \ar[rd]^b& \\
  \cdot\ar[ru]^a  \ar[rr]_f && \cdot
  }
  $$
   in which 
   $$
   ab=f \qquad jj=1 \qquad aj=a \qquad jb=b .
   $$
  It has
  $$
  \Delta(f) = 1\otimes f + f\otimes 1 + \frac{a\otimes b}{2}
  $$
  since the factorisation $ab$ admits an involution, given by $j$.
\end{blanko}

\subsection{\culf functors, coalgebra homomorphisms and bialgebras}

\label{sec:bialg}

An appropriate notion of morphism between decomposition spaces is
that of \culf functors~\cite{GKT:DSIAMI-1}, which we briefly recall.
Their importance is that they induce coalgebra homomorphisms between
the incidence coalgebras.  Two main instances of \culf functors are decalage
and monoidal structures.  As we shall see, decalage accounts for many
reduction procedures in classical theory of incidence coalgebras.  A \culf
monoidal structure on a decomposition space is precisely what makes the
incidence coalgebra into a bialgebra.

\begin{blanko}{\culf functors.}\label{culf-maps}
  A simplicial map $F:X\to Y$ is
  \begin{itemize}
  \item {\em conservative} if it is cartesian with respect to codegeneracy maps
  (\ref{culf-squares}a).
  \item {\em ULF} (for Unique Lifting of Factorisations) if it is cartesian
  with respect to inner coface maps (\ref{culf-squares}b).
  
  \item {\em \culf} if it is both conservative and ULF, that is, cartesian on
  all active maps.  We shall use the term {\em \culf functor} even
  between simplicial groupoids not assumed to be Segal.
  \end{itemize}
  \begin{equation}\vcenter{
  \xymatrix@R-1ex{
  X_n\ar@{}[rd]|{\text{(a)}}\drpullback \dto_-{F}\rto^-{s_i}
  &X_{n+1}\ar[d]^-{F}
  \\
  Y_n \ar[r]_-{s_i}&Y_{n+1},}}
  \qquad\qquad
  \vcenter{\xymatrix@R-1ex{
  X_{n+1}\ar@{}[rd]|{\text{(b)}} \dto_-{F}
  &\dlpullback X_{n+2} \ar[l]_{d_{i+1}}\ar[d]^-{F}
  \\
  Y_{n+1} & \ar[l]^{d_{i+1}} Y_{n+2}}}
  \qquad(0 \leq i \leq n).\label{culf-squares}
  \end{equation}

  If both $X$ and $Y$ are decomposition spaces, then in fact ULF implies \culf
  \cite[Proposition 4.2]{GKT:DSIAMI-1}.
  
  In many examples of decomposition spaces, $1$-simplices are thought of
  as arrows: for simplicial maps between Rezk complete Segal spaces (see
  \ref{Rezk}), conservative means not inverting any arrows, and ULF means
  inducing a one-to-one correspondence between factorisations of an arrow in $X$
  and of its image in $Y$.  

  For morphisms of posets, conservative means to preserve $<$, not just $\leq$,
  while ULF is strictly stronger: it means to induce an isomorphism $[x,x']
  \simeq [Fx,Fx']$ on each interval.  If the morphism of posets is 
  a full inclusion,
  then ULF is precisely the same as convex (cf.~\cite{GKT:restriction}): if two
  elements belong to the subposet then so do all elements between them.
  Note that 
  an ULF map of posets does not have to be injective: for example,
  if $X$ is a discrete poset then {\em any} map $X\to Y$ is ULF.

  Given a simplicial \homomorphism $F:X\to Y$ between decomposition spaces, the
  span $X_1\stackrel=\longleftarrow X_1\stackrel{F_1}\longrightarrow Y_1$
  defines a linear functor
  $$
  {F_1}\lowershriek:\Grpd_{/X_1}\to\Grpd_{/Y_1},
  $$
  which descends
  to a linear functor ${F_1}\lowershriek:\grpd_{/X_1}\to\grpd_{/Y_1}$ with
  cardinality the linear map $\Q_{\pi_0 X_1}\to\Q_{\pi_0 Y_1}$ given on the
  basis by $\delta_f\mapsto\delta_{F_1f}$.
\end{blanko}

\begin{lemma}\label{culf-hm}
  \cite{GKT:DSIAMI-1}
  If $F$ is \culf, then ${F_1}\lowershriek$ is a coalgebra homomorphism,
  meaning that it preserves the comultiplication and
  counit up to coherent homotopy
  $$
  ({{F_1}\lowershriek} \otimes {F_1}\lowershriek)\,\Delta_X\;\simeq\;\Delta_Y\,{F_1}\lowershriek,
  \qquad\quad
  \epsilon_X\;\simeq\;\epsilon_Y\,{F_1}\lowershriek.
  $$
\end{lemma}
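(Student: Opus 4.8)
The plan is to compute both sides of each asserted identity as composites of spans and to recognise, via the Beck--Chevalley Lemma~\ref{lem:beck-chevalley}, that the relevant comparison squares are pullbacks precisely because $F$ is \culf. Recall that ${F_1}\lowershriek$ is represented by the span $X_1 \stackrel{=}{\longleftarrow} X_1 \stackrel{F_1}{\longrightarrow} Y_1$, that $\Delta_X$ is represented by $X_1 \stackrel{d_1}{\longleftarrow} X_2 \stackrel{(d_2,d_0)}{\longrightarrow} X_1\times X_1$ (and likewise $\Delta_Y$ for $Y$), that $\epsilon_X$ is represented by $X_1 \stackrel{s_0}{\longleftarrow} X_0 \longrightarrow 1$, and that composition of linear functors is computed by composing spans by pullback.

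First I would identify the span for $\Delta_Y\circ{F_1}\lowershriek$: it is obtained by pulling back $d_1:Y_2\to Y_1$ along $F_1:X_1\to Y_1$. Since $F$ is \culf, it is cartesian on the inner face map $d_1$, so this pullback is exactly $X_2$, with structure maps $d_1:X_2\to X_1$ and $F_2:X_2\to Y_2$; hence $\Delta_Y\circ{F_1}\lowershriek$ is represented by $X_1 \stackrel{d_1}{\longleftarrow} X_2 \stackrel{(d_2,d_0)\circ F_2}{\longrightarrow} Y_1\times Y_1$. On the other side, $({F_1}\lowershriek\otimes{F_1}\lowershriek)\circ\Delta_X$ is obtained by composing the span for $\Delta_X$ with $X_1\times X_1 \stackrel{=}{\longleftarrow} X_1\times X_1 \stackrel{F_1\times F_1}{\longrightarrow} Y_1\times Y_1$; the required pullback is along an identity, so this composite is represented by $X_1 \stackrel{d_1}{\longleftarrow} X_2 \stackrel{(F_1\times F_1)\circ(d_2,d_0)}{\longrightarrow} Y_1\times Y_1$. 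Simpliciality of $F$ gives $F_1 d_2 = d_2 F_2$ and $F_1 d_0 = d_0 F_2$, so $(F_1\times F_1)\circ(d_2,d_0) = (d_2,d_0)\circ F_2$ and the two spans coincide; Beck--Chevalley then upgrades this into the stated equivalence of linear functors. For the counit the argument is the same with $s_0:X_0\to X_1$ replacing $d_1$: conservativity of $F$ means $F$ is cartesian on the codegeneracy $s_0$, so pulling back $s_0:Y_0\to Y_1$ along $F_1$ returns $X_0$, and both $\epsilon_Y\circ{F_1}\lowershriek$ and $\epsilon_X$ are represented by the span $X_1 \stackrel{s_0}{\longleftarrow} X_0 \longrightarrow 1$.

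I expect the real obstacle to be not this $\pi_0$-level bookkeeping but promoting the two equivalences to a \emph{coherent} homotopy, compatible with the (co)associativity and (co)unitality data already present on both incidence coalgebras. The clean way is to note that a \culf functor is cartesian on \emph{all} generic maps, hence induces a map of the entire simplicial diagrams of spans that generate $\Delta$ and $\epsilon$; the coherence then follows formally from the coherence established for a single decomposition space in \cite[Theorem 7.3]{GKT:DSIAMI-1}. I would therefore organise the proof so that the Beck--Chevalley identifications are carried out once, uniformly over the relevant generic squares, and the higher coherence is read off as a naturality statement rather than verified by hand — this matches the treatment in \cite{GKT:DSIAMI-1}, which I would cite for the coherence details.
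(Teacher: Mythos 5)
Your argument is correct and is essentially the proof given in the cited reference: the present paper states Lemma~\ref{culf-hm} only with a citation to \cite{GKT:DSIAMI-1}, where the same span-composition argument is used — the ULF square for $d_1$ and the conservativity square for $s_0$ identify the composite spans, with Beck--Chevalley justifying that span composition computes composition of linear functors, and the higher coherence handled as you indicate. Nothing to add beyond noting that the compatibility $(F_1\times F_1)\circ(d_2,d_0)=(d_2,d_0)\circ F_2$ you invoke is exactly simplicial naturality of $F$, as you say.
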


\begin{blanko}{Decalage.}\label{Dec}
  An important source of \culf functors is given by decalage. 
  Recall that the {\em decalage} functor $\Decbot{}$ 
  on simplicial groupoids forgets the bottom 
  face and degeneracy maps, and shifts the indexing of the groupoids.  The unused
  face map $d_\bot$ provides a natural transformation from the decalage back to
  the identity functor.  We refer to this $d_\bot$ as the {\em dec map}.
  \vspace*{-6mm}

$$
\xymatrix@C+2em@R-2.5mm{ X&
X_0  
\ar[r]|(0.55){s_0} 
&
\ar[l]<+2mm>^{d_0}\ar[l]<-2mm>_{d_1} 
X_1  
\ar[r]<-2mm>|(0.6){s_0}\ar[r]<+2mm>|(0.6){s_1}  
&
\ar[l]<+4mm>^(0.6){d_0}\ar[l]|(0.6){d_1}\ar[l]<-4mm>_(0.6){d_2}
X_2 
\ar[r]<-4mm>|(0.6){s_0}\ar[r]|(0.6){s_1}\ar[r]<+4mm>|(0.6){s_2}  
&
\ar[l]<+6mm>|(0.6){d_0}\ar[l]<+2mm>|(0.6){d_1}\ar[l]<-2mm>|(0.6){d_2}\ar[l]<-6mm>|(0.6){d_3}
X_3 
\ar@{}|\cdots[r]&
\\ \\
{\Decbot{X}}\ar[uu]^{d_\bot} &
X_1  {\ar[uu]^{d_0}}
\ar[r]|(0.55){s_1} 
&
\ar[l]<+2mm>^{d_1}\ar[l]<-2mm>_{d_2} 
X_2  \ar[uu]^{d_0}
\ar[r]<-2mm>|(0.6){s_1}\ar[r]<+2mm>|(0.6){s_2}  
&
\ar[l]<+4mm>^(0.6){d_1}\ar[l]|(0.6){d_2}\ar[l]<-4mm>_(0.6){d_3}
X_3 \ar[uu]^{d_0}
\ar[r]<-4mm>|(0.6){s_1}\ar[r]|(0.6){s_2}\ar[r]<+4mm>|(0.6){s_3}  
&
\ar[l]<+6mm>|(0.6){d_1}\ar[l]<+2mm>|(0.6){d_2}\ar[l]<-2mm>|(0.6){d_3}\ar[l]<-6mm>|(0.6){d_4}
X_4 \ar[uu]^{d_0}
\ar@{}|\cdots[r]
&
}
$$
\vspace*{-2mm}

Similarly, the decalage $\Dectop{}$ forgets the top face and degeneracy maps.
\end{blanko}

Decalage also plays an important role at the theoretical level, as
exemplified by the following result.

\begin{lemma}[{\cite{Dyckerhoff-Kapranov:1212.3563}, \cite{GKT:DSIAMI-1},} 
  in conjunction with \cite{Feller-Garner-Kock-Proulx-Weber:1905.09580}]
  A simplicial groupoid $X$ is a decomposition space if and only if both
  $\Dectop{X}$ and $\Decbot{X}$ are Segal spaces. Furthermore, in this case
  the corresponding dec maps $d_\top$ and $d_\bot$ are \culf.
\end{lemma}

In particular for any decomposition space $X$ we have a canonical coalgebra
homomorphism from the incidence coalgebra of $\Decbot{X}$ to that of $X$, and
similarly for $\Dectop{}$.  This appears in many examples.

\bigskip

Lemma~\ref{lem:locdiscrDec} above refers to decalage,
and we owe the proof.

\begin{proof*}{Proof of Lemma~\ref{lem:locdiscrDec}.}
  Just note that the dec map is always essentially surjective, since it admits a
  degeneracy map as a section.  Now the result follows from the following lemma.
\end{proof*}

\begin{lemma}
  A decomposition space $X$ is locally discrete if it admits an essentially
  surjective \culf functor 
  $Y \to X$ with $Y$ a locally discrete decomposition space.
\end{lemma}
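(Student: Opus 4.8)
The plan is to extract from the \culf hypothesis two cartesian squares presenting $Y_2$ and $Y_0$ as base changes of $X_2$ and $X_0$ along $F_1$, and then to transport discreteness of homotopy fibres along the essential surjectivity of $F$. Recall from \ref{generic-and-free} that the inner coface $d^1\colon[1]\to[2]$ and the codegeneracy $s^0\colon[1]\to[0]$ are generic maps, so by \ref{culf-squares} a \culffunctor $F\colon Y\to X$ is cartesian on both of them; that is, the squares
$$
\vcenter{\xymatrix@R-1ex{
Y_2 \drpullback \ar[r]^{d_1}\ar[d]_{F_2} & Y_1\ar[d]^{F_1}\\
X_2\ar[r]_{d_1} & X_1}}
\qquad\qquad
\vcenter{\xymatrix@R-1ex{
Y_0 \drpullback \ar[r]^{s_0}\ar[d]_{F_0} & Y_1\ar[d]^{F_1}\\
X_0\ar[r]_{s_0} & X_1}}
$$
are homotopy pullbacks. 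By definition $X$ is locally discrete exactly when the two maps $d_1\colon X_2\to X_1$ and $s_0\colon X_0\to X_1$ have discrete homotopy fibres (cf.~\ref{fibres}), so it suffices to treat these two maps.

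First I would invoke the elementary fact that in a homotopy pullback square the homotopy fibre of the top horizontal map over a point $\tilde f$ is equivalent to the homotopy fibre of the bottom horizontal map over its image under the right-hand vertical map. Applied to the left square this yields, for every $\tilde f\in Y_1$, an equivalence between the fibre of $d_1\colon Y_2\to Y_1$ over $\tilde f$ and the fibre of $d_1\colon X_2\to X_1$ over $F_1\tilde f$; the right square gives the analogous statement for $s_0$. Now since $F$ is essentially surjective, in particular $F_1$ hits every isomorphism class, so every $f\in X_1$ is isomorphic to $F_1\tilde f$ for some $\tilde f\in Y_1$; as homotopy fibres over isomorphic objects are equivalent, every fibre of $d_1\colon X_2\to X_1$ is thus equivalent to a fibre of $d_1\colon Y_2\to Y_1$, and similarly for $s_0$. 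These latter fibres are discrete because $Y$ is locally discrete, and discreteness of a groupoid is invariant under equivalence; hence $d_1\colon X_2\to X_1$ and $s_0\colon X_0\to X_1$ are discrete maps, so $X$ is locally discrete.

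I do not anticipate a genuine obstacle here: the argument is purely formal once the \culf squares are in hand, the only points needing attention being the identification of $d_1$ and $s_0$ as generic maps and the routine manipulation of homotopy fibres of homotopy pullbacks in the groupoid-slice formalism of Appendix~\ref{sec:groupoids}. Note that in fact neither $X$ nor $Y$ needs to be a decomposition space for the argument to work; only the \culf and essential-surjectivity hypotheses on $F$ are used. Together with the observation that the dec maps $d_\bot\colon\Dec_\bot(X)\to X$ and $d_\top\colon\Dec_\top(X)\to X$ are \culf and essentially surjective (being split on objects by a degeneracy), this completes the proof of Lemma~\ref{lem:locdiscrDec}.
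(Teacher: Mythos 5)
Your argument is correct and is essentially the paper's own proof: the paper simply cites the general fact that in a (homotopy) pullback square of groupoids whose base map is essentially surjective, one vertical map is discrete if and only if the other is, and applies it to the \culf squares for $d_1\colon X_2\to X_1$ and $s_0\colon X_0\to X_1$. You merely unpack that general fact fibrewise (fibres of the pulled-back map agree with fibres of the original over the image, essential surjectivity of $F_1$ reaches every fibre up to equivalence), which is the same route in more detail; your side remark that the decomposition-space hypotheses are not needed for this step is also accurate.
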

\begin{proof}
  This follows since discreteness is a local property:
  in a pullback square of groupoids
  $$\xymatrix{
     E' \drpullback \ar[r]\ar[d]_{f'} & E \ar[d]^f \\
     B' \ar[r]_e & B
  }$$
  the homotopy fibres $f'^{-1}(b')$ and  $f^{-1}(e(b'))$ are equivalent. Thus if
  $e$ is essentially surjective then $f$ is discrete if and only if
  $f'$ is discrete.  
\end{proof}

\begin{blanko}{Bialgebras.}
  Recall that a bialgebra is a coalgebra with a compatible algebra structure,
  meaning that multiplication and unit are coalgebra homomorphisms.  More
  formally it can be characterised as a monoid object in the category of
  coalgebras.  In Lemma \ref{culf-hm} we saw that a sufficient condition for a
  simplicial map $f$ between decomposition spaces to induce a coalgebra
  homomorphism on incidence coalgebras is that $f$ be \culf.  Accordingly we
  define a {\em monoidal decomposition space} \cite{GKT:DSIAMI-1} to be a
  decomposition space $Z$ equipped with an associative unital monoid structure
  given by \culf functors $m:Z \times Z \to Z$ and $e:1\to Z$.
\end{blanko}

\begin{prop}\label{prop:bialg}
  If $Z$ is a monoidal decomposition space then $\Grpd_{/Z_1}$ is naturally a
  bialgebra, termed its {\em incidence bialgebra}.  Monoidal \culf functors
  induce bialgebra homomorphisms.
\end{prop}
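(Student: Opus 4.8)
The plan is that, given the results already in hand, both the algebra structure and the bialgebra compatibility are essentially formal. First recall that taking $1$-simplices, $X\mapsto X_1$, is a functor from simplicial groupoids to $\Grpd$, and that sending a groupoid $A$ to $\Grpd_{/A}$ and a map $f\colon A\to B$ to the linear functor $f\lowershriek$ determined by the span $A\xleftarrow{\ =\ }A\xrightarrow{\ f\ }B$ is a \emph{monoidal} pseudofunctor $(\Grpd,\times)\to(\LIN,\otimes)$, since $\Grpd_{/A}\otimes\Grpd_{/B}=\Grpd_{/A\times B}$ and $f\lowershriek\otimes g\lowershriek\simeq(f\times g)\lowershriek$. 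Applying this composite to the monoid $(Z,m,e)$ in simplicial groupoids yields a monoid $\bigl(\Grpd_{/Z_1},(m_1)\lowershriek,(e_1)\lowershriek\bigr)$ in $\LIN$, i.e.\ an algebra structure on $\Grpd_{/Z_1}$: the multiplication is $\Grpd_{/Z_1}\otimes\Grpd_{/Z_1}=\Grpd_{/Z_1\times Z_1}\xrightarrow{(m_1)\lowershriek}\Grpd_{/Z_1}$ and the unit is $\Grpd=\Grpd_{/1}\xrightarrow{(e_1)\lowershriek}\Grpd_{/Z_1}$. The coalgebra structure $(\Delta,\epsilon)$ is the incidence coalgebra of $Z$ (a strong homotopy comonoid in $\LIN$ by the theorem above).

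It remains to verify the bialgebra axiom, namely that $(m_1)\lowershriek$ and $(e_1)\lowershriek$ are coalgebra homomorphisms; this is exactly what Lemma~\ref{culf-hm} gives, once one identifies the incidence coalgebras of $Z\times Z$ and of the terminal simplicial groupoid $1$. The product $Z\times Z$ of decomposition spaces is again a decomposition space, since products and pullbacks of groupoids are formed levelwise, so the defining squares for $Z\times Z$ are pullbacks whenever those for $Z$ are; and since the simplicial structure on $Z\times Z$ is the levelwise product, the span $(d_1\times d_1,\,(d_2,d_0)\times(d_2,d_0))$ defining its comultiplication becomes, after reindexing by the symmetry of $\otimes$, exactly $\Delta\otimes\Delta$, and likewise its counit is $\epsilon\otimes\epsilon$; thus the incidence coalgebra of $Z\times Z$ is $\Grpd_{/Z_1}\otimes\Grpd_{/Z_1}$ with the tensor-product coalgebra structure. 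Similarly $1$ is trivially a decomposition space (every square is $1\to 1$), whose incidence coalgebra is $\Grpd=\Grpd_{/1}$ with the trivial comonoid structure, the monoidal unit of $\LIN$. Now $m\colon Z\times Z\to Z$ and $e\colon 1\to Z$ are \culf by the definition of monoidal decomposition space, so Lemma~\ref{culf-hm} says precisely that $(m_1)\lowershriek$ and $(e_1)\lowershriek$ are coalgebra homomorphisms, which is the bialgebra compatibility.

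For the naturality statement, let $F\colon Z\to W$ be a monoidal \culffunctor between monoidal decomposition spaces. Being \culf, it induces a coalgebra homomorphism $(F_1)\lowershriek\colon\Grpd_{/Z_1}\to\Grpd_{/W_1}$ by Lemma~\ref{culf-hm}; being monoidal, it satisfies $F\circ m_Z\simeq m_W\circ(F\times F)$ and $F\circ e_Z\simeq e_W$, and applying the monoidal pseudofunctor $(-)_1$ followed by $(-)\lowershriek$ turns these equivalences into the (homotopy-)commutativity of the diagrams expressing that $(F_1)\lowershriek$ is an algebra homomorphism. Hence $(F_1)\lowershriek$ preserves both structures and is a bialgebra homomorphism.

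The one genuinely delicate point is \emph{coherence}. Exactly as the incidence coalgebra of a decomposition space is a strong homotopy comonoid in $\LIN$ rather than a strict comonoid, the algebra structure, its compatibility with $\Delta$ and $\epsilon$, and the homomorphism properties above should be established coherently in the $2$-category $\LIN$: one must check that the equivalence identifying the incidence coalgebra of $Z\times Z$ with the tensor square carries the full system of higher coherence cells, and that the pseudo-associativity and pseudo-unitality cells of the monoid $(Z,m,e)$ map, under $(-)_1$ and $(-)\lowershriek$, to cells compatible with the comultiplication and counit. This is where the Beck--Chevalley lemma~\ref{lem:beck-chevalley} and the coherence results of \cite{GKT:DSIAMI-1} do the real work, and where reasoning only about arrows and their factorisations would not suffice.
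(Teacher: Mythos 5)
Your proposal is correct and follows the same route the paper intends: the monoid structure $(m,e)$ on $Z$ transfers along the monoidal assignment $A\mapsto\Grpd_{/A}$, $f\mapsto f\lowershriek$ to an algebra structure on $\Grpd_{/Z_1}$, and since $m$ and $e$ are \culf, Lemma~\ref{culf-hm} (applied after identifying the incidence coalgebras of $Z\times Z$ and of $1$ with the tensor square and the unit of $\LIN$) makes them coalgebra homomorphisms, i.e.\ exhibits a monoid object in coalgebras; the same mechanism gives the statement about monoidal \culf functors. Your closing remark about coherence correctly locates where the nontrivial work lives, namely in the $2$-categorical coherence results of \cite{GKT:DSIAMI-1} rather than in any new combinatorial argument.
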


\begin{blanko}{Extensivity.}
  Classically, a category $\CC$ with sums is called {\em extensive} when the
  natural functor $\CC_{/A} \times \CC_{/B} \to \CC_{/A+B}$ is an
  equivalence. More generally, a monoidal category $(\CC,\tensor,I)$ is called
  {\em monoidal extensive} when the natural functor
  $\CC_{/A} \times \CC_{/B} \to \CC_{/A \tensor B}$ is an equivalence.
  The fat nerve of a monoidal extensive category is always a monoidal
  decomposition space. As an example, the category $\FF$ of finite sets and all
  maps is extensive in the classical sense. The category of finite sets and
  surjections inherits the monoidal structure $+$ from $\FF$, but it is no
  longer the categorical sum (since there are no sum
  injections). It is still monoidal extensive. We shall come back to this
  particular example in Subsection~\ref{sec:fdb}.
\end{blanko}

\begin{lemma}[{\cite[Lemma~9.3]{GKT:DSIAMI-1}}]\label{dec-monoidal}
  The Dec of a monoidal decomposition space has again a natural monoidal
  structure, and the dec map preserves this structure.
\end{lemma}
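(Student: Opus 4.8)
The plan is to verify the claim directly from the definitions, exploiting the fact that $\Dec_\bot$ (and likewise $\Dec_\top$) is just a reindexing of the simplicial groupoid. Recall that a monoidal decomposition space $Z$ comes with a monoid structure given by \culffunctors $m\colon Z\times Z\to Z$ and $e\colon 1\to Z$. Since $\Dec_\bot$ is a functor on simplicial groupoids, applying it to $m$ and $e$ produces maps $\Dec_\bot(m)\colon \Dec_\bot(Z\times Z)\to \Dec_\bot(Z)$ and $\Dec_\bot(e)\colon \Dec_\bot(1)\to\Dec_\bot(Z)$. The first observation is that $\Dec_\bot$ preserves products (it is defined levelwise by $(\Dec_\bot Z)_n = Z_{n+1}$ together with the appropriate faces and degeneracies, and products of simplicial groupoids are computed levelwise), so $\Dec_\bot(Z\times Z)\simeq \Dec_\bot(Z)\times\Dec_\bot(Z)$, and similarly $\Dec_\bot(1)\simeq 1$. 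Hence $\Dec_\bot(m)$ and $\Dec_\bot(e)$ genuinely give a multiplication and unit on $\Dec_\bot(Z)$.

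Next I would check associativity and unitality: these hold because $\Dec_\bot$ is a functor and therefore sends the associativity and unit diagrams for $(Z,m,e)$ to the corresponding diagrams for $(\Dec_\bot Z,\Dec_\bot m,\Dec_\bot e)$, which commute (up to coherent equivalence) because the original ones do. The only genuinely nontrivial point is that $\Dec_\bot(m)$ and $\Dec_\bot(e)$ are again \culffunctors, i.e.\ cartesian on generic maps. For this I would use the description of \culf in terms of pullback squares \eqref{culf-squares}: being cartesian on all generic maps is being cartesian on the inner coface maps $d^i$, $0<i<n$, and on all codegeneracy maps $s^i$. Now a generic map relevant to $\Dec_\bot Z$ in degree $n$ is a generic map of $Z$ in degree $n+1$ that does not touch the bottom vertex $0$; in particular it is still an inner-or-degeneracy map of $Z$, hence the defining \culf squares of $\Dec_\bot(m)$ are literally a subfamily of the \culf squares of $m$, which are pullbacks by hypothesis. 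So $\Dec_\bot(m)$ is \culf; the same argument with the top vertex handles $\Dec_\top$, and $\Dec_\bot(e)$ is \culf because $e$ is and the relevant squares are again inherited. (Alternatively, one can cite that the dec map $d_\bot\colon \Dec_\bot Z\to Z$ is itself \culf and use composition/cancellation properties of \culffunctors together with naturality of $d_\bot$, but the direct check is cleaner.)

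Finally, I would observe that the dec map $d_\bot\colon \Dec_\bot(Z)\to Z$ is a map of monoids: naturality of the dec map with respect to the simplicial maps $m$ and $e$ gives commuting squares $d_\bot\circ\Dec_\bot(m)\simeq m\circ(d_\bot\times d_\bot)$ and $d_\bot\circ\Dec_\bot(e)\simeq e$, which is exactly the statement that $d_\bot$ preserves the monoidal structure. Thus $\Dec_\bot(Z)$ is a monoidal decomposition space (it is a decomposition space since $Z$ is, by the decalage characterisation recalled above) and $d_\bot$ preserves this structure; the same holds for $\Dec_\top$.

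The main obstacle, such as it is, is bookkeeping: one must be careful about the reindexing so that ``generic maps of $\Dec_\bot Z$ in degree $n$'' are correctly identified with ``generic maps of $Z$ in degree $n+1$ fixing the extra bottom vertex,'' and that under this identification the \culf squares for $\Dec_\bot(m)$ really do form a subfamily of those for $m$ rather than requiring new ones. Once that identification is in place, everything is formal from functoriality of $\Dec_\bot$ and naturality of the dec map.
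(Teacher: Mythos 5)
Your proposal is correct, and since the paper gives no proof of this lemma (it simply cites \cite[Lemma~9.3]{GKT:DSIAMI-1}), your argument is exactly the formal one intended there: $\Dec_\bot$ is precomposition with the shift $[n]\mapsto [0]\star[n]$, hence preserves products (so $\Dec_\bot(m)$, $\Dec_\bot(e)$ give a coherent monoid structure on $\Dec_\bot Z$), and the generic maps of $\Dec_\bot Z$ are shifted generic maps of $Z$, so the cartesian squares needed for $\Dec_\bot(m)$ and $\Dec_\bot(e)$ to be \culf are a subfamily of those for $m$ and $e$. Naturality of the dec transformation $d_\bot\colon\Dec_\bot\Rightarrow\mathrm{id}$ with respect to $m$ and $e$ then gives that $d_\bot$ is monoidal, completing the proof as you describe (and the same for $\Dec_\top$).
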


\begin{blanko}{Example: the Schmitt Hopf algebra of graphs,
    continued.}\label{ex:graphs-bialg}
  The decomposition space of Example~\ref{ex:graphs-decomp} (and
  \ref{ex:graphs-coalg}) has a canonical monoidal structure given by disjoint
  union.  Recall that $X_k$ is the groupoid of graphs equipped with an ordered
  partition of the vertex set into $k$ parts (possibly empty).  The disjoint
  union of two such structures is given by taking the disjoint union of the
  underlying graphs, with new partition given by joining the two $i$th parts,
  for each $1\leq i \leq k$.  This clearly defines a simplicial map from
  $X\times X$ to $X$.  To say that it is \culf is to establish that squares
  like this is a pullback:
  $$\xymatrix{
    X_1\times X_1 \ar[d]_{+} & \ar[l]_{d_1}\dlpullback X_2\times X_2 \ar[d]^{+}\\
    X_1 & \ar[l]^{d_1} X_2 }$$
  But this is clear: a pair of graphs with a
  $2$-partition each can be uniquely reconstructed if we know what the two
  underlying graphs are (an element in $X_1\times X_1$) and we know how the
  disjoint union is partitioned (an element in $X_2$) --- provided of course
  that we can identify the disjoint union of those two underlying graphs with
  the underlying graph of the disjoint union (which is to say that the data
  agree down in $X_1$). It follows that the resulting incidence coalgebra is
  also a bialgebra. (Furthermore, this bialgebra has a canonical grading, by the
  number of vertices, and with respect to this grading it is connected, since
  the only zero-vertex graph is the empty graph. It is well known that connected
  graded bialgebras are Hopf~\cite{Figueroa-GraciaBondia:0408145}.)
\end{blanko}

\section{Examples}
\label{sec:ex}

It is characteristic for the classical theory of incidence (co)algebras of
posets that most often it is necessary to impose an equivalence relation on the
set of intervals in order to arrive at the interesting `reduced' incidence
(co)algebras. This equivalence relation may be simply isomorphism of posets, or
equality of length of maximal chains as in binomial posets
\cite{Doubilet-Rota-Stanley}, or it may be more subtle order-compatible
relations \cite{Dur:1986}, \cite{Schmitt:1994}. Content, Lemay and
Leroux~\cite{Content-Lemay-Leroux} remarked that in some important cases the
relationship between the original incidence coalgebra and the reduced one
amounts to a \culf functor, although they did not make this notion explicit.
From our global simplicial viewpoint, we observe that very often these \culf
functors arise from decalage, often of a decomposition space which not a poset
and sometimes not even a Segal space.

Recall that for $X$ a locally finite decomposition space, we write $\Inc_X$ for
the incidence coalgebra (with underlying vector space $\Q_{\pi_0 X_1}$), and we
write $\Inc^X$ for the incidence algebra (with underlying profinite-dimensional
vector space $\Q^{\pi_0 X_1})$.

\begin{blanko}{Decomposition spaces for the classical series.}
  Classically important examples of incidence algebras are power series
  representations. From the perspective of the objective method, these
  representations appear as cardinalities of various monoidal structures on
  species, realised as incidence algebras with groupoid coefficients. We list
  six examples illustrating some of the various kinds of generating functions
  listed by Stanley~\cite{Stanley:MR513004} (see also D\"ur~\cite{Dur:1986}).
\begin{enumerate}
  \item Ordinary generating functions, the zeta function being $\zeta(z)= 
  \sum_{k\geq 0} z^k$.  This comes from ordered
  sets and ordinal sum, and the incidence algebra is that of ordered species
  with the ordinary product.

  \item Exponential generating functions, the zeta function being
        $\zeta(z)=\sum_{k\geq 0} \frac{z^k}{k!}$. Objectively, there are two
        versions of this: one coming from the standard Cauchy product of
        species, and one coming from the shuffle product of $\mathbb L$-species
        (in the sense of \cite{Bergeron-Labelle-Leroux}).

  \item Ordinary Dirichlet series, the zeta function being $\zeta(z)=\sum_{k>0} k^{-s}$.
  This comes from ordered sets with the cartesian product.

  \item `Exponential' Dirichlet series, the zeta function being
        $\zeta(z)= \sum_{k>0} \frac{k^{-s}}{k!}$. This comes from the Dirichlet
        product of arithmetic species \cite{Baez-Dolan:zeta}, also called the
        arithmetic product~\cite{Maia-Mendez:0503436}.

  \item $q$-exponential generating series, with zeta function
        $\zeta(z)= \sum_{k\geq 0} \frac{z^k}{[k]!}$. This comes from the
        Waldhausen $S_\bullet$-construction on the category of finite vector
        spaces. The incidence algebra is that of $q$-species with a version of
        the external product of Joyal--Street~\cite{Joyal-Street:GLn}.

  \item A variation with zeta function
        $\zeta(z)= \sum_{k\geq 0} \frac{z^k}{\norm{\Aut(\F_q^k)}}$, which arises
        from $q$-species with the `Cauchy' product studied by
        Morrison~\cite{Morrison:0512052}.
\end{enumerate}
Of these examples, only (1) and (3) have trivial section coefficients and come
from a \M category in the sense of Leroux. We proceed to the details.

\end{blanko}

\subsection{Additive examples}

\label{sec:add}

We start with several easy examples that serve to reiterate the importance of
having incidence algebras of posets, monoids and monoidal groupoids on the same
footing, connected by \culf functors, and in particular by decalage.

\begin{blanko}{Linear orders and the additive monoid.}\label{ex:N&L}
  Let $\mathbf L$ denote the nerve of the poset $(\N,\leq)$, and let $\mathbf N$
  be the nerve of the additive monoid $(\N,+)$.  Imposing the equivalence
  relation `isomorphism of intervals' on the incidence coalgebra of $\mathbf L$
  gives that of $\mathbf N$, and
  Content--Lemay--Leroux~\cite{Content-Lemay-Leroux} observed that this
  reduction is induced by a \culf functor $r:\mathbf L \to \mathbf N$ sending
  $a\leq b$ to $b-a$.  In fact we have:
\begin{lemma}
There is an isomorphism of simplicial sets
  \begin{align*}
    \Decbot{\mathbf N}  & \stackrel\simeq\longrightarrow  \mathbf L  \\
    \intertext{given in degree $k$ by}
    (x_0,\dots,x_k) & \longmapsto
      [x_0\leq x_0+x_1 \leq  \dots \leq x_0+\dots+x_k] ,
  \end{align*}
  and the \culf functor $r$ is isomorphic to the dec map
  $$
  d_\bot: \Decbot{\mathbf N} \to \mathbf N,\qquad (x_0,\dots,x_k)
  \mapsto (x_1,\dots,x_k).
  $$
\end{lemma}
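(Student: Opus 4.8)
The plan is to verify the claimed isomorphism degreewise and then check simplicial naturality, the whole argument being a routine unwinding of the definitions of $\Dec_\bot$, the nerve of the monoid $(\N,+)$, and the nerve of the poset $(\N,\leq)$. First I would recall that $(\mathbf N)_k = \N^k$, the set of $k$-tuples $(x_1,\dots,x_k)$ of monoid elements thought of as a string of $k$ composable arrows in the one-object category $(\N,+)$; hence $(\Dec_\bot \mathbf N)_k = (\mathbf N)_{k+1} = \N^{k+1}$, with tuples written $(x_0,x_1,\dots,x_k)$. On the other side, $\mathbf L_k = \{(a_0 \leq a_1 \leq \dots \leq a_k)\}$, the set of $k$-simplices of the nerve of $(\N,\leq)$. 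The proposed map sends $(x_0,\dots,x_k)$ to the chain with $a_i = x_0 + x_1 + \dots + x_i$; its inverse sends a chain $(a_0 \leq \dots \leq a_k)$ to the tuple of successive differences $(a_0,\, a_1 - a_0,\, \dots,\, a_k - a_{k-1})$. These are manifestly mutually inverse bijections of sets (this uses only that partial sums and successive differences are inverse operations on $\N$, valid precisely because the chain is weakly increasing so all differences are non-negative).

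Next I would check compatibility with the simplicial operators. For $\Dec_\bot \mathbf N$, the face map $d_i$ ($0 \leq i \leq k$) is the face map $d_{i+1}$ of $\mathbf N$: for $0 < i < k$ it composes $x_i$ and $x_{i+1}$ to $x_i + x_{i+1}$, and $d_k$ deletes the last entry $x_k$ (note $d_0$ of $\mathbf N$ has been forgotten in the decalage). On the $\mathbf L$ side, $d_i$ deletes $a_i$ from the chain. One then checks these match under the partial-sum bijection: deleting $a_k$ corresponds to dropping $x_k$; deleting $a_i$ for $i<k$ replaces the two steps $a_{i-1}\leq a_i\leq a_{i+1}$ by one step $a_{i-1}\leq a_{i+1}$, whose associated difference is $(a_i - a_{i-1}) + (a_{i+1} - a_i) = x_i + x_{i+1}$, exactly the inner face of $\mathbf N$. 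The degeneracies match similarly: inserting a repeated $a_i$ in the chain corresponds to inserting a $0$ in the tuple, which is the degeneracy of $\mathbf N$ shifted by one. This establishes the first display.

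For the second claim, the dec map $d_\bot \colon \Dec_\bot \mathbf N \to \mathbf N$ is by definition the unused bottom face $d_0$ of $\mathbf N$, which on $\N^{k+1}$ sends $(x_0,x_1,\dots,x_k) \mapsto (x_1,\dots,x_k)$ (discarding the first arrow of the composable string). Transporting along the isomorphism just established, this becomes the map $\mathbf L \to \mathbf N$ that sends the chain $(a_0 \leq a_1 \leq \dots \leq a_k)$ to the tuple of differences $(a_1 - a_0,\, a_2 - a_1,\, \dots,\, a_k - a_{k-1})$; in degree $1$ this is exactly $(a_0 \leq a_1) \mapsto a_1 - a_0$, which is the functor $r$ of Content--Lemay--Leroux. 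Hence $r \cong d_\bot$ as simplicial maps. The only point requiring the slightest care is bookkeeping of the index shift in the face and degeneracy maps under $\Dec_\bot$ — I expect this to be the main (mild) obstacle — but once one fixes the convention that $(\Dec_\bot X)_k = X_{k+1}$ with $d_i^{\Dec} = d_{i+1}^X$ and $s_i^{\Dec} = s_{i+1}^X$, everything falls out immediately. That $r$ is \culf then follows formally, since the dec map is always \culf (cf.~the lemma on decalage in Subsection~\ref{sec:bialg}).
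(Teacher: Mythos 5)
Your verification is correct and is precisely the routine unwinding that the paper leaves implicit: the lemma is stated there without proof, the partial-sum/successive-difference bijection and the index shift $d_i^{\Dec}=d_{i+1}$, $s_i^{\Dec}=s_{i+1}$ on $\Dec_\bot$ being regarded as evident. The only case your face-map check does not literally cover is $i=0$: the face $d_0$ of $\Dec_\bot(\mathbf N)$, i.e.\ $d_1$ of $\mathbf N$, merges $x_0$ and $x_1$, and on the $\mathbf L$-side this corresponds to deleting $a_0$, which is not of the ``replace two steps by one'' form but still matches because the first difference of the truncated chain $(a_1\leq\dots\leq a_k)$ is $a_1=x_0+x_1$; with that noted everything is in order, including the identification of $r$ with $d_\bot$ (degree~$1$ suffices since both are simplicial maps out of a nerve) and the appeal to the general fact that dec maps are \culf.
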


  The comultiplication on $\Grpd_{/\mathbf N_1}$ is given by
  $$
  \Delta(\name n) = \sum_{a+b=n} \name a \tensor \name b
  $$
  and, taking cardinality, the incidence coalgebra
  $\Inc_{\mathbf N}$ is the vector space $\Q_{\N}$ with basis  
  given by the
  symbols
  $\delta_n$ and comultiplication $\Delta(\delta_n) = \sum\limits_{a+b=n}
  \delta_a \tensor \delta_b$.
  The incidence algebra $\Inc^{\mathbf{N}}$ is the
  profinite-dimensional vector space $\Q^{\N}$ on
  the symbols
  $\delta^n$ with convolution product
  $\delta^a * \delta^b = \delta^{a+b}$, and
  is isomorphic to the ring of power series in one variable,
  \begin{eqnarray*}
    \Inc^{\mathbf{N}} & \stackrel\simeq\longrightarrow & \Q[[z]]  \\
    \delta^n & \longmapsto & z^n \\
    (\N\stackrel f\to\Q) &\longmapsto& \sum f(n)\, z^n  .
  \end{eqnarray*}
\end{blanko}

\begin{blanko}{Upper dec.}\label{upperdec-op}
  In the previous example, and in most of the following, it is more convenient
  to work with lower dec.  Let us just point out what happens with upper dec.
  Let $\mathbf L\op$ denote the nerve of the opposite poset of $(\N,\leq)$,
  that is, $(\N,\geq)$.
  There is a \culf functor $r': \mathbf L \op \to \mathbf N$ sending $a\geq b$
  to $a-b$.  We have:
\begin{lemma}
There is an isomorphism of simplicial sets
  \begin{align*}
   \Dectop{\mathbf N}  & \stackrel\simeq\longrightarrow  \mathbf L \op  \\
   \intertext{given in degree $k$ by}
    (x_0,\dots,x_k) & \longmapsto  [x_0+\dots+x_k \geq x_1 + \dots + x_k \geq
     \dots \geq x_{k-1}+x_k \geq x_k] ,
  \end{align*}
  and the \culf functor $r'$ is isomorphic to the dec map
  $$
  d_\top: \Dectop{\mathbf N} \to \mathbf N,\qquad (x_0,\dots,x_k)
  \mapsto (x_0,\dots,x_{k-1}).
  $$
\end{lemma}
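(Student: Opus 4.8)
The plan is to write down the isomorphism explicitly and then read off that it identifies $r'$ with the dec map, exactly mirroring the lower-dec lemma above. Recall that $\mathbf N$ is the nerve of the one-object category $(\N,+)$, so $\mathbf N_n = \N^n$, with $d_0$ deleting the first entry, the top face deleting the last entry, the inner faces adding two consecutive entries, and the degeneracies inserting a $0$; and $\mathbf L\op = N(\N,\geq)$, so $(\mathbf L\op)_k$ is the set of weakly decreasing chains $a_0\geq\cdots\geq a_k$ of natural numbers, with $\delta_i$ deleting $a_i$ and $\sigma_i$ repeating it. By definition $(\Dec_\top\mathbf N)_k = \mathbf N_{k+1} = \N^{k+1}$, carrying the face and degeneracy maps $d_0,\dots,d_k$ and $s_0,\dots,s_k$ of $\mathbf N_{k+1}$, while the dec map $d_\top$ in degree $k$ is the leftover top face $\mathbf N_{k+1}\to\mathbf N_k$, $(x_0,\dots,x_k)\mapsto(x_0,\dots,x_{k-1})$. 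First I would define $\phi_k\colon \N^{k+1}\to(\mathbf L\op)_k$ by sending $(x_0,\dots,x_k)$ to the chain of reversed partial sums $[b_0\geq b_1\geq\cdots\geq b_k]$, where $b_i := x_i+x_{i+1}+\cdots+x_k$. The chain is weakly decreasing because the $x_i$ are $\geq 0$, and $\phi_k$ is a bijection with inverse the consecutive-difference map $[a_0\geq\cdots\geq a_k]\mapsto(a_0-a_1,\dots,a_{k-1}-a_k,a_k)$, whose output lands in $\N^{k+1}$ precisely because the chain decreases.

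The second step is to verify that $\phi=(\phi_k)_k$ is a simplicial map; this is the part that needs attention, and it reduces to three model checks. Compatibility with $d_0$: passing to $(x_1,\dots,x_k)$ leaves the reversed partial sums $b_1\geq\cdots\geq b_k$, which is $\delta_0$ applied to $\phi_k(x_0,\dots,x_k)$. Compatibility with an inner face $d_j$ ($1\leq j\leq k$): replacing $x_{j-1},x_j$ by $x_{j-1}+x_j$ leaves every term $b_i$ with $i\neq j$ unchanged and deletes exactly the term $b_j$, which is $\delta_j$. Compatibility with $s_j$: inserting a $0$ in position $j$ repeats exactly the partial sum $b_j$, which is $\sigma_j$. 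Each of these is a one-line telescoping identity. Hence $\phi$ is a levelwise bijection commuting with all structure maps, so it is an isomorphism of simplicial sets.

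Finally I would check the triangle over $\mathbf N$. The functor $(\N,\geq)\to(\N,+)$ underlying $r'$ sends the arrow from $a$ to $b$ (with $a\geq b$) to $a-b$, so on $k$-simplices $r'_k$ sends $[a_0\geq\cdots\geq a_k]$ to the tuple of edge values $(a_0-a_1,\dots,a_{k-1}-a_k)$. Composing, $r'_k\circ\phi_k$ sends $(x_0,\dots,x_k)$ to $(b_0-b_1,\dots,b_{k-1}-b_k)=(x_0,\dots,x_{k-1})$, which is precisely $(d_\top)_k$. Thus $r'\circ\phi=d_\top$ on the nose, exhibiting $r'$ as isomorphic, as an object over $\mathbf N$, to the dec map $d_\top$; in particular $r'$ is \culf, since dec maps of decomposition spaces are, and $\mathbf N$ is a Segal space, hence a decomposition space by Proposition~\ref{prop:segalisdecomp}.

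As an alternative I would note that the statement also follows from the lower-dec lemma with no further computation, by applying the levelwise-opposite functor $X\mapsto X\op$: this interchanges $\Dec_\bot$ and $\Dec_\top$ together with their dec maps, and since $(\N,+)$ is commutative we have $\mathbf N\op\cong\mathbf N$ while $\mathbf L\op$ is, by definition, the nerve of the opposite poset; so the lower-dec isomorphism $\Dec_\bot(\mathbf N)\cong\mathbf L$, transported along $(-)\op$, becomes exactly the desired isomorphism $\Dec_\top(\mathbf N)\cong\mathbf L\op$, carrying $r$ to $r'$. I expect the only genuine obstacle to be the index bookkeeping: keeping straight which face and degeneracy maps of $\mathbf N_{k+1}$ survive into $\Dec_\top\mathbf N$, and matching the `delete a vertex' faces of the poset nerve $N(\N,\geq)$ with the `merge two entries / drop an end entry' faces of the monoid nerve $N(\N,+)$. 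Once that dictionary is fixed, every verification is a telescoping sum.
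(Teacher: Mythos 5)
Your proposal is correct and follows exactly the verification the paper leaves implicit: the lemma's stated map is the reversed-partial-sum bijection you write down, and the routine checks are precisely the telescoping compatibilities with the faces, degeneracies, and the triangle over $\mathbf N$ identifying $r'$ with $d_\top$. Your alternative derivation---transporting the lower-dec lemma along the levelwise-opposite involution, using commutativity of $(\N,+)$ so that $\mathbf N\op\cong\mathbf N$---is a valid and slightly slicker route, consistent with the paper's remark in \ref{upperdec-op} that the upper dec introduces exactly this contravariance.
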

In the following examples, this contravariance comes in for all upper decs.
It will not play any role until Example~\ref{ex:CK}.
\end{blanko}

\begin{blanko}{Powers.}\label{ex:Nk&Lk}
  As a variation of the previous example, fix $k\in \N$ and let $\mathbf{L}^k$
  denote the (strict) nerve of the poset $(\N^k, \leq)$ and let $\mathbf{N}^k$
  denote the strict nerve of the monoid $(\N^k,+)$. Again there is a \culf
  functor $\mathbf{L}^k \to \mathbf{N}^k$, and the incidence algebra of
  $\mathbf{N}^k$ is the power series ring in $k$ variables. The functor is
  defined by coordinatewise difference, and again it is given by decalage, via a
  natural identification $\mathbf{L}^k\simeq\Decbot{\mathbf{N}^k}$. The functor
  does {\em not} divide out by isomorphism of intervals, unless $k=1$, since
  isomorphic intervals also arise by permutation of coordinates, treated next.
\end{blanko}

\begin{blanko}{Symmetric powers.}\label{sym}
  Let $M$ be a monoid.  For fixed $k\in \N$, the power $M^k$ is again a monoid,
  considered as a decomposition space via its strict nerve $X$.  The symmetric
  group $\mathfrak S_k$ acts on $X_1 = M^k$ by permutation of coordinates, and
  acts on $X_n = X_1^n = (M^k)^n$ diagonally.  There is induced a simplicial
  groupoid $X/\mathfrak S_k$ given by homotopy quotient: in degree $n$ it is the
  action groupoid $\frac{X_1\times\cdots \times X_1}{\mathfrak S_k}$.  Since
  taking homotopy quotient of a group action is a lowershriek operation, it
  preserves pullbacks, so it follows that this new simplicial groupoid again
  satisfies the Segal condition.  (It is no longer a monoid, though, since in
  degree zero we have the one-object groupoid $B\mathfrak S_k=1/\mathfrak S_k$,
  the classifying space of the group $\mathfrak S_k$).  In general, the quotient
  map $X \to X/\mathfrak S_k$ is a \culf functor which does not arise from
  decalage.

  We now return to the poset $(\N^k, \leq)$ and its nerve
  $\mathbf{L}^k$ from \ref{ex:Nk&Lk}. The reduced incidence algebra, given by
  identifying isomorphic intervals, coincides with the incidence coalgebra of
  $\mathbf{N}^k/\mathfrak S_k = (\N^k,+)/\mathfrak S_k$. The reduction map is
  the composite \culf functor
  $$
  \mathbf{L}^k \simeq \Decbot{\mathbf{N}^k} \longrightarrow \mathbf{N}^k
  \longrightarrow \mathbf{N}^k/\mathfrak S_k .
  $$
\end{blanko}

\begin{blanko}{Injections and the monoidal groupoid of sets
  under sum.}\label{ex:I=DecB}
Let $\mathbf{I}$ be the fat nerve of the category of finite sets and injections,
and let $\mathbf{B}$ be the monoidal nerve of the monoidal groupoid $(\B, +, 0)$
of finite sets and bijections (see \ref{monoidalgroupoids}).
D\"ur~\cite{Dur:1986} noted that imposing the equivalence relation `having
isomorphic complements' on the incidence coalgebra of $\mathbf{I}$ gives the
binomial coalgebra. Again, we can see this reduction map as induced by a \culf
functor from a decalage:
\begin{lemma}\label{lem:I=DecB}
  There is an equivalence of simplicial groupoids
  \begin{align*}
   \Decbot{\mathbf B}  & \stackrel\simeq\longrightarrow  \mathbf{I}  \\
   \intertext{given in degree $k$ by}
    (x_0,\dots,x_k) & \longmapsto
              [x_0\subseteq x_0+x_1 \subseteq  \dots \subseteq x_0+\dots+x_k] ,
  \end{align*}
  and a \culf functor $\mathbf{I}\to \mathbf B$ is given by
  $$
  d_\bot: \Decbot{\mathbf B} \to \mathbf B,\qquad (x_0,\dots,x_k)
  \mapsto (x_1,\dots,x_k).
  $$
\end{lemma}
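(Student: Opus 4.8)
The plan is to build the comparison functor explicitly, degree by degree, exactly as in the preceding lemma identifying $\Dec_\bot(\mathbf N)$ with $\mathbf L$, and then to check that it is simplicial and a levelwise equivalence of groupoids, which together give an equivalence of simplicial groupoids. Recall that $(\Dec_\bot \mathbf B)_k = \mathbf B_{k+1} = \B^{\times(k+1)}$, the groupoid of $(k{+}1)$-tuples $(x_0,\dots,x_k)$ of finite sets with tuples of bijections as morphisms; its $i$-th face map ($0\le i\le k$) is $d_{i+1}$ of $\mathbf B$, which joins the $(i{+}1)$-st and $(i{+}2)$-nd entries by $+$ (for $i<k$) and discards the last entry (for $i=k$), and its degeneracies insert an entry $0$. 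A $k$-simplex of $\mathbf I$ is a string $a_0\into a_1\into\dots\into a_k$ of injections of finite sets, morphisms being the levelwise-commuting ladders of bijections. Writing $y_j := x_0+\dots+x_j$ with its canonical coproduct inclusions $y_{j-1}\into y_j$, I define
$$
\Phi_k\colon (\Dec_\bot\mathbf B)_k \longrightarrow \mathbf I_k,\qquad
(x_0,\dots,x_k)\longmapsto \bigl[\, y_0 \into y_1 \into \dots \into y_k \,\bigr],
$$
sending a tuple of bijections $(\psi_0,\dots,\psi_k)$ to the ladder $(\psi_0,\ \psi_0+\psi_1,\ \dots,\ \psi_0+\dots+\psi_k)$, which commutes with the inclusions because $(\psi_0+\dots+\psi_j)|_{y_{j-1}}=\psi_0+\dots+\psi_{j-1}$.

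Next I would check that the $\Phi_k$ assemble into a simplicial functor. For the outer face we have $d_0(x_0,\dots,x_k)=(x_0+x_1,x_2,\dots,x_k)$, whose partial sums are $y_1,\dots,y_k$, i.e.\ the chain $\Phi_k(x_0,\dots,x_k)$ with its first object dropped, which is $d_0^{\mathbf I}\Phi_k$. For an inner face, $d_i$ of $\Dec_\bot\mathbf B$ ($1\le i\le k-1$) replaces $x_i,x_{i+1}$ by $x_i+x_{i+1}$, so the partial sums become $y_0,\dots,y_{i-1},y_{i+1},\dots,y_k$, the chain with the object $y_i$ deleted — precisely $d_i^{\mathbf I}\Phi_k$, which composes the $i$-th and $(i{+}1)$-st injections. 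The top face and the degeneracies go the same way; for the degeneracies one uses $y_j+0\cong y_j$, so that the inserted map is, up to the canonical identification coming from unitality of $(\B,+,0)$, an identity, matching $s_i^{\mathbf I}$ inserting $\id_{y_j}$. Since we only claim an equivalence, these canonical identifications cause no trouble.

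Finally I would show that each $\Phi_k$ is an equivalence of groupoids. It is essentially surjective: given a string $a_0\into\dots\into a_k$, set $x_0:=a_0$ and $x_j:=a_j\setminus a_{j-1}$ (the complement of the image), and the canonical identifications $x_0+\dots+x_j\cong a_j$ assemble into an isomorphism of strings $\Phi_k(x_0,\dots,x_k)\xrightarrow{\ \sim\ } (a_0\into\dots\into a_k)$. It is fully faithful: a morphism $(\phi_0,\dots,\phi_k)$ between strings in the image of $\Phi_k$ satisfies $\phi_j(y_{j-1})=y'_{j-1}$ by commutativity with the inclusions, hence restricts to a bijection $x_j\to x'_j$ on each successive complement, and is reconstructed from these as $\phi_j=\psi_0+\dots+\psi_j$; thus $\Phi_k$ is bijective on hom-sets. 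Being a simplicial map that is a levelwise equivalence of groupoids, $\Phi$ is an equivalence of simplicial groupoids. Transporting the dec map $d_\bot\colon \Dec_\bot\mathbf B\to\mathbf B$ — which in degree $k$ is $(x_0,\dots,x_k)\mapsto(x_1,\dots,x_k)$ — across $\Phi$ yields exactly the functor $\mathbf I\to\mathbf B$ that sends a string of injections to the tuple of its successive complements, i.e.\ D\"ur's reduction. That this functor is \culf need not be proved by hand: $\mathbf B$ is the monoidal nerve of a monoidal groupoid, hence a Segal space, hence a decomposition space by Proposition~\ref{prop:segalisdecomp}, so its dec map $d_\bot$ is \culf by the characterisation of decomposition spaces via decalage recalled above.

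The only genuinely delicate point is the full faithfulness of $\Phi_k$, where one must see that automorphisms (and isomorphisms) of a filtered finite set $y_0\subseteq y_1\subseteq\dots\subseteq y_k$ correspond bijectively to tuples of isomorphisms of the successive complements $x_j$. This is precisely the bookkeeping of symmetries that forces the statement to be an equivalence rather than an isomorphism of simplicial groupoids, and it is the reason $\mathbf B$ (rather than a strict skeleton) is the correct target; everything else is the routine simplicial-identity verification already seen in the $\mathbf N$/$\mathbf L$ case.
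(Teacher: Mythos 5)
Your proof is correct, and it is essentially the argument the paper has in mind: the paper states the lemma without a formal proof, offering only the staircase-diagram picture in which both $\Dec_\bot(\mathbf B)_k$ and $\mathbf{I}_k$ are identified via the partial sums $x_0+\dots+x_j$ and their successive complements, which is exactly the comparison functor $\Phi_k$ you construct and verify levelwise. Your handling of the delicate points (coherence of the non-strict monoidal structure, full faithfulness via restriction to successive complements, and deducing \culf-ness of $d_\bot$ from the general decalage characterisation of decomposition spaces) is sound and fills in precisely what the paper leaves implicit.
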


The isomorphism may also be represented diagrammatically using diagrams
reminiscent of those in Waldhausen's $S_\bullet$-construction (cf.
Subsection~\ref{sec:Wald} below). As an example, both groupoids $\mathbf{I}_3 $
and $(\Decbot{\mathbf B})_3=\mathbf B_4$ are equivalent to the groupoid of
diagrams
  $$\xymatrix@R-0.8pc{
    & & &x_3 \ar[d]
    \\
    & &x_2 \ar[d] \ar[r] & x_2+x_3 \ar[d]
    \\
    &x_1\ar[r]\ar[d] & x_1+x_2 \ar[d] \ar[r] & x_1+x_2+x_3 \ar[d]
    \\
    x_0 \ar[r] & x_0+x_1 \ar[r] & x_0+x_1+x_2\ar[r] & x_0+x_1+x_2+x_3 }$$
  The face maps $d_i:\mathbf{I}_3\to \mathbf{I}_2$ and
  $d_{i+1}:\mathbf{B}_4\to \mathbf{B}_3$ both act by deleting the column
  beginning $x_i$ and the row beginning $x_{i+1}$. In particular
  $d_\bot:\mathbf{I}\to\mathbf B$ deletes the bottom row, sending a sequence of
  injections to the sequence of successive complements $(x_1,x_2,x_3)$. We will
  revisit this theme in the treatment of the Waldhausen
  $S_\bullet$-construction.

\medskip

From Subsection~\ref{sec:bialg}
we have:
\begin{lemma}\label{lem:IB}
  Both $\mathbf{I}$ and $\mathbf{B}$ are monoidal decomposition spaces under
  disjoint union, and $\mathbf{I}\simeq \Decbot{\mathbf B} \to \mathbf{B}$ is
  a monoidal \culf functor inducing a bialgebra homomorphism
  $\Grpd_{/{\mathbf{I}}_1}\to\Grpd_{/\mathbf B_1}$.
\end{lemma}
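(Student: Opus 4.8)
The plan is to assemble the statement from results already in hand, reducing the bialgebra claim to Proposition~\ref{prop:bialg}: once $\mathbf{B}$ and $\mathbf{I}$ are known to be monoidal decomposition spaces and $\mathbf{I}\simeq\Dec_\bot(\mathbf B)\to\mathbf B$ is known to be a monoidal \culf functor, the induced bialgebra homomorphism $\Grpd_{/\mathbf{I}_1}\to\Grpd_{/\mathbf B_1}$ is immediate.

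First I would handle $\mathbf{B}$. It is a Segal groupoid---its degree-$0$ part being terminal, the Segal maps $\mathbf{B}_n\to\mathbf{B}_1\times_{\mathbf{B}_0}\dots\times_{\mathbf{B}_0}\mathbf{B}_1$ are equivalences---hence a decomposition space by Proposition~\ref{prop:segalisdecomp}. Its monoidal structure is componentwise disjoint union: $m:\mathbf{B}\times\mathbf{B}\to\mathbf{B}$ is in degree $n$ the map $\B^n\times\B^n\to\B^n$, $\big((a_i),(b_i)\big)\mapsto(a_i+b_i)$, with unit $1\to\mathbf{B}$ picking out the empty set. This is patently simplicial, and that it is \culf is exactly the monoidal extensivity of $(\B,+,0)$, verified as in Example~\ref{ex:graphs-bialg}: the relevant pullback square expresses that a finite set equipped with two ordered $2$-partitions is the same as a finite set equipped with an ordered $4$-partition, i.e.\ that two partitions of a finite set admit a unique common refinement. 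So $\mathbf{B}$ is a monoidal decomposition space; the unit map is \culf by the analogous, trivial, check.

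Next, Lemma~\ref{dec-monoidal} endows $\Dec_\bot(\mathbf{B})$ with a monoidal structure for which the dec map $d_\bot:\Dec_\bot(\mathbf{B})\to\mathbf{B}$ is monoidal, and $d_\bot$ is \culf since $\mathbf{B}$ is a decomposition space (the lower dec map of a decomposition space is always \culf, as recalled after the decalage characterisation of decomposition spaces, and as already used in Lemma~\ref{lem:I=DecB}). It remains to transport this across the equivalence $\mathbf{I}\simeq\Dec_\bot(\mathbf{B})$ of Lemma~\ref{lem:I=DecB} and to identify the transported monoidal structure on $\mathbf{I}$ with disjoint union; this is the one step that is not purely formal. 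Unwinding the explicit equivalence, which in degree $k$ sends $(x_0,\dots,x_k)$ to the chain $[x_0\subseteq x_0+x_1\subseteq\dots\subseteq x_0+\dots+x_k]$, the componentwise disjoint union of $(x_0,\dots,x_k)$ and $(y_0,\dots,y_k)$ goes to the chain $[x_0+y_0\subseteq(x_0+y_0)+(x_1+y_1)\subseteq\dots]$, which is precisely the termwise disjoint union of the two corresponding chains of injections. Hence the equivalence is monoidal, $\mathbf{I}$ is a monoidal decomposition space, and $\mathbf{I}\simeq\Dec_\bot(\mathbf{B})\to\mathbf{B}$ is a monoidal \culf functor. (One may also see directly that $\mathbf{I}$ is a monoidal decomposition space: finite sets and injections form a monoidal extensive category under $+$---given $f:S\hookrightarrow A$ and $g:T\hookrightarrow B$ one forms $f+g:S+T\hookrightarrow A+B$, while an injection $h:U\hookrightarrow A+B$ restricts to $h^{-1}(A)\hookrightarrow A$ and $h^{-1}(B)\hookrightarrow B$, these assignments being mutually inverse---so the Extensivity discussion applies to its fat nerve $\mathbf{I}$.)

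Finally, Proposition~\ref{prop:bialg} yields the incidence bialgebra structures on $\Grpd_{/\mathbf{I}_1}$ and $\Grpd_{/\mathbf{B}_1}$ and the bialgebra homomorphism $\Grpd_{/\mathbf{I}_1}\to\Grpd_{/\mathbf{B}_1}$ induced by the monoidal \culf functor. The only real obstacle is the bookkeeping of the third paragraph---matching the monoidal structure inherited by $\Dec_\bot(\mathbf{B})$ with disjoint union on $\mathbf{I}$ through the explicit equivalence---and even that is a routine verification; all the structural machinery (Segal $\Rightarrow$ decomposition, dec maps of decomposition spaces are \culf, $\Dec$ of a monoidal decomposition space is monoidal, monoidal \culf functors induce bialgebra maps) is already in place.
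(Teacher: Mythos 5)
Your proposal is correct and follows essentially the route the paper intends: the paper gives no explicit proof, presenting the lemma as an immediate consequence of Subsection~\ref{sec:bialg} (monoidal extensivity of finite sets under $+$, Lemma~\ref{dec-monoidal}, the fact that dec maps of decomposition spaces are \culf, together with Lemma~\ref{lem:I=DecB} and Proposition~\ref{prop:bialg}), which is exactly the machinery you invoke. Your only addition is to spell out the routine verifications (the \culf check for $+$ on $\mathbf B$ and the identification of the transported monoidal structure on $\mathbf I$ with termwise disjoint union), and these are carried out correctly.
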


Proposition~\ref{prop:Delta(f)} gives the comultiplication on
$\Grpd_{/\mathbf B_1}$ as
  \begin{align*}
    \Delta&(\name S)
            \;= \;
            \int^{A,B}\Bij(A+B,S)
            \cdot\name A\tensor\name B
\;=
\!\!\!\!\!\!\!\!\!\!
\sum_{
  \substack{A ,B\subset S \\  A\cup B = S,\; A\cap B = \emptyset}
  }
\!\!\!\!\!\!\!\!\!\!
 \name A \tensor \name B .
  \end{align*}
It follows that the convolution product on $\Grpd^{\B}$
is just the Cauchy product on groupoid-valued species
  $$
  (F*G)[S] = \sum_{A+B=S} F[A] \times G[B] .
  $$
For the representables, the formula says simply
$h^A*h^B=h^{A+B}$.

The decomposition space {\bf B} is locally finite, and taking cardinality gives
the classical binomial coalgebra $\Inc_{\mathbf B} = \Q_{\pi_0 \mathbf{B}_1}$,
with basis given by the symbols $\delta_n$ and
$$
\Delta(\delta_n) =
\sum\limits_{a+b=n} \frac{n!}{a!\,b!}\,\delta_a \tensor \delta_b.
$$
As a bialgebra we have $(\delta_1)^n=\delta_n$ and one recovers the
comultiplication from $\Delta(\delta_n)= \big( \delta_0 \tensor \delta_1 +
\delta_1 \tensor \delta_0 \big)^n$.

  Dually, the incidence algebra $\Inc^{\mathbf{B}}$ is the
  profinite-dimensional vector space $\Q^{\pi_0\mathbf{B}_1}$ with basis
  given by the
  symbols $\delta^n$ and with convolution product
  $$
  \delta^a* \delta^b = \frac{(a+b)!}{a!\,b!}\,\delta^{a+b} .
  $$
  This is isomorphic to the algebra $\Q[[z]]$, where $\delta^n$ corresponds to
  $z^n/n!$ and the cardinality of a species $F$ corresponds to its exponential
  generating series.
\end{blanko}

\begin{blanko}{Finite ordered sets,
    and the shuffle product of $\mathbb L$-species.}
  Let $\mathbf{OI}$ denote (the fat nerve of) the category of finite ordered
  sets and monotone injections.  The resulting incidence coalgebra can be
  reduced by identifying two monotone injections if they have isomorphic
  complements, in analogy with Example~\ref{ex:I=DecB}, yielding in this case
  the {\em shuffle coalgebra}.  Again, this reduction is an example of decalage.
  Consider the decomposition space $\mathbf{Z}$ with $\mathbf{Z}_n=
  \mathbf{OI}_{/\un n}$, the groupoid of arbitrary maps from a finite ordered
  set $S$ to $\un n$, or equivalently of $n$-shuffles of $S$.  This provides a
  direct construction of the shuffle coalgebra.  This example is subsumed in the
  theory of restriction species, developed in~\cite{GKT:restriction}.  The
  section coefficients are the binomial coefficients, but we may now note that
  on the objective level the convolution algebra is the shuffle product of
  $\mathbb L$-species (cf.~\cite{Bergeron-Labelle-Leroux}).

  There is a natural identification
  $$
  \mathbf{OI} \isopil \Decbot{\mathbf{Z}},
  $$
  which takes a sequence of monotone injections to the list of successive
  complements. There is also a \culf functor $\mathbf Z\to \mathbf B$ that takes
  an $n$-shuffle to the underlying $n$-tuple of subsets, and the decalage of
  this functor is the \culf functor $\mathbf{OI}\to\mathbf{I}$ given by
  forgetting the order (see \cite[Example 4.5]{GKT:DSIAMI-1}). Combining with
  Lemma~\ref{lem:IB}, we get altogether this commutative diagram of monoidal
  decomposition spaces and monoidal \culf functors,
$$\xymatrix{
  \mathbf{OI}\rto^-\simeq\dto&\Decbot{\mathbf Z}\dto\rto^-{d_\bot}&\mathbf{Z}\dto
  \\
  \mathbf{I}\rto^-\simeq&\Decbot{\mathbf{B}} \rto_-{d_\bot} & \mathbf{B} . }
$$
\end{blanko}

\begin{blanko}{Words.}
  Let $A$ be a fixed set, an alphabet.  The comma category $\mathbf{OI}_{/A}$ is
  the category of finite words in $A$ and subword inclusions,
  cf.~Lothaire~\cite{Lothaire:MR675953} (see also D\"ur~\cite{Dur:1986}).  Again
  it is naturally identified with the decalage of the {\em $A$-coloured shuffle
  decomposition space} $\mathbf Z_A$, which in degree $k$ is the groupoid of
  $A$-words (of arbitrary length) equipped with a
  not-necessarily-order-preserving map to $\un k$.  Precisely, the objects are
  spans of sets
  $$
  \un k \leftarrow \un n \to A .
  $$

  The dec map $\mathbf{OI}_{/A} \simeq \Decbot{\mathbf{Z}_A} \to \mathbf{Z}_A$
  takes a subword inclusion to its complement word. The incidence algebra
  $\Inc^{\mathbf{Z}_A}$ is the Lothaire shuffle algebra of words. Again, it all
  amounts to observing that $A$-words admit a forgetful monoidal \culf functor
  to $1$-words, which is just the decomposition space $\mathbf{Z}$ from before,
  and that this in turn admits a monoidal \culf functor to $\mathbf B$.

  Note the difference between $\mathbf Z_A$ and the free monoid on $A$: the
  latter is like allowing only the trivial shuffles, where the subword
  inclusions are only concatenation inclusions.  In terms of the structure maps
  $\un n \to \un k$, the free-monoid nerve allows only monotone maps, whereas
  the shuffle decomposition space allows arbitrary set maps.
\end{blanko}

\subsection{Multiplicative examples}

\label{sec:mult}

\begin{blanko}{Divisibility poset and multiplicative monoid.}\label{ex:D&M}
  In analogy with \ref{ex:N&L}, let $\mathbf D$ denote the (strict) nerve of the
  divisibility poset $(\N^\times, |)$, and let $\mathbf M$ be the strict nerve
  of the multiplicative monoid $(\N^\times, \cdot)$.  Imposing the equivalence
  relation `isomorphism of intervals' on the incidence coalgebra of $\mathbf D$
  gives that of $\mathbf M$, and
  Content--Lemay--Leroux~\cite{Content-Lemay-Leroux} observed that this
  reduction is induced by the \culf functor $r:\mathbf D \to \mathbf M$ sending
  $d| n$ to $n/d$.  In fact we have:

\begin{lemma}\label{lem:DtoM}
There is an isomorphism of simplicial sets
  \begin{align*}
   \Decbot{\mathbf M}  & \stackrel\simeq\longrightarrow  \mathbf D  \\
   \intertext{given in degree $k$ by}
    (x_0,x_1,\dots,x_k) & \longmapsto  [x_0| x_0x_1 | \dots | x_0x_1\cdots x_k],
  \end{align*}
  and the \culf functor $r$ is isomorphic to the dec map
  $$
  d_\bot:\Decbot{\mathbf M}\to\mathbf M,\qquad
  (x_0,\dots,x_k)\mapsto (x_1,\dots,x_k).
  $$
\end{lemma}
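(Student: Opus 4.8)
The plan is to mirror the argument for the analogous identification $\Dec_\bot(\mathbf N)\simeq\mathbf L$ in Example~\ref{ex:N&L}, with the additive monoid $(\N,+)$ replaced by the multiplicative monoid $(\N^\times,\cdot)$ and the chain poset $(\N,\leq)$ replaced by the divisibility poset $(\N^\times,\mid)$. The only property of $(\N^\times,\cdot)$ that is actually used is that it is cancellative: whenever $d\mid n$ there is a \emph{unique} $x\in\N^\times$ with $dx=n$, namely $x=n/d$.

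First I would unwind both sides degreewise. Since $\mathbf M$ is the strict nerve of a monoid, $\mathbf M_n=(\N^\times)^n$, so $(\Dec_\bot\mathbf M)_k=\mathbf M_{k+1}=(\N^\times)^{k+1}$ with entries written $(x_0,\dots,x_k)$; by the recipe for $\Dec_\bot$ (\ref{Dec}) the face map $d_i$ of $\Dec_\bot\mathbf M$ is the face map $d_{i+1}$ of $\mathbf M$, hence it multiplies $x_i$ into $x_{i+1}$ for $0\le i\le k-1$ and deletes $x_k$ for $i=k$, the degeneracy $s_i$ inserts a unit between $x_i$ and $x_{i+1}$, and the unused bottom face $d_\bot=d_0$ of $\mathbf M$ deletes $x_0$. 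On the other side, $\mathbf D_k$ is the set of divisibility chains $[n_0\mid n_1\mid\dots\mid n_k]$, with $d_i$ deleting $n_i$ and $s_i$ repeating $n_i$. Next I would check that $\phi_k\colon(x_0,\dots,x_k)\mapsto[x_0\mid x_0x_1\mid\dots\mid x_0x_1\cdots x_k]$ is well defined (each partial product divides the next) and a bijection, with inverse $[n_0\mid\dots\mid n_k]\mapsto(n_0,\,n_1/n_0,\dots,n_k/n_{k-1})$; cancellativity is what makes this inverse well defined and forces it to be the only candidate, and a one-line computation shows the two assignments are mutually inverse.

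It then remains to verify that $\phi=(\phi_k)_k$ is simplicial and that it carries $d_\bot$ to $r$, which is pure bookkeeping: deleting $n_0$ matches multiplying $x_0$ into $x_1$; deleting an interior $n_i$ matches multiplying $x_i$ into $x_{i+1}$; deleting the top $n_k$ matches deleting $x_k$; repeating $n_i$ matches inserting a unit; and $d_\bot(x_0,\dots,x_k)=(x_1,\dots,x_k)$ translates under $\phi$ into the map sending $[n_0\mid\dots\mid n_k]$ to $(n_1/n_0,\dots,n_k/n_{k-1})$, which is exactly the functor $(\N^\times,\mid)\to B(\N^\times,\cdot)$ sending an arrow $d\mid n$ to $n/d$, i.e.\ the functor $r$. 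There is no genuine obstacle; the one point requiring care is the index shift built into $\Dec_\bot$ — one must consistently read the $i$-th face of $\Dec_\bot\mathbf M$ as the $(i+1)$-st face of $\mathbf M$, so that the top face of $\mathbf D$ corresponds to the top face of $\Dec_\bot\mathbf M$ while the bottom face of $\mathbf D$ absorbs $x_0$ and the bottom face $d_0$ of $\mathbf M$ is what becomes the dec map. As a conceptual alternative one may note that $\Dec_\bot$ of the nerve of a one-object category $BM$ is the nerve of the coslice $\ast/BM$, whose objects are the elements of $M$ and whose morphisms $a\to a'$ are those $b\in M$ with $ba=a'$; when $M=(\N^\times,\cdot)$, cancellativity makes $\ast/BM$ into the poset $(\N^\times,\mid)$, and under this identification the forgetful functor $\ast/BM\to BM$ is precisely $r$.
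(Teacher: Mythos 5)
Your proof is correct, but it follows a different route from the paper's. The paper disposes of this lemma in two lines by reduction to the additive case of \ref{ex:N&L}: unique prime factorisation gives $\mathbf M\cong\prod_p\mathbf N$ and $\mathbf D\cong\prod_p\mathbf L$ (weak products over the primes), and since $\Dec_\bot$ is a right adjoint it preserves products, so the isomorphism $\Dec_\bot(\mathbf M)\cong\mathbf D$ and the identification of $r$ with $d_\bot$ are inherited coordinatewise from $\Dec_\bot(\mathbf N)\cong\mathbf L$. You instead verify everything directly: unwinding $(\Dec_\bot\mathbf M)_k=\mathbf M_{k+1}$ with the shifted face and degeneracy maps, exhibiting the partial-product bijection with divisibility chains, checking simplicial compatibility, and matching $d_\bot$ with $r$ via successive quotients; your closing remark that $\Dec_\bot N(BM)\simeq N(\ast/BM)$ and that cancellativity (plus commutativity) makes the coslice the divisibility poset is a clean conceptual summary of the same computation. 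What the paper's argument buys is brevity and a reuse of the additive lemma; what yours buys is self-containedness (it does not invoke prime factorisation or the preservation of weak products under decalage, and in effect it reproves the additive case too, since only cancellativity of the monoid is used), so both are legitimate, with yours somewhat more general and the paper's more economical.
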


  This example can be obtained from Example \ref{ex:N&L} directly, since
  $\mathbf M= \prod_p \mathbf N$ and $\mathbf D=\prod_p \mathbf L$, where the
  (weak) product is over all primes $p$.  Now $\Decbot{}$ is a right adjoint, so
  preserves products, and Lemma \ref{lem:DtoM} follows from Lemma~\ref{ex:N&L}.

  Since $\mathbf{M}_0$ is contractible, we can use
  Corollary~\ref{sec coeffs for monoids}, and since $\mathbf{M}_1$ is 
  discrete, there are no symmetry factors, and we find that the convolution 
  product is
  $$\delta^m * \delta^n =
  \delta^{mn},
  $$ and the incidence algebra is isomorphic to the Dirichlet
  algebra:
  \begin{eqnarray*}
    \Inc^{\mathbf M} & \longrightarrow & \{ \sum_{k>0} a_k k^{-s} \}   \\
    \delta^n & \longmapsto & n^{-s} \\
    f & \longmapsto & \sum_{n>0} f(n) n^{-s} .
  \end{eqnarray*}
\end{blanko}

\begin{blanko}{Arithmetic species.}
  The Dirichlet coalgebra (\ref{ex:D&M}) also has a fatter version: consider now
  instead the monoidal groupoid $(\B^\times, \times, 1)$ of non-empty finite
  sets under the cartesian product, and its monoidal nerve $\mathbf A$ with
  $\mathbf A_k := (\B^\times)^k$, as in \ref{monoidalgroupoids}, where this time
  the inner face maps take the cartesian product of two adjacent factors, and
  the outer face maps project away an outer factor.

  The resulting coalgebra structure is
  $$
  \Delta(S) = \sum_{A\times B \simeq S} A \tensor B .
  $$
  Some care is due to interpret this correctly: the homotopy fibre of
  $d_1:\mathbf A_2 \to \mathbf A_1$ over $S$ is the groupoid whose objects are
  triples $(A,B,\phi)$ consisting of sets $A$ and $B$ equipped with a bijection
  $\phi: A \times B \isopil S$, and whose morphisms are pairs of isomorphisms
  $\alpha: A \isopil A'$, $\beta: B \isopil B'$ forming a commutative square
  with $\phi$ and $\phi'$.

  The corresponding incidence algebra $\grpd^{\B^\times}$ with the convolution
  product is the algebra of arithmetic species \cite{Baez-Dolan:zeta} under the
  Dirichlet product (called the arithmetic product of species by Maia and
  M\'endez~\cite{Maia-Mendez:0503436}).

  Clearly we are in the locally finite situation; since $\mathbf{A}_0$ is 
  contractible, the section coefficients are
  given directly by Corollary~\ref{sec coeffs for monoids}:
  $$
  \delta^m * \delta^n = \frac{(mn)!}{m!n!} \, \delta^{mn}   .
  $$
  From this we see that the incidence algebra $\Inc^{\mathbf{A}}$ is isomorphic
  to the Dirichlet algebra, namely
  \begin{eqnarray*}
    \Inc^{\mathbf A} & \longrightarrow & \{ \sum_{k>0} a_k k^{-s} \}  \\
    \delta^m & \longmapsto & \frac{m^{-s}}{m!} \\
    f & \mapsto & \sum_{k>0} f(k) \frac{k^{-s}}{k!};
  \end{eqnarray*}
  these are the `exponential' (or modified) Dirichlet series
  (cf.\ Baez--Dolan~\cite{Baez-Dolan:zeta}).
  So the incidence algebra zeta function in this setting is
  $$
  \zeta = \sum_{k>0} \delta^k \mapsto \sum_{k>0} \frac{k^{-s}}{k!}
  $$
  (which is not the usual Riemann zeta function).
\end{blanko}

\subsection{Linear examples and the Waldhausen $S_\bullet$-construction}

\label{sec:Wald}

In this subsection, we are concerned with linear versions of the additive
examples: instead of starting with finite sets and injections, we look at vector
spaces over a finite field, and their linear injections. This is a richer
setting: in particular, there is now an essential difference between quotients
and complements, which at the level of decomposition spaces is the difference
between the Waldhausen $S_\bullet$-construction and the monoidal nerve of direct
sums, as we shall see.

\begin{blanko}{$\F_q$-vector spaces.}
  Let $\mathbb F_q$ denote a finite field with $q$ elements.  Let $\mathbf W$
  denote the fat nerve of the category $\vect^{\text{inj}}$ of
  finite-dimensional $\mathbb F_q$-vector spaces and $\mathbb F_q$-linear
  injections.  From this decomposition space we immediately get a coalgebra, but
  it is not the most interesting.
\end{blanko}

\begin{blanko}{Direct sums of $\F_q$-vector spaces and
    `Cauchy' product of $q$-species.}
  A coalgebra which is the $q$-analogue of $\mathbf B$ can be obtained from the
  monoidal groupoid $(\vect^{\operatorname{iso}}, \oplus, 0)$.  Denote by
  $\mathbf M$ the monoidal nerve of $(\vect^{\operatorname{iso}}, \oplus, 0)$,
  in the sense of \ref{monoidalgroupoids}.  We compute the section coefficients
  directly from the definition~\eqref{sec coeffs}: the fibre of $d_1: \mathbf M_2 \to
  \mathbf M_1$ over a vector space $V$ is the groupoid consisting of triples
  $(A,B,\varphi)$ where $\varphi$ is a linear isomorphism $A\oplus B\isopil V$.  This
  groupoid projects to
  $\vect^{\operatorname{iso}}\times\vect^{\operatorname{iso}}$: the
  fibre over $(A,B)$ is discrete, of cardinality $\norm{\Aut(V)}$, giving
  altogether the section coefficient
  $$
  c_{k,n-k}^n =
  \frac{\norm{\Aut(\F_q^n)}}{\norm{\Aut(\F_q^k)}\norm{\Aut(\F_q^{n-k})}}
  = q^{k(n-k)} \, { n \choose k}_q .
  $$
  (We could also have invoked Corollary~\ref{sec coeffs for monoids},
  but using the definition directly has the pedagogical advantage that it also 
  works for the closely related Example~\ref{ex:q} below.)

  At the objective level, this convolution product corresponds to the `Cauchy'
  product of $q$-species in the sense of Morrison~\cite{Morrison:0512052}.

  If we let $\delta_n$ denote the cardinality of the name of an $n$-dimensional
  vector space $V$, the resulting coalgebra $\Inc_{\mathbf{M}}$ therefore has
  comultiplication:
  $$
  \Delta(\delta_n) = \sum_{k\leq n}
  q^{k(n-k)}\, {n \choose k}_q \cdot\; \delta_k \tensor \delta_{n-k} .
  $$

  In analogy with the discrete case discussed in
  \ref{ex:I=DecB}--\ref{lem:I=DecB}
  there is a canonical simplicial map
  $\Decbot{\mathbf M} \to \mathbf W$, given by sending an $(n+1)$-tuple
  of vector spaces $(V_0,\ldots,V_n)$ to the sequence of inclusions
  $$
  V_0 \into V_0 \oplus V_1 \into \cdots \into V_0 \oplus \cdots\oplus V_n.
  $$
  But in contrast to Lemma~\ref{lem:I=DecB}, this simplicial map is not an
  equivalence: the inverse, which in the discrete case was
  `taking complements', does not exist in the linear case (or if it is
  constructed artificially, for example by reference to euclidean structure, it
  will mess with the isomorphisms).
  Let us actually compute $\Decbot{\mathbf M}$.
\end{blanko}

\begin{blanko}{Complements as retractions.}
  Let $\mathbf W^{\operatorname{retr}}$ denote the fat nerve of the category
  whose objects are finite-dimensional $\F_q$-vector spaces and whose morphisms
  are retracted injections (linear of course)
  $$\xymatrix{
     V \ar@{ >->}[r]<-2pt>_-i & \ar@{->>}[l]<-4pt>_-r V'
  }$$
  Such retracted injections have canonical complements, namely $\ker(r)$.  The
  following analogue of Lemma~\ref{lem:I=DecB} is now straightforward to
  establish.
\end{blanko}

\begin{lemma}
  There is a canonical equivalence of simplicial groupoids
  \begin{align*}
    \Decbot{\mathbf M}  &
            \stackrel\simeq\longrightarrow\mathbf{W}^{\operatorname{retr}}  \\
   \intertext{given in degree $k$ by}
    (V_0,\dots,V_k) & \longmapsto
[V_0 \subseteq V_0\oplus V_1 \subseteq\dots\subseteq V_0\oplus\dots\oplus V_k]
  \end{align*}
  inducing a \culf functor $\mathbf{W}^{\operatorname{retr}}\to \mathbf M$.
\end{lemma}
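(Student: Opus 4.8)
The plan is to exhibit the simplicial functor $\Phi\colon \Dec_\bot(\mathbf M)\to\mathbf W^{\operatorname{retr}}$ given by the formula in the statement, prove it is a levelwise equivalence of groupoids (which is the relevant notion of equivalence of simplicial groupoids here), and then deduce the \culf assertion from the décalage characterisation of decomposition spaces recalled above.

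First I would set up the two sides explicitly. As the monoidal nerve of $(\vect^{\operatorname{iso}},\oplus,0)$ we have $\mathbf M_n=(\vect^{\operatorname{iso}})^n$ and $\mathbf M_0=*$, with inner face maps adding adjacent summands and outer face maps forgetting an end summand; thus $\Dec_\bot(\mathbf M)_k=\mathbf M_{k+1}$ has objects the tuples $(V_0,\dots,V_k)$. On the other side, $\mathbf W^{\operatorname{retr}}_k$ is the groupoid of chains $W_0\rightarrowtail\cdots\rightarrowtail W_k$ of $k$ composable retracted injections, with morphisms the levelwise isomorphisms commuting with all inclusions and retractions. Define $\Phi_k(V_0,\dots,V_k)$ to be the chain with $W_j=V_0\oplus\cdots\oplus V_j$, with $W_{j-1}\hookrightarrow W_j$ the canonical coproduct inclusion and $W_j\twoheadrightarrow W_{j-1}$ the canonical projection killing $V_j$, and on morphisms send $(\phi_j)_j$ to $(\phi_0\oplus\cdots\oplus\phi_j)_j$. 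Verifying that $\Phi$ respects the simplicial structure is bookkeeping: the outer face $d_\top$ of $\Dec_\bot(\mathbf M)$ (forgetting $V_k$) corresponds to dropping $W_k$; the reindexed bottom face $d_0^{\Dec}=d_1^{\mathbf M}$, sending $(V_0,V_1,\dots)$ to $(V_0\oplus V_1,\dots)$, corresponds to dropping $W_0$; an inner face, fusing $V_i\oplus V_{i+1}$, corresponds to deleting the term $W_i$ and composing the two retracted injections at $W_i$, and one checks the composite is the canonical retracted injection $W_{i-1}\rightarrowtail W_{i+1}$; and inserting a zero summand corresponds to repeating a term of the chain via the identity, i.e.\ to a degeneracy.

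The content of the lemma is that each $\Phi_k$ is an equivalence of groupoids, and this is exactly the observation already recorded above that a retracted injection $V\rightarrowtail V'$ with retraction $r$ amounts to the subspace together with its complement $\ker r$, so that $V'=V\oplus\ker r$ canonically. Concretely, given a chain $W_0\rightarrowtail\cdots\rightarrowtail W_k$ with retractions $r_j\colon W_j\twoheadrightarrow W_{j-1}$, put $V_0=W_0$ and $V_j=\ker r_j$ for $j\geq 1$; an easy induction produces canonical isomorphisms $W_j\cong V_0\oplus\cdots\oplus V_j$ compatible with all the inclusions and retractions, exhibiting the chain as isomorphic to $\Phi_k(V_0,\dots,V_k)$, so $\Phi_k$ is essentially surjective. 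For full faithfulness, a morphism of chains $(\psi_j\colon W_j\isopil W'_j)_j$ commutes with inclusions and retractions, hence (since $r'_j\psi_j=\psi_{j-1}r_j$) carries $\ker r_j$ isomorphically onto $\ker r'_j$; restricting to these kernel summands gives a tuple $(\phi_j\colon V_j\isopil V'_j)_j$, and the assignments $\psi_\bullet\mapsto\phi_\bullet$ and $\phi_\bullet\mapsto(\phi_0\oplus\cdots\oplus\phi_j)_j$ are mutually inverse, so $\Phi_k$ is bijective on hom-sets. The one point requiring a little care, and the main (though still routine) obstacle, is keeping the canonical isomorphisms $W_j\cong V_0\oplus\cdots\oplus V_j$ coherent with the chain maps throughout; everything else is formal. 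Hence $\Phi$ is a levelwise equivalence of simplicial groupoids. For the \culf statement, note that $\mathbf M$ is a Segal space, since $\mathbf M_0=*$ forces $\mathbf M_n=\mathbf M_1^n\simeq\mathbf M_1\times_{\mathbf M_0}\cdots\times_{\mathbf M_0}\mathbf M_1$, hence a decomposition space by Proposition~\ref{prop:segalisdecomp}; by the décalage characterisation the dec map $d_\bot\colon\Dec_\bot(\mathbf M)\to\mathbf M$ is \culf. Equivalences of simplicial groupoids are trivially \culf\ (the squares in the cartesianness condition have a parallel pair of equivalences and are automatically homotopy pullbacks), so composing $d_\bot$ with a pseudo-inverse of $\Phi$ yields the desired \culf\ functor $\mathbf W^{\operatorname{retr}}\to\mathbf M$, which up to natural isomorphism sends a chain of retracted injections to the tuple $(\ker r_1,\dots,\ker r_k)$ of successive complements.
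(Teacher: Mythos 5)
Your proposal is correct and follows exactly the route the paper has in mind: the paper's entire argument is the preceding remark that a retracted injection has the canonical complement $\ker(r)$, so that the analogue of Lemma~\ref{lem:I=DecB} is ``straightforward to establish'', and your levelwise construction, the kernel-of-retraction inverse, and the deduction of \culf-ness from the dec map being \culf (plus equivalences being \culf) are just the spelled-out version of that. No gaps; the one delicate point you flag (coherence of the identifications $W_j\cong V_0\oplus\cdots\oplus V_j$, and that commuting with the retractions forces morphisms of chains to be block-diagonal) is handled correctly and is precisely what distinguishes $\mathbf W^{\operatorname{retr}}$ from $\mathbf W$.
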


The discussion shows that altogether $\mathbf M$ is not the most interesting
viewpoint.  We now change perspective from complements to quotients, getting
to the more important power series representation with factor $[n]!$ instead
of $\norm{\Aut(\F_q^n)}$, and
realise $\mathbf W$ as a decalage, in analogy with Lemma~\ref{lem:I=DecB}.

\begin{blanko}{$q$-binomials.}\label{ex:q}
  With reference to the incidence coalgebra of $\mathbf W$,
  impose the equivalence relation identifying
  two injections if their cokernels are isomorphic.  This gives the
  $q$-binomial coalgebra (see D\"ur~\cite[1.54]{Dur:1986}).

  The same coalgebra can be obtained without reduction as follows.  Put
  $\mathbf V_0 = 1$, let $\mathbf V_1$ be the maximal groupoid of
  $\vect$, and let $\mathbf V_2$ be the groupoid of short exact sequences.
  The span
  $$
   \xymatrixrowsep{4pt}
  \xymatrix {
  \mathbf V_1 & \ar[l] \mathbf V_2 \ar[r]  &\mathbf V_1\times\mathbf V_1 \\
  E & \ar@{|->}[l] [ E' \!\to\! E \!\to\! E''] \ar@{|->}[r]  & (E',E'')
  }$$
  (together with the span $\mathbf V _1 \leftarrow \mathbf V_0 \to 1$) defines a
  coalgebra structure on $\grpd_{/\mathbf V_1}$ which (after taking cardinality)
  is the $q$-binomial coalgebra, without further reduction.  The groupoids and
  maps involved are part of a simplicial groupoid $\mathbf V:
  \simplexcategory\op\to\Grpd$, namely the Waldhausen $S_\bullet$-construction
  of $\vect$, studied in more detail below, where we'll see that this is a
  decomposition space but not a Segal space.  The lower dec of $\mathbf V$ is
  naturally equivalent to the fat nerve $\mathbf W$ of the category of
  injections, and the dec map $d_\bot$ is the reduction map of D\"ur.

  We calculate the section coefficients of $\mathbf V$. Since $\mathbf V$ is
  not a Segal space, we cannot invoke Corollary~\ref{sec coeffs for monoids},
  so we use the definition of section coefficients \eqref{sec coeffs} directly,
  to find the following
  standard formula
  for the {\em Hall numbers} (cf.~also~\ref{Hall} below):
   $$
   c_{k,n-k}^n =
  \frac{\norm{\text{SES}_{k,n,n-k}}}{\norm{\Aut(\F_q^k)}\norm{\Aut(\F_q^{n-k})}}  .
  $$
  Here $\text{SES}_{k,n,n-k} = (\mathbf{V}_2)_{k,n,n-k}$ is 
  the groupoid of short exact sequences with
  specified vector spaces of dimensions $k$, $n$, and $n-k$.
  This is just a discrete groupoid, and it has
  $\norm{\Aut(\F_q^k)}\norm{\Aut(\F_q^{n-k})}{ n \choose k}_q $ elements.
  Indeed, there are ${ n \choose k}_q$
  $k$-dimensional subspaces of the $n$-dimensional space $ \F_q^n$, and hence $\norm{\Aut(\F_q^k)}{ n \choose k}_q $
  choices for the injection $\F_q^k \into \F_q^n$,  and then $\norm{\Aut(\F_q^{n-k})}$ choices for identifying the cokernel with $ \F_q^{n-k}$.
  Thus
  $$
  c^n_{n,n-k}\; =\;  { n \choose k}_q \;=\;\frac{[n]!}{[k]![n-k]!}.
  $$%
  This description gives an isomorphism of algebras
  (cf.~Goldman--Rota~\cite{Goldman-Rota:1970}, D\"ur~\cite{Dur:1986})
  \begin{eqnarray*}
    \Inc^{\mathbf V} & \longrightarrow & \Q[[z]]  \\
    \delta^k & \longmapsto & \frac{z^k}{[k]!} .
  \end{eqnarray*}

  Clearly this algebra is commutative.
  However, an important new aspect is revealed on the objective
  level: here the convolution product is the external product of $q$-species
  of Joyal-Street~\cite{Joyal-Street:GLn}.  They show (working with vector-space
  valued $q$-species), that this product has a natural
  non-trivial braiding (which of course reduces to commutativity upon taking
  cardinality).
\end{blanko}

\begin{blanko}{Waldhausen $S_\bullet$-construction of an abelian category.}
  The decomposition space with the short exact sequences
  leading to the Hall algebra is an example of
  Waldhausen's $S_\bullet$-construction \cite{Waldhausen},
  a centrepiece of modern $K$ theory.  We briefly explain this.

  The Waldhausen $S_\bullet$-construction of an abelian category $\mathscr A$ is
  a simplicial groupoid $S_\bullet\mathscr A$, with the following explicit
  description.  $S_0\mathscr A$ is a point, $S_1\mathscr A$ is the maximal
  groupoid in $\mathscr A$, and $S_2\mathscr A$ is the groupoid of short exact
  sequences in $\mathscr A$.  More generally, $S_n\mathscr A$ is the groupoid of
  staircase diagrams like the following (picturing $n=4$):
  $$\xymatrix{
  &&& A_{34} \\
  && A_{23} \ar@{ >->}[r] & A_{24} \ar@{->>}[u] \\
  & A_{12} \ar@{ >->}[r] & A_{13} \ar@{->>}[u]\ar@{ >->}[r] & A_{14} \ar@{->>}[u] \\
  A_{01} \ar@{ >->}[r] & A_{02} \ar@{->>}[u]\ar@{ >->}[r] & A_{03}
  \ar@{->>}[u]\ar@{ >->}[r] & A_{04} \ar@{->>}[u]
  }$$
  in which each sequence $A_{ij} \to A_{ik} \to A_{jk}$ is exact.
  The face map $d_i$ deletes all objects containing an $i$ index.
  The degeneracy map $s_i$ repeats the $i$th row and the $i$th column.

A sequence of composable monomorphisms
$(A_{1}\rightarrowtail A_{2}\rightarrowtail \dots \rightarrowtail A_{n})$
determines, up to canonical isomorphism,  short exact
sequences $A_{ij}\rightarrowtail A_{ik}\twoheadrightarrow
A_{jk}=A_{ij}/A_{ik}$ with $A_{0i}=A_i$.
Hence the whole diagram can be reconstructed up to isomorphism from the
bottom row.  (Similarly, since epimorphisms have uniquely determined kernels,
the whole diagram can also be reconstructed from the last column.)

We have  $s_0(*)=0$, and
\begin{align*}
d_0(A_1\rightarrowtail A_2\rightarrowtail \dots \rightarrowtail A_{n})&=
(A_2/A_1\rightarrowtail \dots \rightarrowtail A_{n}/A_1),
\\s_0(A_1\rightarrowtail A_2\rightarrowtail \dots \rightarrowtail A_{n})&=
(0\rightarrowtail
A_1\rightarrowtail
A_2\rightarrowtail \dots \rightarrowtail A_{n}) .
\end{align*}
The simplicial maps $d_i,s_i$ for $i\geq1$ are more straightforward:
the simplicial set $\Decbot{(S_\bullet\mathscr A)}$ is just the fat nerve of
$\mathscr A^{\operatorname{mono}}$.
\end{blanko}

\begin{lemma}
  The projections
  $S_{n+1} \mathscr A \to \Map([n], \mathscr A^{\operatorname{mono}})$
  and
  $S_{n+1} \mathscr A \to \Map([n],\mathscr A^{\operatorname{epi}})$
  are equivalences of groupoids.
\end{lemma}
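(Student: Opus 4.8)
The plan is to realise the first projection $S_{n+1}\mathscr A\to\Map([n],\mathscr A^{\operatorname{mono}})$, which sends a staircase diagram $(A_{ij})$ to its bottom row $A_{01}\rightarrowtail A_{02}\rightarrowtail\cdots\rightarrowtail A_{0,n+1}$, as an equivalence by producing an explicit quasi-inverse. Fixing a functorial choice of cokernels in $\mathscr A$, send a chain of composable monomorphisms (relabelled $A_{01}\rightarrowtail\cdots\rightarrowtail A_{0,n+1}$, so that it carries monos $A_{0i}\rightarrowtail A_{0j}$ for all $i\le j$) to the diagram with $A_{ij}:=\operatorname{coker}(A_{0i}\rightarrowtail A_{0j})$ for $1\le i<j\le n+1$. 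For $0<j<k$ the sequence $A_{0j}\rightarrowtail A_{0k}\twoheadrightarrow A_{jk}$ is short exact by construction, while for $0<i<j<k$ the third isomorphism theorem in an abelian category supplies the short exact sequence $A_{0j}/A_{0i}\rightarrowtail A_{0k}/A_{0i}\twoheadrightarrow A_{0k}/A_{0j}$, i.e.\ $A_{ij}\rightarrowtail A_{ik}\twoheadrightarrow A_{jk}$; the induced maps have precisely the variances demanded of a staircase. Functoriality of the chosen cokernels makes this assignment a functor $R\colon\Map([n],\mathscr A^{\operatorname{mono}})\to S_{n+1}\mathscr A$, and it already exhibits the projection as essentially surjective.

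Next I would compare the two composites. The composite $(\text{bottom row})\circ R$ is the identity, since the bottom row of $R$ applied to a chain returns that chain. For $R\circ(\text{bottom row})$, observe that every staircase diagram contains, for each $0<i<j$, the short exact sequence $A_{0i}\rightarrowtail A_{0j}\twoheadrightarrow A_{ij}$ indexed by $0,i,j$, which exhibits a canonical isomorphism $A_{ij}\xrightarrow{\ \sim\ }\operatorname{coker}(A_{0i}\rightarrowtail A_{0j})$; together with the identities on the objects $A_{0j}$ these should assemble into a natural isomorphism from the identity functor to $R\circ(\text{bottom row})$. Verifying this is the only step requiring care: one must check that the comparison isomorphisms commute with all the internal maps $A_{ij}\to A_{ik}$ and $A_{ik}\to A_{jk}$ of a staircase, and with every morphism of staircases. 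Both are routine diagram chases using the universal property of cokernels, and I expect this bookkeeping to be the main (and essentially only) obstacle. One can also phrase the argument without the quasi-inverse, by checking full faithfulness directly: a morphism of staircases is determined by its bottom-row components, since each $\phi_{ij}$ is forced to be the map induced on $\operatorname{coker}(A_{0i}\to A_{0j})$, and conversely any family on the bottom row commuting with the monos extends by taking induced maps on cokernels.

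For the second projection $S_{n+1}\mathscr A\to\Map([n],\mathscr A^{\operatorname{epi}})$, sending a staircase to its last column $A_{0,n+1}\twoheadrightarrow A_{1,n+1}\twoheadrightarrow\cdots\twoheadrightarrow A_{n,n+1}$, the argument is formally dual: fix functorial kernels, reconstruct a chain of composable epimorphisms into a staircase via $A_{ij}:=\ker(A_{i,n+1}\twoheadrightarrow A_{j,n+1})$, use the dual of the third isomorphism theorem for the exactness of $A_{ij}\rightarrowtail A_{ik}\twoheadrightarrow A_{jk}$, and compare the composites using the canonical isomorphisms $A_{ij}\xrightarrow{\ \sim\ }\ker(A_{i,n+1}\twoheadrightarrow A_{j,n+1})$ coming from the sub-sequence indexed by $i,j,n+1$. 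No new ideas arise beyond replacing ``cokernel'', ``bottom row'' and ``third isomorphism theorem'' by their duals.
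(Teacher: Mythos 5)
Your proposal is correct and follows essentially the same route as the paper, which justifies the lemma by observing that the staircase diagram can be reconstructed up to canonical isomorphism from its bottom row (via quotients/cokernels) and, dually, from its last column (via kernels). Your write-up simply makes this sketch explicit by exhibiting the quasi-inverse and checking the comparison isomorphisms, which is exactly the intended argument.
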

More precisely (with reference to the fat nerve):

\begin{prop}
  These equivalences assemble into levelwise simplicial equivalences
$$
\Decbot{(S_\bullet\mathscr A)} \simeq \fatnerve ( \mathscr A^{\operatorname{mono}})
$$
$$
\Dectop{(S_\bullet\mathscr A)} \simeq \fatnerve ( \mathscr A^{\operatorname{epi}}) .
$$
\end{prop}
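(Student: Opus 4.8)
The plan is to show that the two equivalences of the preceding lemma are compatible with all face and degeneracy maps, hence assemble into maps of simplicial groupoids; since they are levelwise equivalences, they will then be equivalences of simplicial groupoids. Concretely, by definition $(\Dec_\bot(S_\bullet\mathscr{A}))_n = S_{n+1}\mathscr{A}$, with $i$-th face and degeneracy map ($0\le i\le n$) given by the operators $d_{i+1}$ and $s_{i+1}$ of $S_\bullet\mathscr{A}$ (the discarded ones being $d_0,s_0$); whereas $\fatnerve(\mathscr{A}^{\operatorname{mono}})_n \simeq \Map([n],\mathscr{A}^{\operatorname{mono}})$ is the groupoid of chains of $n$ composable monomorphisms and natural isomorphisms, with $i$-th face precomposition with the coface $d^i\colon[n-1]\to[n]$ and $i$-th degeneracy precomposition with $s^i\colon[n+1]\to[n]$. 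The comparison map $\phi_n$ is the projection of the lemma: a staircase diagram $(A_{ij})_{0\le i<j\le n+1}$ goes to its bottom row $A_{01}\rat A_{02}\rat\dots\rat A_{0,n+1}$ (and an isomorphism of staircases to its restriction to that row). Dually, with $(\Dec_\top(S_\bullet\mathscr{A}))_n = S_{n+1}\mathscr{A}$ carrying the operators $d_i,s_i$ ($0\le i\le n$), the map $\psi_n$ sends a staircase to its last column $A_{0,n+1}\onto A_{1,n+1}\onto\dots\onto A_{n,n+1}$.

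The heart of the proof is a direct check that $\phi_\bullet$ is simplicial, using the explicit description of the $S_\bullet$-operators. The face map $d_{i+1}$ of $S_\bullet\mathscr{A}$ deletes every object carrying an $(i+1)$ index; on the bottom row this removes $A_{0,i+1}$ and replaces the consecutive pair $A_{0,i}\rat A_{0,i+1}\rat A_{0,i+2}$ by the composite $A_{0,i}\rat A_{0,i+2}$ --- which, because the relevant sub-square of a staircase is still part of a staircase, with $A_{0,i}\rat A_{0,i+2}\onto A_{i,i+2}$ exact, is precisely the monomorphism appearing in the face $F\circ d^i$ of the chain $F$. The degeneracy $s_{i+1}$ repeats the $(i+1)$st row and column, which on the bottom row inserts an identity morphism at $A_{0,i+1}$, i.e.\ yields $F\circ s^i$. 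Since monomorphisms are closed under composition and contain the identities, all of this stays inside $\mathscr{A}^{\operatorname{mono}}$, and the same bookkeeping applied to the ladders of isomorphisms shows $\phi_\bullet$ is a morphism on the nose. Thus $\phi_\bullet\colon\Dec_\bot(S_\bullet\mathscr{A})\to\fatnerve(\mathscr{A}^{\operatorname{mono}})$ is a simplicial map which is levelwise an equivalence, hence an equivalence of simplicial groupoids.

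For the second statement I would either repeat this argument verbatim with ``bottom row'' replaced by ``last column'', $\mathscr{A}^{\operatorname{mono}}$ replaced by $\mathscr{A}^{\operatorname{epi}}$, and the $\Dec_\bot$-operators $d_{i+1},s_{i+1}$ replaced by the $\Dec_\top$-operators $d_i,s_i$ --- using that epimorphisms in an abelian category are closed under composition and contain identities, and that the vertical maps $A_{i,n+1}\onto A_{j,n+1}$ of a staircase compose correctly --- or else deduce it from the first statement by the self-duality of the Waldhausen construction: the order-reversing automorphism of $\simplexcategory$ identifies $S_\bullet(\mathscr{A}\op)$ with $S_\bullet(\mathscr{A})$ and interchanges $\Dec_\bot$ with $\Dec_\top$, while $(\mathscr{A}\op)^{\operatorname{mono}} = (\mathscr{A}^{\operatorname{epi}})\op$ turns $\fatnerve((\mathscr{A}\op)^{\operatorname{mono}})$ into $\fatnerve(\mathscr{A}^{\operatorname{epi}})$ up to the same order-reversal.

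I expect no conceptual obstacle, since the preceding lemma already supplies the levelwise equivalences; the only real work is the index bookkeeping relating the reindexing built into $\Dec_\bot$ and $\Dec_\top$, the ``delete an index'' description of the $S_\bullet$-faces, and the coface/codegeneracy maps of the fat nerve. The fiddliest point, I think, is confirming that deleting an index from a staircase diagram again produces a staircase diagram with its short exact sequences $A_{ij}\rat A_{ik}\onto A_{jk}$ correctly reidentified, so that the induced operation on bottom rows (resp.\ last columns) really is composition of adjacent monomorphisms (resp.\ epimorphisms) --- but this is immediate from the exactness built into the $S_\bullet$-construction.
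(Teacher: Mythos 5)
Your proposal is correct and takes essentially the same approach as the paper, which only sketches the argument: it likewise observes that a staircase diagram is reconstructible (up to isomorphism) from its bottom row (resp.\ last column) and that the operators $d_i,s_i$ for $i\geq 1$ of $S_\bullet\mathscr A$ act on bottom rows exactly as the fat-nerve operators on chains of monomorphisms, which is precisely your bookkeeping check. Only a small adjustment: for $i=0$ and $i=n$ the face $d_{i+1}$ acts on the bottom row by deleting an outer object (matching the outer faces of $\fatnerve(\mathscr A^{\operatorname{mono}})$) rather than by composing a consecutive pair, so your generic inner-face description should be stated separately at those extremes.
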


\begin{theorem}
  \cite[Theorem 7.3.3]{Dyckerhoff-Kapranov:1212.3563}, \cite[10.10]{GKT:DSIAMI-1}
  The Waldhausen $S_\bullet$-construction of an abelian category $\mathscr A$
  is a decomposition space.
\end{theorem}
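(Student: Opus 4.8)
The plan is to deduce the theorem from the decalage characterisation of decomposition spaces recalled above, namely that a simplicial groupoid $X$ is a decomposition space precisely when $\Dec_\bot(X)$ and $\Dec_\top(X)$ are Segal spaces and the two dec maps $d_\bot,d_\top$ are \culf. Taking $X=S_\bullet\mathscr A$, the Proposition just proved identifies $\Dec_\bot(S_\bullet\mathscr A)\simeq\fatnerve(\mathscr A^{\operatorname{mono}})$ and $\Dec_\top(S_\bullet\mathscr A)\simeq\fatnerve(\mathscr A^{\operatorname{epi}})$, which are fat nerves of categories and hence Segal spaces (the Segal square~\eqref{eq:Segal-sq} holds for any fat nerve). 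So the Segal half of the criterion is immediate, and the whole content of the theorem is reduced to checking that the dec maps $d_\bot$ and $d_\top$ are \culf.

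By the self-duality of the $S_\bullet$-construction under $\mathscr A\leftrightarrow\mathscr A\op$ — which exchanges $d_\bot$ with $d_\top$ and monomorphisms with epimorphisms — it suffices to treat $d_\bot$. Under the identification $(\Dec_\bot S_\bullet\mathscr A)_n=S_{n+1}\mathscr A\simeq\fatnerve(\mathscr A^{\operatorname{mono}})_n$, the dec map $d_\bot$ is the face map $d_0$, which sends a chain of monomorphisms $B_0\rightarrowtail\dots\rightarrowtail B_n$ to the staircase of exact sequences with bottom row $B_1/B_0\rightarrowtail\dots\rightarrowtail B_n/B_0$. I would then verify the two conditions making up \culf. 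For \emph{ULF} (cartesian on inner faces): the relevant square asks that a chain of monos be reconstructible from its image under an inner face — which composes two consecutive monos $B_{i-1}\rightarrowtail B_i\rightarrowtail B_{i+1}$ into $B_{i-1}\rightarrowtail B_{i+1}$, forgetting $B_i$ — together with the corresponding refinement of the staircase downstairs; concretely the missing object $B_i$ is forced to be the preimage of the prescribed subobject $B_i/B_0\subseteq B_{i+1}/B_0$ along the epimorphism $B_{i+1}\twoheadrightarrow B_{i+1}/B_0$, which exists and is unique in an abelian category. For \emph{conservative} (cartesian on codegeneracies): if a constituent mono $B_i\rightarrowtail B_{i+1}$ becomes invertible downstairs, i.e.\ $B_i/B_0\isopil B_{i+1}/B_0$, then its cokernel vanishes and it was already an identity; this gives the codegeneracy squares.

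The step needing the most care is the promotion of these object-level reconstructions to genuine homotopy pullbacks of groupoids: one must check that, beyond determining the missing objects up to isomorphism, the comparison functors also induce equivalences on the relevant automorphism groupoids. This holds because the reconstructions are performed by limits and colimits (preimages, images, kernels, cokernels), which are natural in isomorphisms of the data and hence transport automorphisms functorially. An alternative, more computational route bypasses the decalage criterion and checks directly that the squares~\eqref{eq:firstfew} and their higher analogues are pullbacks in $S_\bullet\mathscr A$: each asserts that a staircase of exact sequences is reconstructible from two overlapping sub-staircases, where the only new objects are again preimages of subobjects along epis (and dually pushouts along monos), with the unit/counit squares handled by $S_0\mathscr A=\ast$ and $s_0$ inserting the zero object. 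Either way the essential input is elementary homological algebra in $\mathscr A$; the details are carried out in~\cite[10.10]{GKT:DSIAMI-1} and~\cite[Theorem 7.3.3]{Dyckerhoff-Kapranov:1212.3563}.
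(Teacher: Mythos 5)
Your argument is correct and follows essentially the route the paper sets up: the preceding Proposition identifying $\Dec_\bot(S_\bullet\mathscr A)$ and $\Dec_\top(S_\bullet\mathscr A)$ with the fat nerves of $\mathscr A^{\operatorname{mono}}$ and $\mathscr A^{\operatorname{epi}}$ supplies the Segal half, and the cited proofs (\cite[10.10]{GKT:DSIAMI-1}, and Dyckerhoff--Kapranov's path-space criterion, which is the same decalage characterisation) complete it exactly by the \culf/pullback verifications you sketch, reconstructing the missing objects via the correspondence between subobjects of $A_{i+1}$ containing $A_0$ and subobjects of $A_{i+1}/A_0$, with the $\mathscr A\leftrightarrow\mathscr A\op$ symmetry handling the top dec. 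The only cosmetic point is that in the conservativity step ``was already an identity'' should read ``is an isomorphism, so the simplex is degenerate up to equivalence,'' which you effectively address when you note the need to upgrade object-level reconstructions to equivalences of groupoids.
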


\begin{blanko}{Generalised Waldhausen construction.}
  It was shown by Bergner, Osorno, Ozornova, Rovelli, and   Scheimbauer~\cite{Bergner-et.al:1609.02853, Bergner-Osorno-Ozornova-Rovelli-Scheimbauer:1809.10924,
  Bergner-Osorno-Ozornova-Rovelli-Scheimbauer:1901.03606}
  that in fact every decomposition space arises from an 
  $S_\bullet$-construction, but a more general one, taking as input 
  certain augmented stable double Segal spaces. The classical case is the 
  double Segal space whose horizontal morphisms are the monos and whose 
  vertical morphisms are the epis. See the contributions of 
  Rovelli~\cite{Rovelli:thisvolume} and 
  Ozornova~\cite{Ozornova:thisvolume} for an introduction and further 
  pointers.
\end{blanko}

\begin{blanko}{Hall algebras.}\label{Hall}
  The finite-support incidence algebra of a decomposition space $X$ was
  mentioned in \ref{fin-supp} (see
  \cite[7.15]{GKT:DSIAMI-2} for more details). In order for it to admit a
  cardinality, the required assumption is that $X_1$ be locally finite,
  that $X_0$ be finite, and that
  $X_2 \to X_1 \times X_1$ be a finite map.
  In the case of $X= S_\bullet(\mathscr A)$
  for an abelian category $\mathscr A$, this translates into the condition that
  $\Ext^0$ and $\Ext^1$ be finite (which in practice means `finite dimension
  over a finite field').  The finite-support incidence algebra in this case is
  the {\em Hall algebra} of $\mathscr A$ (cf.~Ringel~\cite{Ringel:Hall}; see
  also \cite{Schiffmann:0611617}, although these sources twist the
  multiplication by the so-called Euler form).

  Hall algebras were one of the main motivations for Dyckerhoff and
  Kapranov~\cite{Dyckerhoff-Kapranov:1212.3563} to
  introduce $2$-Segal spaces.  We refer to their work for development
  of this important topic, recommending the lecture notes of
  Dyckerhoff~\cite{Dyckerhoff:1505.06940} as a starting point; 
  see also the contribution of Cooper and
  Young~\cite{Cooper-Young:thisvolume} in this volume.
\end{blanko}

\subsection{\fdb bialgebra and variations}
\label{sec:fdb}

The \fdb bialgebra, originating with composition of power series, was
constructed combinatorially by let~\cite{Doubilet:1974} by imposing a {\em
type-equivalence} relation on the incidence coalgebra of the partition poset.
Joyal~\cite{JoyalMR633783} observed that it can also be realised directly from
the category of finite sets and surjections, without the need of a reduction
step.  Both constructions, and in particular the relationship between them, can
be cast elegantly in the framework of decomposition spaces, serving to
illustrate many of the characteristic aspects of the theory, such as the use of
groupoids and the role of decalage.

\begin{blanko}{\fdb from the partition poset.}
  Fix a finite set of each cardinality, denoted $\underline 0, \underline 1,
  \underline 2$, etc.  Let $\mathcal{P}(\underline m)$ denote the poset of
  partitions of the set $\underline m$; we write $\rho \leq \pi$ when partition
  $\rho$ refines partition $\pi$.  The {\em partition poset} is by definition
  the disjoint union of all these, $\mathcal{P} := \sum_{m\in \N}
  \mathcal{P}(\underline m)$.
  The nerve of $\mathcal{P}$ defines a coalgebra (which is furthermore a
  bialgebra, with multiplication given by disjoint union).
  More interesting is the reduction of this bialgebra modulo type equivalence.
  An interval $[\rho,\pi]$ in a poset $\mathcal{P}(\underline m)$ is said to
  have {\em type} $1^{\lambda_{1}} 2^{\lambda_{2}} \cdots$ if $\lambda_i$
  is the number of blocks of $\pi$ that consist of exactly $i$ blocks of $\rho$.
  Declare two intervals equivalent if they have the same type.
  The reduced incidence coalgebra of the poset $\mathcal{P}$
  (with reduction given by type equivalence) is the {\em \fdb bialgebra} 
  (we gloss over the multiplicative structure, as it will be clearer in 
  the viewpoint of surjections coming up next).
\end{blanko}

\begin{blanko}{Partitions as surjections.}
  A partition $\rho$ of $\underline m$ can be realised as a surjection
  $\underline m \onto \underline n$,
  where $\underline n$ is the set of blocks.  An interval $[\rho,\pi]$,
  from $\rho:\underline m \onto \underline n$
  to $\pi:\underline m \onto \underline k$ say, is then realised as
  a (unique) comparison surjection $\lambda$ in a commutative triangle
  \begin{equation}\label{eq:under n}
    \vcenter{\xymatrix@C=1.5ex{  & \un m \ar@{->>}[ld]_\rho\ar@{->>}[rd]^\pi \\
                                \un n    \ar@{->>}[rr]_\lambda&& \un k.}
  }
  \end{equation}
  The equivalence type of the interval $[\rho,\pi]$ is then
  precisely the isomorphism type of the surjection $\lambda:\un n\to\un 
  k$, namely
    \begin{equation}\label{eq:type}
      1^{\lambda_{1}} 2^{\lambda_{2}}\cdots n^{\lambda_{n}}
    \end{equation}
    if $\lambda$ has $\lambda_i$ fibres of cardinality $i$.
    Identifying intervals of the same type therefore amounts to forgetting
  the ambient set $\underline m$.

  Splitting of intervals, as in the formula for comultiplication in the
  incidence coalgebra,
  $$
  \Delta( [\rho,\pi] ) = \sum_{\sigma\in [\rho,\pi]} [\rho,\sigma] \tensor 
  [\sigma,\pi] ,
  $$
  is now precisely factorisation of such comparison
  surjections, so as to get
  $$
\Delta(\un n\shortsur \un k) =
\sum_{\un n\onto \un s\onto \un k} (\un n \shortsur \un s) \otimes 
(\un s \shortsur \un k) ,
$$
but some care is needed to count these factorisations correctly. 
The sum is over \isoclasses of factorisations $\un n \onto \un s 
\onto \un k$.
In detail,
consider the {\em factorisation groupoid}
$\operatorname{Fact}(\un n\shortsur \un k)$,
whose objects are factorisations of $\un n\shortsur \un k$
into two surjections $\un n\shortsur \un s\shortsur \un k$,
and whose morphisms are bijections $\un s\simeq \un s'$ making the two triangles
commute:
\begin{equation}\label{eq:fact}
  \xymatrix@R1ex{
&\un s\ar[dd]^{\simeq}\ar@{->>}[rd]& \\
\un n\ar@{->>}[ru]\ar@{->>}[rd] && \un k \,.\\
& \un s'\ar@{->>}[ru]
}
\end{equation}
Then the above sum is over $\pi_0(\operatorname{Fact}(\un n\shortsur \un k))$,
the set of connected components of the factorisation groupoid.
This is Joyal's construction of the \fdb bialgebra~\cite{JoyalMR633783}
(and now the algebra structure is simply given by disjoint union of 
surjections).
\end{blanko}

\begin{blanko}{The decomposition space of surjections.}\label{surjection-gpd}
In decomposition space language, the \fdb
bialgebra is simply the incidence bialgebra of the monoidal decomposition space
$\mathbf S$ given as the fat nerve of the category of finite sets and
surjections. Indeed, it has
as $\mathbf S_0$ the groupoid of finite sets and bijections,
and
as $\mathbf S_1$ the groupoid whose objects are surjections and
whose morphisms are squares
$$
\xymatrix{ \underline n \ar@{->>}[r]\ar[d]_{\sim}
  & \underline k \ar[d]_{\sim}  \\
  \underline n' \ar@{->>}[r] & \underline k'.\!\!\!}$$
$\mathbf S_2$ is the groupoid whose objects are composable pairs of surjections,
and whose morphisms are diagrams
$$
\xymatrix{ \underline n \ar@{->>}[r]\ar[d]_{\sim}
  & \underline s \ar@{->>}[r]\ar[d]_{\sim} &
  \underline k \ar[d]_{\sim}  \\
  \underline n' \ar@{->>}[r] & \underline s'\ar@{->>}[r]
  & \underline k' .\!\! }$$
The homotopy fibre of $d_1:\mathbf S_2\to\mathbf S_1$ over $\lambda\in\mathbf S_1$ is
equivalent to the subgroupoid whose objects are composable pairs composing to
$\lambda$ and whose morphisms are diagrams~\eqref{eq:fact}. It follows readily that
the incidence bialgebra is precisely the \fdb bialgebra \`a la Joyal.

  Note that in the groupoid setting, there is no need to restrict to a skeleton
  of the category of finite sets and surjections.  We may as well work with the
  whole groupoid, without making choices.  The homotopy equivalences take care
  of `dividing out', while keeping the correct automorphism data.
\end{blanko}

\begin{blanko}{Relationship via decalage.}
  Having interpreted partitions as surjections, and refinements as factorisations
  of surjections, one may suspect that the partition poset is the decalage of
  the surjections nerve.  This is almost correct, but not quite: there is a
  subtle difference related to symmetries (pointed out by Mark Weber), which in
  turn originates in the fact that the partition poset is based on chosen fixed
  sets $\underline m$.   Getting this straight is a nice opportunity to see some
  \culf functors:

  We first analyse the relationship between partitions and surjections. Two
  surjections $\underline m \onto \underline n$ and $\underline m
  \onto\underline k$ represent the same partition if there is a bijection
  $\underline n\isopil \underline k$ making the triangle \eqref{eq:under n}
  commute. Consider the category $\CC$ whose objects are surjections between the
  standard sets $\underline m$ and whose morphisms are triangles like
  \eqref{eq:under n}.
  Since there is at most
  one arrow $\lambda$ between any two surjections, this category is equivalent
  to a poset, and indeed equivalent to the partition poset $\mathcal{P}$. It
  does contain non-trivial isomorphisms, but all automorphisms are trivial.
  Since this category $\CC$ and the partition poset are equivalent as
  categories, their fat nerves are (levelwise) equivalent decomposition spaces,
  and therefore define equivalent coalgebras. Note that for strict posets, the
  fat nerve is the same thing as the strict nerve.

This category $\CC$ sits in a bigger
category $\DD$ with the same
objects (surjections), but where maps between two surjections are allowed to
have a non-identity bijection between the domains, instead of just an identity
arrow. The categories $\CC$ and $\DD$ are not equivalent, and their fat nerves
are not equivalent, and their bialgebras are not isomorphic---all because of the
different amount of symmetry they sport at the surjection domains. However, it
is clear that they have exactly the same type reduction, since the type
reduction precisely throws away the surjection domains.
\end{blanko}

\begin{lemma}
  The inclusion functor $\CC \to \DD$ is \culf.
\end{lemma}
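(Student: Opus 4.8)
The plan is to verify that $F$ is ULF; since $\CC$ and $\DD$ are ordinary categories their fat nerves are Segal spaces, hence decomposition spaces, so by \cite[Proposition~4.2]{GKT:DSIAMI-1} (quoted above) ULF will already give \culf. (Conservativity is in any case clear: every arrow of $\CC$ has the form $(\id_{\underline n},h)$ with $h$ a surjection, sent by $F$ to itself, and if this is an identity of $\DD$ then $h$ is an identity, so the arrow was already an identity.) Both fat nerves being Segal, being ULF comes down to checking, for each arrow $c$ of $\CC$, that the induced functor on factorisation groupoids $\operatorname{Fact}_\CC(c)\to\operatorname{Fact}_\DD(Fc)$---objects being the two-step factorisations of the arrow, morphisms the isomorphisms of the middle term compatible with both legs, as in \eqref{eq:fact}---is an equivalence of groupoids.

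First I would make the two sides explicit. An arrow $c$ of $\CC$ is a commuting triangle \eqref{eq:under n} with legs $u\colon\underline n\onto\underline k$, $v\colon\underline n\onto\underline p$ and comparison surjection $h\colon\underline k\onto\underline p$ satisfying $hu=v$, and $Fc=(\id_{\underline n},h)\colon u\to v$ in $\DD$. A two-step factorisation of $c$ in $\CC$ is no more than a factorisation $\underline k\xrightarrow{h_1}\underline m\xrightarrow{h_2}\underline p$ of $h$ through a standard set: the middle object $\underline n\onto\underline m$ is forced to be $h_1u$, and commutativity of the second triangle is then automatic; a morphism between two such is a bijection $\underline m\isopil\underline m'$ commuting with the $h_i$. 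A two-step factorisation of $Fc$ in $\DD$ is a pair $(\beta,h_1)\colon u\to w$, $(\beta',h_2)\colon w\to v$ with $(\beta'\beta,h_2h_1)=(\id_{\underline n},h)$, so that $\beta'=\beta^{-1}$, $h_2h_1=h$, the middle surjection $w=h_1u\beta^{-1}$ and all remaining identities being forced; hence it is recorded by a triple $(\beta\in\Aut(\underline n),\,h_1\colon\underline k\onto\underline m,\,h_2\colon\underline m\onto\underline p)$ with $h_2h_1=h$, and one checks that a morphism in $\operatorname{Fact}_\DD(Fc)$ between two of these is again just a bijection $\zeta\colon\underline m\isopil\underline m'$ commuting with the $h_i$ (the accompanying bijection of domains is forced to be $\beta'\beta^{-1}$, the rest following). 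The functor $F$ sends $\underline k\xrightarrow{h_1}\underline m\xrightarrow{h_2}\underline p$ to $(\id_{\underline n},h_1,h_2)$ and is the identity on the $\zeta$'s.

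From this the conclusion follows: $F$ is fully faithful because on both sides the morphisms out of a fixed factorisation are exactly the bijections $\zeta$ of middle objects commuting with the legs, and $F$ matches them; and it is essentially surjective because an arbitrary $(\beta,h_1,h_2)$ is isomorphic in $\operatorname{Fact}_\DD(Fc)$ to $(\id_{\underline n},h_1,h_2)=F(\underline k\xrightarrow{h_1}\underline m\xrightarrow{h_2}\underline p)$ via the morphism given by $\zeta=\id_{\underline m}$, whose induced domain-bijection is $\beta^{-1}$. So $F$ is an equivalence on all factorisation groupoids, hence ULF, hence \culf. The main---indeed, the only---subtlety is this essential-surjectivity step, which is exactly the symmetry point flagged by Weber: although $\DD$ genuinely has more arrows than $\CC$, namely the non-identity bijections at the surjection domains, a two-step factorisation of $Fc$ routed through such a bijection $\beta$ is isomorphic, by the identity on its middle object, to the corresponding one routed through $\id_{\underline n}$, so the extra domain-symmetry contributes no new connected components. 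The place to be careful is the groupoid bookkeeping in $\DD$, tracking how the domain-bijection components of composites combine, which is what makes the ``forced'' identities of the previous paragraph come out right.
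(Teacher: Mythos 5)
Your proof is correct and is in essence the same argument the paper gives, only spelled out: the paper's one-line justification (``\culf-ness is about factorisations of the codomain maps, and this is not affected by what happens at the domain'') is exactly your essential-surjectivity observation that any factorisation routed through a domain bijection $\beta$ is isomorphic, via the identity on the middle object, to one routed through $\id_{\underline n}$. Your explicit fibrewise computation of the factorisation groupoids, together with the appeal to ULF $\Rightarrow$ \culf for decomposition spaces, is a valid and more detailed rendering of that same idea.
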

\noindent
  Essentially this is for the same reason as the type reduction argument:
  \culf-ness is about factorisations of the codomain maps, and this is not
  affected by what happens at the domain.

  Now that in $\DD$ we have symmetries built in naturally, there is no reason
  to restrict to the skeleton anymore. As an equivalent $\DD'$ we can take the
  same description but allow the objects to be surjections between arbitrary
  finite sets, instead of just those chosen sets $\underline n$. Note that
  this bigger category $\DD'$ has a natural interpretation in terms of
  partitions: suppose we want a notion of partition, but do not wish to
  restrict attention to those chosen sets $\underline n$. We would then have
  to say when two partitions are considered the same, and more generally what
  should be the notion of morphism of partitions: the natural notion is to
	  have a bijection at the domain that preserves block membership,
  i.e.~a bijection $f$ such that if $t_1$ and $t_2$ belong to the same
  block, then also $ft_1$ and $ft_2$ belong to the same block. Such
  a bijection induces a unique surjection between the codomains and
  it is clear
  that this notion of morphism is precisely a refinement.
  This is a category rather
  than a poset: it mixes the poset structure with the invertible maps given by
  renaming of set elements. Note that a partition in $\DD$ or in $\DD'$ has more
  automorphisms than in $\CC$: for example a $(2,2)$-partition has an
  automorphism group of order $8$, namely $4$ possibilities to permute within
  the blocks, and $2$ possibilities of interchanging the blocks.

With these extra symmetries we have

\begin{lemma}
  The fat nerve of $\DD'$ is naturally equivalent to $\Decbot{\mathbf S}$.
\end{lemma}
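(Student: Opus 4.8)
The plan is to exhibit an explicit simplicial map and verify it is a levelwise equivalence of groupoids (which is exactly what ``naturally equivalent'' means for simplicial groupoids). Recall that $(\Dec_\bot\mathbf S)_k=\mathbf S_{k+1}$ is the groupoid of $(k+1)$-chains of surjections $c_0\onto c_1\onto\cdots\onto c_{k+1}$ with levelwise isomorphisms as morphisms, with faces $d_i=d^{\mathbf S}_{i+1}$ and degeneracies $s_i=s^{\mathbf S}_{i+1}$; the forgotten $d_0$ is the Dec map $d_\bot\colon\Dec_\bot\mathbf S\to\mathbf S$. I would define
$$
\Phi_k\colon(\Dec_\bot\mathbf S)_k\longrightarrow\fatnerve(\DD)_k,\qquad
(c_0\onto c_1\onto\cdots\onto c_{k+1})\longmapsto\bigl((c_0\onto c_1)\to(c_0\onto c_2)\to\cdots\to(c_0\onto c_{k+1})\bigr),
$$
where the $i$th object is the composite surjection $c_0\onto c_i$ and the connecting morphism $(c_0\onto c_i)\to(c_0\onto c_{i+1})$ in $\DD$ is the pair $(\id_{c_0},\,c_i\onto c_{i+1})$; on morphisms, a levelwise isomorphism $(\omega_0,\dots,\omega_{k+1})$ of surjection-chains is sent to the natural isomorphism of $\DD$-chains with components $(\omega_0,\omega_{i+1})$. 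The underlying idea is simply that the first vertex $c_0$ of a chain plays the role of the common domain of a family of surjections out of it.

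Checking that $\Phi$ is a well-defined simplicial map is routine bookkeeping. Functoriality in each degree is immediate from the construction. Compatibility with face and degeneracy maps rests on the single identity $c_0\onto c_j=(c_i\onto c_j)\circ(c_0\onto c_i)$: inner face maps of $\mathbf S$ (composing $c_i\onto c_{i+1}\onto c_{i+2}$) match inner face maps of $\fatnerve(\DD)$, which compose adjacent $\DD$-morphisms and multiply out to $(\id_{c_0},c_i\onto c_{i+2})$; the top face of the Dec matches dropping the last object; and on neither side is there a ``drop $c_0$'' operation, so $\Phi$ simply does not interact with $d_\bot$.

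Next I would show each $\Phi_k$ is an equivalence. For fully faithfulness, a morphism $\Phi_k(c_\bullet)\to\Phi_k(c'_\bullet)$ in $\fatnerve(\DD)_k$ is a family of $\DD$-isomorphisms $\theta_i=(\phi_i,\psi_i)$ compatible with the connecting maps; reading off the domain-parts of the compatibility squares forces all $\phi_i$ to be one common bijection $\phi\colon c_0\to c'_0$, and since every surjection is an epimorphism the codomain-part of a $\DD$-morphism is determined, so each $\psi_i$ is forced and the data collapses exactly to a levelwise isomorphism $c_\bullet\to c'_\bullet$ in $\mathbf S_{k+1}$. For essential surjectivity I would take a $\DD$-chain $(E_0\onto B_0)\xrightarrow{(f_1,g_1)}(E_1\onto B_1)\xrightarrow{(f_2,g_2)}\cdots\xrightarrow{(f_k,g_k)}(E_k\onto B_k)$ and use the domain-bijections $f_i$ — which are genuine isomorphisms in $\DD$ — to straighten the chain: conjugating by the levelwise isomorphism whose $i$th component has domain-part $(f_i\cdots f_1)^{-1}\colon E_i\to E_0$ yields an isomorphic $\DD$-chain all of whose objects have domain $E_0$ and all of whose connecting maps have identity domain-part, namely $(E_0\xrightarrow{\bar B_0}B_0)\xrightarrow{(\id,g_1)}(E_0\xrightarrow{\bar B_1}B_1)\to\cdots$ with $\bar B_0$ the original $E_0\onto B_0$ and $\bar B_i=g_i\circ\bar B_{i-1}$; this is visibly $\Phi_k$ of the surjection-chain $E_0\xrightarrow{\bar B_0}B_0\xrightarrow{g_1}B_1\xrightarrow{g_2}\cdots\xrightarrow{g_k}B_k$. (Equivalently and more cleanly, one may exhibit the inverse $\Psi\colon\fatnerve(\DD)\to\Dec_\bot\mathbf S$ sending the above $\DD$-chain to $E_0\xrightarrow{\bar B_0}B_0\xrightarrow{g_1}\cdots\xrightarrow{g_k}B_k$; $\Psi$ is manifestly simplicial and surjective on objects, and the same epimorphism argument makes it fully faithful.)

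Assembling the $\Phi_k$ gives a simplicial map that is a levelwise equivalence of groupoids, hence $\fatnerve(\DD)\simeq\Dec_\bot\mathbf S$; one moreover checks it carries the disjoint-union monoidal structure on $\Dec_\bot\mathbf S$ (Lemma~\ref{dec-monoidal}) to that on $\fatnerve(\DD)$. The step I expect to be the main obstacle is essential surjectivity — making the ``straightening'' precise and verifying its compatibility with the chain structure — because this is exactly the place where the enlarged category $\DD$, with the domain-bijections built in as isomorphisms, is indispensable: with the skeletal $\CC$ on fixed sets $\underline n$ one would only get $\fatnerve(\CC)\simeq\fatnerve(\mathcal P)$, the fat nerve of the partition poset, which as the preceding discussion explains is \emph{not} the Dec.
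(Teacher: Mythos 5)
Your construction is correct, and it is exactly the verification the paper leaves implicit (it records this lemma with only the remark that ``the verifications are straightforward''): the strictly simplicial map $\Phi$ built from the composites out of $c_0$, levelwise fully faithful by the epimorphism/common-domain argument and essentially surjective by straightening a $\DD$-chain along its domain bijections, is the intended comparison. One small caveat: your parenthetical inverse $\Psi$ is not ``manifestly simplicial'' on the nose --- it commutes with the bottom face $d_0$ only up to the isomorphism given by $f_1$ (the two results have domains $E_1$ versus $E_0$) --- so the clean route is the one your main argument actually takes, namely exhibiting $\Phi$ as a strict simplicial map that is a levelwise equivalence.
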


Altogether:

\begin{prop}
  Type reduction, and the relationship between the partition poset and the
  surjections nerve is given by the string of \culf functors
  $$
  N\mathcal{P} \simeq \fatnerve\CC \longrightarrow \fatnerve\DD' \simeq
	\Decbot{\mathbf S} \stackrel{d_\bot} \longrightarrow \mathbf S .
  $$
  The composite sends a partition to its set of blocks, and sends a refinement
  to the corresponding surjection as in \eqref{eq:under n}.
\end{prop}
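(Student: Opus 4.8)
The plan is to assemble the proposition from the three lemmas just proved, using three general facts: a levelwise equivalence of simplicial groupoids is \culf, since a commuting square two of whose parallel edges are equivalences is automatically a homotopy pullback; \culffunctors are stable under composition, by pasting of pullback squares; and the dec map out of a decomposition space is \culf. Granting these, I would check that each of the four arrows in the displayed string is \culf and then compose.

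The leftmost map $N\mathcal{P}\simeq\fatnerve\CC$ is a levelwise equivalence, because $\CC$ is equivalent to the poset $\mathcal{P}$ as a category and the fat nerve is invariant under equivalence of categories, with $N\mathcal{P}=\fatnerve\mathcal{P}$ since $\mathcal{P}$ is a strict poset. The arrow $\fatnerve\CC\to\fatnerve\DD$ is the fat nerve of the inclusion $\CC\to\DD$, \culf by the lemma above. The map $\fatnerve\DD\simeq\Dec_\bot\mathbf S$ is the preceding lemma, again a levelwise equivalence. Finally $\mathbf S$, being the fat nerve of a category, is a Segal space, hence a decomposition space by Proposition~\ref{prop:segalisdecomp}; by the decalage characterisation of decomposition spaces the dec map $d_\bot\colon\Dec_\bot\mathbf S\to\mathbf S$ is \culf. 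Composing, the whole string is \culf.

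It then remains to identify the composite on $0$- and $1$-simplices. In degree $0$ a partition $\rho$ of $\underline n$, viewed as an object of $\mathcal{P}$, goes to its comparison surjection $\underline n\onto\underline k$ (with $\underline k$ the set of blocks) as an object of $\CC$, then of $\DD$, then to that same surjection in $(\Dec_\bot\mathbf S)_0=\mathbf S_1$; and in degree $0$ the dec map $d_\bot$ is $d_0\colon\mathbf S_1\to\mathbf S_0$, returning the codomain $\underline k$. So the composite sends a partition to its set of blocks. In degree $1$ a refinement $\rho\leq\pi$, with $\rho\colon\underline n\onto\underline k$ and $\pi\colon\underline n\onto\underline p$, is the triangle \eqref{eq:under n} with comparison surjection $f\colon\underline k\onto\underline p$; it becomes a morphism of $\DD$ with identity at the domain $\underline n$, corresponding under $\fatnerve\DD\simeq\Dec_\bot\mathbf S$ to the composable pair $\underline n\xrightarrow{\rho}\underline k\xrightarrow{f}\underline p$ in $(\Dec_\bot\mathbf S)_1=\mathbf S_2$, and in degree $1$ the dec map $d_\bot$ is $d_0\colon\mathbf S_2\to\mathbf S_1$, which discards the initial object and returns $f$. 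Hence the composite sends a refinement to the corresponding surjection as in \eqref{eq:under n}.

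The only points requiring care are bookkeeping ones: that passing to fat nerves preserves the \culf property, so that the categorical lemma on $\CC\to\DD$ upgrades to the needed statement about simplicial maps; and keeping the two decalage conventions straight --- that $(\Dec_\bot\mathbf S)_k=\mathbf S_{k+1}$ with face maps shifted up by one, and that the dec map is the left-over $d_0$ in each degree --- so that ``return the codomain'' and ``discard the initial object'' line up with ``set of blocks'' and ``comparison surjection''. Beyond this I expect no genuine difficulty, since all the substantive work has already been done in the preceding lemmas.
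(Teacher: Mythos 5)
Your proposal is correct and follows exactly the route the paper intends: the paper itself only says ``the verifications are straightforward,'' relying on the two preceding lemmas, the culf-ness of levelwise equivalences and of dec maps out of decomposition spaces, and closure of \culf maps under composition. Your degree-by-degree identification of the composite (codomain in degree $0$, second arrow of the composable pair in degree $1$) matches the conventions of \ref{Dec} and the fat nerve, so nothing is missing.
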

The verifications are straightforward.  It should be noted that these
\culf functors are all monoidal, and hence induce bialgebra homomorphisms (by
Proposition~\ref{prop:bialg}).

\begin{blanko}{\fdb section coefficients.}\label{fdb-sectioncoeffs}
  We work with the decomposition space $\mathbf S$. Since $\mathbf S$ is
  monoidal, to describe its section coefficients, it is enough describe the
  comultiplication of connected surjections $n\onto 1$.
  Write $\AAA_n$ for
  the cardinality of $\name{n\onto 1}$ in $\grpd_{/\mathbf S_{1}}$. Since a
  general surjection $\lambda:n\onto k$ of type $1^{\lambda_1}2^{\lambda_2}
  \cdots$ is isomorphic to a disjoint union of connected surjections, its
  cardinality is $\AAA_{1}^{\lambda_{1}}\AAA_{2}^{\lambda_{2}}\cdots$.
  We will apply the general formula from
  Proposition~\ref{sectioncoeff!}. For a given factorisation $\un n
  \stackrel{\lambda}\onto \un k \stackrel{\pi}\onto \un 1$, we have to look
  at the set $\{\varphi\}$ of those isomorphisms $\varphi:\un k\isopil \un
  k$ that make the composite surjection $\lambda\varphi\pi$ equal to the
  original surjection $\un n \onto \un 1$. But since $\un 1$ is terminal,
  all $\varphi$ will do, so the cardinality of this set is $k!$ (see
  \cite{GKT:corrigendum} for discussion of this point). With this, the
  general formula \ref{sectioncoeff!} gives
  $$
  \Delta(\AAA_{n}) =
  \sum_{
  \substack{
  \un n \onto \un k\\  \un k\onto \un 1
  }
  }
  \frac{\norm{\Aut(\un n\onto \un 1)}
   \cdot k!}
       {\norm{\Aut(\un n\onto\un k)}
         \cdot\norm{\Aut(\un k\onto\un 1)}} \;
       \AAA_{1}^{\lambda_{1}}\cdots \AAA_{n}^{{\lambda_{n}}}\;\otimes\; \AAA_{k} .
  $$
  The sum is over all distinct \isoclasses of pairs of
  surjections (for fixed $n$).
  The formula for the order of the automorphism group of 
  a surjection $\lambda:\un n\onto \un 1$ is
  \begin{equation}\label{eq:aut-surj}
  \norm{\Aut(\un n\onto \un k)}\;=\;\prod_{j=1}^\infty\lambda_j ! 
  (j!)^{\lambda_j} ,
  $$
  and in particular $\norm{\Aut(\un k\onto \un 1)} = k!$. Altogether we find
  $$
  \Delta(\AAA_{n}) =
  \sum_{
  \un n \onto \un k
  }
  \frac{n! }{\prod_{j=1}^n\lambda_j ! (j!)^{\lambda_j}}\;
       \AAA_{1}^{\lambda_{1}}\cdots \AAA_{n}^{{\lambda_{n}}}\;\otimes\; \AAA_{k} .
  \end{equation}
  For example,
  \begin{align*}
  \Delta(\AAA_3)  
  &=  
  \AAA_3 \tensor \AAA_1  + 3 \AAA_1 \AAA_2\tensor \AAA_2  + \AAA_1\AAA_1\AAA_1 \tensor \AAA_3
  \\
  \Delta(\AAA_4)
  &=
  \AAA_4 \tensor \AAA_1  + (4 \AAA_1 \AAA_3 +  3 \AAA_2 \AAA_2) \tensor \AAA_2  
  + 6 \AAA_1\AAA_1 \AAA_2 \tensor \AAA_3  + \AAA_1\AAA_1\AAA_1\AAA_1 \tensor \AAA_4   
\end{align*}

  The section coefficients, called the \fdb section coefficients,
  are the coefficients
  ${n \choose \lambda;k}$ of the Bell polynomials,
  cf.~\cite[(2.5)]{Figueroa-GraciaBondia:0408145}.

  Using a different normalisation we may choose the basis 
  $\aaa_{n}=\AAA_{n}/n!$,
on which the comultiplication formula becomes
\[
\Delta(\aaa_{n})=  \sum_{
  \un n \onto \un k
  }
  \frac{k!}{\prod_{j=1}^n\lambda_j ! }\;
       \aaa_{1}^{\lambda_{1}}\cdots \aaa_{n}^{{\lambda_{n}}}\;\otimes\; 
	   \aaa_{k} .
     \]
For example,
\begin{align*}
  \Delta(\aaa_3) & =  \aaa_3 \tensor \aaa_1 + 2 \aaa_1 \aaa_2\tensor \aaa_2  + \aaa_1\aaa_1\aaa_1 
  \tensor \aaa_3  \\
  \Delta(\aaa_4) & =  \aaa_4 \tensor \aaa_1  + (2 \aaa_1 \aaa_3 +  \aaa_2 \aaa_2) \tensor \aaa_2 
  + 3 \aaa_1 \aaa_1 \aaa_2 \tensor \aaa_3  + \aaa_1\aaa_1\aaa_1\aaa_1 \tensor \aaa_4
\end{align*}

\end{blanko}

\begin{blanko}{A decomposition space for the \fdb\ {\em Hopf} algebra.}
  The \fdb\ {\em Hopf} algebra is obtained by further reduction, classically
  stated as identifying two intervals in the partition poset if they are
  isomorphic as posets.  This is equivalent to forgetting the value of
  $\lambda_1$.  There is also a decomposition space that yields this Hopf
  algebra directly, obtained by quotienting the decomposition space $\mathbf S$
  by the same equivalence relation.  This means identifying two surjections (or
  sequences of composable surjections) if one is obtained from the other by
  taking disjoint union with a bijection.  One may think of this as `levelled
  forests modulo linear trees'.  It is straightforward to check that this
  reduction respects the simplicial identities so as to define a simplicial
  groupoid, that it is a monoidal decomposition space, and that the quotient
  map from $\mathbf S$ is monoidal and \culf.
\end{blanko}

\begin{blanko}{Ordered surjections.}\label{ex:OS}
  Let $\mathbf{OS}$ denote the fat nerve of the category of finite ordered set
  and monotone surjections.  It is a monoidal decomposition space under ordinal
  sum.  Hence to describe the resulting comultiplication, it is enough to say
  what happens to a connected ordered surjection, say $ \un n \onto \un 1$,
  which we denote simply $n$: since there are no automorphisms around, we find
  $$
  \Delta(n) = \sum_{k=1}^n \sum_a a \tensor k
  $$
  where the second sum is over the ${n-1 \choose k-1}$ possible surjections
  $\lambda:n\onto k$.
  The resulting bialgebra is essentially the (dual) Landweber--Novikov
  bialgebra in algebraic topology~\cite{MoravaFdB1993} (see also
  \cite{Baues:Hopf}), the noncommutative Fa\`a di Bruno bialgebra in
  combinatorics~\cite{BFKAdv2006}, and the Dynkin--Fa\`a di Bruno bialgebra in
  numerical analysis~\cite{MuntheKaas:BIT95}; it also comes up in number theory
  \cite{Goncharov:Hopf}.  See \cite{GaKaTo2016} and \cite{Kock-Weber:1609.03276}
  for recent perspectives.
\end{blanko}

\subsection{Trees and graphs}

\label{sec:trees}

Various bialgebras of trees and graphs can be realised as incidence bialgebras
of decomposition spaces which are not Segal. This means that one can decompose
but not compose, as already exemplified in the running example with
graphs~\ref{ex:graphs-decomp}. In each case the lack of composability is caused
by the decomposition destroying info that would have been needed to define a
composition. As we shall see (in \ref{Ptrees}), it is sometimes possible to
`remedy' this to get instead a decomposition space which is Segal, at the price
of giving up connectedness of the bialgebra. In the examples based on graphs and
trees, this involves keeping `open-ended' edges, and is intimately related to
the theory of operads and related structures (\ref{ex:monads}).

\medskip

All the examples of decomposition spaces in  this subsection are monoidal
under disjoint union, and hence the resulting coalgebras are bialgebras.

\begin{blanko}{Butcher--Connes--Kreimer Hopf algebra.}\label{ex:CK}
  A {\em rooted tree} is a connected and simply connected graph with a specified
  root vertex; a forest is a disjoint union of rooted trees.  The
  Butcher--Connes--Kreimer Hopf algebra of rooted trees
  \cite{Connes-Kreimer:9808042} is the free algebra on the set of \isoclasses of
  rooted trees, with comultiplication defined by summing over certain admissible
  cuts $c$:
  $$
  \Delta(T) = \sum_{c\in \operatorname{adm.cuts}(T)} P_c \tensor R_c\,.
  $$
  An admissible cut $c$ is a splitting of the set of nodes into two subsets,
  such that the second forms a subtree $R_c$ containing the root node (or is the
  empty forest); the first subset, the complement `crown', then forms a
  subforest $P_c$, regarded as a monomial of trees. (The order of the two
  factors is dictated by an operadic viewpoint, where leaves are `in' and the
  root is `out', and is further justified in \ref{ex:monads} below.)

  D\"ur~\cite{Dur:1986} (Ch.IV, \S3) gave an incidence-coalgebra construction of
  the Butcher--Connes--Kreimer coalgebra by starting with the category $\CC$ of
  forests and root-preserving inclusions, generating a coalgebra (in our
  language the incidence coalgebra of the fat nerve of $\CC$), and imposing the
  equivalence relation that identifies two root-preserving forest inclusions if
  their complement crowns are isomorphic forests.

  Note that to be precise, one must
use $\CC\op$ instead of $\CC$:
  $$
  \mathbf R := \fatnerve (\CC\op).
  $$
  From the viewpoint of the incidence coalgebra this `op' affects the
  comultiplication only by reversing the order of the tensor factors. We shall
  see shortly that the `op' originates in an upper-dec construction (compare
  \ref{upperdec-op}).

  We can obtain the Butcher--Connes--Kreimer coalgebra directly from a
  decomposition space: let $\mathbf H_1$ denote the groupoid of forests, and let
  $\mathbf H_2$ denote the groupoid of forests with an admissible cut.  More
  generally, $\mathbf H_0 $ is defined to be a point, and $\mathbf H_k$ is the
  groupoid of forests with $k-1$ compatible admissible cuts.  These form a
  simplicial groupoid in which the inner face maps forget a cut, and the outer
  face maps project away stuff: $d_\bot$ deletes the crown (everything above the
  top-most cut) and $d_\top$ deletes the bottom layer (the part of the forest
  below the bottom-most cut).  It is readily seen that $\mathbf H$ is not a
  Segal space: a tree with a cut cannot be reconstructed from its crown and its
  bottom tree, which is to say that $\mathbf H_2$ is not equivalent to $\mathbf
  H_1 \times_{\mathbf H_0} \mathbf H_1$.  It is straightforward to check that it
  {\em is} a decomposition space, in fact a symmetric monoidal one under
  disjoint union, and it is also clear from its construction that the resulting
  bialgebra is the Butcher--Connes--Kreimer Hopf algebra.  Note that the
  decomposition space is graded by the number of nodes (which is precisely the
  length filtration \ref{length}), and that it is connected since the empty
  forest is the only forest with zero nodes.
  
  To explain the relationship between the two constructions, note that
  admissible cuts are essentially the same thing as root-preserving forest
  inclusions: then the
  cut is interpreted as the division between the included forest and the forest 
  induced on the nodes in its
  complement.  In this way we see that $\mathbf H_k$ is the groupoid of $k-1$
  consecutive root-preserving inclusions. 
  Furthermore, there is a
  natural identification
  $$
  \Dectop{\mathbf{H}} \simeq \mathbf{R} = \fatnerve (\CC\op),
  $$
  where the `op' occurs since we are dealing with an upper dec, as in 
  \ref{upperdec-op}.
  Under this identification, the dec map $\Dectop{\mathbf H} \to \mathbf H$,
  always a
  (symmetric monoidal) \culf functor, realises
  precisely D\"ur's reduction: on $\mathbf R_1
  \to \mathbf H_1$ it sends a root-preserving forest inclusion to its crown,
  that is, its complement.  More generally, on $\mathbf R_k \to \mathbf H_k$ it
  sends a sequence
  of forest inclusions $ F_0 \subset F_1 \subset \dots \subset F_k $ to
  $$
  F_1\shortsetminus F_0 \subset \dots \subset F_k \shortsetminus F_0 .
  $$

\end{blanko}

\begin{blanko}{Restriction species and directed restriction species \cite{GKT:restriction}.}\label{restriction}\label{directedrestriction} The
  Butcher--Connes--Kreimer example shares important characteristics with the
  graph example of Schmitt, our running example in Section~\ref{sec:decomp}
  (Examples \ref{ex:graphs-decomp}, \ref{ex:graphs-coalg},
  \ref{ex:graphs-bialg}), but where in the graph example there are no
  constraints on the nature of the cuts, in the tree example, only certain
  order-respecting cuts are deemed admissible.
  
  Both
  examples can be subsumed in big families of decomposition spaces, which can be
  treated uniformly, namely decomposition spaces of restriction species, in the
  sense of Schmitt~\cite{Schmitt:hacs} (see also \cite{Aguiar-Mahajan}), and
  decomposition spaces of directed restriction species, introduced and studied
  in~\cite{GKT:restriction}.  Here we content ourselves with
  outlining the idea.

  A {\em restriction species}~\cite{Schmitt:hacs} is simply a presheaf of the category
  $\I$ of finite sets and injections.    Compared to a classical
  species~\cite{JoyalMR633783}, a restriction species is thus functorial not
  only on bijections but also on injections, meaning that a given structure on a
  set induces such structure also on any subset. 
  
  Given a restriction species
  $R: \I\op\to \Set$, a coalgebra is obtained on the set of \isoclasses of
  $R$-structures with comultiplication
  $$
  \Delta(X)=
  \sum_{A+B=S} X|A \tensor X|B , \qquad X \in R[S]
  $$
  and counit sending only the empty structures to $1$.  (This is the 
  construction of Schmitt~\cite{Schmitt:hacs}.)
   
  It is preferable to work with groupoid-valued species as in 
  \cite{Baez-Dolan:finset-feynman},
  rather than the traditional set-valued species.
  Given a (groupoid-valued) restriction species
  $R: \I\op\to \Grpd$, we construct a simplicial groupoid $\mathbf R$ where $\mathbf
  R_k$ is the groupoid of $R$-structures with an ordered partition of the
  underlying set into $k$ parts (possibly empty).  Functoriality on active maps
  is clear, by joining adjacent parts or inserting an empty part.  Functoriality
  on inert maps is about projecting away outer parts, and is possible precisely
  because $R$ is a restriction species.  This simplicial groupoid can be shown
  to be a decomposition space, and the resulting incidence coalgebra is the
  Schmitt coalgebra~\cite{GKT:restriction}.  Furthermore, morphisms of
  restriction species induce \culf functors and hence coalgebra homomorphisms.  A
  great many species are actually restriction species (such as various classes
  of graphs, matroids, and posets), providing in this way a large supply of
  decomposition spaces (which are not Segal spaces).

  The Butcher--Connes--Kreimer example is subsumed in a large class of examples
  coming from {\em directed restriction species}, a notion introduced in 
  \cite{GKT:restriction}.  Where ordinary restriction species are presheaves on
  finite sets and injections, directed restriction species are presheaves on 
  the category of finite posets and convex injections.  The definition formalises the idea of
  considering only decompositions compatible with the poset structure in a 
  certain way, as exemplified clearly by the notion of admissible cut.
\end{blanko}

\begin{blanko}{Operadic trees and $P$-trees.}\label{Ptrees}
  There is an important variation on the Butcher--Connes--Kreimer Hopf algebra
  (but it is only a bialgebra): instead of considering combinatorial trees one
  considers operadic trees (i.e.~trees with open incoming edges, and an
  open-ended root edge).  More generally one can consider $P$-trees for a
  finitary polynomial endofunctor $P$, i.e.~trees whose nodes are decorated by
  the operations of $P$.  For details on this setting, see 
  \cite{Kock:0807,Kock:MFPS28,Kock:1109.5785}, 
  \cite{GalvezCarrillo-Kock-Tonks:1207.6404}; it suffices here to note that the
  notion of $P$-tree covers many kinds of structured trees, such as planar
  trees, binary trees, effective trees, linear trees, words, and
  a large class of inductive data types (W-types).
  
  For operadic trees, when copying over the description to get a simplicial
  groupoid $X$ where $X_k$ is the groupoid of forests with $k-1$ compatible
  admissible cuts, there are two important differences, both due to the fact
  that the cuts cannot remove the edges, since this might 
  violate the local structure of the tree, (e.g.~being binary)---the cut
  leaves a trace of the edge on each side of the cut, in the form of an 
  open-ended edge.  One difference is that $X_0$ is not
  just a point: it is the groupoid of node-less forests.  The second difference
  is that
  unlike $\mathbf H$, the simplicial groupoid $X$
  is a Segal space; this follows from the Key Lemma
  of \cite{GalvezCarrillo-Kock-Tonks:1207.6404} (see
  \cite{Kock-Weber:1609.03276} for an abstract viewpoint).  The reason is that
  the `half-edges' left by the cut constitute enough data to
  reconstruct a tree with a cut from its bottom tree and crown
  by grafting.  More precisely, the Segal maps $X_k\to X_1 \times_{X_0} \cdots
  \times_{X_0} X_1$ return the layers seen in between the cuts, and they are 
  easily seen to be equivalences: given the layers separately,
  and a match of their boundaries, one can glue them together to reconstruct the
  original forest, up to isomorphism.  In this sense the operadic-forest
  decomposition space $X$ is a `category' with node-less forests as objects, 
  and arbitrary forests as morphisms, a forest being seen as a morphism from
  its leaves to its roots.  In
  this perspective, the decomposition space $\mathbf H$ of combinatorial forests is
  obtained from $X$ by throwing away the object information, i.e.~the data
  governing the composability constraints.  These two differences are crucial in
  the work on Green functions and \fdb formulae in
  \cite{GalvezCarrillo-Kock-Tonks:1207.6404,Kock:1512.03027,Kock-Weber:1609.03276}.
  
  There is a functor from operadic trees or $P$-trees to combinatorial trees
  which is {\em taking core} \cite{Kock:1109.5785}: it amounts to forgetting the
  $P$-decoration and shaving off all open-ended edges.  This defines a monoidal
  \culf functor $X \to \mathbf H$ which realises a bialgebra
  homomorphism from the bialgebra of operadic trees or $P$-trees to the
  Butcher--Connes--Kreimer Hopf algebra of combinatorial trees.

\end{blanko}

\begin{blanko}{Note about symmetries.}
  One {\em cannot} obtain the same bialgebra of trees
  (either the combinatorial or the
  operadic) by taking \isoclasses in each groupoid $X_k$: doing
  so would destroy symmetries that constitute an essential ingredient in the
  Butcher--Connes--Kreimer bialgebra.  Indeed, define a simplicial set $Y$ in
  which $Y_k = \pi_0(X_k)$, the set of \isoclasses of forests with $k$
  compatible admissible cuts.  Consider the tree $T$ 

  \tikzset{
	mytree/.pic={
      \draw (0.0, 0.0) -- (0.0, 0.28);
	  \fill (0.0, 0.28) circle[radius=0.065];
	  \draw (0.0, 0.28) -- (-0.175, 0.595);
	  \fill (-0.175, 0.595) circle[radius=0.065];
	  \draw (0.0, 0.28) -- (0.175, 0.595);
	  \fill (0.175, 0.595) circle[radius=0.065];
	  \draw (0.175, 0.595);
	}
  }
  
  \begin{center}\begin{tikzpicture}[line width=0.25mm]
    \draw (0.0, 0.0) pic {mytree};
  \end{tikzpicture}\end{center}
  belonging to $X_1$.  The fibre of $d_1:X_2\to X_1$ over $T$ is the (discrete) groupoid of
  all possible cuts in this tree:
  \begin{center}\begin{tikzpicture}[line width=0.25mm]
    \draw (0.0, 0.0) pic {mytree};
	\begin{scope}[shift={(0.0, 0.735)},line width=0.15mm]
	  \draw (-0.35, 0.0) .. controls (-0.14, 0.14) and (0.14, 0.14) .. (0.35, 0.0);
	\end{scope}
    \draw (1.05, 0.0) pic {mytree};
	\begin{scope}[shift={(1.05, 0.35)},line width=0.15mm]
	  \draw (-0.35, -0.07) .. controls (0.07, 0.0) and (-0.07, 0.49) .. (0.35, 0.49);
	\end{scope}
    \draw (2.1, 0.0) pic {mytree};
	\begin{scope}[shift={(2.1, 0.35)},line width=0.15mm]
	  \draw (0.35, -0.07) .. controls (-0.07, 0.0) and (0.07, 0.49) .. (-0.35, 0.49);
	\end{scope}
    \draw (3.15, 0.0) pic {mytree};
	\begin{scope}[shift={(3.15, 0.35)},line width=0.15mm]
	  \draw (-0.28, 0.0) .. controls (-0.14, 0.14) and (0.14, 0.14) .. (0.28, 0.0);
	\end{scope}
    \draw (4.2, 0.0) pic {mytree};
	\begin{scope}[shift={(4.2, 0.14)},line width=0.15mm]
	  \draw (-0.21, 0.0) .. controls (-0.14, -0.07) and (0.14, -0.07) .. (0.21, 0.0);
	\end{scope}
  \end{tikzpicture}\end{center}
  The thing to notice here is that while the second and third cuts are isomorphic
  as abstract cuts, and therefore get identified in $Y_2 = \pi_{0}(X_2)$, this
  isomorphism does not fix the underlying tree $T$.  This means that 
  in the formula for comultiplication of $T$ as an element of $X_1$ both cuts
  appear, and there is a total of $5$ terms, whereas in the formula for 
  comultiplication of $T$ as an element of $Y$ there
  will be only $4$ terms.  (Put in another way, the functor $X \to Y$ given by
  taking components is not \culf.)

  It seems that there is no way to circumvent this discrepancy directly at the
  \isoclass level: attempts involving ingenious decorations by natural numbers and
  actions by symmetric groups will almost certainly end up amounting to actually
  working at the groupoid level, and the conceptual clarity of the groupoid
  approach seems much preferable.
\end{blanko}

\begin{blanko}{Non-commutative versions.}
  The Butcher--Connes--Kreimer Hopf algebra of combinatorial trees admits a
  natural non-commutative version, first studied by Foissy~\cite{Foissy:2002I}.
  It is defined in exactly the same way, but with {\em ordered} forests of {\em
  planar} combinatorial trees.  In this case, the decomposition space is
  monoidal but not symmetric monoidal, giving naturally a non-commutative
  bialgebra.
  
  The same modification can be applied in the operadic case.  Planar operadic
  trees are precisely $M$-trees for $M$ the free-monoid monad.  More generally,
  to have planar structure on $P$-trees is to have a cartesian natural 
  transformation $P \Rightarrow M$ (see \cite{Gambino-Kock:0906.4931} for 
  details); in this situation there is a non-commutative
  bialgebra of ordered forests of $P$-trees.
\end{blanko}

\begin{blanko}{Free multicategories 
  \cite{Gambino-Kock:0906.4931}.}\label{ex:freemonad}
  Continuing the previous example, for any polynomial endofunctor $P$ cartesian
  over $M$, the groupoid of $P$-trees is (essentially) discrete, which is to
  say that it is equivalent to the set of \isoclasses of $P$-trees (because the planar
  structure encoded in the cartesian natural transformation to $M$ fixes the
  automorphisms).  This set is the set of operations of the free monad on $P$
  \cite{Gambino-Kock:0906.4931}, \cite{Kock:0807}.
  Thinking of $P$ as specifying a signature, we can equivalently think of
  $P$-trees as operations for the free (coloured) operad on that signature, or
  as the multi-arrows of the free multicategory on $P$ regarded as a multigraph.
  To a multicategory there is associated a monoidal
  category~\cite{Hermida:repr-mult}, whose object set is the free monoid on the
  set of objects (colours).
    The decomposition space of $P$-trees is naturally identified with the (fat) nerve of
  the (monoidal) category associated to the multicategory
  of $P$-trees.
  (The adjective `fat' is in parenthesis here because it could be omitted:
  the categories involved here have no invertible arrows (other than the 
  identities), because the multicategory is free.)
\end{blanko}

\begin{blanko}{Polynomial monads and operads.}\label{ex:monads}
  The decomposition space of $P$-trees for $P$ a polynomial endofunctor
  (\ref{Ptrees}) can be regarded as the decomposition space associated to the
  free monad on $P$.  In fact the construction works for any (cartesian,
  discrete-finitary) polynomial monad, not just free ones, as we now proceed to
  explain.  This construction has been generalised and subsumed in a more
  comprehensive setting of relative two-sided bar constructions in
  \cite{Kock-Weber:1609.03276}.  Presently we outline, in a more heuristic
  manner, the construction of a monoidal decomposition space from any coloured
  operad, and from it a commutative bialgebra.  For the numerical version, some
  finiteness conditions must be assumed.
  
  Coloured operads can be encoded as polynomial monads~\cite{Weber:1412.7599}.
  The combinatorial data of the endofunctor $R$ underlying the monad is a
  diagram of groupoids
  $$
  I \leftarrow E \to B \to I
  $$
  where $I$ is the set (or more generally, groupoid) of colours, $B$ is the
  groupoid of operations (more precisely the action groupoid of the action of
  the symmetric groups on the operations), and $E$ is the groupoid of operations
  with a marked input slot.  It follows that $E \to B$ is a finite map; the
  fibre over an operation is the set of its input slots.  The operad 
  substitution law then amounts to a cartesian monad structure on $R$, 
  i.e.~cartesian natural 
  transformations $R \circ R\Rightarrow R \Leftarrow \Id$ subject to axioms.

  Following the graphical
  interpretation given in \cite{Kock-Joyal-Batanin-Mascari:0706}, one can regard $I$
  as the groupoid of decorated unit trees (i.e.~trees without nodes), and $B$ as
  the groupoid of corollas (i.e.~trees with exactly one node) decorated with
  $B$ on the node and $I$ on the edges, compatibly.  The arity of a
  corolla labeled by $b\in B$ is then the cardinality of the fibre $E_b$. 
  
  We can now form a simplicial groupoid $X$ in which $X_0$ is the groupoid of
  disjoint unions of decorated unit trees, $X_1$ is the groupoid of disjoint
  unions of decorated corollas, and where more generally $X_n$ is the groupoid of
  $R$-forests of height $n$.  For example, $X_2$ is the groupoid of $R$-forests of
  height $2$, which equivalently can be described as configurations consisting
  of a disjoint unions of bottom corollas whose leaves are decorated with other
  corollas, in such a way that the roots of the decorating corollas match the
  leaves of the bottom corollas.  This groupoid can more formally be described
  as the free symmetric monoidal category on $R(B)$ (the endofunctor $R$ 
  applied to $B$).  Similarly, $X_n$ is the
  free symmetric monoidal category on $R^{n-1}(B)$.  The outer face maps project
  away the top or bottom layer in a level-$n$ forest.  For example $d_0: X_1
  \to X_0$ sends a disjoint union of corollas to the disjoint union of their
  root edges, while $d_1: X_1 \to X_0$ sends a disjoint union of corollas to the
  forest consisting of all their leaves.  The active face maps (i.e.~inner face
  maps) join two adjacent layers by means of the monad multiplication on $R$.
  The degeneracy maps insert unary corollas by the unit of the monad.
  Associativity of the monad law ensures that this simplicial groupoid is
  actually a category object and a Segal space~\cite{Kock-Weber:1609.03276}.
  The operation of disjoint union makes this a symmetric monoidal
  decomposition space, and altogether an incidence bialgebra results from the
  construction.

  The example (\ref{Ptrees}) of $P$-trees (for $P$ a polynomial endofunctor) and
  admissible cuts is an example of this construction, namely corresponding to
  the free monad on $P$: indeed, the operations of the free monad on $P$ form the
  groupoid of $P$-trees, which now plays the role of $B$.  Level-$n$ trees in
  which each node is decorated by objects in $B$ is the same thing as $P$-trees
  equipped with $n-1$ compatible admissible cuts, and grafting of $P$-trees (as
  prescribed by the active face maps in \ref{Ptrees}) is precisely the monad
  multiplication in the free monad on $P$.  
  
  It should be stressed that while the decomposition space of a free operad is
  always automatically locally finite, the case of a general operad is not
  automatically so.  This condition must be imposed separately if numerical
  examples are to be extracted.
  
  Another subexample of this is the case where the monad is the terminal 
  reduced monad $\mathrm{Comm}$, which is the free-commutative-semimonoid monad.
  In this case, the resulting category object in groupoids is equivalent to the
  fat nerve of the category of surjections (as in \ref{sec:fdb}), so the
  associated bialgebra is the classical \fdb bialgebra.  The main achievement
  of~\cite{Kock-Weber:1609.03276} is to show that the \fdb formula for the
  comultiplication in the classical \fdb bialgebra generalises to incidence
  bialgebras of arbitrary operads and polynomial monads (the free case having
  been established previously in \cite{GalvezCarrillo-Kock-Tonks:1207.6404}).
\end{blanko}

\begin{blanko}{Progressive graphs and free PROPs.}\label{ex:MM}
  The constructions in \ref{Ptrees} readily generalise from trees to
  progressive graphs (although the attractive polynomial
  interpretation does not).  By a progressive graph
  we understand a finite directed graph with a certain number of open
  input edges, a certain number of open output edges, and prohibited to contain
  an oriented cycle (see \cite{Kock:1407.3744} for a categorical formalism).  In
  particular, the set of vertices of a progressive graph has a natural poset
  structure.  The progressive graphs form a groupoid $\mathbf G_1$.  We allow
  graphs without vertices, these form a groupoid $\mathbf G_0$.  Let $\mathbf
  G_2$ denote the groupoid of progressive graphs with an {\em admissible cut}:
  by this we mean a partition of the set of vertices into two disjoint parts, a
  down-set $V_1$ and an up-set $V_2$.  This partition determines a set of edges,
  called the {\em cut}, consisting of the edges that connect a vertex in $V_1$
  with a vertex in $V_2$, the out-edges of $V_0$, the in-edges of $V_2$, and the
  edges of $G$ that are both in-edges and out-edges. The two vertex sets $V_1$ 
  and $V_2$ induce new progressive graphs $G|V_1$ and $G|V_2$, by including all
  edges incident to the given vertex set, and including in both cases also the whole cut set,
  which becomes the
  new set of output edges for $G| V_1$ and the new set of input edges for $G|
  V_2$.  Similarly, let
  $\mathbf G_k$ denote the groupoid of progressive graphs with $k-1$ compatible
  admissible cuts, just like we did for forests.  It is clear that this defines
  a simplicial groupoid $\mathbf G$, easily verified to be a decomposition space
  and in fact a Segal space.  The progressive graphs form the set of operations
  of the free PROP with one generator in each input/output degree $(m,n)$.
  Decorating data for progressive graphs are called tensor schemes in
  \cite{Joyal-Street:tensor-calculus}, and the progressive graphs decorated by a
  tensor scheme form the set of operations of the free (coloured) PROP on the
  tensor scheme. The same construction is important for the operational
  semantics of Petri nets~\cite{Kock:2005.05108}.
  In fact, the construction works for any PROP, not just free ones, in
  analogy with the passage from trees and free operads to arbitrary operads 
  (\ref{ex:monads}).  Note that disjoint union (or the monoidal structure underlying any 
  PROP) makes the resulting incidence coalgebras into bialgebras.
  
  Bialgebras of
  progressive graphs have been studied in the context of Quantum Field Theory by
  Manchon~\cite{Manchon:MR2921530}.  Certain decorated progressive graphs, and
  the resulting bialgebra have been studied by Manin~\cite{Manin:MR2562767},
  \cite{Manin:0904.4921} in the theory of computation: his graphs are decorated
  by operations on partial recursive functions and switches.
\end{blanko}

\begin{blanko}{Hereditary species and directed hereditary species.}
  In incidence coalgebras of categories or (directed) restriction species
  the left and the right tensor factor of the comultipication are on equal
  footing. As we have just seen, incidence bialgebras of operads of 
  various kinds have a monomial in the left-hand tensor factor and linear 
  terms in the right-hand tensor factor, reflecting the many-to-one nature 
  of operads. There is a large class of combinatorial bialgebras with this
  feature which do not come from operads: they are bialgebras coming from
  Schmitt's hereditary species, from \cite{Schmitt:hacs}: just like
  ordinary species are presheaves on $\B$ and restriction species are
  presheaves on $\I$, hereditary species are functors on the category of
  finite sets and partial surjections, that is spans where the backward leg
  is injective and the forward leg is surjective. We are thus talking about
  structures that can be restricted along injections and pushed forth along
  surjections. 

  A prototypical example is the hereditary species of graphs: the
  comultiplication of a graph is given by summing over partitions of the
  vertex sets; then on each block there is an induced graph structure (as in
  restriction species), and these are formally multiplied together to form
  a monomial placed in the left-hand tensor factor. The right-hand tensor
  factor receives the graph structure induced on the quotient, that is, the
  set of blocks. Carlier~\cite{Carlier:1903.07964} showed how a hereditary
  species $H$ induces a monoidal decomposition space $\ds H$ whose
  incidence bialgebra is the one constructed by
  Schmitt~\cite{Schmitt:hacs}. The construction is similar to the two-sided
  bar construction of an operad: the decomposition space $\ds H$ has in
  degree $1$ the groupoid of (families of) $H$-structures, and in degree
  $2$ the groupoid of (families of) $H$-structures with a partition on the
  underlying set. The middle face map $d_1$ forgets the partition; the
  bottom face map $d_0$ returns the $H$-structure induced on the quotient
  set, and the top face map $d_2$ returns the family of $H$-structures
  given by the blocks. Just as for operads, it is this nature of the top
  face maps that necessitates working with families of structures rather
  than single structures.

  Cebrian and Forero~\cite{Cebrian-Forero:2211.07721} gave a directed
  version of this construction, starting with a new notion of directed
  hereditary species, which is a functor on the category of posets and
  partial monotone contractions. Just as the case of graphs is the
  paradigmatic example of hereditary species, posets and trees form
  canonical examples of directed hereditary species. In particular, there
  are monoidal decomposition spaces of finite posets  (giving a certain 
  bialgebra of finite topologies first studied by 
  Fauvet--Foissy--Manchon~\cite{Fauvet-Foissy-Manchon:1503.03820}), 
  trees (giving the 
  Calaque--Ebrahimi-Fard--Manchon bialgebra of 
  trees~\cite{Calaque-EbrahimiFard-Manchon:0806.2238}), or linear trees 
  (giving a version of the Fa\`a di Bruno bialgebra as in Subsection~\ref{sec:fdb}, 
  with the alternative
  normalisation mentioned in \ref{fdb-sectioncoeffs}, in turn a 
  symmetrised version of the example in \ref{ex:OS}).

  Both for hereditary species and directed hereditary species, an important
  feature is that there is an 
  ordinary restriction species present at the same time, and
  in this way there are {\em two} different comultiplications. It is shown in
  \cite{Carlier:1903.07964} and \cite{Cebrian-Forero:2211.07721} that these
  structures form what is called a {\em comodule bialgebra}: it is about two
  bialgebras with the same underlying algebra but with two
  different comultiplications,
  and such that one comultiplication co-distributes over the other from
  one side. The interest in these structures comes mostly from their 
  appearance in various branches of analysis (see
  Manchon~\cite{Manchon:Abelsymposium}
  and Foissy~\cite{Foissy:1702.05344}).

  Both hereditary species and directed hereditary species have a strong
  operad flavour, since there is one output (the quotient) and many
  inputs (the blocks). However, they are rarely operads; instead they are
  so-called {\em operadic categories} in the sense of Batanin and
  Markl~\cite{Batanin-Markl:1404.3886}. An operadic analogue of the tree
  examples of Cebrian and Forero was given by Kock~\cite{Kock:1912.11320},
  leading to other decomposition-space constructions of comodule bialgebras. The
  difference is essentially the difference between combinatorial trees (as
  in \ref{ex:CK}) and operadic trees (as in \ref{Ptrees}).
\end{blanko}

\subsection{Symmetric functions}
\label{sec:Sym}

The ring $\Sym$ of symmetric functions is the subring of the ring of
power series in countably many variables, $\Sym\subset
\Q[[x_1,x_2,...]]$, consisting of the power
series that are of bounded degree and are invariant under
permutation of the variables (see Stanley~\cite{Stanley:volII}).

Given an integer partition $\lambda \vdash n$ let
$m^\cl_\lambda(\underline x) $ be the symmetric function
given by the
prescription: list all {\em distinct}
permutations of the vector $\lambda$, then apply these as
exponents to all $x_i \ x_j \ x_k\dots$ with
$i<j<k<\dots$, and take the sum of the resulting monomials.
For example, the integer partition $331$ has three permutations,
namely $331,313,133$, and if we restrict to the alphabet
with three variables, we obtain
\begin{equation}\label{eq:classicalm331}
m^\cl_{331} = x_1^3 x_2^3 x_3^1 + x_1^3 x_2^1 x_3^3  + x_1^1 x_2^3 x_3^3 \; .
\end{equation}
The symmetric functions $m^\cl_\lambda(\underline x)$
form an additive basis for $\Sym$, termed the monomial basis.

\begin{blanko}{Towards an objective theory: the groupoid of surjections.}
  We outline some developments of an objective theory
  of symmetric functions. Further details will be forthcoming
  in \cite{GKT:sym,GKT:QSym}.
  As in \ref{surjection-gpd}
  we take the basic object to be the groupoid of
  surjections $\mathbf S_{1}$ rather than the set of integer partitions.
\end{blanko}

The theory of symmetric functions is huge,
and
even setting up the standard bases and base changes is a big task.
In the following we content ourselves to outline the monomial basis and
the elementary basis, and the base change between the two. This already
illustrates the flavour of the objective theory, and reveals some
subtleties.

\begin{blanko}{Symmetric functions in the $M$-basis.}
The decomposition space encoding the comultiplication of symmetric
functions in the monomial basis is given by
$$
\Lambda^M_r := \left\{
\begin{tikzcd}[cramped]
\underline n \ar[d, onto,"\lambda"] &   \\
\underline k \ar[r,"\ell"] & r
\end{tikzcd}
\right\}
$$
So $\Lambda^{M}_0=1$ as only the empty surjection
splits into 0 parts,
and $\Lambda^M_1$ is the groupoid of surjections $\lambda$,
while $\Lambda^M_2$ is the
groupoid of surjections $\lambda$ with a splitting or \emph{layering} $\ell$
of the codomain into two parts,
which we can picture more explicitly as
\begin{equation}\label{eq:sum-split-m}
\qquad\qquad\begin{tikzcd}[column sep=large]
  \underline n' \ar[dashed,r] \ar[dashed,d, onto,"{\lambda'=d_0(\lambda,\ell)}"']
  & \underline n \ar[d,
  onto,"{d_1(\lambda,\ell)}"',"\lambda"] & \underline n'' \ar[dashed,l]
  \ar[dashed,d, onto,"{\lambda''=d_2(\lambda,\ell)}"]  &
  \text{\footnotesize{sum splitting \phantom{$\ell$}}} \\
\underline k' \ar[r] & \underline k & \underline k'' \ar[l]
& \text{\footnotesize{sum splitting $\ell$}}
\end{tikzcd}
\end{equation}
The bottom row is a sum-splitting diagram, meaning a pair of injective 
maps whose images are disjoint and that is jointly surjective.
Note that then the top row is a sum splitting too,
and that the data of the top row and the surjections $\lambda',
\lambda''$ are implied (taking preimages) by $\lambda$ and the bottom row.

Let $M_\lambda$ denote the cardinality of
$\name{\lambda} : 1 \to \Lambda^M_1$.
The simplicial groupoid $\Lambda^M$ is a
locally discrete decomposition space in the sense of \ref{loc-disc-ds}
and we have
$$
\Delta(M_\lambda) = \sum_{\underline k' + \underline k'' = \underline k}
M_{\lambda'} \otimes M_{\lambda''}
$$
where the summation is over all splittings
\eqref{eq:sum-split-m}.
\end{blanko}

\begin{blanko}{Example comultiplication.} In the comultiplication of $M_{331}$
  there are $8$ terms, correspondning to the $2^3=8$ splittings of the codomain.
We get
$$
\Delta(M_{331}) = M_{331} \tensor 1 + M_{33} \tensor M_1 + 2 \, M_{31}
\tensor M_3 + 2\, M_3 \tensor M_{31} + M_1 \tensor M_{33} + 1 \tensor
M_{331} .
$$
\end{blanko}

\begin{blanko}{Comparison with classical normalisation.}
Experts will notice here that this comultiplication is different from the comultiplication
of the usual monomial symmetric functions $m^\cl_\lambda$, where
$$
\Delta(m^\cl_{331}) =
m^\cl_{331} \tensor 1 + m^\cl_{33} \tensor m^\cl_1 +  m^\cl_{31}
\tensor m^\cl_3 +  m^\cl_3 \tensor m^\cl_{31} + m^\cl_1
\tensor m^\cl_{33} + 1 \tensor
m^\cl_{331} .
$$
In fact we are dealing with a different
normalisation, which was first studied by Doubilet~\cite{Doubilet:1972}.
Precisely, the relation with the classical monomial symmetric functions is
given by
$$
M_\lambda = ( \lambda_1! \lambda_2! \lambda_3!
\cdots ) \;
 m^\cl_\lambda
 $$
 if the type of the surjection is
 $1^{\lambda_1}2^{\lambda_2}\dots$ as in \eqref{eq:type}.
\end{blanko}

\begin{blanko}{Polynomial semantics.}
  Let $A$ be a set, here playing the role of an alphabet.
  A {\em polynomial functor} in $A$-many variables is a diagram
  $$
  A \stackrel s \leftarrow T \to B \to 1
  $$
  which defines a functor
  \begin{eqnarray*}
     \grpd_{/A} & \longrightarrow & \grpd  \\
    X{\to}A \ & \longmapsto & \sum_{b\in B} \prod_{e\in T_b} X_{se}\;\;=\;\;
                              \sum_{b\in B}\prod_{a\in A}(X_a)^{T_{a,b}}
                              .
  \end{eqnarray*}
  Note that with $B$ infinite, it is more like a power series.

  To the monomial symmetric function $M_\lambda$ we assign the polynomial
  functor defined by the diagram of sets
  $$
  A   \leftarrow T_\lambda \to B_\lambda \to 1
  $$
  where $A$ is our alphabet which could be finite or could be
  $\N$, and
  $T_\lambda$ and $B_\lambda$ are the sets of possible diagrams
$$
T_\lambda=\left\{\begin{tikzcd}[cramped]
  1 \ar[r] & \un n \ar[d, onto, "\lambda"] &
  \\
  & \un k \ar[r, into] & A
  \end{tikzcd}\right\}\,,
\qquad
B_\lambda=\left\{\begin{tikzcd}[cramped]
  \un n \ar[d, onto, "\lambda"] &
  \\
  \un k \ar[r, into] & A
  \end{tikzcd}
\right\}\,.
  $$
  The map from $T_\lambda$ to $A$ returns the element singled out
  by the composite, and the map to $B_\lambda$
  forgets which element of $\un n$ was singled out.

  In other words, the polynomial associated to $\lambda$
  consists of one monomial for
  each monomorphism $\un k \into A$,
  formed using the fibres
  $n_i$ (for $i\in k$) as the exponents,
  $$
M_\lambda\longmapsto \sum_{j:k\into A}x_{j_1}^{n_1}\dots x_{j_k}^{n_k}
  $$
\end{blanko}

\begin{blanko}{Example.}
  Consider $M_{331}$
    and take the alphabet
$A=\{x_1,x_2,x_3\}$.
In contrast to the recipe for classical symmetric polynomials
in \eqref{eq:classicalm331}
we keep the exponents vector fixed, and then
let the variables come in all orders, so as to get
\begin{align*}
M_{331} &= x_1^3 x_2^3 x_3^1 + x_1^3 x_3^3 x_2^1  + x_2^3 x_1^3 x_3^1
+ x_2^3 x_2^3 x_1^1 + x_3^3 x_1^3 x_2^1  + x_3^3 x_2^3 x_1^1
.\end{align*}
This is consistent with the normalisation
$M_{331}\!= \!2\, m^\cl_{331}$,
where the factor $2\!=1!\,0!\,2!$
appears since the surjection has type $1^{1}2^{0}3^{2}$.
\end{blanko}

\begin{blanko}{Elementary symmetric functions.}
The classical elementary basis for $\Sym$ is given by defining
$$
e_n^\cl:=m^\cl_{11\dots1}
$$
and then defining
$$
e^\cl_\lambda := e_{n_1}^\cl \cdots e_{n_k}^\cl.
$$

In the objective theory we define $E_\lambda$ directly in terms of
surjections, with comultiplication given by splitting the domain.
There is a locally discrete decomposition space $\Lambda^E$ with
$$
\Lambda^E_r := \left\{
\begin{tikzcd}[cramped]
\underline n \ar[d, onto] \ar[r] & r  \\
\underline k  &
\end{tikzcd}
\right\}.
$$
Thus $\Lambda^E_0=1$ and $\Lambda^E_1$ is the
groupoid of surjections as before, while now $\Lambda^E_2$ is the
groupoid of surjections with a splitting of the domain into two parts,
which we can picture more explicitly as
$$
\begin{tikzcd}
\underline n' \ar[r] \ar[d, onto] & \underline n \ar[d,
onto] & \underline n'' \ar[l] \ar[d, onto]  & \text{\footnotesize{sum splitting}}\\
\underline k' \ar[r] & \underline k & \underline k'' \ar[l] &
\text{\footnotesize{cover}\phantom{xxxxxx}}
\end{tikzcd}
$$
where the top row is a sum-splitting diagram.
The bottom row then consists of two injections that jointly cover $k$.
\end{blanko}

\begin{blanko}{Example.}
  $\Delta(E_{21})$ has $8$ terms, since there are $8$ ways to split the
  domain. If the split is inside a block, that block splits into two.
  $$
  \Delta(E_{21}) =
  1 \tensor E_{21}
  + 2 E_{11}\tensor E_1
  + E_1 \tensor E_2
  + 2 E_1 \tensor E_{11}
  + E_2 \tensor E_1
  + E_{21} \tensor 1  .
  $$
  Experts in symmetric functions will notice this differs from
  the comultiplication of
  classical elementary symmetric functions, where
  $$\Delta(e^\cl_{21}) =
  1\tensor e^\cl_{21}
  + e^\cl_{11}\tensor e^\cl_1
  + e^\cl_1 \tensor e^\cl_2
  + e^\cl_{1}\tensor e^\cl_{11}
  + e^\cl_2 \tensor e^\cl_1
  + e^\cl_{21}  \tensor 1 .
  $$
\end{blanko}

\begin{blanko}{Another normalisation.}
  For the elementary symmetric functions, as for the monomial
  basis, there is a scalar factor relating
  the classical and our objective version.
The
relationship here says $E_2 = 2\, e^\cl_2$ and
 $E_{21} = 2\, e^\cl_{21}$, and in general
$$
E_\lambda \;=\; n_1! \cdots n_k! \; e^\cl_\lambda.
$$
Again, the reason is the polynomial semantics: to $E_\lambda$ represented
by a surjection $\lambda:\un n \onto \un k$ we assign the polynomial functor $A
\leftarrow T \to B\to 1$, where $B$ is the set of diagrams
$$\begin{tikzcd}
  \un n \ar[r, "\text{loc.inj.}"] \ar[d, twoheadrightarrow] & A \\
  \un k &
\end{tikzcd}$$
The map $\un n\to A$ is required to be
locally injective, which means that it is injective on each fibre.

For example, if the alphabet  $A$ has 3 elements, then
$e_{21}^\cl=m_{11}^\cl m_{1}^\cl$ is a sum of
$3\times 3$ monomials, while
$E_{2,1}$ has $18$ terms: $6$ choices of an
injective map $\un 2\into A$ and $3$ of an injective map
$\un 1 \into A$.

This normalisation was perhaps first studied by
Doubilet~\cite{Doubilet:1972}.
\end{blanko}

\begin{blanko}{Change of basis: elementary symmetric functions in terms of
  monomial symmetric functions.}
  One can write every elementary symmetric function as a sum
of monomial symmetric functions
\begin{equation}\label{e-to-m-cl}
e_\lambda = \sum_{\lambda\wedge\pi =\id} m_\pi
\end{equation}
Traditionally the summation is over $0/1$ matrices whose row and column totals
give the partitions $\lambda$ and $\pi$.
There are several ways to rephrase this in a more objective manner.
For example, consider the sum over all injections $\un n\into \un k\times \un b$
whose components are surjections $\lambda:\un n \onto \un k$ and $\pi:\un n\onto \un b$.
Note that two surjections $\lambda$ and $\pi$
with common domain are jointly injective if and only if
they are \emph{transversal partitions}, that is,
their meet $\lambda\wedge\pi=\id$.
Alternatively, an injection $\un n\into \un k\times \un b$ can be regarded as a relation
between $\un k$ and $\un b$, and is termed an \emph{effective relation}
when its components are surjective.

The change of basis \eqref{e-to-m-cl} is best expressed
as a coalgebra homomorphism, and
at the level of decomposition spaces this means that we describe a
\ikeo-\culf span of decomposition spaces
$$
\Lambda^E \stackrel{\text{\tiny{ikeo}}}\longleftarrow W
\stackrel{\text{\tiny{culf}}}\longrightarrow \Lambda^M
$$
Recall that \ikeo maps are those that induce coalgebra homomorphisms
contravariantly, while \culf maps induce coalgebra homomorphisms covariantly.

In this case, the middle decomposition space $W$ has $W_1$ the groupoid of
effective relations, or transversal partitions,
$$
\begin{tikzcd}
  & \un n \ar[ld, onto, "\lambda"'] \ar[rd, onto, "\pi"] \ar[d, into] & \\
  \un k & \un k\times \un b  \ar[r] \ar[l]& \un b
\end{tikzcd}
$$
Given two partitions $\lambda$ and $\pi$, fitting them into an effective
relation is a yes/no question: it is the question whether the resulting map
$(\lambda,\pi) :\un  n \to \un k\times \un b$ is injective or not.

The span
$$
(\Lambda^E)_1 \leftarrow W_1 \rightarrow (\Lambda^M)_1
$$
clearly implements the map we aim for:
\begin{equation}\label{EtoM}
E_\lambda \mapsto \sum_{\lambda\wedge \pi = \id} M_\pi
\end{equation}
There is now the obvious question whether this assignment is
comultiplicative. It will be so if we can fit $W_1$ into a simplicial
object (necessarily a decomposition space) which is \ikeo over $\Lambda^E$
and \culf over $\Lambda^M$. Guessing what such a decomposition space should
be is a question of knowing what it is in degree $1$ (which we do, since
we already know what it should do), and then use the \culf condition and
the \ikeo condition to arrange the rest.
The result in this case is
$$
W_r  = \{ \un k \leftarrow \un n \to \un b \to r \}
$$
where the part $\un k \leftarrow \un n \to \un b$ is an effective relation.

Now there are obvious projection maps constituting simplicial maps to 
$\Lambda^E$ and $\Lambda^M$. It is not difficult to show that 
the simplicial
  map $W \to \Lambda^M$ is \culf, while it requires some work to show
  that the simplicial map $\Lambda^E \leftarrow W$ is \ikeo.
  The span thus defines a coalgebra homomorphism, whose cardinality is
  \eqref{EtoM}.
\end{blanko}

\begin{blanko}{Discussion of non-invertibility of the base change.}
  The span is not invertible! However, after taking cardinality, it becomes
invertible, and the inverse is given by a formula involving M\"obius
inversion in the lattice of partitions (cf.~Doubilet~\cite{Doubilet:1972}).
The upshot is that at the objective level, we can see more detail: there
is not a single decomposition space of symmetric functions, but rather one
for each basis (all of them having the groupoid of surjections in
simplicial degree $1$). These are not isomorphic,
because while there is maybe an \ikeo-\culf span in one direction to express
a change of coordinates, the inverse change of coordinates will involve
minus signs (coming from some instance of \M inversion), and it will
not exist at the objective level. (It will exist in the same way as
\M inversion exists: only in virtue of formal differences,
cf.~Section~\ref{sec:M} below.)
\end{blanko}

\begin{blanko}{Bialgebra structure.}
  More than the comultiplication of symmetric functions, there is of course
  the multiplication. At the objective level, this should be encoded by
  certain bisimplicial spaces $B : 
  (\simplexcategory\times\simplexcategory)\op \to \Grpd$ that are
  decomposition spaces in each direction, and with a certain compatibility
  condition. The idea is then that the bialgebra is $\grpd_{/B_{11}}$
  with the horizontal direction used to
  define the comultiplication and the vertical direction used to
  define the multiplication. The compatibility condition expresses the
    multiplicativity of the comultiplication (or equivalently the
  comultiplicativity of the   multiplication.) In practice, such bisimplicial
  spaces often arise as \culf-monoidal decomposition spaces, meaning that
  one of the two directions is actually a Segal monoid (rather than a more
  general decomposition space). We already saw several examples of this.
\end{blanko}

\begin{blanko}{Bialgebra structure (\culf monoidal structure) on $\Lambda^E$.}
  The two decomposition spaces of symmetric functions we have described,
  $\Lambda^M$ and $\Lambda^E$, fit into bisimplicial groupoids like this.
  In fact, they are the same bisimpicial groupoid, only read in two
  different directions. This is actually a \culf-monoidal decomposition
  space, most easily described from the viewpoint of $\Lambda^E$: this 
  decomposition space is
  \culf-monoidal under disjoint union of surjections. This means that it is
  very easy to multiply in the $E$-basis:
  $$
  E_{\lambda'} \cdot E_{\lambda''} = E_{\lambda'\sqcup \lambda''}
  $$
  where $\lambda'\sqcup \lambda''$ is the disjoint union $\un n' + \un n'' \onto
    \un k' + \un k''$.

  If we write out the monoidal nerve of this monoidal operation in the way we
  did with $\B$ in \ref{ex:I=DecB}, it is precisely a question of
  splitting the codomain. As a bisimplicial space $\Lambda^E_{\bullet\bullet}$,
  we have $\Lambda^E_{ij}$ the groupoid of surjections with the codomain split
  into $i$ parts and the domain split into $j$ parts. There are no compatibility
  requirements on the splittings of domain and codomain (and this independence
  is essentially the statement that the monoidal structure is \culf). So for
  $\Lambda^E$, the multiplication is simply given by a monoidal structure.
\end{blanko}

\begin{blanko}{Bisimplicial groupoid for $\Lambda^M$.}
  For $\Lambda^M$, in contrast, we have the same bisimplicial
  groupoid, but transposed, so that we use the monoidal direction to define the
  comultiplication, and use the `decomposition-space' direction for the
  multiplication. This means that the formula for multiplication in the
  $M$-basis is less straightforward. Given $\lambda': \un n' \onto \un k'$ and
  $\lambda'': \un n'' \onto \un k''$, we need to find all ways to fit it into a
  diagram
  $$
  \begin{tikzcd}
  \underline n' \ar[r, dotted] \ar[d, onto] & \underline n \ar[d,
  onto] & \underline n'' \ar[l, dotted] \ar[d, onto]  \\
  \underline k' \ar[r, dotted] & \underline k & \underline k'' \ar[l, dotted]
  \end{tikzcd}
  $$
  such that the top row is a sum-inclusion diagram. This forces $n=n'+n''$.
  We can list all such diagrams by first writing the sum $\lambda' \sqcup
  \lambda'' :\un  n' +\un  n'' \onto \un k' + \un k''$, and then postcomposing with all
  quotient maps $q:\un k'+\un k'' \onto \un k$
  whose components are both injective.
\end{blanko}

\begin{blanko}{Example.}
  For any positive numbers $a,b,c,d,e$ we have
  \begin{align*}
 & \!\!\! M_{a,b,c} * M_{d,e} \;\;\;=\;\;\;  M_{a,b,c,d,e} \\
  &{} + M_{a+d,b,c,e} + M_{a+e,b,c,d} + M_{b+d,a,b,e} + M_{b+e,a,c,d}
  + M_{c+d,a,b,e} + M_{c+e,a,b,d}
  \\
  &{} + M_{a,b+d,c+e} + M_{a,b+e,c+d}
  + M_{b,a+d,c+e} + M_{b,a+e,c+d}
  + M_{c,b+d,a+e} + M_{c,b+e,a+d}
  \end{align*}
    To see this, we have to look at quotient maps $q$ in
  $$\begin{tikzcd}
  \un 3 \ar[r, into] \ar[rd, into] & \un 3+\un 2 \ar[d, onto, "q"] & \un 2 \ar[l,intoL]
  \ar[ld,intoL] \\
  & \un k &
  \end{tikzcd}$$
  We can choose to join nothing; this is possible in $1$ way.
  Then we can choose to join one point in $\un 3$ with one point in $\un 2$.
  This can be done in $6$ ways.
  Finally, we can choose to join two points in $\un 3$ with two points in $\un 2$,
  and this can also be done in $6$ ways.
\end{blanko}

\begin{blanko}{Multiplicativity of the change of basis.}
  The change of basis expressed by the \ikeo-\culf span is not only
  comultiplicative but also multiplicative. This means that the
  decomposition space $W$ is also just one direction of a bisimplicial
  groupoid, with bisimplicial maps to the bisimplicial groupoids
  $\Lambda^E_{\bullet\bullet}$ and $\Lambda^M_{\bullet\bullet}$.

  This span of bisimplicial groupoids
  $$
  \Lambda^E_{\bullet\bullet} \leftarrow W \rightarrow \Lambda^M_{\bullet\bullet}
  $$
  is row-wise \ikeo-\culf, as we already saw, while column-wise it is instead
  \culf-\ikeo. This is precisely to say that the corresponding homomorphism on
  incidence coalgebras is furthermore a homomorphism of bialgebras.
\end{blanko}

\begin{blanko}{Outlook.}\label{QSym}
  We have briefly outlined two (double) decomposition spaces, one
  corresponding to the monomial basis and one corresponding to the
  elementary basis, and explained how to express elementary symmetric
  functions in terms of monomial ones by way of an intermediate double
  decomposition space and \ikeo-\culf spans. While this gives a hint at
  what the decomposition-space approach to symmetric functions looks like,
  we have only scratched the surface here. The other combinatorial bases
  (complete homogeneous, power-sum, and the forgotten basis) admit similar
  decomposition-space interpretations (with similar non-standard
  normalisations). Unfortunately, the Schur basis, which is perhaps the
  most interesting basis, for its many connections to representation theory
  and geometry, is not easy to describe as a decomposition space.

  Next, related Hopf algebras, such as that of quasi-symmetric functions,
  noncommutative symmetric functions, free quasi-symmetric functions (the
  Malvenuto--Reutenauer Hopf algebra), word quasi-symmetric functions, also
  admit decomposition space interpretations~\cite{GKT:QSym}. The
  decomposition space for the Hopf algebra of
  quasi-symmetric functions as well as the word quasi-symmetric functions
  have received some interest recently~\cite{Hackney-Kock:2210.11192}, as
  they are examples of free decomposition spaces
  (meaning that they are left Kan extended from
  $\simplexcategory_{\text{inert}}$, as briefly touched upon in 
  \ref{ex:cULF/N} below). The bialgebra of quasi-symmetric
  functions is the terminal object in the category of graded connected
  bialgebras with a zeta function, by a theorem of
  Aguiar--Bergeron--Sottile~\cite{Aguiar-Bergeron-Sottile}.
  An objective version of this result has been
  established recently by Hackney--Kock--Steinebrunner.
\end{blanko}

\section{\M inversion}

\label{sec:M}

\subsection{Completeness, and \M inversion at the objective level}

\label{sec:compl}

We are interested in the invertibility of the zeta functor (see \ref{zeta})
under the convolution product (see \ref{convolution}).  Unfortunately,
at the objective level it can practically {\em never} be convolution invertible,
because the inverse $\mu$ should always be given by an alternating sum
$$
\mu = \Phieven - \Phiodd. 
$$
We do not have minus sign available, but the sign-free equation
$$
\zeta * \Phieven = \epsilon + \zeta * \Phiodd
$$
will hold, as we proceed to recall.
In the category case (cf.~\cite{Content-Lemay-Leroux,LawvereMenniMR2720184}),
$\Phieven$ is given by the even-length
chains of non-identity arrows, that is, by the non-degenerate simplices of even dimension, and similarly for $\Phiodd$.  
To  make sense of this for more general 
decomposition spaces we need to recall, from \cite{GKT:DSIAMI-2},
the notion of completeness.

A simplex in any simplicial groupoid is degenerate when it is in the image of a
degeneracy map.  `Nondegenerate' should mean to be in the complement of the image, but this is only well behaved for
monomorphisms of groupoids,
i.e.~maps that are fully faithful as functors, see \ref{mono}.

\begin{blanko}{Completeness and non-degeneracy.}\label{complete}
 A decomposition space is {\em complete} if $s_0: X_0
  \to X_1$ is mono \cite{GKT:DSIAMI-2}.  It follows that all other degeneracy maps in $X$ are also mono (see \cite{GKT:DSIAMI-1}).

For a complete decomposition space $X$ we define $\nondeg X_n \subset X_n$ to
  be the full subgroupoid of nondegenerate $n$-simplices, i.e.~not in the
  image of any of the degeneracy maps.  More importantly, in a decomposition 
  space
  one can measure whether a simplex is nondegenerate on its principal edges: it
  is nondegenerate if and only if all its principal edges are~\cite[Corollary
  2.14]{GKT:DSIAMI-2}.  Hence it really just boils down to defining
  nondegenerate $1$-simplices: define $\nondeg X_1 \subset X_1$ to be the
  complement of the monomorphism $s_0 : X_0 \to X_1$. 
\end{blanko}

\begin{blanko}{Examples and non-example.}
  Clearly, every discrete decomposition space (such as strict nerves)
  is complete, since any map between
  sets which admits a retraction is a monomorphism.  Also every Rezk-complete 
  Segal space is complete in the sense of \ref{complete}.  In particular, fat
  nerves of categories are complete.
  
  To see an example of a non-complete decomposition space, let $G$ be a
  nontrivial group, and write $BG$ for the same group considered as a groupoid with one object. Now consider the simplicial groupoid $X$ with $X_n = (BG)^n$.
  Here $s_0: 1 \to BG$ is not a monomorphism, as the trivial subgroupoid of $BG$ is not a full subgroupoid.
\end{blanko}

\begin{blanko}{`Phi' functors.}\label{Phi}
  We define $\Phi_n$ to be the linear functor given by the span
  $$
  X_1 \longleftarrow \nondeg{X}_n \longrightarrow  1.
  $$
  If $n=0$ then $\nondeg X_0=X_0$ by convention, and $\Phi_0$ is given by the
  span
  $$
  X_1 \longleftarrow X_0 \longrightarrow  1.
  $$
  That is, $\Phi_0$ is the linear functor $\epsilon$.  Note that
  $\Phi_1=\zeta-\epsilon$, and is denoted $\eta$ in the classical literature
  \cite{Content-Lemay-Leroux,Rota:Moebius}.  The minus sign makes sense here, since $X_0$ and $\nondeg X_1$, representing $\epsilon$ and $\Phi_1$,
   define complementary full subgroupoids of $X_1$, representing
  $\zeta$.
\end{blanko}

Computing convolution with the functors $\Phi_n$
is really about knowing how the groupoids $\nondeg X_n$ behave under various 
pullbacks. 
This is carried out in detail in \cite{GKT:DSIAMI-2}, leading to
the following results.
 \begin{lemma}    \cite[Lemma 3.6]{GKT:DSIAMI-2}
   For a complete decomposition space, we have $$\Phi_n\;\; =\;\; (\Phi_1)^n\;\; =\;\; (\zeta-\epsilon)^n,$$ 
     the $n$th convolution product of $\Phi_1$ with itself.
 \end{lemma}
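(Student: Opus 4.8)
The plan is to argue by induction on $n$, unfolding both sides as spans of groupoids and matching them. The cases $n=0$ and $n=1$ are immediate: $\nondeg X_0=X_0$ by convention gives $\Phi_0=\epsilon$, the convolution unit, which is the empty convolution $(\Phi_1)^{*0}$; and $\Phi_1=(\Phi_1)^{*1}=\zeta-\epsilon$ by the definition in \ref{Phi}. For the inductive step, assume $\Phi_n=(\Phi_1)^{*n}$ (with $*$ the convolution product of \ref{convolution}), so that, by associativity of convolution, $(\Phi_1)^{*(n+1)}=\Phi_1*(\Phi_1)^{*n}=\Phi_1*\Phi_n$. By \ref{convolution} this is the linear functor given by the composite span built from $X_1\xleftarrow{d_1}X_2\xrightarrow{(d_2,d_0)}X_1\times X_1$ and $X_1\times X_1\xleftarrow{}\nondeg X_1\times\nondeg X_n\to1$, where $\nondeg X_n\to X_1$ is the left leg of the defining span of $\Phi_n$, namely the long-edge map $\ell_n$ sending a nondegenerate $n$-simplex to its total edge from vertex $0$ to vertex $n$. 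Composing (a pullback), $(\Phi_1)^{*(n+1)}$ is given by $X_1\leftarrow P\to1$, where $P$ is the groupoid of triples $(\tau,e,\rho)$ with $\tau\in X_2$, $e\in\nondeg X_1$, $\rho\in\nondeg X_n$, subject to $d_2\tau=e$ and $d_0\tau=\ell_n\rho$; equivalently $P$ is the full subgroupoid of $X_2\times_{d_0,\,X_1,\,\ell_n}\nondeg X_n$ on the pairs $(\tau,\rho)$ with $d_2\tau$ nondegenerate, with left leg $P\to X_1$, $(\tau,\rho)\mapsto d_1\tau$.

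The key geometric input, and the step I expect to be the main obstacle, is to recognise the fibre product $X_2\times_{d_0,\,X_1,\,\ell_n}X_n$ as $X_{n+1}$. Its two legs are the free face operator $d_0\colon X_{n+1}\to X_n$ (restriction to the last $n+1$ vertices) and the generic face operator $X_{n+1}\to X_2$ given by restriction to the vertices $\{0,1,n+1\}$, collapsing the middle ones; these arise from a cospan in $\simplexcategory$ exhibiting $[n+1]$ as a generic--free pushout (using that generic and free maps admit pushouts along each other, as recalled in \ref{generic-and-free}), so the decomposition space axiom for $X$ turns it into a pullback $X_{n+1}\simeq X_2\times_{d_0,\,X_1,\,\ell_n}X_n$ --- concretely, an $(n+1)$-simplex is reconstructed from its triangle on $\{0,1,n+1\}$ and its $n$-simplex face on $\{1,\dots,n+1\}$ glued along their common edge $\{1,n+1\}$, the same kind of reconstruction visible in (pastings of) the pullback squares \eqref{eq:firstfew}. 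Under this identification $P$ becomes the full subgroupoid of $X_{n+1}$ of those simplices $\omega$ whose first principal edge $\{0,1\}$ (the edge $d_2\tau$) is nondegenerate and whose face $d_0\omega$ on $\{1,\dots,n+1\}$ is nondegenerate, and the left leg $P\to X_1$ sends $\omega$ to its edge $\{0,n+1\}$, i.e.\ to $\ell_{n+1}\omega$.

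It remains to see that this subgroupoid is exactly $\nondeg X_{n+1}$; this is where completeness enters. Because $s_0\colon X_0\to X_1$ is a monomorphism, each $\nondeg X_k\subseteq X_k$ is a well-defined full subgroupoid, and \cite[Corollary 2.14]{GKT:DSIAMI-2} applies: a simplex is nondegenerate if and only if all its principal edges are. Applied to $\rho=d_0\omega$, the condition $d_0\omega\in\nondeg X_n$ is equivalent to nondegeneracy of the principal edges $\{1,2\},\dots,\{n,n+1\}$ of $\omega$; together with nondegeneracy of $\{0,1\}$ this says all $n+1$ principal edges of $\omega$ are nondegenerate, which --- by the same corollary, now applied to $\omega$ --- says precisely $\omega\in\nondeg X_{n+1}$. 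Hence $P\simeq\nondeg X_{n+1}$ compatibly with the long-edge map to $X_1$ and the map to $1$, so $(\Phi_1)^{*(n+1)}=\Phi_{n+1}$, completing the induction. Finally $(\zeta-\epsilon)^n=(\Phi_1)^{*n}$ is merely the definition $\Phi_1=\zeta-\epsilon$ from \ref{Phi}; no expansion of the binomial is intended, since $(\Phi_1)^n$ denotes the iterated convolution throughout.
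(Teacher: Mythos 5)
Your proof is correct: the base cases are as you say, the square you use is indeed the image under $X$ of the generic--free pushout exhibiting $[n+1]$ as $[2]$ glued to $[n]$ along the long edge (generic $[1]\to[n]$, free $\delta^0\colon[1]\to[2]$), so the decomposition-space axiom gives $X_{n+1}\simeq X_2\times_{X_1}X_n$, and completeness plus the principal-edge criterion identifies the restricted pullback with $\nondeg X_{n+1}$ compatibly with the long-edge leg. Note that the present paper does not prove this lemma itself but imports it from \cite[Lemma 3.6]{GKT:DSIAMI-2}; your argument uses exactly the two ingredients the surrounding text points to (generic--free pushouts taken to pullbacks, and the criterion that a simplex is nondegenerate iff all its principal edges are), the only cosmetic difference being that you run a one-step induction via $\Phi_1*\Phi_n$ rather than identifying the $n$-fold comultiplication span $X_1\leftarrow X_n\to X_1^{\times n}$ at once and pulling back along $(\nondeg X_1)^{\times n}$.
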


\begin{prop}
  For a complete decomposition space $X$, the square
$$\xymatrix{
\nondeg{X}_1 + \nondeg{X}_{2} \ar[d]\ar[r] & X_2 \ar[d] \\
X_1 \times \nondeg{X}_1 \ar[r]& X_1\times X_1
}$$
is a pullback.
\end{prop}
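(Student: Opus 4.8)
The plan is to identify the claimed pullback with an explicit full subgroupoid of $X_2$ and then to split that subgroupoid into two complementary pieces using completeness. First I would pin down the maps in the square: the right-hand vertical map is $(d_2,d_0)\colon X_2\to X_1\times X_1$ (the rightward leg of the comultiplication span), and the bottom map is $\id\times\iota$, where $\iota\colon\nondeg{X}_1\hookrightarrow X_1$ is the inclusion of the full subgroupoid of nondegenerate $1$-simplices; this $\iota$ is a monomorphism because $X$ is complete, and $\nondeg{X}_1$ is closed under isomorphism in $X_1$. On the summand $\nondeg{X}_2$ the top map is the inclusion into $X_2$ and the left map is $(d_2,d_0)$ restricted to $\nondeg{X}_2$, which lands in $X_1\times\nondeg{X}_1$ because, by \cite[Corollary 2.14]{GKT:DSIAMI-2}, a nondegenerate $2$-simplex has nondegenerate principal edges, in particular $d_0\sigma\in\nondeg{X}_1$. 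On the summand $\nondeg{X}_1$ the top map is $s_0$ and the left map is $(s_0 d_1,\id)$, using the simplicial identities $d_2 s_0=s_0 d_1$ and $d_0 s_0=\id$; this too is well defined, since $d_0 s_0 f=f$. Commutativity of the square on each summand is then immediate.

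Next, since $\id\times\iota$ is a (fully faithful) monomorphism of groupoids, the homotopy pullback of $X_2\xrightarrow{(d_2,d_0)}X_1\times X_1\xleftarrow{\id\times\iota}X_1\times\nondeg{X}_1$ is equivalent to the full subgroupoid $X_2'\subseteq X_2$ spanned by those $\sigma$ with $d_0\sigma$ nondegenerate. The remaining task is to show that the canonical comparison functor $\nondeg{X}_1+\nondeg{X}_2\to X_2'$ is an equivalence. By \cite[Corollary 2.14]{GKT:DSIAMI-2}, $\nondeg{X}_2$ is precisely the full subgroupoid of $X_2$ on simplices both of whose principal edges $d_2\sigma$, $d_0\sigma$ are nondegenerate, so $\nondeg{X}_2\subseteq X_2'$ is a full subgroupoid; its complement in $X_2'$ consists of the degenerate $\sigma\in X_2'$, i.e.\ those in the image of $s_0$ or of $s_1$. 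Here I would use the simplicial identities once more: a $2$-simplex $s_1 g$ has $d_0 s_1 g=s_0 d_0 g$ degenerate, hence does not lie in $X_2'$, so every $\sigma$ in that complement has the form $s_0 f$, and then $f=d_0 s_0 f=d_0\sigma$ is nondegenerate precisely because $\sigma\in X_2'$; conversely $s_0 f$ with $f\in\nondeg{X}_1$ lies in $X_2'$ and is degenerate. Thus the complement of $\nondeg{X}_2$ in $X_2'$ is exactly the essential image of $s_0|_{\nondeg{X}_1}$, and since $s_0$ is a monomorphism (completeness) this restriction is an equivalence onto that full subgroupoid. Hence $X_2'\simeq\nondeg{X}_1+\nondeg{X}_2$ compatibly with the maps above, which is what we want.

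I expect the main obstacle to be the bookkeeping in the second step — keeping straight which degenerate $2$-simplices lie in $X_2'$, and verifying that $s_1$ contributes nothing while $s_0$ contributes exactly $\nondeg{X}_1$ — together with being careful that everything is phrased at the level of groupoids (full subgroupoids, essential images, homotopy pullbacks) rather than just $\pi_0$. Once \cite[Corollary 2.14]{GKT:DSIAMI-2} and the monomorphism property of the degeneracies are in hand, the rest is formal.
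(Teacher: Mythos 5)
Your proof is correct. Note, though, that the paper itself does not argue the statement at all: it simply records it as a special case of \cite[Lemma 3.5]{GKT:DSIAMI-2}, accompanied by the verbal reading that follows the proposition. What you have written is in effect a self-contained proof of that special case, and it is the natural fleshing-out of exactly that verbal reading: pulling back along the monomorphism $X_1\times\nondeg{X}_1\hookrightarrow X_1\times X_1$ cuts out the full subgroupoid $X_2'$ of $2$-simplices with nondegenerate second principal edge, and then $X_2'$ splits, by iso-closedness of essential images, into its nondegenerate part and its degenerate part, the latter being exactly the essential image of $s_0$ restricted to $\nondeg{X}_1$ (your observation that $s_1$ contributes nothing, via $d_0s_1=s_0d_0$, and that $d_0s_0=\id$ recovers the edge, is the right bookkeeping, and completeness is what makes $s_0|_{\nondeg{X}_1}$ an equivalence onto that part). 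The one thing to be conscious of is where the decomposition-space axioms actually enter: the inclusion $\nondeg{X}_2\subseteq X_2'$, i.e.\ that a nondegenerate $2$-simplex has nondegenerate principal edges, is \emph{not} a purely simplicial fact but precisely the substantive direction of \cite[Corollary 2.14]{GKT:DSIAMI-2}, quoted in \ref{complete}; since that corollary precedes the cited Lemma 3.5 in \cite{GKT:DSIAMI-2}, your use of it is legitimate and non-circular, but without it the argument would only identify the pullback as $s_0(\nondeg{X}_1)+(\nondeg{X}_2\cap X_2')$ rather than the stated sum. Compared with the paper's route, your version buys a direct, readable proof of the $n=1$ case at the price of importing that corollary, whereas the citation to Lemma 3.5 gives the whole family of such squares (all $n$) at once.
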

These are special cases of 
\cite[Lemma 3.5]{GKT:DSIAMI-2}.
The proposition can be read as saying that if a $2$-simplex $\sigma$ has its second principal 
edge nondegenerate, then there are two possibilities for the first principal 
edge: either it is degenerate and the whole simplex $\sigma$ is determined
by the second principal edge (an element of $\nondeg X_1$), or it is nondegenerate and the whole simplex $\sigma$ is nondegenerate (an element of $\nondeg X_2$).

From this lemma and its higher-dimensional analogues, it is not difficult to 
prove the following key result.

\begin{prop}
    \cite[Proposition 3.7]{GKT:DSIAMI-2}
The linear functors $\Phi_n$ satisfy the following explicit equivalences of 
linear functors
$$
\zeta*\Phi_n
\;\;=\;\;
\Phi_n+\Phi_{n+1}
\;\;=\;\;
\Phi_n*\zeta.
$$
\end{prop}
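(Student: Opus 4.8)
The plan is to compute $\zeta*\Phi_n$ and $\Phi_n*\zeta$ directly as composite spans and to recognise the resulting middle object via the preceding Proposition and its higher-dimensional analogues (\cite[Lemma 3.5]{GKT:DSIAMI-2}). Recall from \ref{convolution} that, writing $\zeta$ as the span $X_1\xleftarrow{=}X_1\to 1$ and $\Phi_n$ as the span $X_1\xleftarrow{\ell_n}\nondeg{X}_n\to 1$ — where $\ell_n\colon\nondeg{X}_n\hookrightarrow X_n\to X_1$ is the long-edge map, i.e.\ the image under $X$ of the unique endpoint-preserving map $[1]\to[n]$ — the convolution $\zeta*\Phi_n$ is presented by the composite span $X_1\leftarrow P\to 1$, with $P$ the pullback of $(d_2,d_0)\colon X_2\to X_1\times X_1$ along $\id_{X_1}\times\ell_n$ and with $P\to X_1$ the composite $P\to X_2\xrightarrow{d_1}X_1$. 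Since the first leg of $\id\times\ell_n$ is an identity, this reduces to $P\simeq X_2\times_{d_0,X_1,\ell_n}\nondeg{X}_n$: an object of $P$ is a $2$-simplex $\sigma$ together with a nondegenerate $n$-simplex $\tau$ whose long edge is $d_0\sigma$.

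The heart of the argument is to identify $P$ with $\nondeg{X}_n+\nondeg{X}_{n+1}$ over $X_1$, by splitting according to whether the first principal edge $d_2\sigma$ is degenerate. If it is, then the decomposition-space axiom in \eqref{eq:firstfew} exhibiting $s_0\colon X_1\to X_2$ as a pullback forces $\sigma=s_0(d_0\sigma)$, so $(\sigma,\tau)$ is determined by $\tau$ alone and $d_1\sigma=\ell_n\tau$; this summand is $\nondeg{X}_n$ mapping to $X_1$ by $\ell_n$, the span of $\Phi_n$. If $d_2\sigma$ is nondegenerate, then the data $(\sigma,\tau)$ is equivalent to a nondegenerate $(n+1)$-simplex with long edge $d_1\sigma$: prepending the edge $d_2\sigma$ to the chain $\tau$ produces an $(n+1)$-simplex whose principal edges are $d_2\sigma$ and the $n$ principal edges of $\tau$, all nondegenerate, hence nondegenerate by the principal-edge criterion \cite[Corollary 2.14]{GKT:DSIAMI-2}; this assignment is an equivalence because in a decomposition space $X_{n+1}$ is assembled from $X_2$ and $X_n$ by the relevant generic--free pullback (equivalently, because $\Dec_\bot X$ is Segal). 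Thus $P\simeq\nondeg{X}_n+\nondeg{X}_{n+1}$ over $X_1$, which is exactly $\zeta*\Phi_n=\Phi_n+\Phi_{n+1}$; the displayed Proposition is the case $n=1$, where $\ell_1=\mathrm{id}$ and $P$ is the top-left corner of the square quoted there:
\[
\vcenter{\xymatrix{
\nondeg{X}_1+\nondeg{X}_2 \drpullback \ar[d]\ar[r] & X_2 \ar[d]^{(d_2,d_0)} \\
X_1\times\nondeg{X}_1 \ar[r]_-{\id\times\ell_1} & X_1\times X_1.
}}
\]

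The equality $\Phi_n*\zeta=\Phi_n+\Phi_{n+1}$ follows by the mirror argument: the middle object of $\Phi_n*\zeta$ reduces to $X_2\times_{d_2,X_1,\ell_n}\nondeg{X}_n$, and splitting on whether the last principal edge $d_0\sigma$ is degenerate — using the complementary axiom in \eqref{eq:firstfew} exhibiting $s_1\colon X_1\to X_2$ as a pullback, and appending $d_0\sigma$ at the end of $\tau$ — identifies it with $\nondeg{X}_n+\nondeg{X}_{n+1}$ over $X_1$ as well. This yields the triple equality. The main obstacle is making the higher-dimensional analogues of the preceding Proposition precise, in particular verifying that prepending (resp.\ appending) a nondegenerate edge to a composable nondegenerate $n$-chain is a well-defined, invertible operation onto the nondegenerate $(n+1)$-chains; this is where the full decomposition-space axioms (generic--free pushouts sent to pullbacks, yielding the needed identity $X_{n+1}\simeq X_2\times_{X_1}X_n$) and completeness (so that ``nondegenerate'' behaves well and is detected on principal edges) are really used. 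Once these analogues are in hand, the span bookkeeping above is routine.

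A shorter, more formal route sidesteps the pullback analysis: completeness gives $X_1=X_0+\nondeg{X}_1$, hence $\zeta=\epsilon+\Phi_1$ as a sum of linear functors; convolution is bilinear, $\epsilon$ is its unit, and the preceding Lemma gives $\Phi_1*\Phi_n=(\Phi_1)^{n+1}=\Phi_n*\Phi_1=\Phi_{n+1}$, whence $\zeta*\Phi_n=\epsilon*\Phi_n+\Phi_1*\Phi_n=\Phi_n+\Phi_{n+1}=\Phi_n*\epsilon+\Phi_n*\Phi_1=\Phi_n*\zeta$. I would keep the direct computation as the main proof, since it also exhibits the underlying combinatorics, and record this shorter argument as a remark.
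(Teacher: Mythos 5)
Your proposal is correct and follows essentially the same route as the paper, which deduces the statement from the quoted pullback square and its higher-dimensional analogues (\cite[Lemma 3.5]{GKT:DSIAMI-2}): you simply make those analogues explicit via the generic--free pushout-to-pullback axiom giving $X_{n+1}\simeq X_2\times_{X_1}X_n$, completeness, and the principal-edge criterion, exactly as intended. The shorter remark deriving the identity from $\zeta=\epsilon+\Phi_1$ and $\Phi_n=(\Phi_1)^n$ is also sound given the preceding Lemma, and is a nice complement rather than a different proof.
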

Now let
$$
\Phieven := \sum_{n \text{ even}} \Phi_n , \qquad
\Phiodd := \sum_{n \text{ odd}} \Phi_n .
$$

\begin{theorem}\label{thm:zetaPhi}
  \cite[Theorem 3.8]{GKT:DSIAMI-2}
  For a complete decomposition space, the following \M inversion 
  principle holds (explicit equivalences of linear functors):
  \begin{align*}
\zeta * \Phieven
 &\;\;=\;\; \epsilon\;\; +\;\; \zeta * \Phiodd,\\
    \Phieven *\zeta &\;\;=\;\; \epsilon \;\;+ \;\; \Phiodd*\zeta.
\end{align*}
\end{theorem}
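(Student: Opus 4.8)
The plan is to derive the theorem from \cite[Proposition 3.7]{GKT:DSIAMI-2}, which gives $\zeta*\Phi_n \simeq \Phi_n+\Phi_{n+1} \simeq \Phi_n*\zeta$ for every $n$, by a telescoping (re-indexing) argument; all the genuine work will then already have been done in that proposition and, behind it, in the pullback description of $\nondeg{X}_n$ coming from \cite[Lemma 3.5]{GKT:DSIAMI-2}. First I would record that $\Phieven$ and $\Phiodd$ are the linear functors represented by the spans $X_1 \leftarrow \coprod_{n\text{ even}}\nondeg{X}_n \to 1$ and $X_1 \leftarrow \coprod_{n\text{ odd}}\nondeg{X}_n \to 1$, so that the defining sums are honest homotopy sums of linear functors, indexed by a discrete set, and hence make sense at the objective level without any finiteness hypothesis (the numerical statement would in addition require local finiteness of length, but that is not our concern here). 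Since $\zeta*(-)$ and $(-)*\zeta$ are linear functors in the remaining slot, and linear functors preserve homotopy sums (homotopy linear algebra, \cite{GKT:HLA}), we obtain $\zeta*\Phieven \simeq \sum_{n\text{ even}}\zeta*\Phi_n$, and likewise with $\Phiodd$ and with $\zeta$ on the right.

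Next I would carry out the telescoping. Applying \cite[Proposition 3.7]{GKT:DSIAMI-2} termwise,
$$
\zeta*\Phieven \;\simeq\; \sum_{n\text{ even}}\bigl(\Phi_n+\Phi_{n+1}\bigr) \;\simeq\; \sum_{m\geq 0}\Phi_m ,
$$
the second equivalence being the evident re-indexing of the index set, in which each $m$ occurs exactly once (as the left summand of its block when $m$ is even, as the right summand when $m$ is odd). On the other hand, splitting off $m=0$ and using $\Phi_0=\epsilon$ together with \cite[Proposition 3.7]{GKT:DSIAMI-2} for odd $n$,
$$
\sum_{m\geq 0}\Phi_m \;\simeq\; \Phi_0 + \sum_{n\text{ odd}}\bigl(\Phi_n+\Phi_{n+1}\bigr) \;\simeq\; \epsilon + \sum_{n\text{ odd}}\zeta*\Phi_n \;\simeq\; \epsilon + \zeta*\Phiodd .
$$
Concatenating gives $\zeta*\Phieven \simeq \epsilon + \zeta*\Phiodd$. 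For the right-hand version one repeats verbatim with $\Phi_n*\zeta$ in place of $\zeta*\Phi_n$; alternatively, since \cite[Proposition 3.7]{GKT:DSIAMI-2} already gives $\zeta*\Phi_n \simeq \Phi_n*\zeta$ for every $n$, summing that equivalence yields $\Phieven*\zeta \simeq \zeta*\Phieven$ and $\Phiodd*\zeta \simeq \zeta*\Phiodd$, and the four-term chain of the theorem follows.

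The only point requiring care — and essentially the only thing beyond bookkeeping — is that the re-indexings $\sum_{n\text{ even}}(\Phi_n+\Phi_{n+1}) \simeq \sum_{m\geq 0}\Phi_m \simeq \Phi_0 + \sum_{n\text{ odd}}(\Phi_n+\Phi_{n+1})$ are equivalences of \emph{linear functors}, not merely abstract bijections of the representing groupoids. This is immediate from the shape of the representing spans: for each of the three expressions the apex is a coproduct $\coprod_k \nondeg{X}_k$ over an index set, with the legs to $X_1$ and to $1$ induced componentwise from the spans defining the individual $\Phi_k$ (and $\Phi_0$); a bijection of index sets that matches components to components therefore induces an equivalence of spans over $X_1 \leftarrow (-) \rightarrow 1$, hence an equivalence of the associated linear functors. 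The three index sets in play are canonically in bijection via ``which block, and left or right'', so no choices are involved. With \cite[Proposition 3.7]{GKT:DSIAMI-2} in hand there is no further obstacle; the substance of the theorem lives entirely in that proposition and in the pullback square of \cite[Lemma 3.5]{GKT:DSIAMI-2} governing the behaviour of $\nondeg{X}_n$ under the relevant pullbacks.
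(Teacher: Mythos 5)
Your proposal is correct and follows essentially the same route as the paper: the paper's own proof simply observes that, by the proposition $\zeta*\Phi_n \simeq \Phi_n+\Phi_{n+1} \simeq \Phi_n*\zeta$, all four linear functors are equivalent to $\sum_{r\geq 0}\Phi_r$, which is exactly the telescoping/re-indexing you spell out (including the use of $\Phi_0=\epsilon$). Your extra remarks on why the re-indexing is an equivalence of linear functors, not just of apex groupoids, are a faithful elaboration of the same argument rather than a different method.
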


\begin{proof}
  This follows immediately from the proposition: all four linear functors are in fact
  equivalent to $\sum_{r\geq0}\Phi_r$.
\end{proof}

For these results there is no need for finiteness conditions: there in no
problem in taking infinite sums of groupoids.  In the following subsection,
however, we must impose finiteness conditions before we can take cardinality and
recover \M inversion at the level of vector spaces and (co)algebras over $\Q$.

\subsection{Length and \M decomposition spaces}

\label{sec:length}

If $X$ is a complete and locally finite decomposition space, then by 
Proposition~\ref{finitetypespan}
the linear functors
$$
\Phi_r:\Grpd_{/X_1}\to \Grpd
$$
are finite for each $r\geq0$ and descend to linear functors
$$
\Phi_r:\grpd_{/X_1}\to \grpd.
$$

This is not enough to guarantee finiteness of the sum of all those $\Phi_r$ and
hence allow the \M inversion formula to descend to the vector-space level.  For
this we also need to assume that, for each $f\in X_1$, there is an upper bound on
the dimension of a nondegenerate $n$-simplex with long edge $f$.  This condition
is important in its own right, as it is the condition for the existence of a
length filtration \ref{length}, useful in many applications.  When $X$ is the
nerve of a category, the condition says that for each arrow $f$, there is an
upper bound on the number of non-identity arrows in a sequence
of arrows composing to $f$.  We are led to the following definition.

\begin{blanko}{Length.}\label{length}
  A complete decomposition space $X$ is {\em of locally finite length} if, for
  each $a\in X_1$, the fibres $F^{(n)}_a$ of $d_1^{n-1}:\nondeg X_n\to X_1$ over
  $a$ are empty for $n$ sufficiently large.
\end{blanko}

The {\em length} of $a$ is the greatest $n$ for which
$F^{(n)}_a\neq\varnothing$; this induces a filtration on the incidence
coalgebra.  If $X$ is a Segal space, it is the longest factorisation of $a$ into
nondegenerate $a_i\in \nondeg X_1$.  

\begin{blanko}{Example.}
  The incidence coalgebra of $(\N^2,+)/\mathfrak S_2$ (see \ref{sym})
  is the simplest example 
  we know of in which the length filtration
  does not agree with the coradical 
  filtration (see Sweedler~\cite{Sweedler} for this notion).
  The elements $(1,1)$ and $(2,0)\simeq (0,2)$ are clearly of 
  length $2$.  On the other hand, the element
  $$
  P := (1,1) - (2,0) - (0,2)
  $$
  is primitive, meaning
  $$
  \Delta(P) = (0,0) \tensor P + P \tensor (0,0)
  $$
  and is therefore of coradical filtration degree $1$.  (Note that in 
  $(\N^2,+)$ it is not true that $P$ is primitive:
  it is the symmetrisation that makes the $(0,1)$ terms cancel out in
  the computation, to make $P$ primitive.)
\end{blanko}
  
\begin{blanko}{\M condition.}\label{M}
  A complete decomposition space $X$ is {\em \M} if it is locally finite and of
  locally finite length, that is, for each $a$, $F^{(n)}_a$ is finite and
  eventually empty.
\end{blanko}

  Note that for posets, `locally finite' already implies `locally finite
  length', so the \M condition is not needed separately in the poset case.  
  If $X$ is the strict nerve of a category, then it is \M in our
  sense if and only if it is \M in the sense of Leroux~\cite{Leroux:1975}.

  Classically, it is known that a \M category in the sense of Leroux does not
  have non-identity invertible arrows~\cite[Lemma~2.4]{LawvereMenniMR2720184}.
  Similarly (cf.~\cite[Corollary~8.7]{GKT:DSIAMI-2}), if a \M decomposition
  space $X$ is a Segal space, then it is Rezk complete (meaning that all
  invertible arrows are degenerate, cf.~\ref{Rezk}).

\begin{lemma}\label{lem:oldcharacterisationofM}
  A complete decomposition space $X$ is \M if and only if $X_1$ is locally
  finite and the restricted composition map
  $$
  \sum_r{d_1}^{r-1}:\sum_r \nondeg X_r\to X_1
  $$
  is finite.
\end{lemma}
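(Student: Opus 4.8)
The plan is to separate the \M condition into its two ingredients --- local finiteness and local finiteness of length --- and to re-express both in terms of the single map $\sum_r d_1^{r-1}\colon\sum_r\nondeg X_r\to X_1$. First I would record two elementary facts about any complete decomposition space $X$. (i) Since $s_0\colon X_0\to X_1$ is a monomorphism (\ref{complete}), its homotopy fibres are empty or contractible, so $s_0$ is automatically a finite map; hence $X$ is locally finite (\ref{finitary}) if and only if $X_1$ is locally finite and $d_1\colon X_2\to X_1$ is a finite map. (ii) $d_1\colon X_2\to X_1$ is finite if and only if its restriction $d_1\colon\nondeg X_2\to X_1$ is. The "only if" is immediate, since $\nondeg X_2\hookrightarrow X_2$ is a full subgroupoid. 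For the "if" I would use the proposition above expressing $\nondeg X_1+\nondeg X_2$ as a pullback (a special case of \cite[Lemma~3.5]{GKT:DSIAMI-2}) together with the degeneracy squares of \eqref{eq:firstfew} to decompose $X_2$ as a sum of $\nondeg X_2$ with two further pieces, embedded by $s_0$ and $s_1$, on each of which $d_1$ restricts to the inclusion of a subgroupoid of $X_1$. The homotopy fibre of $d_1$ over a fixed $a$ then splits as $F^{(2)}_a$ together with a subsingleton and a point, and so is finite exactly when $F^{(2)}_a$ is.

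Next I would unravel the hypothesis on $\sum_r d_1^{r-1}$. Its homotopy fibre over $a\in X_1$ is the sum $\sum_r F^{(r)}_a$, and a sum of groupoids is a finite groupoid precisely when every summand is finite and all but finitely many are empty; the $r=0$ term is just $s_0$, finite by (i), and imposes nothing. Thus "$\sum_r d_1^{r-1}$ is a finite map" is equivalent to the conjunction of "$d_1^{r-1}\colon\nondeg X_r\to X_1$ is finite for every $r$" and "$X$ is of locally finite length" (\ref{length}).

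The equivalence then becomes bookkeeping. If $X$ is \M, then $X$ is locally finite, so $X_1$ is locally finite and, by Proposition~\ref{finitetypespan}, every $\Phi_r$ --- equivalently every $d_1^{r-1}\colon\nondeg X_r\to X_1$ --- is finite; since $X$ is also of locally finite length, the previous paragraph gives that $\sum_r d_1^{r-1}$ is finite. Conversely, if $X_1$ is locally finite and $\sum_r d_1^{r-1}$ is finite, then by the previous paragraph $X$ is of locally finite length and each $\nondeg X_r\to X_1$ is finite; applying (ii) with $r=2$ shows $d_1\colon X_2\to X_1$ is finite, and then (i) shows $X$ is locally finite, so $X$ is \M.

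I expect the only real obstacle to be fact (ii): getting the decomposition of $X_2$ into its degenerate and nondegenerate parts right, and checking carefully --- via completeness and the axiom squares in \eqref{eq:firstfew} --- that the $2$-simplices with degenerate second principal edge form a copy of $X_1$ embedded by $s_1$ and mapped to $X_1$ by the identity under $d_1$ (and similarly for the mixed part via $s_0$), so that the fibrewise count goes through. The remaining ingredients --- the monomorphism-implies-finite remark, the behaviour of homotopy fibres under sums of groupoids, and the finiteness of the $\Phi_r$ for locally finite $X$ --- are routine or already recorded above.
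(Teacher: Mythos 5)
Your argument is correct. Note that the paper itself gives no proof of Lemma~\ref{lem:oldcharacterisationofM} --- it is recalled with the surrounding material from \cite{GKT:DSIAMI-2} --- so there is no in-text proof to compare with; what you supply is the natural argument and it goes through. The pivot is exactly your fact (ii), and rather than reassembling it from the pullback proposition and the squares \eqref{eq:firstfew}, you could simply invoke the degeneracy-type decomposition $X_r=\sum_k\binom{r}{k}\,\nondeg X_k$ of a complete decomposition space (quoted later in the paper, in the zeta-polynomial discussion, from \cite[2.6]{GKT:DSIAMI-2}): its $r=2$ case gives $X_2\simeq \nondeg X_2+s_0(X_1)+s_1(\nondeg X_1)$, on whose degenerate pieces $d_1$ is the identity of $X_1$, respectively the full inclusion $\nondeg X_1\subset X_1$, so the fibre of $d_1$ over $a$ is $F^{(2)}_a$ plus a point plus a subsingleton, as you say. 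The remaining ingredients are as you describe: completeness makes $s_0$ a monomorphism and hence a finite map, so local finiteness reduces to $X_1$ locally finite plus $d_1\colon X_2\to X_1$ finite; the fibre of $\sum_r d_1^{r-1}$ over $a$ is $\sum_r F^{(r)}_a$, finite precisely when each $F^{(r)}_a$ is finite and all but finitely many are empty, which splits the hypothesis into finiteness of each $\Phi_r$ and the locally-finite-length condition of \ref{length}. One small caveat for the forward direction: the statement that local finiteness makes every $\Phi_r$ finite is not literally Proposition~\ref{finitetypespan} (which only concerns descent of an already finite span); it rests on the fact, from \cite{GKT:DSIAMI-2}, that for a locally finite decomposition space all the maps $d_1^{r-1}\colon X_r\to X_1$ are finite --- but this is exactly the assertion the paper itself makes in the sentence preceding the lemma, so using it as you do is legitimate and non-circular.
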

Thus, if $X$ is \M, the linear functors $\Phieven$ and $\Phiodd$ also descend to
$$
\Phieven,\Phiodd:\grpd_{/X_1}\to \grpd
$$
and their cardinalities are elements $\norm{\Phieven}, \norm{\Phiodd}:\Q_{\pi_0X_1}\to\Q$ of the incidence algebra.
We can therefore take the cardinality of the abstract \M
inversion formula of Theorem \ref{thm:zetaPhi}:

\begin{thm}\label{thm:|M|}
  If $X$ is a \M decomposition space,
  then the cardinality of the zeta functor, $\norm{\zeta}:\Q_{\pi_0 X_1}\to\Q$,
  is convolution invertible with inverse $\norm{\mu}:= \norm{\Phieven} - \norm{\Phiodd}$:
  $$
  \norm\zeta * \norm\mu = \norm\epsilon = \norm\mu * \norm\zeta .
  $$
\end{thm}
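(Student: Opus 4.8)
The plan is to deduce Theorem~\ref{thm:|M|} from the objective-level M\"obius inversion principle, Theorem~\ref{thm:zetaPhi}, by applying the cardinality functor. The key point that legitimises this is the \M condition~\ref{M} together with Lemma~\ref{lem:oldcharacterisationofM}: it is precisely what is needed to guarantee that all the linear functors appearing in Theorem~\ref{thm:zetaPhi} are \emph{finite}, hence descend to the slices of finite groupoids, hence admit a cardinality that is a genuine linear map of vector spaces. So the first step is bookkeeping: record that, for a \M decomposition space, $X_1$ is locally finite and the map $\sum_r {d_1}^{r-1}:\sum_r\nondeg X_r\to X_1$ is finite, so that by Proposition~\ref{finitetypespan} the spans defining $\Phieven=\sum_{n\text{ even}}\Phi_n$ and $\Phiodd=\sum_{n\text{ odd}}\Phi_n$ (subspans of that one finite map) are of finite type, and likewise for $\zeta$ and $\epsilon$ (local finiteness handles $\zeta$; completeness plus local finiteness handles $\epsilon$ as in~\ref{finitary}). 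Thus $\Phieven,\Phiodd,\zeta,\epsilon:\grpd_{/X_1}\to\grpd$ are all finite linear functors.

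Second, invoke the fact from homotopy linear algebra (\cite{GKT:HLA}, as used throughout Section~\ref{sec:decomp}) that taking cardinality is a functor that is symmetric monoidal and, crucially here, preserves the relevant colimits: it sends finite homotopy sums of spans to sums of the corresponding matrices, and it sends composition of finite linear functors (given by the pullback/push-pull composite of spans) to composition of linear maps. In particular cardinality respects the convolution product $*$ built from $\Delta$ and tensoring, because $*$ is itself defined (see~\ref{convolution}) by a composite of spans, and it respects infinite homotopy sums of uniformly-finite families. Hence applying $\norm{-}$ to the equivalences of linear functors in Theorem~\ref{thm:zetaPhi},
$$
\zeta*\Phieven \;=\; \epsilon \;+\; \zeta*\Phiodd, \qquad \Phieven*\zeta \;=\; \epsilon\;+\;\Phiodd*\zeta,
$$
yields the corresponding equalities of linear maps $\Q_{\pi_0 X_1}\to\Q$:
$$
\norm\zeta * \norm{\Phieven} \;=\; \norm\epsilon \;+\; \norm\zeta * \norm{\Phiodd},\qquad
\norm{\Phieven}*\norm\zeta \;=\; \norm\epsilon \;+\; \norm{\Phiodd}*\norm\zeta.
$$

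Third, now that we are at the numerical level, the vector space $\Q^{\pi_0 X_1}$ is an ordinary (profinite-dimensional) associative algebra, so we are permitted to subtract. Setting $\norm\mu := \norm{\Phieven} - \norm{\Phiodd}$ and rearranging the two displayed identities gives $\norm\zeta * \norm\mu = \norm\epsilon$ and $\norm\mu * \norm\zeta = \norm\epsilon$, which is exactly the assertion that $\norm\zeta$ is convolution-invertible with two-sided inverse $\norm\mu$. (One may also remark that each of the four cardinalities $\norm{\zeta*\Phieven}$ etc.\ equals $\sum_{r\ge0}\norm{\Phi_r}$, the convergent-in-the-profinite-topology sum, matching the proof of Theorem~\ref{thm:zetaPhi}; this gives an alternative way to see the cancellation.) The only genuine obstacle is the first step—verifying that the \M hypothesis really does force every span in sight to be of finite type so that cardinality is defined and well-behaved—but Lemma~\ref{lem:oldcharacterisationofM} packages precisely this, so the argument is short. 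Everything after that is formal: cardinality is a homomorphism for all the operations involved, and subtraction, unavailable objectively, becomes available numerically.
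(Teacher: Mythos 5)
Your proposal is correct and follows essentially the same route as the paper: the \M condition (packaged by Lemma~\ref{lem:oldcharacterisationofM}) ensures $\Phieven$, $\Phiodd$, $\zeta$, $\epsilon$ are finite linear functors, so one takes cardinality of the objective-level identities of Theorem~\ref{thm:zetaPhi} and rearranges using the minus sign now available at the vector-space level. This is exactly how the paper derives Theorem~\ref{thm:|M|}.
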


\subsection{\M functions and cancellation}

\label{sec:cancellation}

We compute the \M functions in some of our examples.  While the formula $\mu =
\Phieven-\Phiodd$ seems to be the most general and uniform expression of the \M
function, it is often not the most economical.  At the numerical level, it is
typically the case that much more practical expressions for the \M functions can
be computed with different techniques.  The formula $\Phieven-\Phiodd$ should not be
dismissed on these grounds, though: it must be remembered that it constitutes a
natural `bijective' account, valid at the objective level, in contrast to many
of the elegant cancellation-free expressions in the classical theory which are
often the result of formal algebraic manipulations, often power-series 
representations.

Comparison with the economical formulae raises the question whether these too can
be realised at the objective level.  This can be answered (in a few cases) by
exhibiting an explicit cancellation between $\Phieven$ and $\Phiodd$, which in
turn may or may not be given by a {\em natural} bijection.  

Once a more economical expression has been found for some \M decomposition space
$X$, it can be transported back along any \culf functor $f:Y \to X$ to yield also
more economical formulae for $Y$.

\begin{blanko}{Natural numbers.}
  For the decomposition space $\mathbf N$ (see~\ref{ex:N&L}),
  the incidence algebra is $\grpd^{\N}$, with basis given by
  the representables $h^n$, and with convolution product
  $$
  h^a * h^b = h^{a+b}.
  $$

  To compute the \M functor,
  we have
    $$
  \Phieven = \sum_{r \text{ even}} (\N\shortsetminus \{0\})^r ,
  $$
  hence $\Phieven(\un n)$ is
  the set of ordered compositions of the ordered 
  set $\un n$ into an even number of parts, or equivalently
  $$
  \Phieven(\un n) = \{ \un n \onto \un r \mid r \text{ even } \} ,
  $$
  the set of monotone surjections. 
  In conclusion, with an abusive sign notation,
  the \M functor is
  $$
  \mu(\un n) = \sum_{r\geq 0} (-1)^r  \{ \un n \onto \un r \} .
  $$
  
  At the numerical level, this formula simplifies to
  $$
  \mu(n)
  = \sum_{r\geq 0} (-1)^r {n-1 \choose r-1} = \begin{cases}
    1 & \text{ for } n=0 \\
    -1 & \text{ for } n=1 \\
    0 & \text{ else, }
  \end{cases}
  $$
  (remembering that ${-1 \choose -1} =  1$, and ${k \choose -1} = 0$ for $k\geq 
  0$).
  
  \bigskip
  
  On the other hand, since clearly the incidence algebra is isomorphic to the 
  power series ring under the identification $\norm {h^n} = \delta^n 
  \leftrightarrow z^n \in \Q[[z]]$, and since the zeta function corresponds to the 
  geometric 
  series $\sum_n x^n = \frac{1}{1-x}$, we find that the \M function is $1-x$.
  This corresponds to the functor $\delta^0-\delta^1$.

  At the objective level, there is indeed a cancellation of groupoids taking
  place.  It amounts to an equivalence of the Phi-groupoids restricted to
  $n\geq2$:
$$\xymatrix{
\Phieven{}_{\mid r\geq 2} \ar[rd]\ar[rr]^\sim && \Phiodd{}_{\mid r\geq 2} \ar[ld] \\
& \N_{\geq 2} & }
$$
which cancels out most of the terms,
leaving us with
the much more economical \M function
$$
\delta^0 - \delta^1
$$
supported on $\N_{\leq 1}$.
Since $\N$ is discrete, this equivalence (just a bijection)
can be established fibrewise: 
  
  {\em For each $n\geq 2$ there is a natural fibrewise
  bijection}
  $$
  \Phieven(n) \simeq \Phiodd(n) .
  $$
  To see this, encode the elements $(x_1,x_2,\ldots,x_k)$ 
  in $\Phieven(n)$ (and $\Phiodd(n)$) as binary strings 
  of length $n$ and starting with $1$ as follows: each coordinate 
  $x_i$ is represented as a string of length $x_i$ 
  whose first bit is $1$ and whose other bits are 
  $0$, and all these strings are concatenated.
  In other words, thinking of the
  element $(x_1,x_2,\ldots,x_k)$ as a ordered partition of the 
  ordered set $n$, in the binary representation the $1$-entries
  mark the beginning of each part.  
  (The binary strings must start with $1$ since the first part must
  begin at the beginning.)  
  For example, with
  $n=8$, the element $(3,2,1,1,1)\in \Phiodd(8)$, 
  is encoded as the binary string $10010111$.
  Now the bijection between
  $\Phieven(n)$ and $\Phiodd(n)$ can be taken to simply flip the
  second bit in the binary representation.  In the example,
  $10010111$ is sent to $11010111$, meaning that
  $(3,2,1,1,1)\in \Phiodd(8)$ is sent to $(1,2,2,1,1,1)\in 
  \Phieven(8)$.  Because of this cancellation which occurs for
  $n\geq 2$ (we need the second bit in order to flip), the difference
  $\Phieven - \Phiodd$ is the same as $\delta_0 -\delta_1$, which is
  the cancellation-free formula.
  
  The minimal solution $\delta^0-\delta^1$ can also be checked immediately at 
  the objective level to satisfy the defining equation for the \M function:
  $$
  \zeta * \delta^0 = \zeta * \delta^1 + \delta^0
  $$
  This equation says
  $$
  \xymatrix{\N\times \{0\} \ar[d]_{\text{add}} \\ \N}
  =
  \xymatrix{(\N\times \{1\}) + \{0\} \ar[d]_{\text{add}+\text{incl}} \\ \N}
  $$
  
  In conclusion, the classical formula lifts to the objective level.
\end{blanko}

\begin{blanko}{Finite sets and bijections.}
  Already for the next example (\ref{ex:I=DecB}),
  that of the monoidal groupoid $(\B,+,0)$,
  whose incidence algebra is the algebra of species under the Cauchy convolution
  product (cf.~\cite{Aguiar-Mahajan}), the situation is more subtle. 
  
  Similarly to the previous example, we have
  $\Phi_r(S) = \operatorname{Surj}(S, \un r)$, but this time we are 
  dealing with arbitrary surjections, as $S$ is just an abstract set.
  Hence the \M functor is given by
  $$
  \mu(S) = \sum_{r\geq 0} (-1)^r \operatorname{Surj}( S, \un r) .
  $$
  Numerically, 
  the incidence algebra
  is just the power series ring
  $\Q[[z]]$ (cf.~\ref{ex:I=DecB}).
  Since this time the zeta function is the exponential $\exp(z)$, 
  the \M function is the series $\exp(-z)$, corresponding to
  $$
  \mu(n) = (-1)^n .
  $$

The economical \M function suggests the existence of the following
equivalence at the groupoid level:
$$
\mu(S) = \int^r (-1)^r h^r(S) \ \simeq \ \Beven(S) - \Bodd(S) ,
$$
where 
$$\Beven = \sum_{r \text{ even}} \B_{[r]} \quad \text{ and }\quad
\Bodd = \sum_{r \text{ odd}} \B_{[r]}$$
are the full subgroupoids of $\B$ consisting of the even and odd sets,
respectively.  However, it seems that such an equivalence is not possible, at
least not over $\B$: while we are able to exhibit a bijective proof, this
bijection is {\em not} natural, and hence does not assemble into a groupoid
equivalence.
\end{blanko}

\begin{prop}
  For a fixed set $S$, there are monomorphisms $\Beven(S) \into \Phieven(S)$
  and $\Bodd(S) \into \Phiodd(S)$, and a residual bijection
  $$
  \Phieven(S)-\Beven(S)= \Phiodd(S) -\Bodd(S) .
  $$
  This is {\em not} natural in $S$, though, and hence does not constitute
  an isomorphism of species, only an equipotence of species \cite{Bergeron-Labelle-Leroux}.
\end{prop}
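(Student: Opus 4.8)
The plan is to reduce everything to the combinatorial description $\Phi_r(S)=\operatorname{Surj}(S,\un r)$ recorded just above, so that $\Phieven(S)$ (resp.\ $\Phiodd(S)$) is the set of ordered partitions of $S$ into an even (resp.\ odd) number of nonempty blocks, while $\Beven(S)$ (resp.\ $\Bodd(S)$) is a one-point set when $\norm S$ is even (resp.\ odd) and is empty otherwise. Fix $S$, put $n=\norm S$, and --- this is the choice that will spoil naturality --- fix a linear order $s_1<\dots<s_n$ on $S$. Since $\Beven(S)$ and $\Bodd(S)$ are each empty or a singleton, the monomorphisms amount to naming, when $n$ is even (resp.\ odd), one ordered partition of $S$ with an even (resp.\ odd) number of blocks: I take the partition into singletons in increasing order, $\pi_\ast:=(\{s_1\},\dots,\{s_n\})$, which has exactly $n$ blocks (for $n=0$ it is the empty partition), and the two monomorphisms send the point to $\pi_\ast$.

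For the residual bijection the strategy is to build a sign-reversing involution $\iota$ on the set $\operatorname{OSP}(S)$ of all ordered partitions of $S$, with sign $(-1)^{\#\text{blocks}}$, whose unique fixed point is $\pi_\ast$. Define $\iota$ by downward induction on $n$ (with $\iota$ the identity when $S=\varnothing$): given $\pi=(B_1,\dots,B_r)$, let $B_j$ be the block containing the maximum $s_n$; if $\norm{B_j}\ge2$, split $s_n$ off to the left, replacing $B_j$ by the consecutive pair $\{s_n\},\,B_j\smallsetminus\{s_n\}$; if $B_j=\{s_n\}$ with $j<r$, merge $B_j$ rightwards into $B_{j+1}$; and if $B_j=\{s_n\}$ with $j=r$, write $\pi=(\pi',\{s_n\})$ and set $\iota(\pi)=(\iota'(\pi'),\{s_n\})$, where $\iota'$ is the involution already built for $S\smallsetminus\{s_n\}$. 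One checks that the split and merge clauses unwind each other, so $\iota$ is an involution; it changes the block count by one except on the fixed points, so it is sign-reversing; and its fixed points are exactly those $\pi$ ending in $\{s_n\}$ whose truncation is $\iota'$-fixed, which by induction pins down the single global fixed point $\pi_\ast$. Restricting $\iota$ to $\operatorname{OSP}(S)\smallsetminus\{\pi_\ast\}$ gives a fixed-point-free sign-reversing involution, hence a bijection between the even and the odd ordered partitions of $S$ other than $\pi_\ast$; since $\pi_\ast$ is even exactly when $n$ is even, this is precisely the claimed equality $\Phieven(S)-\Beven(S)=\Phiodd(S)-\Bodd(S)$. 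Taking cardinalities recovers $\norm{\Phieven(S)}-\norm{\Phiodd(S)}=(-1)^n$, consistent with $\norm\mu$.

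It remains to see that the bijection is not natural in $S$. Already the monomorphism $\Beven(S)\into\Phieven(S)$ cannot be chosen naturally: a natural transformation $\Beven\to\Phieven$ of species would, at each $S$ with $n=\norm S$ even, single out an $\Aut(S)=\mathfrak S_n$-fixed element of $\Phieven(S)$; but for $n\ge2$ an $\mathfrak S_n$-fixed ordered partition has its first block invariant under every permutation, hence equal to $S$, so it has a single (odd) block, and there is no fixed point in $\Phieven(S)$ at all. (Indeed $\Phiodd(S)$ has exactly one fixed point, namely $(S)$, so $\Phieven$ and $\Phiodd$ are not even equipotent as species, and an equivariant bijection of finite $\mathfrak S_n$-sets cannot mix their fixed-point counts.) Consequently the subtraction $\Phieven(S)-\Beven(S)$ is not the value of a subspecies inclusion, and the residual bijection, which genuinely depends on the auxiliary linear order, cannot be promoted to an isomorphism of species: one is left with an equipotence in the sense of \cite{Bergeron-Labelle-Leroux}. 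The main obstacle in the proof is engineering the involution to have a \emph{single} fixed point rather than a lower-dimensional family of them --- the recursive ``if $s_n$ is alone in the last block, leave it and descend'' clause is exactly what forces uniqueness --- together with the routine but fiddly verification that the split and merge operations invert one another, including when the active block sits at the start or the end of the sequence.
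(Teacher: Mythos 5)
Your proof is correct and is essentially the paper's own argument in mirror image: your split/merge involution on ordered partitions, which toggles whether the maximum element forms a singleton block and recurses when it sits alone in the last position, is the reflection of the paper's explicit bijection performing the same moves on the minimum element, with the same exceptional increasing-singleton partition playing the role of the identity surjection to be subtracted. Your non-naturality argument (for $n\ge 2$ there is no $\mathfrak S_n$-fixed point in $\Phieven(S)$, so no equivariant map out of the one-point set $\Beven(S)$ exists) is the species-level restatement of the paper's observation that the automorphism group of any object of $\Phieven$ lying over $S\in\B$ is a proper subgroup of $\mathfrak S_n$, so the fibration admits no section and $\Beven$ cannot be realised naturally inside $\Phieven$.
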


\begin{cor}
  For a fixed $S$ there is a bijection
  $$
  \mu(S) \simeq \Beven(S) - \Bodd(S)
  $$
  but it is {\em not} natural in $S$.
\end{cor}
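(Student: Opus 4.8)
The plan is that the Corollary is a repackaging of the preceding Proposition, so I would first record that short deduction and then indicate how the Proposition is proved, since that is where the real content sits. By Theorem~\ref{thm:zetaPhi} applied to the monoidal decomposition space $\mathbf B$, the \M functor is the formal difference $\mu=\Phieven-\Phiodd$ at the objective level, and evaluating at a fixed set $S$ produces the discrete groupoids $\Phi_r(S)=\operatorname{Surj}(S,\un r)$; thus $\mu(S)$ is the virtual set $\Phieven(S)-\Phiodd(S)$, with $\Phieven(S)=\bigsqcup_{r\text{ even}}\operatorname{Surj}(S,\un r)$ and similarly for $\Phiodd(S)$. The Proposition supplies monomorphisms $\Beven(S)\into\Phieven(S)$ and $\Bodd(S)\into\Phiodd(S)$ together with a bijection of complements $\Phieven(S)\setminus\Beven(S)\cong\Phiodd(S)\setminus\Bodd(S)$. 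Writing $\Phieven(S)=\Beven(S)\sqcup(\Phieven(S)\setminus\Beven(S))$, similarly on the odd side, and cancelling the two bijective complements, leaves precisely $\mu(S)\simeq\Beven(S)-\Bodd(S)$. Since the cancelled bijection is the one from the Proposition, which is non-natural in $S$, the surviving bijection is non-natural as well; this is the Corollary, and the only real obstacle lies in the Proposition.

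For the Proposition I would argue as follows. Choosing a linear order on $S$, equivalently an identification $S\cong\un n$ with $n=\norm{S}$, singles out one surjection $S\onto\un n$, namely that identification; regarded as an element of $\operatorname{Surj}(S,\un n)$ it lies in $\Phieven(S)$ when $n$ is even and in $\Phiodd(S)$ when $n$ is odd, and I would take it as the distinguished point of $\Beven(S)$ (resp.\ $\Bodd(S)$), the other of $\Beven(S),\Bodd(S)$ being empty and embedded trivially. For the residual bijection I would run, on an ordered partition $(B_1,\dots,B_r)$ of $S$, an explicit order-dependent procedure --- splitting a block at its smallest element, or merging a leading singleton into its successor, carried out at the first position where this flips the parity of $r$, and iterated down the list --- in the spirit of the ``flip the second bit'' argument used for $\mathbf N$ in~\ref{ex:N&L}. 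The essential new feature is that here the top-dimensional nondegenerate simplices are the $n!$ bijections $S\onto\un n$ rather than a single simplex, so the cancellation cannot be total: the procedure must be tuned so that exactly the one chosen bijection survives, the remaining $n!-1$ of them being paired with strictly coarser surjections. The step I expect to be the main obstacle is pinning down the precise procedure and verifying that it really is a bijection --- injective, surjective, terminating, and leaving exactly the intended single residual term --- rather than merely a count-preserving correspondence.

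For the failure of naturality it is cleanest to argue representation-theoretically, which moreover shows that $\mu$ and $\Beven-\Bodd$ are not isomorphic even as virtual species, so that no natural bijection at all can exist. View $\mu(S)=\sum_r(-1)^r\,\Q[\operatorname{Surj}(S,\un r)]$ and $\Beven(S)-\Bodd(S)$ as virtual $\Aut(S)$-representations, $\Aut(S)\cong\mathfrak S_n$; a natural bijection would force these to agree. A surjection $f\colon S\onto\un r$ is fixed by $\sigma\in\Aut(S)$ exactly when every fibre of $f$ is a union of cycles of $\sigma$, so the fixed surjections correspond to ordered partitions of the set $C$ of cycles of $\sigma$; by the numerical \M function of $\mathbf B$ computed above, $\sum_r(-1)^r\norm{\operatorname{Surj}(C,\un r)}=(-1)^{\norm{C}}$, i.e.\ the character of $\mu(S)$ takes $\sigma$ to $(-1)^{c(\sigma)}$, with $c(\sigma)$ the number of cycles of $\sigma$, which equals $(-1)^{\norm{S}}\operatorname{sgn}(\sigma)$. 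The character of $\Beven(S)-\Bodd(S)$ is the constant $(-1)^{\norm{S}}$. These differ once $\norm{S}\ge 2$, so no natural bijection can exist, and one obtains only an equipotence of species, exactly as asserted.
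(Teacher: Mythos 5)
Your deduction of the Corollary from the Proposition is exactly how the paper proceeds, so the comparison really concerns the two halves of the Proposition. For the non-naturality clause you take a genuinely different route: the paper argues (in the paragraph after the Corollary) that already a natural monomorphism $\Beven\into\Phieven$ over $\B$ cannot exist, because over a component of $\B$ the automorphism group of any object of $\Phieven$ lying above $\un n$ is only the subgroup of $\mathfrak S_n$ compatible with the surjection, hence proper, so the fibration $\Phieven\to\B$ admits no section there; you instead linearise and compare characters of virtual $\mathfrak S_n$-representations, getting $(-1)^{c(\sigma)}=(-1)^{n}\operatorname{sgn}(\sigma)$ for $\mu(S)$ against the constant $(-1)^{n}$ for $\Beven(S)-\Bodd(S)$. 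Your computation is correct (the surjections fixed by $\sigma$ are the surjections from the set $C$ of cycles, and $\sum_r(-1)^r\norm{\operatorname{Surj}(C,\un r)}=(-1)^{\norm{C}}$), and it buys something the paper's remark does not state explicitly: any equivariant family of bijections realising $\Phieven(S)+\Bodd(S)\simeq\Phiodd(S)+\Beven(S)$ would force these characters to agree, so \emph{no} natural bijection of any packaging can exist — indeed $\mu$ linearises to $(-1)^{n}$ times the sign representation, while $\Beven-\Bodd$ is $(-1)^{n}$ times the trivial one.

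For the existence half you lean on the Proposition, which is legitimate since it immediately precedes the Corollary, but your own sketch of its proof is not completed: you describe a split/merge procedure ``at the first position where this flips the parity'' and concede that pinning it down and verifying bijectivity is the main obstacle. That is precisely where the paper's content lies. After choosing $S\simeq\un n$, it gives a recursive involution on surjections $p:\un n\onto\un k$ other than the identity: if $p^{-1}(1)\neq\{1\}$, then either the element $1$ is alone in its block, in which case that block is merged with the \emph{previous} block, or it is not, in which case $1$ is split off into a new block placed immediately \emph{after} its block; if $p^{-1}(1)=\{1\}$ one recurses on element $2$ and block $2$, and so on, the only untreated case being the identity surjection, so exactly one term survives on the appropriate side. (Note the merge goes into the predecessor block, not the successor as in your sketch.) In summary, your argument for the Corollary is sound, with a cleaner and in fact stronger non-naturality proof than the paper's, but the combinatorial bijection underlying the existence statement remains the paper's, not yours.
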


\begin{proof*}{Proof of the Proposition.}
  The map $\Beven \to \B$ is a  monomorphism of groupoids (\ref{mono}), so for each set $S$ of even cardinality
  there is a single element to subtract from $\Phieven(S)$.  The groupoid
  $\Phieven$ has as objects finite sets $S$ equipped with a surjection $S \onto \un k$
  for some even $k$.  If $S$ is itself of even cardinality $n$, then among such
  partitions there are $n!$ possible partitions into $n$ parts.  If there were
  given a total order on $S$, among these $n!$ $n$-block partitions, there is
  one for which the order of $S$ agrees with the order of the $n$ parts.  We
  would like to subtract that one and then establish the required bijection.
  This can be done fibrewise: over a given $n$-element set $S$, we can establish
  the bijection by choosing first a bijection $S \simeq \un n =
  \{1,2,\ldots, n\}$, the totally ordered set with $n$ elements.

  {\em For each $n$, there is an explicit bijection
  $$
  \{ \text{surjections } p: \un n \onto \un k \mid k \text{ even}, p 
  \text{ not the identity map} \}
  $$
$$  \leftrightarrow$$
  $$
    \{ \text{surjections } p: \un n \onto \un k \mid k \text{ odd}, p 
  \text{ not the identity map} \} 
  $$
  }

  Indeed, define first the bijection on the subsets for which $p^{-1}(1)\neq \{1\}$,
  i.e.~the element $1$ is not alone in the first block.  In this case the 
  bijection goes as follows.  If the element $1$ is alone in a block,
  join this block with the previous block.  (There exists a previous block as
  we have excluded the case where $1$ is alone in block $1$.)  If $1$ is not
  alone in a block, separate out $1$ to a block on its own, coming just after
  the original block.  Example
  $$
  (34,1,26,5) \leftrightarrow (134,26,5)
  $$
  For the remaining case, where $1$ is alone in the first block, we just leave 
  it
  alone, and treat the remaining elements inductively, considering now the
  case where the element $2$ is not alone in the second block.  In the end,
  the only case not treated is the case where for each $j$, we have 
  $p^{-1}(j)=\{j\}$, that is, each element is alone in the block with the same
  number.  This is precisely the identity map excluded explicitly in the 
  bijection.  (Note that for each $n$, this case only appears on one of the 
  sides
  of the bijection, as either $n$ is even or $n$ is odd.)
\end{proof*}

In fact, already subtracting the groupoid $\Beven$ from $\Phieven$ 
is not possible naturally.  We would have first to find a monomorphism
$\Beven\into\Phieven$ over $\B$.  But the automorphism group of an
object $\un n \in \B$ is $\mathfrak S_n$, whereas the 
automorphism
group of any overlying object in $\Phieven$ is a proper subgroup of 
$\mathfrak S_n$.  In fact it is the subgroup of those permutations 
that
are compatible with the surjection $\un n \onto \un k$.
So locally the fibration $\Phieven \to \B$ is a group monomorphism,
and hence it cannot have a section.
So in conclusion, we cannot even realise $\Beven$ as a full 
subgroupoid in $\Phieven$, and hence it doesn't make sense to 
subtract it.

\bigskip

One may note that  it is not logically necessary to be able to subtract
the redundancies from $\Phieven$ and $\Phiodd$ in order to find the economical
formula.
It is enough to establish directly (by a separate proof) that the economical 
formula holds, by actually convolving it with the zeta functor.
At the object level the simplified \M function would be the groupoid
$$
\Beven - \Bodd .
$$
We might try to establish directly that
$$
\zeta * \Beven = \zeta * \Bodd + \epsilon .
$$
This should be a groupoid equivalence over $\B$.
But again we can only establish this
fibrewise.  This time, however, rather than exploiting a non-natural total
order, we can get away with a non-natural base-point.
On the left-hand side, the fibre over an $n$-element set $S$, consists
of an arbitrary set and an even set whose disjoint union is $S$.  In other 
words,
it suffices to give an even subset of $S$.  Analogously, on the right-hand
side, it amounts to giving an odd subset of $S$---or in the special case of
$S=\emptyset$, we also have the possibility of giving that set, thanks to the
summand $\epsilon$.  This is possible, non-naturally:

{\em 
For a fixed nonempty set $S$, there is an explicit bijection between even subsets of
  $S$ and odd subsets of $S$.}

  Indeed, fix an element $s\in S$.  The bijection consists of adding $s$ to the subset 
  $U$
  if it does not belong to $U$, and removing it if it already belongs to $U$.
  Clearly this changes the parity of the set.

Again, since the bijection involves the choice of a basepoint, it seems impossible to lift
it to a natural bijection.

\begin{blanko}{Restricting \M formulae along \culf functors.}
  Once a more economical \M function has been found for a decomposition
  space $X$, it can be exploited to yield more economical formulae for any
  decomposition space $Y$ with a \culf functor to $X$. This is the content
  of the following straightforward lemma:
\end{blanko}

\begin{lemma}\label{lem:inheritM}
  Suppose that for the complete decomposition space $X$ we have found
  a \M inversion formula $\mu_{X}=\Psi_{1}-\Psi_{0}$, that is
  $$
  \zeta_{X} * \Psi_0 = \zeta_{X} * \Psi_1 + \epsilon .
  $$
  Then for every decomposition space \culf over $X$, say $f:Y \to X$, we
  have the formula $\mu_{Y}=f\upperstar\Psi_{1}-f\upperstar\Psi_{0}$, that is
  $$
  \zeta_{Y} * f\upperstar \Psi_0 = \zeta_{Y} * f\upperstar \Psi_1 + \epsilon.
  $$
\end{lemma}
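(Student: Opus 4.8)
The \M inversion formula is an identity of linear functors in the incidence algebra $\Grpd^{X_1}$, so the plan is to transport it along the pullback $f\upperstar\colon\Grpd^{X_1}\to\Grpd^{Y_1}$. Recall (cf.~\ref{duality}) that objects of $\Grpd^{X_1}$ are equivalently presheaves on $X_1$ or linear functors $\Grpd_{/X_1}\to\Grpd$; in the first picture $f\upperstar$ is precomposition with $f_1\colon Y_1\to X_1$, and in the second it is precomposition with ${f_1}\lowershriek\colon\Grpd_{/Y_1}\to\Grpd_{/X_1}$. I would first record four properties of $f\upperstar$: (i) it preserves homotopy sums, because $f_1\upperstar$ has both adjoints (${f_1}\lowershriek$ and ${f_1}_*$); (ii) it sends the zeta functor of $X$ to that of $Y$, because $\zeta$ is the terminal presheaf and pullback preserves terminal objects; (iii) $f\upperstar\epsilon\simeq\epsilon$; and (iv) $f\upperstar$ preserves the convolution product. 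Granting these, applying $f\upperstar$ to the hypothesis $\zeta*\Psi_0\simeq\zeta*\Psi_1+\epsilon$ gives at once $\zeta*f\upperstar\Psi_0\simeq\zeta*f\upperstar\Psi_1+\epsilon$ in $\Grpd^{Y_1}$, which is the assertion.

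Property (iii) uses conservativity of $f$: the \culf square of \ref{culf-maps} involving $s_0$ gives $Y_0\simeq Y_1\times_{X_1}X_0$ over $Y_1$, and since $\epsilon$ is the span $X_1\xleftarrow{s_0}X_0\to 1$, its pullback along $f_1$ is the span $Y_1\xleftarrow{s_0}Y_0\to 1$, i.e.~$\epsilon$ for $Y$. Property (iv) is the one point that is not mere bookkeeping; the cleanest route is to dualise Lemma~\ref{culf-hm}, where ${f_1}\lowershriek$ is shown to be a coalgebra homomorphism: since the convolution product and unit on $\Grpd^{X_1}$ are defined from $\Delta_X$ and $\epsilon_X$, precomposition with a coalgebra homomorphism $\Grpd_{/Y_1}\to\Grpd_{/X_1}$ is automatically an algebra homomorphism $\Grpd^{X_1}\to\Grpd^{Y_1}$, so $f\upperstar$ preserves $*$. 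Alternatively one checks (iv) by hand: for $F,G\in\Grpd^{X_1}$ given by spans $X_1\leftarrow M\to 1$ and $X_1\leftarrow N\to 1$, the convolution $F*G$ is the span $X_1\xleftarrow{d_1}X_2\times_{X_1\times X_1}(M\times N)\to 1$ (see~\ref{convolution}); pulling back along $f_1$, invoking that $f$ is ULF to get $Y_2\simeq Y_1\times_{X_1}X_2$ over $X_1$ via $d_1$ (another square of \ref{culf-maps}), invoking naturality of $f$ to rewrite the composite $Y_2\to X_2\xrightarrow{(d_2,d_0)}X_1\times X_1$ as $(f_1\times f_1)\circ(d_2,d_0)$, and pasting pullbacks, one obtains the span $Y_1\xleftarrow{d_1}Y_2\times_{Y_1\times Y_1}(f\upperstar M\times f\upperstar N)\to 1$, which is precisely $f\upperstar F*f\upperstar G$.

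I expect (iv), and within it the ULF pullback square over the inner face map $d_1\colon X_2\to X_1$, to be the only real content; properties (i)--(iii) and the final deduction are routine unwinding of definitions. Note that no finiteness hypotheses enter, since the entire argument takes place at the objective level, where infinite homotopy sums are unproblematic; the numerical statement, when the relevant finiteness conditions hold, then follows simply by taking cardinality.
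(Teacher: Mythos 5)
Your proof is correct, and it fleshes out exactly the argument the paper leaves implicit (the lemma is stated there as ``straightforward'' with no proof given): dualising Lemma~\ref{culf-hm}, precomposition with ${f_1}\lowershriek$ is an algebra homomorphism for convolution, and the cartesianness of $f$ on $s_0$ and $d_1$ (together with $\zeta$ being the terminal presheaf) gives $f\upperstar\zeta_X\simeq\zeta_Y$ and $f\upperstar\epsilon_X\simeq\epsilon_Y$, so applying $f\upperstar$ to the hypothesis yields the formula for $Y$. Your explicit span computation for property (iv) is a correct hands-on alternative to invoking the duality, and your remark that no finiteness is needed at the objective level matches the paper's setting.
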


\begin{blanko}{Free decomposition spaces.}\label{ex:cULF/N}
  In most of the examples treated, the length filtration~\ref{length} is
  actually a grading. Recall from \cite[6.20]{GKT:DSIAMI-2} that this
  amounts to having a simplicial map $X\to B\N$ to the nerve of $(\N,+)$. In
  the rather special situation when this is \culf, the economical \M
  function formula
  $$
  \mu = \delta^0 - \delta^1
  $$
  for $B\N$
  induces the same formula for the \M functor of $X$.
  This is of course a rather restrictive condition; in fact, 
  for nerves of categories, this happens only for free categories on
  directed graphs (cf.~Street~\cite{Street:categorical-structures}).
  More generally, decomposition spaces admitting a \culf functor to 
  $B\N$ are precisely the {\em free decomposition spaces}~\cite{Hackney-Kock:2210.11192},
  meaning that they are simplicial groupoids obtained by left Kan 
  extension along the inclusion functor 
  $j:\simplexcategory_{\operatorname{inert}} \to \simplexcategory$.
  The formula for $X_1$ is $X_1=\sum_{k\in \N} A_k$.
  The \culf functor for $X=j\lowershriek(A)$ (for some 
  $A:\simplexcategory_{\operatorname{inert}}\op\to\Grpd$) is the simplicial 
  map $j\lowershriek(A) \to j\lowershriek(1)=B\N$ which is always 
  \culf~\cite{Hackney-Kock:2210.11192}.
  The economical \M functor of $B\N$ is now inherited by 
  $X=j\lowershriek(A)$, by Lemma~\ref{lem:inheritM}.
  In detail, there is for each $n\in \N$ a linear span $X_1 \leftarrow A_n 
  \to 1$ denoted
  $\delta^n$ consisting of all the arrows of length $n$, and the 
  economical \M functor is $\delta^0-\delta^1$, which is essentially 
  $A_0-A_1$, with reference to the original 
  $A:\simplexcategory_{\operatorname{inert}}\op\to\Grpd$.

  Many combinatorial coalgebras of deconcatenation type are incidence
  coalgebras of free decomposition spaces. The simplest example is the free
  monoid on a set $S$, i.e.~the monoid of words in the alphabet $S$. The
  economical \M function is then $\delta^0-\delta^1$, where $\delta^1 =
  \sum_{s\in S} \delta^s$. In the power series ring, with a variable $z_s$
  for each letter $s\in S$, it is the series $1-\sum_{s\in S} z_s$. A
  slightly more elaborate example is the decomposition space of
  quasi-symmetric functions (briefly mentioned in \ref{QSym}): it is free
  on $B\nondeg \N :\simplexcategory_{\operatorname{inert}}\op\to\Grpd$, 
  and it follows that we have
  $$
  \mu(p) = \begin{cases} 1  & \text{ for } p:\emptyset\onto \emptyset,\\
  -1 & \text{ for } p:n\onto 1,\\
  0 &  \text{ for $p:n\onto k$ with $k\geq 2$}.
  \end{cases}
  $$
  (Note that the length grading (by codomain of a surjection) is not the
  usual grading of quasi-symmetric functions, which is instead by the
  domain of a surjection.)
\end{blanko}

\begin{blanko}{Decomposition spaces over $\mathbf B$ (\ref{ex:I=DecB}).}
  Similarly, if a decomposition space $X$ admits a \culf functor $\ell : X \to 
  \mathbf B$ (which may be thought of as a `length function with symmetries')
  then at the numerical level  and at the objective level, locally for each 
  object $S\in X_1$, 
  we can pull back the economical \M `functor' $\mu(n) = (-1)^n$
  from $\mathbf B$ to $X$, yielding the numerical  \M function on $X$
  $$
  \mu(f) = (-1)^{\ell(f)} .
  $$
  An example of this is the coalgebra of graphs \ref{ex:graphs-coalg} of 
  Schmitt~\cite{Schmitt:hacs}: the functor from the decomposition space of
  graphs to $\mathbf B$ which to a graph associates its vertex set is \culf.
  Hence the \M function for this decomposition space is
  $$
  \mu(G) = (-1)^{\norm{V(G)} } .
  $$
  In fact this argument  works for any restriction species~\cite{GKT:restriction}.
\end{blanko}

\begin{blanko}{Finite vector spaces.}
  We calculate the \M function in the incidence algebra of the Waldhausen
  decomposition space of $\F_q$-vector spaces, cf.~\ref{ex:q}.
  In this case, $\Phi_r$ is the
  groupoid of strings of $r-1$ nontrivial injections.  The fibre over $V$ is the
  discrete groupoid of strings of $r-1$ nontrivial injections whose last space
  is $V$.  This is precisely the set of nontrivial $r$-flags in $V$, i.e.~flags
  for which the $r$ consecutive codimensions are nonzero.
  In conclusion,
$$
\mu(V) =  \sum_{r=0}^n (-1)^r  \{ \text{ nontrivial $r$-flags in $V$} \} .
$$
(That's in principle a groupoid, but since we have fixed $V$, it is just a 
discrete groupoid: a flag inside a fixed vector space has no automorphisms.)

The number of flags with codimension sequence $p$ is the $q$-multinomial 
coefficient
$$
{ n \choose p_1, p_2, \ldots, p_r }_q .
$$
In conclusion, at the numerical level we find
$$\mu(V) = \mu(n) = \sum_{r=0}^n (-1)^r 
\sum_{\substack {p_1+\cdots + p_r=n \\ p_i > 0}}
{ n \choose p_1, p_2, \ldots, p_r }_q .
$$

On the other hand, it is classical that from the power-series representation
(\ref{ex:q}) one gets the numerical \M function
$$\mu(n) = (-1)^n q^{n\choose 2}.$$
While the equality of these two expressions can easily be established at the
numerical level (for example via a zeta-polynomial argument, cf.~below), we do
not know of an objective interpretation of the expression $\mu(n) = (-1)^n
q^{n\choose 2}$.  Realising the cancellation on the objective level would
require first of all to being able to impose extra structure on $V$ in such a
way that among all nontrivial $r$-flags, there would be $q^{r\choose 2}$ special
ones!
\end{blanko}

\begin{blanko}{\fdb.}
  Recall (from \ref{sec:fdb}) that the incidence bialgebra of the fat nerve of
  the monoidal category of finite sets and surjections is the \fdb bialgebra.
  Since clearly $\zeta$ and $\epsilon$ are multiplicative, also $\mu$ is 
  multiplicative, i.e.~determined by its values on
  the connected surjections.
  The general formula gives
$$
\mu( \un n \onto \un 1) = \sum_{r=0}^n (-1)^n \kat{Tr}(n,r)
$$
where $\kat{Tr}(n,r)$ is the (discrete) groupoid of $n$-leaf $r$-level trees
with no trivial level (in fact, more precisely, strings of $r$ nontrivial
surjections composing to $n \onto 1$), and where the minus sign is abusive 
notation for splitting into even and odd.

On the other hand, classical theory (see Doubilet--Rota--Stanley~\cite{Doubilet-Rota-Stanley})
gives the following `connected 
\M function':
$$
\mu(n) = (-1)^{n-1}(n-1)! .
$$
In conjunction, the two expressions yield
the following combinatorial identity:
$$
(-1)^{n-1}(n-1)! \ = \ \sum_{r=0}^n (-1)^r \norm{ \kat{Tr}(n,r) }.
$$

We do not know how to realise the cancellation at the objective level.
This would require first developing a bit further the theory of
monoidal decomposition spaces and incidence bialgebras, a task
we plan to take up in the near future.
\end{blanko}

\begin{blanko}{Zeta polynomials.}
  For a complete decomposition space $X$, we can 
  classify the $r$-simplices according to their degeneracy type,
  writing
  $$
  X_r = 
  \sum_{k=0}^r {r \choose k} \nondeg X_k ,
  $$
  where the binomial coefficient is an abusive shorthand for that many copies of
  $\nondeg X_k$, embedded disjointly into $X_r$ by specific degeneracy maps (see
  \cite[2.6]{GKT:DSIAMI-2} for details).  Now we fibre over a fixed
  arrow $f\in X_1$, to obtain
  \begin{equation}\label{eq:Xrf}
  (X_r)_f =\sum_{k=0}^\infty {r \choose k} (\nondeg X_k)_f ,
  \end{equation}
  where we have now allowed ourselves to sum to infinity, but for fixed $f$ of 
  finite length it is still a finite sum.
  
  The {\em `zeta polynomial}' of a decomposition space $X$ is the function
  \begin{eqnarray*}
    \zeta^r(f) : X_1 \times \N & \longrightarrow & \Grpd  \\
    (f,r) & \longmapsto & (X_r)_f
  \end{eqnarray*}
  assigning to each arrow $f$ and  $r\in\N$ the $\infty$-groupoid of $r$-simplices with 
  long edge $f$.
  For fixed $f\in X_1$ of finite length $\ell$, this is a polynomial in $r$,
  as witnessed by the expression \eqref{eq:Xrf}.
  In this case, at the numerical level,
  we can substitute $r= -1$ into it to find:
  $$
  \zeta^{-1}(f) = \sum_{k=0}^\infty (-1)^k \Phi_k(f) 
  $$
  Hence $\zeta^{-1}(f) = \mu(f)$, as the notation suggests.
  
  In some cases there is a polynomial formula for $\zeta^r(f)$.
  For example, in the case $X= (\N,+)$ of \ref{ex:N&L} we find
  $\zeta^r(n) = {n+r-1\choose n}$,
  and therefore $\mu(n) = {n-2\choose n}$, in agreement with the other 
  calculations (of this trivial example).
  In the case $X= (\B,+)$ of \ref{ex:I=DecB}, we find $\zeta^r(n) = r^n$,
  and therefore $\mu(n) = (-1)^n$ again.
 
  Sometimes, even when a formula for $\zeta^r(n)$ cannot readily be found, the
  $(-1)$-value can be found by a power-series
  representation argument.  For example in the case of the Waldhausen $S_\bullet$ 
  construction of $\vect$ (\ref{ex:q}), we have that $\zeta^r(n)$ is the set of $r$-flags of $\F_q^n$
  (allowing trivial steps).  We have
  $$
\zeta^r(n) = \sum_{\substack{p_1+\cdots+p_r=n \\ p_i \geq 0}}
\frac{[n]!}{[p_1]!\cdots [p_r]!},
$$
and therefore 
$$
\sum_{n=0}^\infty \zeta^r(n) \frac{z^n}{[n]!} = \left( \sum_{n=0}^\infty 
\frac{z^n}{[n]!}\right)^r ,
$$
Now $\zeta^{-1}(n)$ can be read off as the $n$th coefficient in the inverted
series $\big( \sum_{n=0}^\infty 
\frac{z^n}{[n]!}\big)^{-1}$.  In the case at hand, these coefficients are $(-1)^n q^{n
\choose 2}$, as we already saw.
\end{blanko}

\subsection{Tools for calculation of \M functors}

We mention three high-level tools for calculating \M functions, and wrap 
up with a few open ends.

\begin{blanko}{Carlier's Rota formula for bicomodules.}
  A classical formula of Rota~\cite{Rota:Moebius} compares the \M functions of
  two posets related by a Galois adjunction.
  Carlier~\cite{Carlier:1801.07504} has generalised this to the setting of
  decomposition spaces, where it concerns a notion of adjunction, or more
  generally certain bicomodule configurations: a $X$--$Y$--bicomodule
  configuration is a bisimplicial groupoid $B$ equipped with an
  augmentation column $X$ and an augmentation row $Y$, and such that $B$ is
  Segal in every row and every column and $B$ is furthermore stable, which
  is a pullback condition on top vertical face against top horizontal
  faces, and on bottom vertical faces against bottom horizontal faces.
  Furthermore, the augmentation maps must be \culf. Such a bicomodule
  configuration has an incidence bicomodule over the incidence coalgebras
  of $X$ and $Y$.

  One application of Carlier's Rota formula formula concerns the
  relationship between restriction species and directed restriction
  species, briefly treated in \ref{directedrestriction}. The decomposition
  space $\I$ corresponding to the terminal restriction species has \M
  function $(-1)^n$ for a set with $n$ elements. Since every restriction
  species is \culf over $\I$, a similar formula exists for general
  restriction species. Carlier~\cite{Carlier:1812.09915} sets up a \M
  bicomodule interpolating between $\I$ and the decomposition space $\C$ of
  finite posets (corresponding to the terminal directed restriction
  species), and applies the generalised Rota
  formula~\cite{Carlier:1801.07504} to calculate the \M function of any
  directed restriction species to be $\mu(Q) = (-1)^n$ if the underlying
  poset of $Q$ is discrete with $n$ elements, and zero otherwise. By a \culf
  argument, this also leads to a formula for the \M function of the
  decomposition space of the free monad on a polynomial endofunctor $P$ as
  in \ref{ex:freemonad} (the bialgebra of $P$-trees), namely $\mu(T) =
  (-1)^n$ if the forest $T$ consists of $n$ corollas (and possibly some
  isolated edges), and zero otherwise.
\end{blanko}

\begin{blanko}{Antipodes.}
  The formula for the M\"obius function
  $$
  X_1 \leftarrow \sum_k (-1)^k \nondeg X_k \to 1
  $$
  admits an elegant variation in the case where $X$ is a \culf monoidal 
  decomposition space. In that case one can be more precise on the codomain 
  side of the span, writing instead
  $$
  X_1 \leftarrow \sum_k (-1)^k \nondeg X_k \to X_1
  $$
  where the right-hand map is $\sum_k (-1)^k \nondeg X_k \to \sum_k (-1)^k
  \prod_{i=1}^k \nondeg X_1 \stackrel{\otimes}\to X_1$, returning the
  monoidal product of all the principal edges of a $k$-simplex. This is in
  fact a decomposition-space version (see~\cite{Carlier-Kock:1807.11858})
  of the antipode formula of Takeuchi~\cite{Takeuchi:1971} and
  Schmitt~\cite{Schmitt:antipodes}. It is not a true antipode unless $X_0$
  is contractible (so that the incidence bialgebra becomes connected), but
  even in the case where $X_0$ is not contractible, this mock antipode $S$
  has some merit. For example, it can still be used to calculate the
  M\"obius function as $\zeta \circ S$.

\end{blanko}

\begin{blanko}{Crapo's complementation formula.}
  Another classical tool for calculating M\"obius functions is Crapo's
  complementation formula, originally established for lattices~\cite{Crapo}
  but generalised to general finite posets by Bj\"orner and
  Walker~\cite{Bjoerner-Walker}. It concerns the situation where a poset
  $X$ (resp.~a decomposition space) has a convex subposet
  (resp.~sub-decomposition space) $C$. It reads
  $$
  \mu^X = \mu^{X\shortsetminus C} + \mu^X*\zeta^C * \mu^X  
  $$
  (with self-explanatory notation). This has been established recently \cite{gkt:crapo} at
  the objective level for any M\"obius decomposition space $X$, and
  provides in particular a bijective proof for the Bj\"orner--Walker
  theorem. It should be mentioned that it is more difficult to find
  interesting applications of this formula for decomposition spaces: at the
  moment, the only known applications are the original poset applications
  of \cite{Bjoerner-Walker}.

  In the context of the present discussions, the formula is more
  interesting for the questions it raises. For $C=X$, the formula reads
  $\mu = \mu * \zeta * \mu$. Numerically this is immediate from the fact
  that $\mu$ is convolution inverse to $\zeta$. But objectively there are
  {\em two} different cancellations establishing that easy fact: one from
  $(\mu * \zeta) * \mu$ and one from $\mu * (\zeta * \mu)$. What is more
  interesting is that the Crapo formula gives yet another two different
  cancellations. Altogether it is pressing to get to learn more about the
  structure of cancellations in general.
\end{blanko}

\begin{blanko}{Product formula?}
  Classically, a very useful tool for calculating \M functions, is the
  product formula: it states, for two locally finite posets $P$ and $Q$,
  $$
  \mu_{P\times} = \mu_P \times \mu_Q .
  $$
  For example, it calculates the classical \M function from number theory,
  which is the \M function of the divisibility poset $(\N^\times, \, | \, 
  )$.
  Since this is the infinite (but finitely supported) product of copies of
  the poset $(\N,\leq)$, one for each prime, the classical formula
  $$
  \mu(n) = \begin{cases} 
  (-1)^r & \text{if  $n$ is the product of $r$ 
  distinct primes} \\
  0 & \text{else}
  \end{cases}
  $$
  Unfortunately, it is not easy to derive any such formula at the 
  objective level, since there is no easy description of the nondegenerate 
  simplices of a product of decomposition spaces in terms of the 
  nondegenerate simplices of the factors. Elaborate cancellations seem to
  be required, and at the moment it is not known how to handle this.
\end{blanko}

We finish with a kind of non-example which raises further interesting
questions.
\begin{eks}
  Consider the strict nerve of the category
  $$
  \xymatrix {
  x \ar@(lu,ld)[]_e  \ar@/_0.6pc/[r]_r  & y \ar@/_0.6pc/[l]_s
  }
  $$
  in which $r\circ s = \id_y$, $s\circ r = e$
  and $e\circ e = e$.
  This decomposition space $X$ is clearly locally finite, so it defines
  a vector-space coalgebra, in fact a finite-dimensional one.
  One can check by linear algebra (see 
  Leinster~\cite[Ex.6.2]{Leinster:1201.0413}),
  that this coalgebra has \M inversion.
  On the other hand, $X$ is not of locally finite length,
  because the identity arrow $\id_y$ can be written as an 
  arbitrary long string $\id_y = r\circ s \circ \cdots \circ r\circ s$.
  In particular $X$ is not a \M decomposition space.
  So we are in the following embarrassing situation: on the objective level,
  $X$ has \M inversion (as it is complete), but the formula does not
  have a cardinality.  At the same time, at the numerical level \M inversion 
  exists nevertheless.  Since inverses are unique if they exist, it is
  therefore likely that the infinite \M inversion formula of the objective
  level admits some drastic cancellation at this level, yielding a finite
  formula, whose cardinality is the numerical formula.  Unfortunately, so far
  we have not been able to pinpoint such a cancellation.
\end{eks}

\appendix

\section{Groupoids}

\label{sec:groupoids}

\subsection{Homotopy theory of groupoids}

We briefly recall the needed basic notions of groupoids and their homotopy
cardinalities. 

\begin{blanko}{Groupoids.}
  A {\em groupoid} is a small category in which all the arrows are invertible.
  A map of groupoids is just a functor.  Let $\Grpd$ denote the category of
  groupoids and maps.
  
  Intuitively we consider groupoids as sets with built-in symmetries.  While a
  group models symmetry automorphisms of one object, groupoids model
  automorphisms and isomorphisms between several objects.  
\end{blanko}

\begin{blanko}{Homotopy equivalences.}
  A homotopy between two maps of groupoids is just a natural transformation of
  functors.  A map of groupoids $f:X\to Y$ is called a {\em homotopy
  equivalence} when there exists a pseudo-inverse $g: Y \to X$, meaning that the
  two composites are homotopic to the identities: $g\circ f \simeq \id_X$ and
  $f\circ g \simeq \id_Y$.  Just as for categories, homotopy equivalences
  can also be characterised as functors that are essentially surjective and 
  fully faithful.

  Homotopy equivalence is the appropriate notion of sameness for groupoids, and
  it is important that all the notions involved be invariant under homotopy
  equivalence.

  We adopt the convention that all notions in the paper are the homotopy
  invariant ones: outside this appendix we will usually say {\em equivalence,
  finite, discrete, trivial, cartesian, pullback, fibre, sum, colimit} and {\em
  monomorphism} instead of `homotopy equivalence', `homotopy finite', `homotopy
  discrete', etc, for the notions defined below.  It is essential that the word
  `homotopy' is understood throughout.
\end{blanko}

\begin{blanko}{Connectedness, discreteness.}
  A groupoid $X$ is {\em connected} if $\operatorname{obj}(X)$ is non-empty and
  the set $\Hom_X(x,y)$ is non-empty for all $x,y\in X$.  A maximal connected subgroupoid of $X$
  is termed a {\em component} of $X$ and denoted $[x]$ or $X_{[x]}$, where $x$
  is some object in the component.  The set of components is denoted $\pi_0(X)$.
  We denote by $\pi_1(X,x)$ the automorphism group $\Aut_X(x)=\Hom_X(x,x)$.  A
  groupoid $X$ is {\em homotopy discrete} if $\pi_1(X,x)$ is trivial for all
  $x$, and {\em contractible} if it is homotopy discrete and also connected.
  This means homotopy equivalent to a point, i.e.~the terminal groupoid $1$.
\end{blanko}

\begin{blanko}{Finiteness.}\label{finite}
  A groupoid $X$ is {\em locally finite} if $\pi_1(X,x)$ is finite for every
  $x$, and is {\em (homotopy) finite} if in addition $\pi_0(X)$ is finite.  We
  denote by $\grpd$ the category of finite groupoids.
\end{blanko}

\begin{blanko}{Pullbacks.}\label{pbk}
  The {\em homotopy fibre product} of \homomorphisms $f:G\to B$ and $g:E\to B$ is 
  the groupoid $H=G\times_B E$ whose objects are triples $(x,y,\varphi)$
  consisting of $x\in G$, $y\in E$, and $\varphi:fx\to gy$ in $B$,
  and whose arrows $(x',y',\varphi')\to (x,y,\varphi)$ are
  pairs $(\beta,\epsilon)\in \Hom_G(x',x) \times \Hom_E(y',y)$ 
  such that $\varphi \circ f(\beta)=g(\epsilon)\circ \varphi'$.
  There are canonical projections $p,q$,
\begin{equation}\vcenter{
\xymatrix@R-1ex{
H\drpullback 
\rto^-{q}\dto_-{p}
&E\ar[d]^-{g}\\G\ar[r]_-{f}&B.
}}\label{pullbacksquare}
\end{equation}
The diagram does not commute on the nose, but the third components of objects
$a=(x,y,\varphi)$ provide a natural isomorphism $\{\varphi:fp(a)\cong gq(a)\}$.  We
say a square \eqref{pullbacksquare} is {\em homotopy cartesian} or a {\em
homotopy pullback} if $H$ is homotopy equivalent to the homotopy fibre product
$G\times_B E$
given explicitly above.  The \homomorphism $p$ is sometimes termed the {\em
pullback of $g$ along $f$} and denoted $f\upperstar (g)$.
\end{blanko}

\begin{blanko}{Fibres.}\label{fibres}
  The {\em homotopy fibre} $E_b$ of a \homomorphism $p:E\to B$ over an object
  $b$ of $B$ is the homotopy pullback of $p$ along the map $\name b:1 \to B$ 
  that picks out the element $b$:
  $$\xymatrix{
     E_b\drpullback \ar[r]\ar[d] & E \ar[d] \\
     1 \ar[r]_{\name b} & B
  }$$
\end{blanko}
\begin{blanko}{Loops.}\label{loops}
  The {\em loop groupoid} $\Omega_b B$ of a groupoid $B$ at an object $b$ is given by
  the homotopy pullback $1\times_B 1$ of the inclusion $\name b:1\to B$ along
  itself.  This is discrete: it has $\Aut_B(b)$ as its set of objects, and only
  the identity isomorphisms.
\end{blanko}

\subsection{Slices and the fundamental equivalence}

\begin{blanko}{Slices.}
  We shall need homotopy slices, sometimes called weak slices.  First recall the
  usual notion of slice category: If $\CC$ is a category, and $I\in \CC$, then
  the usual slice category $\CC{/I}$ is the category whose objects are morphisms
  $X \to I$ in $\CC$ and whose arrows are commutative triangles
  $$\xymatrix@R-3ex@C-1ex{
  X' \ar[rdd] \ar[rr]    && X \ar[ldd]   \\ \\
  &     I   .
  }$$

  We are concerned instead with groupoid-enriched categories $\CC$,
  i.e.~categories such that the arrows between each pair of objects $X,Y$ define
  a groupoid $\Map(X,Y)$ instead of just a set, and the composition law is given by
  groupoid maps instead of just functions.  Thus, between two parallel arrows $X
  \rightrightarrows Y$ there may be invertible $2$-cells.  The {\em homotopy slice category}
  $\CC_{/I}$ then has as objects the morphisms $X \to I$; its arrows are
  triangles with a 2-cell
  \begin{equation}\label{triangledigram}
  \vcenter{\xymatrix@R-3ex@C-1ex{
  X \ar[rdd] \ar[rr]    && X' \ar[ldd]   \\ &\Rightarrow\\
  &     I   .
  }}
  \end{equation}
The basic example is $\CC=\Grpd$ with
$2$-cells given by homotopies between maps (that is, the natural isomorphisms).
\end{blanko}

\begin{blanko}{Homotopy sums and Grothendieck construction.}
  For a map $p:E \to B$, each isomorphism $\beta:b'\to b$ in $B$ induces an
  equivalence of homotopy fibres $\beta_*:E_{b'}\to E_b$, sending an object
  $(1,e,\varphi\!:\!pe\cong b')$ to
  $(1,e,\beta\varphi\!:\!pe\cong b)$.  Thus the
  homotopy fibres of $p:E\to B$ form a $B$-indexed family of groupoids, that is,
  a functor $E_{(-)}$ from $B$ to the category $\Grpd$ of groupoids.

  The {\em homotopy sum} of any $B$-indexed family of groupoids $E:B\to \Grpd$
  is the groupoid given by the {\em homotopy colimit} of this functor, which may
  be defined by the {\em Grothendieck construction} and denoted $\int^{b\in
  B}E_b$.  Its objects are pairs $(b, e)$ with $b\in B$ and of $e\in E_b$, and
  isomorphisms $(b',e')\to(b,e)$ are pairs $(\beta,\epsilon)$ of
  isomorphisms $\beta:b'\to b$ in $B$ and $\epsilon : \beta_*e'\to e$ in
  $E_{b}$.

  The Grothendieck construction of any family $E:B\to\Grpd$ comes equipped with
  a canonical projection to $B$ whose homotopy fibres give back the original
  family $E$ up to homotopy equivalence.  Conversely, for any \homomorphism
  $E\to B$, the homotopy sum of its homotopy fibres $E_b$ is homotopy
  equivalent, over $B$, to $E$.  Thus we have
\end{blanko}

\begin{thm}[Fundamental Equivalence]
  There is a canonical equivalence between the categories of groupoids over a
  fixed groupoid $B$ and that of groupoid-valued functors from $B$,
  $$
  \Grpd_{/B}\;\simeq\;\Grpd^B
  $$
  given by taking homotopy fibres and the Grothendieck construction.
\end{thm}

\begin{blanko}{Monomorphisms.}\label{mono}
  A \homomorphism $E\to B$ is a {\em homotopy monomorphism} if each homotopy
  fibre $E_b$ is empty or contractible.  Up to homotopy
  equivalence, such a \homomorphism is the inclusion of some collection of
  connected components of $B$, that is, the Grothendieck construction of an
  indicator function $B\to\{\varnothing,1\}\subset\Grpd$.  Note that in general neither
  $\name b:1\to B$ nor the diagonal map $B\to B\times B$ are homotopy mono.
 \end{blanko}
  
\begin{blanko}{Finite maps.}
  A \homomorphism is {\em homotopy finite} if each homotopy fibre is homotopy
  finite.  A pullback of any homotopy monic or finite map is again homotopy
  mono or finite.
\end{blanko}

\begin{blanko}{Families.}
  The homotopy sum of an $I$-indexed family in $\Grpd^B$ is defined as the
  homotopy sum of the corresponding object of $\Grpd^{I\times B}$, composed with
  the projection,
  $$
  E\longrightarrow I\times B\longrightarrow B.
  $$ 
  Homotopy sums of $I$-indexed families in $\Grpd_{/B}$ are defined similarly.
  We regard the maps $\name b:1\to B$, for $[b]\in\pi_0B$ as a {\em basis} of
  $\Grpd_{/B}$, in analogy with vector spaces. {\em Scalar multiples} $A\,\name b$ of basis elements in $\Grpd_{/B}$ are given by $A\to 1\xrightarrow{\name b}B$. 
\end{blanko}

\begin{lemma}\label{lem:fashosum}
  Any $f:E\to B$ in $\Grpd_{/B}$ may be expressed as a linear combination
  of basis elements as follows
$$  f \;\;\simeq\;\; \int^{e\in E} \name{f(e)} 
 \;\;\simeq\;\; \int^{b\in B} E_b\,\name{b} .
  $$
\end{lemma}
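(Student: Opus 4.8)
The plan is to derive both displayed equivalences from the Fundamental Equivalence together with the Grothendieck construction, treating the two expressions as the two natural ways of describing the same object $f:E\to B$. First I would recall that under $\Grpd_{/B}\simeq\Grpd^B$ the object $f$ corresponds to the family of homotopy fibres $b\mapsto E_b$. The content of the lemma is then simply that the homotopy colimit (homotopy sum) recovers $f$, so the whole proof is an unwinding of definitions, and I would organise it around the two factorisations of the colimit.

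For the first equivalence, $f\simeq\int^{e\in E}\name{f(e)}$, I would regard $E$ itself as the indexing groupoid and consider the constant-at-a-point family composed with $f$; more concretely, for each object $e\in E$ one has the scalar multiple $1\cdot\name{f(e)} = \name{f(e)}$ (a point mapping to $f(e)\in B$), and the Grothendieck construction $\int^{e\in E}\name{f(e)}$ has as objects pairs $(e,\ast)$ with the single point over $e$, hence is equivalent to $E$ itself, with the canonical projection to $B$ being exactly $f$ up to the natural isomorphism built into the construction. This is the observation that the homotopy sum of the fibres of the identity map $E\to E$, pushed forward along $f$, is $E$; it is essentially the statement at the end of the "Homotopy sums and Grothendieck construction" paragraph that "for any \homomorphism $E\to B$, the homotopy sum of its homotopy fibres is homotopy equivalent, over $B$, to $E$," applied with the \emph{source} playing the role of the index.

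For the second equivalence, $f\simeq\int^{b\in B}E_b\,\name b$, I would take the indexing groupoid to be $B$ and the $B$-indexed family to be $b\mapsto E_b$, the homotopy fibres of $f$. By definition of scalar multiples, $E_b\,\name b$ is the composite $E_b\to 1\xrightarrow{\name b}B$, so $\int^{b\in B}E_b\,\name b$ is the Grothendieck construction of exactly the family $E_{(-)}$; its canonical projection to $B$ has homotopy fibres giving back $E_{(-)}$, and by the Fundamental Equivalence (or directly by the converse direction of the Grothendieck construction) this object is homotopy equivalent over $B$ to $f$. Chaining the two gives $\int^{e\in E}\name{f(e)}\simeq f\simeq\int^{b\in B}E_b\,\name b$, as required.

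The only genuinely delicate point — and the one I would spell out with a little care rather than leave to the reader — is the bookkeeping of $2$-cells when reindexing the homotopy sum from $E$ to $B$: an object of $\int^{e\in E}\name{f(e)}$ is (up to equivalence) just an object of $E$, whereas an object of $\int^{b\in B}E_b\,\name b$ is a pair $(b,x)$ with $x\in E_b$, i.e.\ $x=(1,e,\phi\!:\!f e\cong b)$; the equivalence between the two amounts to the standard fact that the fibration $E\to B$ is classified by its family of fibres, and one must check that the isomorphisms match up, using the action $\beta_*$ of isomorphisms of $B$ on fibres. This is routine but is where all the homotopy-coherence is hiding, so I expect it to be the main (and essentially only) obstacle; everything else is formal manipulation of the Grothendieck construction already set up in the appendix.
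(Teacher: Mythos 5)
Your proposal is correct and follows essentially the same route as the paper, which states the lemma without separate proof precisely because it is the direct unwinding of the Grothendieck construction and the Fundamental Equivalence set up in the preceding paragraphs: the first expression is the homotopy sum of the constant-point family indexed by $E$ itself, and the second is the Grothendieck construction of the fibre family $b\mapsto E_b$, equivalent over $B$ to $E$. Your remark about matching the $2$-cells via the action $\beta_*$ of isomorphisms of $B$ on fibres is exactly the (routine) coherence the paper leaves implicit.
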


\subsection{Linear functors}

\label{sec:LIN}

\begin{blanko}{Basic slice adjunction.}\label{adj}
Taking homotopy pullback along a morphism of groupoids 
$f:B'\to B$  defines a functor between the slice categories
$$
f^*
:\Grpd_{/B}\to \Grpd_{/B'} .
$$
This has a homotopy left adjoint, given by postcomposition,
$$
f\lowershriek  :\Grpd_{/B'}\to \Grpd_{/B}.
$$ 
The homotopy adjointness is expressed by 
natural equivalences of mapping groupoids
\begin{align}
\Map_{/B}(f\lowershriek E',E)&\simeq
\Map_{/B'}(E',f^*E).\label{slice-adj}
\end{align}
\end{blanko}

Moreover,

\begin{lemma}[Beck--Chevalley]\label{lem:beck-chevalley}
For any homotopy pullback square \eqref{pullbacksquare}, the functors 
$$
q\lowershriek \,p\upperstar 
,\;
g\upperstar f\lowershriek 
:\Grpd_{/G}\to\Grpd_{/E}
$$
are naturally homotopy equivalent.
\end{lemma}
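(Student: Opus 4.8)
The plan is to unwind both composite functors on a general object of $\Grpd_{/G}$ and observe that they compute, up to a canonical and natural equivalence, the same groupoid over $E$; the only substantive ingredient is the pasting law for homotopy pullbacks of groupoids, everything else being bookkeeping with the explicit model of~\ref{pbk}. Fix $(T\xrightarrow{t}G)$ in $\Grpd_{/G}$. On one side, $g\upperstar f\lowershriek(t)$ is by definition the homotopy pullback of $f\circ t\colon T\to B$ along $g$, i.e.\ the groupoid $T\times_B E$ equipped with its projection to $E$. On the other side, $q\lowershriek p\upperstar(t)$ is obtained by first forming the homotopy pullback $T\times_G H$ of $t$ along $p\colon H\to G$, and then postcomposing its projection to $H$ with $q$. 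Both assignments $t\mapsto q\lowershriek p\upperstar(t)$ and $t\mapsto g\upperstar f\lowershriek(t)$ are functors $\Grpd_{/G}\to\Grpd_{/E}$, since homotopy pullback and postcomposition are functorial.

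The comparison is then read off from the diagram
$$
\xymatrix@C+1ex{
T\times_G H\drpullback\ar[r]\ar[d] & H\drpullback\ar[r]^-{q}\ar[d]_-{p} & E\ar[d]^-{g}\\
T\ar[r]_-{t} & G\ar[r]_-{f} & B
}
$$
in which the right-hand square is a homotopy pullback by the hypothesis on the original square, and the left-hand square is a homotopy pullback by construction of $T\times_G H$. By the pasting law, the outer rectangle is a homotopy pullback, so its apex $T\times_G H$ is canonically equivalent to the homotopy pullback $T\times_B E$ of $f\circ t$ along $g$, compatibly with the legs to $T$ and to $E$. Reading this along the leg to $E$ gives $q\lowershriek p\upperstar(t)\simeq g\upperstar f\lowershriek(t)$. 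Concretely, using~\ref{pbk}: an object of $T\times_G H$ consists of $s\in T$, an object $(x,y,\phi\colon fx\cong gy)$ of $H=G\times_B E$, and an isomorphism $t(s)\cong x$ in $G$; absorbing that last isomorphism, this is the same datum as a pair $s\in T$, $y\in E$ with an isomorphism $ft(s)\cong gy$, which is exactly an object of $T\times_B E$, and the structure map to $E$ is $y$ in both descriptions. The same matching works on morphisms.

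Naturality in $(T\xrightarrow{t}G)$ is then immediate: a morphism in $\Grpd_{/G}$, given by a functor $u\colon T\to T'$ together with a $2$-cell $t\Rightarrow t'u$, induces compatible maps between all the fibre products appearing above, and the object/arrow correspondence just exhibited commutes with them; equivalently, one invokes the universal property characterising homotopy pullbacks up to equivalence, which forces the comparison to be natural. The only point that genuinely needs care is the pasting law for homotopy pullbacks of groupoids, but this is standard and in the same spirit as the basic facts already used in the paper. If a more structural account is preferred, the square induces via the adjunctions $f\lowershriek\dashv f\upperstar$ and $g\lowershriek\dashv g\upperstar$ a canonical Beck--Chevalley mate $q\lowershriek p\upperstar\Rightarrow g\upperstar f\lowershriek$; since every object of $\Grpd_{/G}$ is a homotopy sum of basis elements $\name b\colon 1\to G$ by Lemma~\ref{lem:fashosum}, and both functors preserve homotopy sums, it suffices to check the mate is an equivalence on each $\name b$, which reduces to exactly the fibre-product identification above.
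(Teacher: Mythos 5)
Your proof is correct: unwinding both composites on an object $t\colon T\to G$, pasting the defining pullback of $p\upperstar(t)$ with the given square, and identifying $T\times_G H\simeq T\times_B E$ over $E$ (together with the explicit object-level matching in the model of~\ref{pbk}) is exactly the standard argument, and the paper itself states Lemma~\ref{lem:beck-chevalley} without proof as background material, so there is nothing in the text for your route to diverge from. One small inaccuracy in your closing aside only: the canonical Beck--Chevalley mate $q\lowershriek p\upperstar\Rightarrow g\upperstar f\lowershriek$ is obtained from the unit of $f\lowershriek\dashv f\upperstar$ and the counit of $q\lowershriek\dashv q\upperstar$ (not of $g\lowershriek\dashv g\upperstar$); this does not affect your main argument, which does not use the mate.
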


\begin{blanko}{Spans and linear functors.}
  A pair of groupoid \homomorphisms $A\xleftarrow r G\xrightarrow f B$ is termed a {\em span}
between $A$ and  $B$, and induces a functor between the slice categories by 
pullback and postcomposition 
$$
f\lowershriek \,r\upperstar :\Grpd_{/A}\longrightarrow\Grpd_{/B}   .
$$
A functor $\Grpd_{/A}\longrightarrow\Grpd_{/B}$ is {\em linear} if it is 
homotopy equivalent to one arising from a span in this way. 
By the Beck--Chevalley Lemma \ref{lem:beck-chevalley},
 composites of linear functors are linear,
$$
\vcenter{\xymatrix@!R=4ex@!C=5ex{
&&H\dpullback\dlto_p\drto^q\\&G\dlto_r\drto^f&&E\dlto_g\drto^s\\A&&B&&C\\
\makebox[0em][r]{$(sq)\lowershriek(rp)\upperstar:\;$}\Grpd_{/A}\rrto^{f\lowershriek r\upperstar}&&\Grpd_{/B}\rrto^{s\lowershriek g\upperstar}&&\Grpd_{/C}\;.\!\!\!
}}
$$
We write $\LIN$ for the monoidal $2$-category of all slice categories $\Grpd_{/B}$ and linear functors between them, with the tensor product induced from the cartesian product
$$
\Grpd_{/A}\otimes \Grpd_{/B} \;:=\;\Grpd_{/A\times B}.
$$
The neutral object is $\Grpd \simeq \Grpd_{/1}$, playing the role of the ground 
field.
\end{blanko}

\begin{blanko}{Duality.}\label{duality}
  The functor category $\Grpd^S$ is the {\em linear dual} of the slice category
  $\Grpd_{/S}$,
  since there is an equivalence (see~\cite[\S2.11]{GKT:HLA})
  $$
  \Grpd^S\;\;\simeq\;\;\LIN(\Grpd_{/S},\Grpd).
  $$
  A span $A\leftarrow G\to B$ defines both a linear functor
  $\Grpd_{/A}\to\Grpd_{/B}$ and the dual linear functor $\Grpd^B\to\Grpd^A$.
  In particular the span $1\leftarrow G\to S$ may be thought of as an element of
  $\Grpd_{/S}$, and its transpose $S\leftarrow G\to 1$ as an element of
  $\Grpd^S$.

  There is a canonical pairing
  \begin{align}
  \Grpd_{/S}\times{}&\Grpd^S\to\Grpd
  \nonumber
  \\ \label{objective-pairing}
  \langle\; \name t\;,\,\;&h^s\;\rangle
   = \Hom(s,t) = 
  \left\{\begin{array}{cr}\Omega_s(S)&(s\cong t)\\\varnothing&(s\not\cong t)\end{array}\right.
  \end{align}
  The maps $\name t:1\to S$ (or the spans $1\leftarrow 1\to S$) form the 
  canonical basis of the slice category, and the representable functors 
  $h^s=\Hom(s,-):S\to\Grpd$ (or the spans $S\leftarrow 1\to 1$) form the 
  canonical basis for the dual.
\end{blanko}

\subsection{Cardinality}

\begin{blanko}{Cardinality of groupoids.}
The cardinality of a finite groupoid $X$ is given by
$$
\norm{X}\; :=\; \sum_{[x]\in \pi_0(X) } \frac{1}{\norm{\pi_1(X,x)}} \;\in\;\mathbb Q.
$$
where $\pi_1(X,x)$ is the automorphism group $\Aut(x)$ of the 
object $x$ in the groupoid $X$, and the norm signs on the right refer to the order of the group.
Homotopy equivalent groupoids have the same cardinality.
For any component of a locally finite groupoid $B$ we have
\begin{equation}\label{normloop}
\norm{B_{[b]}}
\;=\;\norm{\pi_1(B,b)}^{-1}
\;=\;\norm{\Omega_b(B)}^{-1}.
\end{equation} 

For any function $q:\pi_0B\to \Q$, we use the notation
$$
\int^{b\in B}\!\!\!
q(b)\;:=\;\sum_{[b]\in \pi_0B}|B_{[b]}|\;
q(x)\;=\;\sum_{[b]\in \pi_0B}\frac{q(b)}{|\pi_1(B,b)|}  .
$$ 
This is chosen to resemble the Grothendieck construction notation since for any \homomorphism $E\to B$ from a finite groupoid we have, by \cite[Lemma~3.5]{GKT:HLA},
$$
|E|=\int^{b\in B}\!\!\!|E_b|.
$$
The case of the \homomorphism $\name b:1\to B$ is just equation \eqref{normloop}.
\end{blanko}

\begin{blanko}{Global cardinality.}
A span $A \xleftarrow r G \xrightarrow f B$, and the corresponding 
linear functor $\Grpd_{/A}\to\Grpd_{/B}$, are termed {\em finite} if
the \homomorphism $r$ is finite
(that is, the homotopy fibres of $r$ are finite).
We have \cite[Proposition~4.3]{GKT:HLA},
\begin{prop}\label{finitetypespan}
Let $A$, $B$, $G$ be locally finite groupoids and 
$A \xleftarrow{\;\;}  G \xrightarrow{\;\;}  B$ a finite span.
Then the induced finite linear functor $\Grpd_{/A}\to \Grpd_{/B}$ restricts to
\begin{equation}\label{finitelinear}
\grpd_{/A}\to\grpd_{/B}.
\end{equation}
\end{prop}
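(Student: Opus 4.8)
The plan is to unwind the definition of the linear functor and reduce the statement to a single structural lemma about finiteness of Grothendieck constructions. Write $r:G\to A$ and $f:G\to B$ for the two legs of the span, with $r$ finite. An object of $\grpd_{/A}$ is a map $t:S\to A$ with $S$ a finite groupoid, and its image under $f\lowershriek\,r\upperstar$ is, up to equivalence, the composite $G\times_A S\to G\xrightarrow{\,f\,}B$, where $G\times_A S$ denotes the homotopy pullback of $r$ along $t$. Thus what has to be shown is simply that the groupoid $G\times_A S$ is homotopy finite; observe that no finiteness of the map $G\times_A S\to B$ is required, only finiteness of its \emph{domain}.

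First I would note that the projection $p:G\times_A S\to S$ is a homotopy pullback of $r:G\to A$ along $t$, hence finite, since (as recalled in the appendix) a pullback of a finite map is again finite. So the problem is reduced to the following lemma: \emph{if $p:E\to S$ is a finite map and $S$ is a finite groupoid, then $E$ is a finite groupoid.} Everything else in the proof is formal; this lemma is the real content.

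To prove the lemma I would use the Fundamental Equivalence $\Grpd_{/S}\simeq\Grpd^S$: the map $p$ corresponds to the functor $F:=E_{(-)}:S\to\Grpd$, $s\mapsto E_s$, and $E$ is recovered as the Grothendieck construction $\int^{s\in S}E_s$, whose objects are pairs $(s,e)$ with $s\in S$ and $e\in E_s$. For the components, each component of $E$ lies over a component of $S$, and over $[s]$ the set of components of $E$ is the quotient $\pi_0(E_s)/\pi_1(S,s)$ of the finite set $\pi_0(E_s)$ by the monodromy action; since $\pi_0(S)$ is finite, $\pi_0(E)$ is a finite union of finite sets, hence finite. For the loop groups, an automorphism of $(s,e)$ is a pair $(\beta,\epsilon)$ with $\beta\in\pi_1(S,s)$ and $\epsilon:\beta_*e\to e$ in $E_s$; forgetting $\epsilon$ exhibits $\pi_1(E,(s,e))$ as an extension of a subgroup of the finite group $\pi_1(S,s)$ by the finite group $\pi_1(E_s,e)$, hence finite. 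Therefore $E$ is homotopy finite, which completes the proof.

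The step I expect to be the main obstacle --- or at least the only one with genuine content --- is the finiteness of $\pi_0(E)$. It is tempting to argue numerically via $\norm{E}=\int^{s\in S}\norm{E_s}$ (a finite sum of rationals), but this does \emph{not} yield homotopy finiteness: a groupoid such as $\coprod_{n\geq 1}B(\mathbb Z/2^n)$ has finite cardinality yet infinitely many components. One genuinely needs the description of $\pi_0$ of the Grothendieck construction used above, and this is where the hypothesis that $S$, and not merely its cardinality, is finite is actually used. This is a standard fact about groupoids, recorded in \cite{GKT:HLA}; in a fully detailed account one would simply cite it there.
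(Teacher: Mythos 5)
Your argument is correct: the reduction to the single lemma (a finite map $E\to S$ with $S$ a finite groupoid has finite total space $E$), via pullback-stability of finite maps, together with your explicit $\pi_0$/$\pi_1$ analysis of the Grothendieck construction, gives a complete proof, and your warning that a cardinality computation would not suffice is well taken. Note that the paper itself offers no proof here — it simply cites \cite[Proposition~4.3]{GKT:HLA} — and your route is essentially the one underlying that cited result, made explicit at the level of $1$-groupoids.
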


To a slice category $\grpd_{/A}$, with $A$ locally finite, we associate the vector space 
$\Q_{\pi_0A}$ 
with canonical basis $\{\delta_a\}_{[a]\in\pi_0A}$. 
To the finite linear functor \eqref{finitelinear}, we associate the linear map
\begin{eqnarray}\label{arrow-card}
\qquad\qquad\nonumber   \Q_{\pi_0 A} & \longrightarrow &\Q_{\pi_0 B}  \\
    \delta_a & \mapsto \quad& \!\!\!\!\!\!\!\!\!\!
\sum_{[b]\in\pi_0B}\!\!\! 
    \norm{B_{[b]}} \norm{G_{a,b}} \delta_b 
\;=\;\int^{b\in B} \!\!\!\norm{G_{a,b}} \delta_b 
\end{eqnarray}
where $G_{a,b}$ are the fibres of the map $G\to A\times B$  defined by the span.
This process is functorial \cite[Proposition~8.2]{GKT:HLA}, and
defines what we call {\em meta} or {\em global cardinality} 
$$\|\;\;\|:\ind\lin\to \Vect
$$
from the category $\ind\lin$ of slice categories $\grpd_{/A}$  ($A$ locally finite) and finite linear functors.
\end{blanko}
\begin{blanko}{Local cardinality.} 
To each object $p:G\to B
$ in $\grpd_{/B}$ ($B$ locally finite) we can associate a vector $\norm{p:G\to B}$ in $\mathbb Q_{\pi_0B}$, called the {\em relative} or {\em local cardinality},
$$
\norm p\; :=\; \sum_{[b]\in\pi_0B}\norm{B_{[b]}} \norm{G_{b}} \delta_b 
 \;=\; \int^{b\in B}\!\!\! \norm{G_{b}} \delta_b 
$$ 
Note that $p$ determines a finite linear functor via $1\leftarrow G\to B$, and the local cardinality $\norm p$ is just the  image of $1$ under the global cardinality $\Q\to \Q_{\pi_0B}$.
It follows that local cardinality respects the action of finite linear functors $L$,  
$$
|L(p)|\;=\;\|L\|(|p|).
$$
The local cardinality of the basis object $\name b:1\to B$ in $\grpd_{/B}$ is just the basis vector  $\delta_b$ in $\Q_{\pi_0B}$, by \eqref{normloop}.

To simplify notation we will write $\normnorm L$ for $\|L\|$
when the meaning is clear from the context, and say just cardinality rather than meta, global, relative or local cardinality.

\end{blanko}

\begin{blanko}{Cardinality of the dual.}\label{dual-card}
Dually we define cardinality of finite-groupoid valued functors (see \ref{duality}) as a map
$$\norm { \ \ } : \grpd^S \to\|\grpd^S\|=\Q^{\pi_0 S}$$
where $\Q^{\pi_0 S}$ is the function space,
the profinite dimensional vector space with pro-basis given by
the characteristic functions $\delta^s$.

Finite spans $A\leftarrow G\to B$ define linear maps $\grpd^B\to\grpd^A$, 
whose cardinality is defined using the same matrix as in \eqref{arrow-card} above:
\begin{eqnarray}\label{transpose-card}
\qquad\qquad\nonumber   \Q^{\pi_0 B} & \longrightarrow &\Q^{\pi_0 A}  \\
    \delta^b & \mapsto \quad& \!\!\!\!\!\!\!\!\!\!
\sum_{[a]\in\pi_0A}\!\!\! 
    \norm{B_{[b]}} \norm{G_{a,b}} \delta^a 
\end{eqnarray}
An element $g\in\grpd^S$ is represented by a finite span $S\leftarrow G\to 1$ (using the fundamental equivalence) and has cardinality
\begin{equation}\label{presheaf-card}
|g|\;=\;\|(S\leftarrow G\to 1)\|\left(\delta^1\right)\;=\;\sum_{[s]\in\pi_0S}\norm{g(s)}\delta^s.
\end{equation}
The cardinality of the representable functor $h^s$ in $\grpd^S$ is thus
\begin{equation}\label{representable-card}
|h^s|\;=\;\|(S\leftarrow 1\to 1)\|\left(\delta^1\right)\;=\;\norm{\Omega_s(S)}\delta^s
\end{equation}
and the `objective pairing' \eqref{objective-pairing} 
is consistent with the classical pairing
$$\langle\norm{\name t},\norm{h^s}\rangle\;\;=\;\;
\langle\delta_t,\norm{\Omega_s(S)}\delta^s\rangle\;\;=\;\;
\langle\delta_t,\delta^s\rangle\norm{\Omega_s(S)}\;\;=\;\;\norm{\langle\name t,h^s\rangle}.$$
\end{blanko}

\section{Simplicial groupoids and fat nerves}

\label{sec:simplicial}

In this appendix, we provide some background material on simplicial groupoids
and fat nerves.  The general notion of simplicial set (originally termed a {\em
complete semi-simplicial complex}) has been widely used in homotopy theory since
the work of Eilenberg, Kan and others in the 1950s, owing its utility on one
hand to the fact that simplicial sets are a model for topological spaces up to
homotopy by way of the singular functor, and on the other hand because it
receives a fully faithful functor from the category of small categories, namely
the nerve (see \ref{app:nerve} below).  The theory of $\infty$-categories, the
common generalisation of spaces up to homotopy and categories, exploits the
simplicial setting in a crucial way.
 
Any poset can naturally be regarded as a category, hence we may talk about
posets in terms of their nerves.  In combinatorics, however, it is common to
view posets as  simplicial {\em complexes} instead of simplicial sets, 
associating to a poset its order complex.
The simplicial complexes that arise in this way have a canonical order on each
simplex, and due to this they can be regarded as special kinds of simplicial
sets, characterised by the property that $n$-simplices are completely determined
by their vertex sets.  Although such simplicial sets are of a simple kind,
the subcategory they form is not as nice as the category of simplicial sets
(which is a presheaf topos).  For the purposes of the present undertakings,
it is crucial to work with simplicial sets.

In this short appendix we recall the basic definitions, contrasting with
simplicial complexes.

\subsection{Simplicial sets and nerves}

\begin{blanko}{The simplex category (the topologist's Delta).}
  Let $\simplexcategory$ be the \emph{simplex category}, whose objects are the finite
  nonempty standard ordinals $${[n]}=\{0<1<\cdots<n\},$$ and whose arrows are the
  order-preserving maps between them.  These maps are generated by the
  injections ${\partial^{i}}:[n-1]\to[n]$ that skip the value $i$, termed {\em
  coface} maps, and the surjections ${\sigma^{i}}:[n+1]\to[n]$ that repeat $i$,
  termed {\em codegeneracy} maps.  The obvious relations between these
  generators are called the {\em cosimplicial identities} (dual to the 
  simplicial identities below). 
\end{blanko}

\begin{blanko}{Simplicial sets.}
  A \emph{simplicial set} is a functor
  ${X:\simplexcategory\op\to\Set}$.
  One writes $X_n$ for the
  image of $[n]$, and $d_i,s_i$ for the images of $\partial^i,\sigma^i$.
  The elements of $X_n$ are called {\em $n$-simplices}.

  Explicitly, a simplicial set $X$ is thus
  a sequence of sets $X_n$ ($n\geq 0$) together with
  {\em face maps} $d_i:X_n\to X_{n-1}$ and {\em degeneracy maps} $s_i:X_n\to
  X_{n+1}$, $(0\leq i\leq n)$,
  $$
\xymatrix@C+2em@R-2.5mm{
X_0  
\ar[r]|(0.55){s_0} 
&
\ar[l]<+2mm>^{d_0}\ar[l]<-2mm>_{d_1} 
X_1  
\ar[r]<-2mm>|(0.6){s_0}\ar[r]<+2mm>|(0.6){s_1}  
&
\ar[l]<+4mm>^(0.6){d_0}\ar[l]|(0.6){d_1}\ar[l]<-4mm>_(0.6){d_2}
X_2 
\ar[r]<-4mm>|(0.6){s_0}\ar[r]|(0.6){s_1}\ar[r]<+4mm>|(0.6){s_2}  
&
\ar[l]<+6mm>|(0.6){d_0}\ar[l]<+2mm>|(0.6){d_1}\ar[l]<-2mm>|(0.6){d_2}\ar[l]<-6mm>|(0.6){d_3}
X_3 
\ar@{}|\cdots[r]&
}
$$
  subject to the {\em simplicial identities}:
  $d_is_i=d_{i+1}s_i=1$ and
  $$
  d_id_j=d_{j-1}d_i,\quad
  d_{j+1}s_i=s_id_j,\quad
  d_is_j=s_{j-1}d_i,\quad
  s_js_i=s_is_{j-1},\quad
  \qquad(i<j).
  $$ 
  
\end{blanko}

\begin{blanko}{Simplicial maps.}
  A \emph{simplicial \homomorphism} $F:X\to Y$ between simplicial sets is
  given by a sequence of maps $(F_n:X_n\to Y_n)_{n\geq0}$ commuting
  with face and degeneracy maps, that is, a natural transformation between the
  functors $X$ and $Y$.  A simplicial \homomorphism is {\em cartesian} with
  respect to an order-preserving map $[m]\to [n]$ in $\simplexcategory$ if the
  associated naturality square is a pullback (see \ref{culf-maps} for examples).
\end{blanko}

\begin{blanko}{Simplicial complexes.}
  A {\em simplicial complex} $K$ consists of a set $V$ of vertices together with
  a collection $S(K)$ of nonempty subsets of $V$, termed the {\em simplices} of $K$,
  satisfying
  \begin{itemize}
   \item All the one-element subsets of $V$ are simplices of $K$.
   \item Any non-empty subset of a simplex of $K$ is a simplex of $K$.
  \end{itemize}
A map between simplicial complexes is a function between their vertex 
sets such that the image of each simplex is a simplex.
\end{blanko}

\begin{blanko}{Locally ordered simplicial complexes.} \label{app:LOSC}
  Certain simplicial complexes can be regarded as simplicial sets, but some
  ordering is necessary so as to have well-defined face maps.  We call a
  simplicial complex {\em locally ordered} if there is specified a linear order
  on each simplex, in such a way that all inclusions preserve these orders.
  (The terminology {\em hierarchical simplicial complex} is used by
  Ehrenborg~\cite{Ehrenborg}.)  A map of locally ordered simplicial complexes is
  a map of simplicial complexes whose restriction to each simplex is
  order preserving.  This defines a category $\kat{LOSC}$.
  
  To each locally ordered simplicial complex $K$ there is associated canonically
  a simplicial set $X$ whose $n$-simplices are sequences $(v_0,v_1,\ldots,v_n)$
  of elements in $V$, permitting repetitions, which as a set is required to
  form a simplex $F\in S(K)$, and which as a sequence is required to be
  non-decreasing for the linear order in the simplex $F$.  This can be described more formally as follows.
  Each linear order $[n]\in \simplexcategory$ can be regarded as a locally ordered
  simplicial complex, defining in fact a functor $\simplexcategory 
  \to\kat{LOSC}$.  Now the simplicial set $X$ assigned to $K$ has
  $$
  X_n := \Hom_{\kat{LOSC}}([n], K).
  $$
  This automatically accounts for the face and degeneracy maps, simply
  induced by precomposition with the coface
  maps and codegeneracy maps $[m] \to [n]$ in $\simplexcategory$. 

  This assignment defines a fully faithful functor 
  from locally ordered
  simplicial complexes to simplicial sets.  (Note that allowing repetition in
  the sequences is necessary for the assignment to be functorial in maps of
  locally ordered simplicial complexes, because these are allowed to send a
  simplex to a simplex of lower dimension.)
\end{blanko}

\begin{blanko}{The order complex and the nerve of a poset.}
  The {\em order complex} of a poset $P$ is the simplicial complex whose
  vertices are the elements of $P$ and whose $n$-simplices are those subsets
  that form $n$-chains $v_0<\dots< v_n$ in the poset.  The order complex is
  naturally locally ordered since each simplex is a total order, and its
  associated simplicial set is usually termed the {\em nerve} of the poset.  The
  definition of the nerve extends to more general categories as follows.
\end{blanko}

\begin{blanko}{Strict nerve.}\label{app:nerve}
  The {\em nerve} of a category $\CC$ is the simplicial set
  $$
  N\CC : \simplexcategory\op\to\Set
  $$
  whose set of $n$-simplices is the set of sequences of $n$ composable arrows in
  $\CC$ (allowing identity arrows).  The face maps are given by composing arrows
  (for the inner face maps) and by discarding arrows at the beginning or the end
  of the sequence (outer face maps).  The degeneracy maps are given by inserting an
  identity map in the sequence.   By regarding the total order $[n]$
  as a category, we see that 
  a sequence of $n$ composable arrows in $\CC$ is the same thing
  as a functor $[n]\to \CC$, and more formally the $n$-simplices can be 
  described as
  $$
  (N\CC)_n = \Fun([n],\CC),
  $$
  and in particular we see that the face and degeneracy maps of $N\CC$ are given
  simply by precomposition with the coface and codegeneracy maps in 
  $\simplexcategory$.
\end{blanko}

\subsection{Simplicial groupoids, fat nerves, and Segal spaces}

\begin{blanko}{Simplicial groupoids.}
  For any category $\EE$,
  one can talk about simplicial objects $X:\simplexcategory\op\to\EE$.
  Thus, in the case of the category of groupoids, 
  a \emph{simplicial groupoid} is a sequence of groupoids
  $X_n$, $n\geq 0$, and face and degeneracy \homomorphisms
  $d_i:X_n\to X_{n-1}$, $s_i:X_n\to X_{n+1}$, $(0\leq i\leq n)$, subject to the
   simplicial identities above.
\end{blanko}

\begin{blanko}{Fat nerve of a small category.}\label{fatnerve}
  Important examples of simplicial groupoids are given by the {\em fat nerve} of
  a small category $\CC$.  Here $X_n$ is the groupoid of all composable
  sequences
  $a_0\xrightarrow{\alpha_1}a_1\xrightarrow{\alpha_2}\dots\xrightarrow{\alpha_n}a_n$
  of $n$ arrows in $\CC$, that is,
  $$
  X_n=\{ \text{functors }\alpha:[n]\to \CC\}.
  $$ 
  In the case of the classical {\em strict nerve} this is just a set,
  or a discrete groupoid; in the {\em
  fat nerve}, $X_n$ includes all natural isomorphisms $\alpha\to\alpha'$
  $$
  \xymatrix{
  \cdot \ar[r] \ar[d]^*-[@]=0+!L{\scriptstyle \sim} & \cdot \ar[d]^*-[@]=0+!L{\scriptstyle \sim}
  \ar[r]  & \cdot \ar[r] \ar[d]^*-[@]=0+!L{\scriptstyle \sim} &\cdots\ar[r]&
   \cdot \ar[d]^*-[@]=0+!L{\scriptstyle \sim} \\
  \cdot \ar[r] & \cdot\ar[r] & \cdot\ar[r] &\cdots\ar[r]& \cdot
  }$$
   
  This can be described succinctly in categorical terms, in terms of
  the functor category, but allowing only invertible natural transformations:

  $$
  (\fatnerve \CC)_n = \Fun([n],\CC)^\iso .
  $$
  As in the previous cases (\ref{app:LOSC}, \ref{app:nerve}), this automatically accounts for face and degeneracy 
  maps by precomposition.
  In particular, $d_0:X_1 \to X_0$ assigns to an arrow its codomain, and
  $d_1: X_1 \to X_0$ assigns to an arrow its domain.

  Since $X_2$ is by definition 
  the groupoid of composable pairs of arrows, we have $X_2 \simeq X_1 
  \times_{X_0} X_1$.  Here the fibre product is
  \begin{equation}\label{appendix-segal-pb}\vcenter{\xymatrix{
     X_2\drpullback \ar[r]^{d_0}\ar[d]_{d_2} & X_1 \ar[d]^{d_1} \\
     X_1 \ar[r]_{d_0} & X_0}
  }\end{equation}
  expressing the composability condition: only those pairs of arrows
  such that the target of the first matches the source of the second.

  In particular, $d_1: X_2 \to X_1$ is the composition map.  Also, $d_0:X_2 \to
  X_1$ assigns to a composable pair the second arrow, and $d_2 : X_2 \to X_1$
  assigns to a composable pair the first arrow.  (Here we are referring to the
  order of composition, as in $a$-followed-by-$b$, and not the order used when
  writing this as $b\circ a$.)
  
  Note that if $\CC$ is just a poset, then it has no invertible arrows
  except the identities.
  Therefore, the notions of strict and fat nerve coincide.
\end{blanko}

\begin{blanko}{Rezk complete Segal spaces.}\label{Rezk}\label{Segal}
  A simplicial groupoid is a {\em Segal space} if $X_2 \simeq X_1\times_{X_0}
  X_1$, as in \eqref{appendix-segal-pb}, and in general the canonical Segal map
  $$
  X_n \longrightarrow  X_1 \times_{X_0} X_1\times_{X_0} \dots \times_{X_0} X_1
  $$ 
  is an equivalence for each $n\geq 1$.

  Consider the contractible groupoid generated by one isomorphism $0\isopil 1$, and its
  strict nerve $J$.  A Segal space $X$ is {\em Rezk complete} if the map $J\to *$
  induces an equivalence of groupoids $\Map(*,X)\to\Map(J,X)$, which in 
  turn means
  that $s_0: X_0 \to X_1$ is fully faithful and has as its essential image the
  arrows that admit left and right quasi-inverses.  
  More intuitively, the condition expresses the idea that up to homotopy there are no other
  weakly invertible arrows than those coming from $X_0$ via the
  degeneracy map $s_0$.

  The Rezk complete Segal spaces are precisely those simplicial groupoids
  that are levelwise-equivalent to the fat nerve of a category.
\end{blanko}

\begin{blanko}{Monoidal groupoids.}\label{monoidalgroupoids}
  A monoidal groupoid is a monoidal category $(\CC,\tensor,I)$ which happens to
  be a groupoid.  For these, one can define the {\em monoidal nerve}, which is
  essentially a simplicial groupoid $X:\simplexcategory\op\to\Grpd$.  One takes
  $X_0$ to be a singleton, takes $X_1$ to be the groupoid itself, and more
  generally let $X_n$ be the $n$-fold cartesian product
  $$
  X_n = 
  \underset{n \text{ factors}}{\underbrace{\CC \times \cdots \times \CC}}   .
  $$
  The outer face maps just project away an outer factor. The inner face maps use
  the monoidal structure $\tensor : \CC \times \CC \to \CC$ on two adjacent
  factors. The degeneracy maps insert a unit object. All this is completely
  canonical, given the monoidal structure. The only problem is that the
  simplicial identities do not hold on the nose, due to the fact that the
  monoidal structure is not assumed to be strict. The diagram is therefore not
  literally speaking a simplicial groupoid, but only a pseudo-functor
  $\simplexcategory\op\to\Grpd$.
  
  While this may be a slight annoyance sometimes, it is not actually important
  for the purpose of this work: for the sake of defining a homotopy-coherently
  coassociative coalgebra structure on $\Grpd_{/X_1}$, a pseudo-functor is just
  as good as a strict functor.
  Another thing is that one can alternatively invoke strictification theorems
  (see Mac Lane~\cite[\S XI.3, Theorem 1]{MacLane:categories}): any monoidal 
  category is equivalent to a strict monoidal category.  The monoidal nerve of
  the strictification of a monoidal groupoid is then a simplicial
  groupoid on the nose, equivalent to the original monoidal nerve.
\end{blanko}

\end{document}